\numberwithin{figure}{section}
\definecolor{green}{rgb}{0,0.8,0} % Redefines the color green.
\newcommand{\ud}{\mathrm{d}}
\newcommand{\eps}{\epsilon}
\newcommand{\calC}{\mathcal C}
\newcommand{\calG}{\mathcal G}
\newcommand{\calS}{\mathcal S}
\renewcommand{\H}{\mathcal{H}}
\definecolor{light-gray1}{gray}{0.90}
\definecolor{light-gray2}{gray}{0.80}
\definecolor{deepgreen}{cmyk}{1,0,1,0.5}
\newcommand{\B}{\mathcal{B}}
\newcommand{\HH}{\mathcal{H}}
\newcommand{\NN}{\mathcal{N}}
\newcommand{\N}{\mathbb{N}}
\newcommand{\R}{\mathbb{R}}
\newcommand{\Sp}{\mathbb{S}}
\newcommand{\Z}{\mathbb{Z}}
\newcommand{\al}{\alpha}
\newcommand{\be}{\beta}
\newcommand{\de}{\delta}
\newcommand{\e}{\eps}
\newcommand{\om}{\omega}
\newcommand{\la}{\lambda}
\newcommand{\te}{\theta}
\newcommand{\s}{\sigma}
\newcommand{\ka}{\kappa}
\newcommand{\De}{\Delta}
\newcommand{\Om}{\Omega}
\newcommand{\p}{\partial}
\newcommand{\na}{\nabla}
\newcommand{\supp}{\operatorname{supp}}
\newcommand{\ext}{\operatorname{ext}}
\newcommand{\Rmnum}[1]{\expandafter\@slowromancap\romannumeral #1@}
\newcommand{\I}{\infty}
\newcommand{\ti}{\widetilde}
\newcommand{\ang}[1]{\left\langle{#1}\right\rangle}
\newcommand{\abs}[1]{\left\lvert{#1}\right\rvert}
\newcommand{\ant}[1]{\begin{align*}\begin{split} #1 \end{split}\end{align*}}
\newcommand{\EQ}[1]{\begin{equation}\begin{split} #1 \end{split}\end{equation}}
\newcommand{\Del}[1]{}
\numberwithin{equation}{section}
\newtheorem{thm}{Theorem}[section]
\newtheorem{cor}[thm]{Corollary}
\newtheorem{lem}[thm]{Lemma}
\newtheorem{prop}[thm]{Proposition}
\newtheorem{claim}[thm]{Claim}
\theoremstyle{remark}
\newtheorem{rem}[thm]{Remark}
\newtheorem{defn}[thm]{Definition}
\newcommand{\mand}{{\ \ \text{and} \ \  }}
\newcommand{\mif}{{\ \ \text{if} \ \ }}
\newcommand{\mas}{{\ \ \text{as} \ \ }}
\newcommand{\nsp}{|\nabla|^{s_p}}
\newcommand{\nspt}{|\nabla|^{s_p-1}}
\def\glei{\mathrm{eq}}
\newcommand{\qtq}[1]{\quad\text{#1}\quad}
\newcommand{\bR}{{\mathbb R}}
\newcommand{\cN}{{\mathcal{N}}}
\newcommand{\cH}{{\mathcal{H}}}
\newcommand{\inte}{{\mathrm{int}}}
\begin{document}

\title[Subcritical wave equations]{Scattering for defocusing energy subcritical nonlinear wave equations}
\author{B. Dodson}
\author{A. Lawrie}
\author{D. Mendelson}
\author{J. Murphy}

\begin{abstract}
We consider the Cauchy problem for the defocusing power type nonlinear wave equation in $(1+3)$-dimensions for energy subcritical powers $p$ in the super-conformal range $3 < p< 5$.   We prove that any solution is global-in-time and scatters to free waves in both time directions as long as its critical Sobolev norm stays bounded on the maximal interval of existence. 
\end{abstract}

\thanks{B. Dodson gratefully acknowledges support from  NSF DMS-1500424 and NSF DMS-1764358. A. Lawrie gratefully acknowledges support from NSF grant DMS-1700127. D. Mendelson gratefully acknowledges support from NSF grant DMS-1800697. J. Murphy gratefully acknowledges support from DMS-1400706.  The authors thank the MSRI program ``New Challenges in PDE: Deterministic Dynamics and Randomness in High and Infinite Dimensional Systems,''  where this work began. The first and second authors also thank IHES program ``Trimester on Nonlinear Waves,'' where part of this work was completed. The first and third authors gratefully acknowledge support from the Institute for Advanced Study, where part of this work was completed.
}

\maketitle

\section{Introduction}

 We study the Cauchy problem for the power-type nonlinear wave equation in $\R^{1+3}$, 
\begin{equation} \label{eq:nlw} 
\left\{
\begin{aligned}
&\Box  u  =    \pm u \abs{u}^{p-1} \\ 
& \vec u(0) = (u_{0}, u_{1}) \quad u  = u(t, x), \quad (t, x) \in \R^{1+3}_{t, x} .
\end{aligned}
 \right.
\end{equation}
Here $\Box  = - \p_{t}^{2} + \De$ so the $+$ sign above above yields the defocusing equation and the $-$ sign the focusing equation. The equation has the following scaling symmetry: if $\vec u(t, x) = (u, \p_t u)(t, x)$ is a solution, then so is 
\EQ{ \label{eq:scaling} 
\vec u_{\la}(t, x) =  \left( \la^{-\frac{2}{p-1}}u( t/ \la, x/ \la ),   \la^{-\frac{2}{p-1} -1} \p_t u (t/ \la, x/ \la)\right).
} 
The conserved energy, or Hamiltonian,   is 
\EQ{ \label{eq:en} 
E( \vec u(t))  = \int_{\{t\} \times \R^{3}} \frac{1}{2} \left(\abs{u_{t}}^{2} + \abs{\na u}^{2}\right)   \pm  \frac{1}{p+1} \abs{u}^{p+1} \, \ud x  = E(\vec u(0))
}
which scales like 
\[
E( \vec u_{\la}) =  \la^{3 - 2\frac{p+1}{p-1}} E( \vec u). 
\]
The energy is invariant under the scaling of the equation only when $p= 5$, which is referred to as the energy critical exponent. The range $p<5$ is called energy subcritical, since concentration of a solution by rescaling requires divergent energy, i.e.,   $\la \to 0 \Rightarrow E( \vec u_{\la}) \to \infty$. Conversely, the  range $p>5$ is called energy supercritical,  and here $E( \vec u_{\la})  \to 0 \mas \la \to 0$, i.e., concentration by rescaling is energetically favorable.  

Fixing $p$,  the critical Sobolev exponent $s_p:= \frac{3}{2} - \frac{2}{p-1}$ is defined to be the unique $s_p \in \R$ so that  $\dot{H}^{s_p} \times \dot{H}^{s_p-1}( \R^3)$ is invariant under the scaling~\eqref{eq:scaling}.  We will often use the shorthand notation 
\EQ{
 \dot \HH^s:=  \dot{H}^s \times \dot{H}^{s-1}(\R^3). 
 }

The power-type wave equation on $\R_{t, x}^{1+ 3}$ has been extensively studied. In the defocusing setting, the positivity of the conserved energy can be used to extend a local existence result to a global one for sufficiently regular initial data. In 1961, J\"orgens showed global existence for the defocusing equation for smooth compactly supported data~\cite{Jorg}.  In 1968, Strauss proved global existence for smooth solutions and moreover that these solutions decay in time and scatter to free waves~\cite{Strauss68} -- this remarkable paper was the first work that proved scattering for \emph{any} nonlinear wave equation. There are many works extending the local well-posedness theorem of Lindblad and Sogge~\cite{LinS} in $\dot\H^{s}$ for $s > s_p$ to an unconditional global well-posedness statement and we refer the reader to \cite{KPV00, GP03, BC06, Roy09} and references therein. These works do not address global dynamics of the solution, in particular scattering.  In the radial setting the first author has made significant advances in this direction, proving in~\cite{D16, D17} an unconditional global well-posedness and scattering result for the defocusing cubic equation for data in a Besov space with the same scaling as $\dot \H^{\frac{1}{2}}$.  In very recent work \cite{D18a, D18b}, the first author has proved unconditional scattering for the defocusing equation for radial data in the critical Sobolev space in the entire range $3\leq p<5$. 

The goal of this paper is to address global dynamics for \eqref{eq:nlw} in the non-radial setting. Our main result is the following theorem. 
\begin{thm}[Main Theorem] \label{t:main} 
 Consider~\eqref{eq:nlw} for energy subcritical exponents $3<p<5$ and with the \emph{defocusing} sign. Let $\vec u(t) \in \dot{\H}^{s_p}(\R^3)$ be a solution to~\eqref{eq:nlw} on its maximal interval of existence $I_{\max}$. Suppose that 
\EQ{ \label{eq:fantasy} 
\sup_{t \in I_{\max}} \| \vec u(t) \|_{\dot{\H}^{s_p} (\R^{3})} < \infty.
}
Then, $\vec u(t)$ is defined globally in time, i.e., $I_{\max} = \R$. In addition, we have 
\EQ{ \label{scattering} 
\| u \|_{L^{2(p-1)}_{t, x}( \R^{1+3})} <  \infty, 
}
which implies that $\vec u(t)$ scatters to a free wave in both time directions, i.e., there exist solutions $\vec v_{L}^{\pm}(t)  \in \dot{\H}^{s_p}(\R^3)$ to the free wave equation, $\Box v_L^{\pm} = 0$,  so that 
\EQ{
\| \vec u(t)- \vec v^{\pm}_L(t) \|_{\dot{\H}^{s_p} (\R^3)} \longrightarrow 0 \mas t \rightarrow \pm \I.
}
\end{thm}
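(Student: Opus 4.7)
The plan is to carry out the Kenig--Merle concentration-compactness/rigidity strategy at the critical regularity $\dot{\mathcal{H}}^{s_p}$, tailored to the defocusing super-conformal subcritical setting. Argue by contradiction: assume the theorem fails, and construct a minimal ``critical element''—a nonzero solution with $\sup_{t \in I_{\max}} \|\vec u(t)\|_{\dot{\mathcal{H}}^{s_p}} < \infty$ but with infinite scattering norm $\|u\|_{L^{2(p-1)}_{t,x}}$, whose forward trajectory is precompact modulo the scaling and translation symmetries of \eqref{eq:nlw}. Then show that no such solution can exist.

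First I would set up the functional framework: a local well-posedness and stability theory for \eqref{eq:nlw} in $\dot{\mathcal{H}}^{s_p}$, with small-data global scattering quantified by the $L^{2(p-1)}_{t,x}$ norm, built from appropriate Strichartz and Sobolev estimates at scaling-critical regularity. One then needs a long-time perturbation lemma, ensuring that approximate solutions with small error in suitable spaces stay close in $\dot{\mathcal{H}}^{s_p}$. Next, establish a linear profile decomposition for sequences bounded in $\dot{\mathcal{H}}^{s_p}$ in the Bahouri--G\'erard style, with parameters $(\lambda_n^{(j)}, x_n^{(j)}, t_n^{(j)})$ and a remainder whose nonlinear solution has small scattering norm.

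Combining the profile decomposition with stability and the hypothesized failure of \eqref{scattering}, a standard Palais--Smale extraction produces the critical element $\vec u_*$, together with $\lambda(t), x(t)$ such that
\[
K := \left\{ \lambda(t)^{-\frac{2}{p-1}} \vec u_*\!\left(t,\, \tfrac{\cdot - x(t)}{\lambda(t)}\right) : t \in I_{\max} \right\}
\]
is precompact in $\dot{\mathcal{H}}^{s_p}$. The rigidity step then classifies $\vec u_*$ by the asymptotic behavior of $(\lambda(t), x(t))$: (i) finite-time blowup with $\lambda(t) \to \infty$, which is ruled out via finite speed of propagation and the compactness of $K$; (ii) a global soliton-like regime with $\lambda(t)$ essentially bounded and $x(t)$ controlled; and (iii) possibly a frequency cascade scenario with $\lambda(t) \to \infty$ in infinite time. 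In the defocusing case, the rigidity is driven by Morawetz-type identities—the standard Morawetz estimate giving $\iint |u|^{p+1}/|x|\, dx\, dt < \infty$, possibly together with the Lin--Strauss/conformal Morawetz identity, a virial-type estimate, or a long-time Strichartz/double-Duhamel input in the style of the first author's radial works. Combined with compactness, these force $\vec u_* \equiv 0$, a contradiction.

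The main obstacle, as anticipated, is the rigidity step in the non-radial setting. Two features make it delicate here. First, since $s_p < 1$ for $p < 5$, the conserved energy is \emph{supercritical} relative to the working regularity, so energy conservation offers no direct control and one must extract rigidity from dispersive/Morawetz information alone. Second, and more seriously, in the non-radial case the translation parameter $x(t)$ need not be bounded, and one must rule out solutions whose spatial center escapes to infinity. This typically requires exploiting conservation of momentum, or a careful analysis showing $|x(t)|/\lambda(t)^{-1}$ remains controlled, compatibly with the Morawetz weight $1/|x|$. Extending the first author's radial long-time Strichartz and double Duhamel machinery to accommodate nontrivial $x(t)$—and thereby cover the full super-conformal range $3 < p < 5$ non-radially—is where I expect the technical heart of the argument to lie.
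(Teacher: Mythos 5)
Your overall skeleton (small data theory, perturbation lemma, Bahouri--G\'erard profile decomposition, extraction of a critical element with trajectory pre-compact modulo $N(t), x(t)$) matches the paper's Sections 2--3. The genuine gap is in the rigidity step, which you essentially defer to ``Morawetz-type identities combined with compactness.'' None of those tools are available at the regularity you have: the critical element is only an $\dot\H^{s_p}$ solution with $s_p<1$, and the energy, virial, and Morawetz identities (including the Lin--Strauss estimate $\iint |u|^{p+1}/|x|$) all require at least $\dot\H^1$ control to make sense of and to close the boundary terms. The actual content of the paper is to first \emph{upgrade the regularity} of the critical element before any monotonicity formula can be invoked: this is done via a double Duhamel pairing exploiting the weak vanishing of the free part (Lemma~\ref{l:weak}), the sharp Huygens principle, frequency envelopes, and long-time Strichartz estimates, and it must be carried out separately in four scenarios (soliton-like, doubly concentrating, self-similar, and traveling wave), not the three you list. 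Only after establishing $\dot\H^1$ (indeed $\dot\H^{1+}$) bounds does the paper run a virial argument, and even then the soliton case requires a Lorentz transform whose parameters are compactified by the subluminality of $x(t)$ rather than a Morawetz weight centered at the origin.

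The second concrete failure is your proposed treatment of a spatial center escaping at the speed of light: you suggest conservation of momentum. Momentum is not defined, let alone conserved or controlled, for $\dot\H^{s_p}$ solutions with $s_p<1$ (the pairing $\int u_t\,\partial_j u$ does not converge at this regularity), and the finite-energy arguments of Kenig--Merle/Nakanishi--Schlag and the energy-flux argument of Killip--Visan are likewise unavailable here. The paper instead rules out this ``traveling wave'' critical element (Case (IV) of Proposition~\ref{p:cases}) by proving additional regularity in the directions transverse to the motion, using an angularly localized frequency projection $\hat P_{N,M}$ whose angular separation from the wave zone yields arbitrary decay in the double Duhamel pairing (Lemmas~\ref{l:angle}--\ref{lem:ang}), and then concluding with a Morawetz estimate adapted to the direction of travel (Proposition~\ref{P:tw-morawetz}). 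Without this ingredient, or a substitute for it, your argument cannot exclude the light-speed scenario and the proof does not close.
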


A version of Theorem~\ref{t:main} restricted to radially symmetric data was established in  the earlier work of Shen \cite{Shen}; see also \cite{DL1} for the cubic power.  This type of conditional scattering result first appeared in the work of Kenig and Merle \cite{KM10} in the setting of the 3$d$ cubic NLS and has since attracted a great deal of research activity, see e.g. \cite{KM11a, KM11b, KV11a, KV11b, Bul12a, Bul12b, DL2, CR5d, DKM5, DR} for this type of result for the NLW. 

In the energy critical regime, the bound \eqref{eq:fantasy} is guaranteed by energy conservation and  the analogue of Theorem \ref{t:main} was proved in the seminal works of Shatah and Struwe \cite{SS93,SS94}, Bahouri and Shatah \cite{BS} and Bahouri and G\'erard \cite{BG}.  In the energy supercritical regime, the analogue of Theorem \ref{t:main} was obtained by Killip and Visan in  \cite{KV11b}.

The regime treated in this work, namely energy-subcritical with non-radial data,  necessitates several new technical developments, which may prove useful in contexts beyond the scope of Theorem~\ref{t:main}.

\begin{rem}
It is conjectured that for the defocusing equation all solutions with data in $\dot \H^{s_p}$ scatter in both time directions as in the energy critical case $p=5$. Theorem~\ref{t:main} is a conditional result, specifically we do not determine a priori which data satisfy~\eqref{eq:fantasy}.   It is perhaps useful to think of the theorem in its contrapositive formulation: if initial data in the critical space $\dot \H^{s_p}$ were to lead to an evolution that does not scatter in forward time, then the $\dot \H^{s_p}$ norm of the solution must diverge along at least one sequence of times tending to the maximal forward time of existence.  
\end{rem}

\begin{rem}
The dynamics are much different in the case of the energy subcritical \emph{focusing} equation. In remarkable works, Merle and Zaag~\cite{MZ03ajm, MZ05ma} classified the blow up dynamics by showing that all blow-up solutions must develop the singularity  \emph{at the self-similar rate}.  In the radial case, an infinite family of smooth self-similar solutions is constructed by Bizo\'n et al. in~\cite{BBMW10}.  In~\cite{DS12, DS17}, Donninger and Sch\"orkhuber address the stability of the self-similar blow up.
\end{rem}

\subsection{Comments about the proof} 
The proof of Theorem~\ref{t:main} follows the fundamental concentration compactness/rigidity method of Kenig and Merle, which first appeared in~\cite{KM06, KM08}. The proof is by contradiction -- if Theorem~\ref{t:main} were to fail, the profile decomposition of Bahouri-G\'erard~\cite{BG} yields a minimal nontrivial solution to~\eqref{eq:nlw}, referred to as a \emph{critical element} and denoted by $\vec u_c$, that does not scatter. Here `minimal' refers to the size of the norm in~\eqref{eq:fantasy}. This standard construction is outlined in Section~\ref{s:cc}. The key feature of a critical element is that its trajectory is pre-compact modulo symmetries in the space $\dot \H^{s_p}$, see Proposition \ref{p:ce}. The proof is completed by showing that  this compactness property is too rigid for a nontrivial  solution and thus the critical element cannot exist.  

The major obstacle to rule out a critical element $\vec u_{c}(t)$ in this energy subcritical setting is the fact that  $\vec u_{c}(t)$ is \emph{a priori} at best an $\dot \H^{s_p}$ solution, while all known global monotonicity formulae, e.g.,  the conserved energy, virial and Morawetz type inqualities, require more regularity.  In general, solutions to a semilinear wave equation are only as regular as their initial data because of the free propagator $S(t)$ in the Duhamel representation of a solution 
\EQ{\label{eq:du}
\vec u_c(t_0)  = S(t_0-t) \vec u_c(t)  + \int_{t}^{t_0} S(t_0-\tau) (0, \pm \abs{u}^{p-1} u (\tau)) \, \ud \tau.
}
However, for a \emph{critical element}  the pre-compactness of its trajectory is at odds with the dispersion of the free part, $S(t_0-t) \vec u(t)$, which means  the first term on the right-hand-side above must vanish weakly as $t  \to \sup I_{\max} $ or as $t \to \inf I_{\max}$, where $I_{\max}$ is as in Theorem \ref{t:main}. Thus, the Duhamel integral on the right-hand-side of~\eqref{eq:du} encodes the regularity of a critical element and additional regularity can be expected due to the nonlinearity. As in~\cite{DL1} the gain in regularity  at a fixed time $t_0$ is observed via the so-called  ``double Duhamel trick'', which refers to the analysis of the pairing 
\EQ{ \label{eq:pair} 
\bigg\langle \int_{T_1}^{t_0} S(t_0-t) (0, \pm \abs{u}^{p-1} u) \, \ud t, \,  \int^{T_2}_{t_0} S(t_0-\tau) (0, \pm \abs{u}^{p-1} u) \, \ud\tau \bigg\rangle
} 
where we take $T_1<t_0$ and $T_2>t_0$.  The basic outline of this technique was introduced  by Tao in~\cite{Tao07} and was used within the Kenig-Merle framework for nonlinear Schr{\"o}dinger equations by Killip and Visan~\cite{KV10CPDE, KV10AJM, KVClay},  and for nonlinear wave equations in, e.g.,~\cite{KV11b, Shen}. This method is also closely related to the in/out decomposition used by Killip, Tao, and  Visan in~\cite[Section~$6$]{KTV09}. 

Here we employ several novel interpretations of the double Duhamel trick, substantially building on the simple implementation developed by the first two authors in the radial setting in~\cite{DL1, DL2} for $p=3$, which exploited the sharp Huygens principle to overcome the difficulties arising from the both the slow $\ang{t}^{-1}$ decay of $S(t)$ in dimension $3$ and the small power $p=3$ that precluded this case from being treated by techniques introduced in earlier works. The general case (non-radial data) considered here requires several new ideas.  
\medskip

We briefly describe the set-up and several key components of the proof. A critical element has compact trajectory up to action by one parameter families (indexed by $t \in I_{\max}( \vec u_c)$)  of translations $x(t)$ that mark the \emph{spatial center} of the bulk of $\vec u_c(t)$,   and rescalings $N(t)$ that record the \emph{frequency scale} at which $\vec u_c(t)$ is concentrated.  In Section~\ref{s:cc} we perform a reduction to four distinct behaviors of the parameters $x(t)$ and $N(t)$. First, following the language of~\cite{KV11b} we distinguish between $x(t)$ that are \emph{subluminal}, roughly that $|x(t) - x(\tau)| \le (1-\de) \abs{t-\tau}$ for some $\de>0$, and those that \emph{fail to be subliminal}, i.e., if $x(t)$ forever moves at the speed of light, or more precisely, $\abs{x(t)} \simeq  \abs{t}$ (in a certain sense) for all $t$. The latter case is quite delicate in this energy-subcritical setting and we introduce several new ideas to treat it, see Section \ref{s:hans}. We elaborate further on these two cases. 

\medskip
\emph{Subluminal critical elements.} When $x(t)$ is subluminal,  we distinguish between what we call a \emph{soliton-like critical element} where $N(t) = 1$, a \emph{self-similar-like critical element} where $N(t) = t^{-1}$, $t >0$, and a \emph{global concentrating critical element} where,  $\limsup_{t \to \infty} N(t) = \infty$.  These distinct cases are treated in Sections $4, 5, 6$ respectively.

In Section~\ref{s:soliton}, we set out to show as in~\cite{DL1} that soliton-like critical elements must be uniformly bounded in $\dot \H^{1+\eps} \cap \dot\H^{s_p}$ and hence the trajectory is pre-compact in $\dot \H^1$.  Once this is accomplished we can access nonlinear monotonicity formulae to show that such critical elements cannot exist. In this latter step we employ a version of a standard argument based on virial identity, after shifting the spatial center of the solution to $x =0$ by the Lorentz group, which is compactified by the bound in $\dot \H^1$. The heart of the argument in Section $4$ is thus establishing the additional regularity of a soliton-like critical element. The goal, roughly, is to show that the pairing~\eqref{eq:pair} can be estimated in $\dot \H^1$.   In~\cite{DL1} the proof relied crucially on radial Sobolev embedding. As this is no longer at our disposal in the current, non-radial setting, we have introduced a substantial reworking of the argument from~\cite{DL1} that both simplifies it, and removes the reliance on radial Sobolev embedding. Examining the pairing~\eqref{eq:pair} at time $t_0 =0$ we divide spacetime into three types of regions;  see Figure~\ref{f:bowtie}.  The first region is a fixed time interval of the form  $[t_0-R, t_0+ R]$, where $R>0$ is chosen so that the bulk of $\vec u_c(t)$ is captured by the light cone emanating from $(t_0, 0)$ in both time directions. In this region~\eqref{eq:pair} is estimated using an argument based on Strichartz estimates, using crucially that $R>0$ is finite and can be chosen independently of $t_0$ by compactness.  The second region is the region of spacetime exterior to this time interval and exterior to the cone. Here the $\dot \H^{s_p}$ norm of the solution is small on any fixed time slice and hence an argument based on the small data theory can be used to absorb the time integrations in~\eqref{eq:pair}. Lastly, the heart of the double Duhamel trick is employed to note the interaction between the two regions in the interior of the light cone, one for times $t< -R$ and the other for times $t>R$ is identically $=0$ by the sharp Huygens principle!

In Section~\ref{s:ss} we show that a self-similar-like critical element cannot exist. Here we again use a double Duhamel argument centered at $t_0 \in (0, \infty)$, but with  $T_1 = \inf I_{\max} = 0$ and $T_2 =\sup I_{\max} = \infty$ in~\eqref{eq:pair}. The argument exploiting Huygens principle given in Section $4$ no longer applies since the forward and backwards cones emanating from time, say $t_0 = 1$ can never capture the bulk of the solution  since $N(T) =T^{-1}$ is an expression of the fact that the solution is localized to the physical scale $T$ at time $T$, see \eqref{r:Reta}.  However here, we use a different argument based on a version of the long-time Strichartz estimates introduced by the first author in~e.g.,~\cite{D-JAMS, D-Duke}, which allow us to control Strichartz norms of the projection of $\vec u_c$ to high frequencies $k \gg 1$ on time intervals $J$ which are long in the sense that $\abs{J} \simeq 2^{\al k}$ for $\al\ge1$. 

In Section~\ref{s:sword},  $N(t)$ is no longer a given fixed function. We establish a dichotomy which we refer to colloquially as the sword or the shield: either additional regularity for the critical element can be established using essentially the same argument used in Section~\ref{s:soliton-reg}, or a self-similar-like critical element can be extracted by passing to a suitable limit. To apply the argument from Section $4$ the following must be true -- fixing any time $t_0$,  the amount of time (\emph{but where now time is measured relative to the scale $N(t)$}) that one has to wait until the bulk of the solution is absorbed by the cone emanating from time $t_0$ must be uniform in $t_0$. We define functions $C_{\pm}(t_0)$ whose boundedness (or unboundedness)  measures whether or not this criteria is satisfied; see the introduction to Section~\ref{s:sword}. The rest of the section is devoted to showing how to apply the arguments from Section~\ref{s:soliton} in the case where $C_{\pm}(t_0)$ are uniformly bounded, and how to extract a self-similar solution-like critical element in the case that one of $C_\pm(t_0)$ are not bounded. 

\medskip
\emph{Critical elements that are not subluminal:} In Section $7$ we show that critical elements with spatial center $x(t)$ traveling at the speed of light cannot exist. The technique in this section is novel and may be useful in other settings. First we note that such critical elements are easily ruled out for solutions with finite energy, as is shown in~\cite{KM08, Tao37, NakS} using an argument based on the conserved momentum,  and even in the energy supercritical setting; see~\cite{KV11b} using the energy/flux identity. None of these techniques (which provide an \emph{a priori} limit on the speed of $x(t)$) apply in our setting so we must rule out this critical element by other means, namely, \emph{by first showing that such critical elements have additional regularity}. 

In Section \ref{subsec:analysis_of_props} we lay the necessary groundwork and show, using finite speed of propagation, that any such critical element must have a fixed scale, i.e., $N(t) = 1$ and $x(t)$ must choose a fixed preferred direction up to deviation in angle by $\frac{1}{\sqrt{t}}$. The model case one should consider is $x(t) = (t, 0, 0)$ for all $t \in \R$, which means that the bulk of $\vec u_c(t)$ travels along the $x_1$-axis at speed $t$. We are able to show that such critical elements have up to $1-\nu$ derivatives in the $(x_2, x_3)$ directions for any $\nu>0$. This is enough to show that such critical elements cannot exist via a Morawetz estimate adapted to the direction of $x(t)$ -- this is the only place in the paper where the arguments are limited to the defocusing equation. 

The technical heart of this section is the proof of extra regularity ($1-\nu$ derivatives)  in the $x_{2, 3}$ directions. We again divide spacetime into three regions. For a solution projected to a fixed frequency $N \gg1$, we call region $A$ the strip $[0, N^{1-\eps}] \times \R^3$ for $\eps>0$ sufficiently small relative to $\nu$. On this region we can control the solution  by a version of the long-time Strichartz estimates proved in Section~\ref{s:ltse}. At time $t = N^{1-\eps}$ we then divide the remaining part of spacetime for positives times into two regions. Region $B$ is the set including all times $t \ge N^{1-\eps}$ exterior to the light cone of initial width $R(\eta_0)$ emanating from the point $(t, x) = (N^{1-\eps}, x(N^{1-\eps}))$ where $R(\eta_0)$ is chosen large enough so that at  $\vec u_v(N^{1-\eps})$ has $\dot \H^{s_p}$ norm less than $\eta_0$ exterior to the ball of radius $R(\eta_0)$ centered at $x(N^{1-\eps})$. The solution is then controlled on region $B$ using small data theory. Estimating the interaction of the two terms in the pair~\eqref{eq:pair} on the remaining region $C$ (the region $\{\abs{x - x(N^{1-\eps})} \le R(\eta_0) + t- N^{1-\eps}, t \ge N^{1-\eps} \}$) and the analogous region $C'$ for negative times $\tau \le -N^{1-\eps}$ provides the most delicate challenge. Any naive implementation of the double Duhamel trick based on Huygens principle is doomed to fail here since the left and right-hand components of the pair~\eqref{eq:pair} restricted to $C, C'$ interact \emph{in the wave zone} $\abs{x} \simeq \abs{t}$. Furthermore, since we are in dimension $d=3$, the $\ang{t}^{-1}$  decay from the wave propagator $S(t)$ in~\eqref{eq:pair} is not sufficient for integration in time. For this reason we introduce an auxiliary frequency localization to frequencies  $\abs{(\xi_{2}, \xi_{3})} \simeq M$ in the $(\xi_{2}, \xi_{ 3})$ directions after first localizing in all directions to frequencies $\abs{\xi} \simeq N$.  We call this angular frequency localization $\hat P_{N, M}$. The key observation is that the intersection of the wave zone $\{\abs{x} \simeq \abs{t}\}$ with region $C$ requires the spatial variable $x = (x_1, x_{2, 3})$ to satisfy 
\EQ{
\frac{\abs{x_{2, 3}}}{\abs{x}} \ll \frac{M}{N}
}
for all $M \ge N^{\frac{s_p}{1-\nu}}$ as long as $\eps>0$ is chosen small enough relative to $\nu$, whereas 
application of $\hat P_{N, M}$ restricts to frequencies $\xi = (\xi_1, \xi_{2, 3})$ with 
\EQ{
\frac{\abs{\xi_{2, 3}}}{ \abs{\xi}} \simeq \frac{M}{N}.
}
This yields \emph{angular separation} in the kernel of $\hat P_{N, M} S(t)$ and allows us to deduce arbitrary time decay for the worst interactions in~\eqref{eq:pair}, see Lemma \ref{l:angle}. The remaining interactions in~\eqref{eq:pair} are dealt with using an argument based on the sharp Huygens principle, which is complicated due to the blurring of supports caused by $\hat P_{N, M}$.

%whereas for any $x = (x_1, x_{2, 3}) \in \calG(t) \cap \{(t, x) \mid \abs{x} \ge t - R(\eta_0)\}$ we claim that 
%\EQ{
%\frac{\abs{x_{2, 3}}}{\abs{x}} \ll \frac{M}{N}
%}
%for all $M \ge N^{\frac{s_p}{1-\nu}}$.

\begin{rem}\label{r:p=3a} 
The proof of Theorem~\ref{t:main} serves as the foundation for the more complicated case of the cubic equation, $p=3$, as well as for the analogous result for the focusing equation; see for example~\cite{DL1} where the focusing and defocusing equations are treated in the same framework in the radial setting. 

 Much of the argument given here carries over to the defocusing equation when $p=3$. However, in this case  we have $s_p = 1/2$ and the critical space  $\dot \HH^{\frac{1}{2}}$ is the unique Sobolev space that is invariant under Lorentz transforms. This introduces several additional difficulties, described more in detail in Remark~\ref{r:p=3b}.  Additionally, certain estimates in Section \ref{s:hans} fail at the $p=3$ endpoint and would require modification.
 
 Similarly the argument in Sections 4-6 applies equally well to the focusing equation. However the argument in Section \ref{s:hans} used to rule out the traveling wave critical element is specific to the defocusing equation as it relies on a Morawetz-type estimate only valid in that setting. 
\end{rem} 

\section{Preliminaries} 

\subsection{Notation, definitions, inequalities} 

We write $A\lesssim B$ or $B\gtrsim A$ to denote $A\leq CB$ for some $C>0$.  Dependence of implicit constants will be denoted with subscripts.  If $A\lesssim B\lesssim A$, we write $A\simeq B$.  We will use the notation $a\pm$ to denote the quantity $a\pm\eps$ for some sufficiently small $\eps>0$. 

We will denote by $P_N$  the Littlewood-Paley projections onto frequencies of size $ \abs{ \xi} \simeq N$ and by $P_{ \le N}$ the projections onto frequencies of size $ \abs{ \xi} \lesssim N$. Often we will consider the case when $N = 2^k$, $k \in \Z$ is a dyadic number and in this case we will employ the following notation: when write $P_k$ with a \emph{lowercase} subscript $k$ this will mean projection onto frequencies $\abs{\xi} \simeq 2^k$.  We will often write $u_N$ for $P_N u$, and similarly for $P_{\leq N}$, $P_{>N}$, $P_k$, and so on. 

These projections satisfy Bernstein's inequalities, which we state here.  
\begin{lem}[Bernstein's inequalities]\emph{\cite[Appendix A]{Taobook}} \label{l:bern} Let $1 \le p \le q \le \infty$ and $s \ge 0$. Let $ f: \R^d \to \R$. Then 
\EQ{ \label{bern}
&\|P_{\ge N} f\|_{L^p} \lesssim N^{-s} \| \abs{\na}^s P_{\ge N} f\|_{L^p},\\
&\|P_{\le N} \abs{\na}^s f\|_{L^p} \lesssim N^{s} \|  P_{\le N} f\|_{L^p}, \, \quad
\|P_{ N} \abs{\na}^{\pm s} f\|_{L^p} \simeq N^{ \pm s} \|  P_{ N} f\|_{L^p}\\
&\|P_{\le N} f\|_{L^q} \lesssim N^{\frac{d}{p}- \frac{d}{q}} \| P_{\le N} f\|_{L^p}, \, \quad
\|P_{ N} f\|_{L^q} \lesssim N^{\frac{d}{p}- \frac{d}{q}} \| P_{N} f\|_{L^p}.
}
\end{lem}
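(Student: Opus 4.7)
The plan is to realize each Littlewood--Paley projection as convolution with an $L^1$-scaled Schwartz kernel and then read off all five inequalities from one application of Young's convolution inequality, keeping careful track of how rescaling the kernel produces powers of $N$.

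Concretely, I would fix $\phi,\psi\in\Sm(\R^d)$ with $\wht\phi$ a smooth bump equal to $1$ on $\{|\xi|\le 1\}$ and supported in $\{|\xi|\le 2\}$, and $\wht\psi$ a smooth bump supported in the annulus $\{1/2\le|\xi|\le 4\}$ that equals $1$ on the annular support of the multiplier defining $P_N$. Setting $\phi_N(x)=N^d\phi(Nx)$ and $\psi_N(x)=N^d\psi(Nx)$, we have $P_{\le N}f=\phi_N*f$ and $P_N f=\psi_N*f$ (after choosing $\psi$ to genuinely realize the projector rather than merely dominate it on the Fourier side), with the key computation $\|\phi_N\|_{L^r}=N^{d(1-1/r)}\|\phi\|_{L^r}$ for every $1\le r\le\infty$, and similarly for $\psi_N$.

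With this setup, the two $L^p\to L^q$ bounds on the last line of~\eqref{bern} would follow directly from Young's inequality $\|K_N*f\|_{L^q}\le\|K_N\|_{L^r}\|f\|_{L^p}$ with the exponent $r$ determined by $1+\tfrac{1}{q}=\tfrac{1}{r}+\tfrac{1}{p}$; the algebra $d(1-1/r)=\tfrac{d}{p}-\tfrac{d}{q}$ produces exactly the stated power of $N$. For the derivative inequalities on the middle line, I would exploit that $|\xi|^s\wht\psi(\xi/N)$ (and its analogue with $\phi$ after a cosmetic cutoff disposing of the singularity of $|\xi|^s$ at the origin) is a smooth, compactly supported symbol, so its inverse Fourier transform is Schwartz with $L^1$ norm $\simeq N^s$ by the same rescaling; Young's inequality with $r=1$ then yields $\|P_N|\nabla|^s f\|_{L^p}\lesssim N^s\|P_N f\|_{L^p}$ and analogously for $P_{\le N}$ and for $|\nabla|^{-s}P_N$ (using that $|\xi|^{-s}$ is smooth on the annular support of $\wht\psi$), which give the equivalence claimed in the third relation.

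The only step requiring minor care is the first inequality of~\eqref{bern}, concerning $P_{\ge N}$ rather than $P_N$. Here I would dyadically decompose $P_{\ge N}f=\sum_{k:\,2^k\ge N}P_{2^k}f$, apply the annular equivalence just proved to each summand to get $\|P_{2^k}f\|_{L^p}\simeq 2^{-ks}\||\nabla|^s P_{2^k}f\|_{L^p}$, write $P_{2^k}|\nabla|^s f = P_{2^k}\circ|\nabla|^s\circ P_{\ge N/2}f$ (using that $P_{2^k}$ sees only frequencies $\gtrsim N$), and sum the resulting geometric series in $k\ge\log_2 N$, which is dominated by its largest term $N^{-s}$. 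The only cosmetic nuisance is the fattened projector $P_{\ge N/2}$ appearing on the right, a standard Littlewood--Paley maneuver that is harmless in applications. The \emph{main step} --- indeed the only nontrivial observation --- is that a single convolution estimate, applied with different exponents, simultaneously yields both the Lebesgue and the derivative gains; after that the argument is pure bookkeeping.
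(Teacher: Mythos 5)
The paper does not prove this lemma at all --- it simply cites \cite[Appendix A]{Taobook} --- so there is no in-paper argument to compare against; your proposal is the standard textbook proof (kernel rescaling plus Young's inequality, then dyadic summation for the non-annular projections), and it is essentially correct. Two small points deserve tightening. First, for $\|P_{\le N}|\nabla|^s f\|_{L^p}\lesssim N^s\|P_{\le N}f\|_{L^p}$ the phrase ``cosmetic cutoff disposing of the singularity of $|\xi|^s$ at the origin'' is not quite right as stated: the singularity lies inside the support of $\wht\phi$ and cannot be cut away without changing the operator. The clean fixes are either to check directly that the kernel of $|\nabla|^sP_{\le 1}$ decays like $|x|^{-d-s}$ (hence lies in $L^1$), or --- mirroring exactly the maneuver you already use for $P_{\ge N}$ --- to decompose into annuli $M\le N$, apply the annular bound $\|P_M|\nabla|^s g\|_{L^p}\lesssim M^s\|\tilde P_M g\|_{L^p}$, and sum the geometric series $\sum_{M\le N}M^s\lesssim N^s$, which is where the hypothesis $s>0$ enters. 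Second, the fattened projector $P_{\ge N/2}$ in your first inequality is avoidable: apply the dyadic argument directly to $g=P_{\ge N}f$ (whose Fourier support already lies in $|\xi|\gtrsim N$), so that each piece satisfies $\|P_{2^k}g\|_{L^p}\lesssim 2^{-ks}\|P_{2^k}|\nabla|^s g\|_{L^p}\lesssim 2^{-ks}\||\nabla|^s P_{\ge N}f\|_{L^p}$ by $L^p$-boundedness of $P_{2^k}$, and the sum over $2^k\gtrsim N$ gives exactly the stated right-hand side with no loss.
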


We will write either
\[
\|u\|_{L_t^q L_x^r(I\times\R^3)} \quad\text{or}\quad \|u\|_{L_t^q(I;L_x^r(\R^3)}
\]
to denote the space-time norm
\[
\biggl(\int_I \biggl(\int_{\R^3} |u(t,x)|^{q,r}\,dx\biggr)^{\frac{q}{r}}\,dt\biggr)^{\frac{1}{q}},
\]
with the usual modifications if $q$ or $r$ equals infinity. 

Given $s\in\R$ we define the space $\dot\H^s$ by
\[
\dot\H^s=\dot H^s(\R^3)\times\dot H^{s-1}(\R^3). 
\]
For example, we work with initial data in $\dot \H^{s_p}$. 

We also require the notion of a frequency envelope. 
\begin{defn}{\cite[Definition~$1$]{Tao1}} \label{d:fren} A \emph{frequency envelope} is a sequence $\be = \{\be_k\}$ of positive numbers with $\be \in \ell^2$ satisfying the local constancy condition 
\ant{
2^{-\s\abs{j-k}} \be_k \lesssim \be_j \lesssim 2^{\s\abs{j-k}} \be_k,
}
where  $\s>0$ is a small, fixed constant.  If $\be$ is a frequency envelope and $(f, g) \in \dot{H}^s \times \dot{H}^{s-1}$ then we say that \emph{$(f, g)$  lies underneath $\be$} if 
\ant{
\| (P_k f,P_k g)\|_{\dot H^s \times \dot{H}^{s-1}} \le \be_k \, \quad \forall k \in \Z.
} 
Note that if $(f, g)$ lies underneath $\be$ then we have 
\ant{
\| (f, g)\|_{\dot H^s \times \dot{H}^{s-1}} \lesssim \| \be\|_{\ell^2(\Z)}.
}
\end{defn}

In practice, we will need to choose the parameter $\sigma$ in the definition of frequency envelope sufficiently small depending on the power $p$ of the nonlinearity.

We next record a commutator estimate. 

\begin{lem}\label{L:commutator} Let $\chi_R$ be a smooth cutoff to $|x|\geq R$.  For $0\leq s\leq 1$ and $N\geq 1$,
\begin{align*}
\|P_N\chi_R f - \chi_R P_N f\|_{L^2} &\lesssim N^{-s}(N R)^{-(1-s)}\|f\|_{\dot H^s}, \\
\|P_N\chi_R f - \chi_R P_N f\|_{L^2} &\lesssim R^{-s}(N R)^{-(1-s)}\|f\|_{\dot H^{-s}}. 
\end{align*}
\end{lem}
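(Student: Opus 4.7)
The plan is to reduce to three endpoint estimates and conclude by complex interpolation on the Sobolev scale. Since the two claimed bounds coincide at $s=0$, it suffices to establish
\[
\mathrm{(a)}\ \|[P_N,\chi_R]\|_{L^2\to L^2}\lesssim (NR)^{-1},\qquad \mathrm{(b)}\ \|[P_N,\chi_R]\|_{\dot H^1\to L^2}\lesssim N^{-1},\qquad \mathrm{(c)}\ \|[P_N,\chi_R]\|_{\dot H^{-1}\to L^2}\lesssim R^{-1}.
\]
Complex interpolation of (a) with (b) then yields the first inequality for $0\le s\le 1$, and of (a) with (c) yields the second. Estimate (a) follows from the kernel representation
$[P_N,\chi_R]f(x)=\int \phi_N(x-y)[\chi_R(y)-\chi_R(x)]f(y)\,dy$, where $\phi_N(z)=N^3\phi(Nz)$: the Lipschitz bound $|\chi_R(y)-\chi_R(x)|\lesssim R^{-1}|x-y|$ (which follows from $\|\nabla\chi_R\|_{L^\infty}\lesssim R^{-1}$) combined with the elementary computation $\int|z|\,\phi_N(z)\,dz\lesssim N^{-1}$ delivers $(NR)^{-1}\|f\|_{L^2}$ via Young's inequality.

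For (b), I would bound $\|[P_N,\chi_R]f\|_{L^2}\le\|P_N(\chi_R f)\|_{L^2}+\|\chi_R P_N f\|_{L^2}$ and apply Bernstein $\|P_N g\|_{L^2}\lesssim N^{-1}\|\nabla g\|_{L^2}$ to each summand, reducing matters to $\|\nabla(\chi_R f)\|_{L^2}\lesssim \|\nabla f\|_{L^2}$. By the Leibniz rule the only nontrivial term is $\nabla\chi_R\cdot f$: since $\nabla\chi_R$ is supported in $\{|x|\sim R\}$ and $|\nabla\chi_R|\lesssim R^{-1}\lesssim|x|^{-1}$ there, the Hardy inequality $\||x|^{-1}f\|_{L^2}\lesssim\|\nabla f\|_{L^2}$ closes the bound.

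For (c), since $P_N$ and $\chi_R$ are both self-adjoint, $[P_N,\chi_R]^{*}=-[P_N,\chi_R]$, and by duality $\|[P_N,\chi_R]\|_{\dot H^{-1}\to L^2}=\|[P_N,\chi_R]\|_{L^2\to\dot H^1}$; it therefore suffices to show $\|\nabla[P_N,\chi_R]g\|_{L^2}\lesssim R^{-1}\|g\|_{L^2}$. Differentiating the kernel representation in $x$ produces
\[
\nabla[P_N,\chi_R]g(x)=\int \nabla_x\phi_N(x-y)\,[\chi_R(y)-\chi_R(x)]\,g(y)\,dy \;-\; \nabla\chi_R(x)\,P_N g(x),
\]
and both summands are bounded by $R^{-1}\|g\|_{L^2}$: the second directly from $\|\nabla\chi_R\|_{L^\infty}\lesssim R^{-1}$, the first by Young's inequality together with $\int|z|\,|\nabla\phi_N(z)|\,dz=O(1)$ (the extra $\nabla$ contributes a factor of $N$ that exactly cancels the $N^{-1}$ saved by the $|z|$ weight) and the Lipschitz factor $R^{-1}$. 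The most delicate point of the whole argument is the Hardy invocation in (b), which avoids a spurious loss of $R^{-1}$ when estimating $\nabla\chi_R\cdot f$; everything else reduces to routine Young-type estimates against explicit Schwartz kernels.
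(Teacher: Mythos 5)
Your proof is correct and follows essentially the same route as the paper: the kernel representation with the Lipschitz bound on $\chi_R$ giving the $(NR)^{-1}$ bound on $L^2$, a Bernstein-type argument for the $\dot H^1\to L^2$ endpoint, an integration-by-parts argument for the $\dot H^{-1}\to L^2$ endpoint, and interpolation. Your use of Hardy's inequality for the $\nabla\chi_R\cdot f$ term (the paper uses H\"older plus Sobolev embedding) and of duality for the $\dot H^{-1}$ endpoint (the paper writes $f=\nabla\cdot\nabla\Delta^{-1}f$ and integrates by parts directly) are only cosmetic variations.
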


\begin{proof} We write the commutator as an integral operator in the form
\[
[P_N\chi_R f - \chi_R P_N f](x) = N^{d}\int K(N(x-y))[\chi_R(x)-\chi_R(y)]f(y)\,dy. 
\]
Thus, using the pointwise bound
\[
|\chi_R(x)-\chi_R(y)| \lesssim N|x-y|\cdot N^{-1}R^{-1}
\]
and Schur's test, we first find
\[
\|P_N\chi_R f - \chi_R P_N f\|_{L^2} \lesssim N^{-1}R^{-1} \|f\|_{L^2}.
\]
Next, a crude estimate via the triangle inequality, Bernstein's inequality, H\"older's inequality, and Sobolev embedding gives
\begin{align*}
\|P_N\chi_R f - \chi_R P_N f\|_{L^2} & \lesssim N^{-1}\|\nabla(\chi_R f)\|_{L^2} + N^{-1}\|\nabla f\|_{L^2} \lesssim N^{-1}\|f\|_{\dot H^1}. 
\end{align*}
The first bound now follows from interpolation. For the second bound, we write
\[
[P_N\chi_R f - \chi_R P_N f](x) = N^{d}\int K(N(x-y))[\chi_R(x)-\chi_R(y)]\nabla\cdot\nabla\Delta^{-1}f(y)\,dy
\]
and integrate by parts.  Estimating as above via Schur's test, we deduce
\[
\|P_N\chi_R f - \chi_R P_Nf\|_{L_x^2} \lesssim R^{-1} \||\nabla|^{-1}f\|_{L^2},
\]
so that the second bound also follows from interpolation.  \end{proof}

\subsection{Strichartz estimates} 
The main ingredients for the small data theory are Strichartz estimates for the linear wave equation in $\R^{1+3}$, 
\EQ{ \label{eq:lw}
&\Box v = F,\\
&\vec v(0) = (v_0, v_1).
}
A free wave means a solution to~\eqref{eq:lw} with $F=0$ and will be often denoted using the propagator notation $\vec v(t) = S(t) \vec v(0)$. We define a pair $(r, q)$ to be wave-admissible in $3d$ if 
\EQ{ \label{adm} 
q, r \ge 2,  \, \, \frac{1}{q} + \frac{1}{r}   \le \frac{1}{2}, \quad  ( q, r) \neq (2, \infty)
}
The Strichartz estimates stated below are standard and we refer to~\cite{Kee-Tao, LinS}, the book~\cite{Sogge}, and references therein for more.  

\begin{prop}[Strichartz Estimates]\emph{\cite{Kee-Tao, LinS, Sogge}} \label{p:strich}Let $\vec v(t)$ solve~\eqref{eq:lw} with data $\vec v(0) \in \dot{H}^s \times \dot{H}^{s-1}(\R^3)$, with $s >0$. Let $(q, r)$, and $(a, b)$ be admissible pairs satisfying the gap condition 
\EQ{
\frac{1}{q} + \frac{3}{r} = \frac{1}{a'} + \frac{3}{b'} - 2  = \frac{3}{2} - s.
}
where $(a', b')$ are the conjugate exponents of $(a, b)$. 
Then, for any time interval $I \ni 0$ we have the bounds 
\EQ{\label{eq:str}
\| v \|_{L^{q}_t(I; L^r_x)} \lesssim  \| \vec v(0) \|_{\dot{H}^s \times \dot{H}^{s-1}} + \|F\|_{L^{a'}_t(I; L^{b'}_x)}.
}
\end{prop}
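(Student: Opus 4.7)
The plan is to derive Proposition \ref{p:strich} from the classical $L^2$-based Strichartz estimates for the half-wave propagator $e^{\pm it|\nabla|}$ via Littlewood--Paley decomposition, a $TT^{\ast}$ argument, and the Christ--Kiselev lemma; all ingredients are standard and are contained in \cite{Kee-Tao, LinS, Sogge}, but it is worth tracking how the gap condition and the restriction $s>0$ enter.

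First, I would write $S(t)$ in terms of $e^{\pm it|\nabla|}$ and invoke Duhamel's formula, reducing the statement to a homogeneous estimate of the form $\||\nabla|^{-s}e^{\pm it|\nabla|}g\|_{L^q_t L^r_x}\lesssim \|g\|_{L^2}$ together with its retarded inhomogeneous counterpart. Next I would frequency-localize with $P_N$ and rescale so that $N=1$; the gap condition guarantees that every dyadic piece scales consistently, and square-summing in $N$ via the Littlewood--Paley inequality in mixed norms (valid since $r,b'\in(1,\infty)$) reduces matters to a unit-frequency, $L^2$-based estimate for $e^{\pm it|\nabla|}$. At unit frequency, stationary phase on the kernel of $e^{\pm it|\nabla|}P_1$ gives $\|e^{\pm it|\nabla|}P_1 f\|_{L^\infty_x}\lesssim \langle t\rangle^{-1}\|f\|_{L^1_x}$, and interpolation with the $L^2$ isometry yields
\[
\|e^{\pm it|\nabla|}P_1 f\|_{L^r_x}\lesssim \langle t\rangle^{-(1-\frac{2}{r})}\|f\|_{L^{r'}_x}, \qquad r\in[2,\infty].
\]

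Second, a standard $TT^{\ast}$ computation combined with the Hardy--Littlewood--Sobolev inequality then yields $\|e^{\pm it|\nabla|}P_1 f\|_{L^q_t L^r_x}\lesssim \|f\|_{L^2_x}$ for every pair obeying \eqref{adm}. The hypothesis $(q,r)\ne(2,\infty)$ keeps us safely away from the sharp Keel--Tao endpoint in $\R^{1+3}$, so the bilinear interpolation of \cite{Kee-Tao} is not required. Dualizing the homogeneous estimate for the pair $(a,b)$ then produces the non-retarded inhomogeneous bound $\bigl\|\int_{\R} e^{\pm i(t-\tau)|\nabla|}F(\tau)\,d\tau\bigr\|_{L^q_t L^r_x}\lesssim \|F\|_{L^{a'}_t L^{b'}_x}$.

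Finally, to pass from the non-retarded operator to the retarded Duhamel integral $\int_0^t S(t-\tau)F(\tau)\,d\tau$, I would apply the Christ--Kiselev lemma. Admissibility combined with $(q,r)\ne(2,\infty)$ forces $q>2$, while $a\ge 2$ gives $a'\le 2$, so $q>a'$ strictly and Christ--Kiselev applies (in the matched case $(q,r)=(a,b)$ one bypasses Christ--Kiselev and uses $TT^{\ast}$ directly). The main obstacle here is not conceptual but accounting: one must verify that the gap condition delivers consistent scalings for each dyadic block and that $s>0$ leaves enough room to close the Littlewood--Paley summation in both the data norm and the space-time norm. All the genuine analytic input is concentrated in the unit-frequency dispersive estimate.
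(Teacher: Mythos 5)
Your proposal is correct and is precisely the standard argument (Littlewood--Paley reduction to a unit-frequency dispersive estimate, $TT^{*}$ with Hardy--Littlewood--Sobolev, duality, and Christ--Kiselev for the retarded term) contained in the references \cite{Kee-Tao, LinS, Sogge} that the paper cites; the paper gives no independent proof, so your route coincides with the one it relies on. The only cosmetic caveat is that the square-function summation you invoke needs $r,b'<\infty$ (or an $\ell^1$/Besov variant when $r=\infty$), and the closing of the dyadic sum follows from orthogonality rather than from $s>0$ per se, but neither point affects the validity of the argument.
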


\subsection{Small data theory -- global existence, scattering, perturbative theory}
A standard argument based on Proposition~\ref{p:strich} yields the scaling critical small data well-posedness and scattering theory. We define the following notation for a collection of function spaces that we will make extensive use of. In this sub-section we fix $p \in [3, 5]$ (later we will fix $p \in (3, 5)$) and let $I  \subset \R$ be a time interval. We define 
\EQ{
&S(I) : = L^{2(p-1)}_t ( I; L^{2(p-1)}_x( \R^3)).
}
For example, when $p = 3$, $S = L^{4}_{t, x}$ while for $p =5$ we have $S = L^{8}_{t, x}$.

\begin{rem} 
There are a few other function spaces related to 
\EQ{ \label{eq:Hdef} 
\dot \HH^{s_p}:= \dot{H}^{s_p} \times \dot{H}^{s_p-1}(\R^3)
}
 that will appear repeatedly in our analysis. First note the Sobolev embedding $\dot{H}^{s_p}(\R^3) \hookrightarrow L^{\frac{3}{2}(p-1)}(\R^3)$, which means 
\EQ{
\| f \|_{L^{\frac{3}{2}(p-1)}(\R^3)} \lesssim  \| f \|_{\dot{H}^{s_p}(\R^3)}.
} 
\end{rem}

\begin{prop}[Small data theory]\label{small data}
Let $3 \le p <5$ and suppose that  $\vec u(0) = (u_0, u_1) \in \dot{H}^{s_p} \times \dot{H}^{s_p-1}(\R^3)$. Then there is a unique solution $\vec u(t) \in \dot\H^{s_p}$ with maximal interval of existence $I_{\max}( \vec u )= (T_-(\vec u), T_+( \vec u))$.  Moreover, for any compact interval $J \subset I_{\max}$, 
\ant{ 
\| u\|_{S(J)} < \infty.
}
Additionally, a globally defined solution $\vec u(t)$ on $t\in [0, \I)$ scatters as $t \to \I$ to a free wave
if and only if  $ \|u \|_{S([0, \infty))}< \infty$. In particular, there exists a constant $\de_0>0$ so that  
\EQ{ \label{eq:apsmall}
 \| \vec u(0) \|_{\dot{H}^{s_p} \times \dot{H}^{s_p-1}} < \de_0 \Longrightarrow  \| u\|_{S(\R)} \lesssim \|\vec u(0) \|_{\dot{H}^{s_p} \times \dot{H}^{s_p-1}}  \lesssim \de_0
 }
and thus $\vec u(t)$ scatters to free waves as $t \to \pm \infty$. Finally, we have the standard finite time blow-up criterion: 
\EQ{ \label{ftbuc}
T_+( \vec u)< \infty \Longrightarrow \|u \|_{S([0,T_+( \vec u)) } = + \infty
}
An analogous statement holds if $-\infty< T_-( \vec u)$. 
\end{prop}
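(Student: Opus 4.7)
The plan is a standard contraction mapping argument in a Strichartz-based space, followed by bootstrap arguments for the blow-up criterion and scattering statements.

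First, I would fix the key Strichartz pair. Observe that $(q,r) = (2(p-1), 2(p-1))$ is wave-admissible in dimension $3$: indeed $q = r = 2(p-1) \ge 4$, $\tfrac{1}{q}+\tfrac{1}{r} = \tfrac{1}{p-1} \le \tfrac{1}{2}$ for $p \ge 3$, and $\tfrac{1}{q} + \tfrac{3}{r} = \tfrac{2}{p-1} = \tfrac{3}{2} - s_p$, so Proposition~\ref{p:strich} applied at regularity $s = s_p$ yields $\|S(t)\vec v(0)\|_{S(I)} \lesssim \|\vec v(0)\|_{\dot\H^{s_p}}$ and an inhomogeneous version with a dual Strichartz norm $\|F\|_{L^{a'}_t L^{b'}_x}$ of the source. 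Next, for the nonlinearity $F = \pm |u|^{p-1} u$, I would choose a dual Strichartz pair $(a,b)$ so that H\"older's inequality controls $\|F\|_{L^{a'}_t L^{b'}_x}$ by a product of the scattering norm $\|u\|_{S(I)}$ and, if necessary, the $L^\infty_t$-energy norm $\|\vec u\|_{L^\infty_t \dot\H^{s_p}}$ paired with Sobolev embedding $\dot H^{s_p} \hookrightarrow L^{\frac{3(p-1)}{2}}_x$; equivalently, one may apply the fractional chain rule to $|\nabla|^{s_p}|u|^{p-1}u$. This step is the only mildly technical point, since the naive pair $(a,b)=(\frac{2(p-1)}{p-2},\frac{2(p-1)}{p-2})$ dual to $S$ with itself is admissible only at $p=3$; for $3<p<5$, one passes to a non-diagonal dual pair, which exists throughout the subcritical range.

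With this in hand, I would set up the map
\[
\Phi(\vec u)(t) := S(t)\vec u(0) + \int_0^t S(t-\tau)\bigl(0,\mp|u|^{p-1}u(\tau)\bigr)\,d\tau
\]
on the ball $\{\vec u : \|\vec u\|_{L^\infty_t \dot\H^{s_p}(I)} + \|u\|_{S(I)} \le 2C\eta\}$, where $\eta$ dominates $\|\vec u(0)\|_{\dot\H^{s_p}} + \|S(t)\vec u(0)\|_{S(I)}$. The Strichartz bounds from the previous step show $\Phi$ maps this ball to itself and is a contraction provided $\eta$ is small. Either the data are globally small (giving \eqref{eq:apsmall} and global scattering) or one chooses a short interval $I$ on which $\|S(t)\vec u(0)\|_{S(I)}$ is small by dominated convergence, giving local existence with an $S(J)$-bound on every compact $J \subset I_{\max}$. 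Uniqueness and persistence of regularity follow from the same contraction estimate applied to differences.

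The finite-time blow-up criterion \eqref{ftbuc} follows by contradiction: if $T_+(\vec u) < \infty$ but $\|u\|_{S([0,T_+))} < \infty$, partition $[0,T_+)$ into finitely many subintervals $J_i$ on which $\|u\|_{S(J_i)}$ is sufficiently small; then Strichartz applied on each $J_i$ yields $\sup_{[0,T_+)} \|\vec u(t)\|_{\dot\H^{s_p}} < \infty$, and local existence started from a time $t_0$ sufficiently close to $T_+$ extends the solution past $T_+$, contradicting maximality. For scattering, if $\|u\|_{S([0,\infty))}<\infty$, Strichartz shows the Duhamel tail $\int_0^\infty S(-\tau)\bigl(0,\mp|u|^{p-1}u(\tau)\bigr)\,d\tau$ converges absolutely in $\dot\H^{s_p}$, so the free wave with data $\vec v_L^+(0) := \vec u(0) + \int_0^\infty S(-\tau)(0,\mp|u|^{p-1}u)\,d\tau$ satisfies $\|\vec u(t) - \vec v_L^+(t)\|_{\dot\H^{s_p}} \to 0$; conversely, if $\vec u$ scatters to a free wave, that free wave lies in $S(\R)$ by the homogeneous Strichartz bound, and the difference $\vec u - \vec v_L^+$ is controlled by the Duhamel integral over increasingly small tails, giving $\|u\|_{S([T,\infty))} \to 0$ as $T \to \infty$, hence $\|u\|_{S([0,\infty))} < \infty$. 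The main obstacle is just the Strichartz bookkeeping in Step 2; everything downstream is routine.
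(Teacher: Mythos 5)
Your scheme is the standard Strichartz-based contraction argument, which is precisely what the paper invokes for this proposition (it gives no proof, citing the standard small-data theory built on Proposition~\ref{p:strich}), so in outline you and the paper agree. One repair is needed in the resolution space: for $3<p<5$ you cannot close the nonlinear estimate using only $\|u\|_{S(I)}$ and $\|\vec u\|_{L^\infty_t\dot\H^{s_p}}$ with Sobolev embedding. Writing $\||u|^{p-1}u\|_{L^{a'}_tL^{b'}_x}\le \|u\|_{S(I)}^{\theta p}\|u\|_{L^\infty_t L^{3(p-1)/2}_x}^{(1-\theta)p}$, the scaling (gap) condition holds for every $\theta$, but admissibility of the pair $(a,b)$ forces $\theta\ge\tfrac{5p-9}{2p}$, which exceeds $1$ as soon as $p>3$. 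So the contraction ball must carry an auxiliary derivative-bearing Strichartz norm, e.g. $\||\nabla|^{s_p-\frac12}u\|_{L^4_{t,x}}$, together with the fractional chain rule bound $\||\nabla|^{s_p-\frac12}(|u|^{p-1}u)\|_{L^{4/3}_{t,x}}\lesssim\|u\|_{S(I)}^{p-1}\||\nabla|^{s_p-\frac12}u\|_{L^4_{t,x}}$ (this is exactly the dual norm the paper uses in Lemma~\ref{l:pert}); your parenthetical mention of the fractional chain rule is the right route, but the ball as you wrote it does not contain the norm needed to run it.

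The second point concerns the blow-up criterion \eqref{ftbuc}. As written, your argument passes from $\sup_{t<T_+}\|\vec u(t)\|_{\dot\H^{s_p}}<\infty$ to ``local existence from $t_0$ near $T_+$ extends the solution,'' but at critical regularity the existence time admits no lower bound in terms of the size of the data norm alone; indeed ``bounded critical norm implies extension/scattering'' is the conditional content of Theorem~\ref{t:main}, not a consequence of the local theory, so that step would fail as stated. The standard fix uses the finiteness of $\|u\|_{S([0,T_+))}$ directly: choose $t_0$ with $\|u\|_{S([t_0,T_+))}\le\eps$, bootstrap to make the auxiliary norms on $[t_0,T_+)$ of size $O(\eps)$, deduce from the Duhamel formula and Strichartz that $\|S(t-t_0)\vec u(t_0)\|_{S([t_0,T_+))}\lesssim\eps$, then use continuity of $T\mapsto\|S(t-t_0)\vec u(t_0)\|_{S([t_0,T])}$ to obtain smallness on $[t_0,T_++\delta]$ for some $\delta>0$, and run the small-free-evolution local theory on that interval; uniqueness then extends $u$ past $T_+$, the desired contradiction. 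With these two repairs your proof is the standard one the paper has in mind.
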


The concentration compactness procedure in Section~\ref{s:cc} requires the following nonlinear perturbation lemma for approximate solutions to~\eqref{eq:nlw}. 

\begin{lem}[Perturbation Lemma]\emph{\cite{KM06, KM08}} \label{l:pert}  
There exist continuous functions $\e_0,C_0: (0,\infty) \to (0,\infty)$ so that the following holds true. 
Let $I\subset \R$ be an open interval (possibly unbounded), $\vec u, \vec v\in C(I; \dot{H}^{s_p} \times \dot{H}^{s_p-1})$  satisfying for some $A>0$
\ant{
 \|\vec v\|_{L^\infty(I;\dot{H}^{s_p} \times \dot H^{s_p-1})} +   \|v\|_{S(I)} & \le A \\
 \|\abs{\na}^{s_p-\frac{1}{2}}\glei(u)\|_{L^{\frac{4}{3}}_t(I; L^{\frac{4}{3}}_x)}
   + \|\abs{\na}^{s_p-\frac{1}{2}}\glei(v)\|_{L^{\frac{4}{3}}_t(I; L^{\frac{4}{3}}_x)} + \|w_0\|_{S(I)} &\le \e \le \e_0(A),
   }
where $\glei(u):=\Box u \pm \abs{u}^{p-1} u$ in the sense of distributions, and $\vec w_0(t):=S(t-t_0)(\vec u-\vec v)(t_0)$ with $t_0\in I$ fixed, but arbitrary.  Then
\ant{ 
  \|\vec u-\vec v-\vec w_0\|_{L^\infty(I;\dot{H}^{s_p} \times \dot H^{s_p-1})}+\|u-v\|_{S(I)} \le C_0(A)\e.}
  In particular,  $\|u\|_{S(I)}<\I$.
\end{lem}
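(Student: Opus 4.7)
The approach is a standard subdivision-and-bootstrap argument. Set $\gamma := u - v - w_0$, so that $\vec \gamma(t_0) = 0$ and, in the sense of distributions,
\[
\Box \gamma = \bigl[\glei(u) - \glei(v)\bigr] \mp \bigl[\abs{u}^{p-1} u - \abs{v}^{p-1} v\bigr],
\]
with signs matching those in \eqref{eq:nlw}. Writing $u = v + w_0 + \gamma$, I would use the pointwise bound
\[
\bigl|\,\abs{u}^{p-1} u - \abs{v}^{p-1} v\,\bigr| \lesssim \bigl(\abs{v}^{p-1} + \abs{w_0}^{p-1} + \abs{\gamma}^{p-1}\bigr)\bigl(\abs{w_0} + \abs{\gamma}\bigr),
\]
which reduces the nonlinear difference to factors expressible in terms of $v$, $w_0$, $\gamma$ in the norms $S(I)$ and $\abs{\nabla}^{s_p - 1/2}\,\cdot\,\in L^4_{t,x}$, the latter being the Strichartz pair dual to the hypothesis on $\abs{\nabla}^{s_p - 1/2}\glei$.

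First I would partition $I$ into $J = J(A, \eta)$ consecutive subintervals $I_j = [t_j, t_{j+1}]$ on each of which $\|v\|_{S(I_j)} \le \eta$, for a threshold $\eta = \eta(A)$ to be chosen small; since $\|v\|_{S(I)} \le A$, we have $J \lesssim (A/\eta)^{2(p-1)}$. On each $I_j$ I would then apply Proposition~\ref{p:strich} with two wave-admissible pairs — the scaling-critical pair $(2(p-1), 2(p-1))$ to control $S(I_j)$ and $L^\infty_t \dot\HH^{s_p}$, and the $(4,4)$ pair at regularity $1/2$ to accommodate the fractional derivative $\abs{\nabla}^{s_p - 1/2}$. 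Combined with a Kato--Ponce-type fractional Leibniz/chain rule for $\abs{u}^{p-1}u - \abs{v}^{p-1}v$, this yields the bootstrap inequality
\[
\begin{aligned}
\|\gamma\|_{S(I_j)} &+ \|\abs{\nabla}^{s_p - 1/2}\gamma\|_{L^4_{t,x}(I_j \times \R^3)} + \|\vec \gamma\|_{L^\infty_t \dot\HH^{s_p}(I_j)} \\
&\lesssim \|\vec \gamma(t_j)\|_{\dot\HH^{s_p}} + \bigl(\eta + \|w_0\|_{S(I_j)} + \|\gamma\|_{S(I_j)}\bigr)^{p-1}\bigl(\|w_0\|_{S(I_j)} + \|\gamma\|_{S(I_j)}\bigr) + \varepsilon,
\end{aligned}
\]
where the additive $\varepsilon$ absorbs the $\glei(u) - \glei(v)$ contribution via dual Strichartz at the $(4,4)$ pair.

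To close I would run continuity/bootstrap inductively in $j$. Since $\|w_0\|_{S(I)} \le \varepsilon \le \varepsilon_0(A)$, choosing $\eta = \eta(A)$ small enough to defeat the $\eta^{p-1}$ Strichartz prefactor and $\varepsilon_0(A)$ small enough yields on $I_1$ that $\|\gamma\|_{S(I_1)} + \|\vec \gamma\|_{L^\infty_t \dot\HH^{s_p}(I_1)} \lesssim \varepsilon$. Passing from $t_j$ to $t_{j+1}$ multiplies the initial-data error by some factor $C(A,\eta)$, so after $J$ iterations one obtains a total blow-up factor $C(A)^J$; setting $\varepsilon_0(A) := c\,C(A)^{-J}$ for a small absolute constant $c$ makes the bootstrap hypotheses self-consistent across all subintervals and gives $\|\vec \gamma\|_{L^\infty_t \dot\HH^{s_p}(I)} + \|u - v\|_{S(I)} \le C_0(A)\varepsilon$. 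The main technical point is the fractional nonlinear estimate: in the range $3 < p < 5$ one has $s_p - 1/2 \in (0, 1/2)$, so $\abs{\nabla}^{s_p - 1/2}$ is a subcritical fractional derivative and Kato--Ponce applies cleanly to $\abs{u}^{p-1} u$, with the $p-1$ H\"older factors distributed between $L^{2(p-1)}_{t,x}$ and $L^4_{t,x}$ via interpolation and Sobolev embedding. It is precisely this delicate interpolation that degenerates at $p = 3$, as flagged in Remark~\ref{r:p=3a}.
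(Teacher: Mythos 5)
The paper does not actually prove this lemma — it is quoted from Kenig--Merle \cite{KM06, KM08} (see also the analogous statements in Killip--Visan and Shen for the supercritical/subcritical ranges), so there is no in-paper argument to compare against. Your sketch is the standard long-time perturbation argument from those references — subdivide $I$ into $O_{A,\eta}(1)$ intervals of small $S$-norm for $v$, run Strichartz at the derivative-free critical pair $(2(p-1),2(p-1))$ together with the pair $(4,4)$ at regularity $\tfrac12$ (dual $(4/3,4/3)$, matching the normalization of the hypothesis on $|\na|^{s_p-\frac12}\glei$), estimate the nonlinear difference by a fractional chain rule for differences (legitimate here since $s_p-\tfrac12\in(0,\tfrac12)$ and $F'$ is $C^1$ for $p>3$), and iterate with $\e_0(A)\sim C(A)^{-J}$ — and it is essentially correct. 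Two small remarks: to close the nonlinear estimate you also need a priori Strichartz bounds with derivatives on $v$ itself (e.g.\ $\||\na|^{s_p-\frac12}v\|_{L^4_{t,x}(I_j)}\lesssim_A 1$), obtained on each subinterval from the hypotheses by a preliminary bootstrap, which your sketch only implicitly includes; and your closing claim that the interpolation "degenerates at $p=3$" is not quite right — at $p=3$ one has $s_p-\tfrac12=0$ and the $(4,4)$ estimate is the classical derivative-free $L^4_{t,x}$ bound, so the perturbation lemma is if anything easier there; the $p=3$ caveats in Remark~\ref{r:p=3a} concern Lorentz invariance of $\dot\HH^{1/2}$ and the profile decomposition, not this lemma.
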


\section{Concentration compactness and the reduction of Theorem~\ref{t:main}} \label{s:cc}

We begin the proof of Theorem~\ref{t:main} using the concentration compactness and rigidity method of Kenig and Merle~\cite{KM06, KM08}. The concentration compactness aspect of the argument is by now standard and we follow the scheme from~\cite{KM10}, which is a refinement of the scheme in~\cite{KM06, KM08}. The main conclusion of this section is the following:  If Theorem~\ref{t:main} fails, there exists a minimal, nontrivial, non-scattering solution to~\eqref{eq:nlw}, which we call a \emph{critical element}.

We follow the notation from~\cite{KM10} for convenience.  Given initial data $ (u_0, u_1) \in \dot H^{s_p} \times \dot H^{s_p-1}$ we let  $\vec u(t) \in \dot H^{s_p} \times \dot H^{s_p-1}$ be the unique solution to~\eqref{eq:nlw} with data $\vec u(0) = (u_0, u_1)$ and maximal interval of existence $I_{\max}(\vec u) := (T_-( \vec u), T_+( \vec u))$.  

Given  $A>0$, set 
\EQ{
\B(A):= \{ (u_0, u_1) \in\dot H^{s_p} \times \dot H^{s_p-1}   \, : \,      \|\vec u(t)\|_{L^{\infty}_t(I_{\max}(\vec u); \dot H^{s_p} \times \dot H^{s_p-1})} \le A\}.
}
\begin{defn} We say that $\mathcal{SC}(A;  \vec u(0))$ holds if $\vec u(0) \in \B(A)$, $I_{\max} (\vec u) = \R$ and $ \|u \|_{S(I)} < \infty$. In addition, we will say that $\mathcal{SC}(A)$ holds if for every $ (u_0, u_1) \in \B(A)$ one has  $I_{\max}(\vec u) = \R$ and $ \|u \|_{S(I)} < \infty$. 
\end{defn} 
\begin{rem}
Recall from Proposition~\ref{small data} that $\| u \|_{S(I)}< \infty$ if and only if $\vec u(t)$ scatters to a free waves as $t \to \pm \I$. Thus,  Theorem~\ref{t:main} is equivalent to the statement that $\mathcal{SC}(A)$ holds for all $A>0$. 
\end{rem} 

Now suppose that Theorem~\ref{t:main} {\em fails to be true}. By Proposition~\ref{small data}, there exists an $A_0>0$ small enough so that $\mathcal{SC}(A_0)$ holds.  Since we are assuming that Theorem~\ref{t:main} fails, we can find a threshold value $A_C$ so that for $A<A_C$, $\mathcal{SC}(A)$ holds, and for $A>A_C$, $\mathcal{SC}(A)$ fails. Note that we must have  $0<A_0<A_C$. The Kenig-Merle concentration compactness argument is now used to produce a \emph{critical element}, namely a minimal non-scattering solution $\vec u_{\textrm{c}}(t)$ to~\eqref{eq:nlw} so that $\mathcal{SC}(A_C, \vec u_{\textrm{c}})$ fails, and which enjoys certain compactness properties.

We state a refined version of this result below, and we refer the reader to~\cite{KM10, Shen, TVZ, TVZ08} for the details. As usual, the deep foundations of the concentration compactness part of the Kenig-Merle framework are profile decompositions of Bahouri and G\'erard \cite{BG} used in conjunction with the nonlinear perturbation theory in Lemma~\ref{l:pert}. 

\begin{prop} \label{p:ce} Suppose Theorem~\ref{t:main} fails to be true. Then, there exists a solution $\vec u(t)$ such that $\mathcal{SC}(A_C;  \vec u)$ fails, which we call a \emph{critical element}.  We can assume that $\vec u(t)$ does not scatter in either time direction, i.e.,  
\EQ{\label{blow up}
\|u\|_{S((T_-(\vec{u}), 0])} = \|u\|_{S([0, T_+(\vec u))} =  \infty
}
and moreover, there exist continuous  functions
 \ant{
 &N: I_{\max}(\vec u) \to (0, \infty) , \quad
 x: I_{\max}(\vec u) \to \R^3
 }
 so that the set 
{\small\EQ{\label{eq:K} 
 \left\{ \left(\tfrac{1}{N(t)^{\frac{2}{p-1}}} u\left( t, x(t) + \tfrac{ \cdot}{N(t)} \right),  \tfrac{1}{N(t)^{\frac{2}{p-1}+1}} u_t\left( t,   x(t) + \tfrac{\cdot}{N(t)} \right) \right) \mid t \in I_{\max}\right\}
} }
is pre-compact in $\dot\H^{s_p}$. 
\end{prop}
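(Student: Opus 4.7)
I would follow the standard Kenig-Merle concentration-compactness scheme (as in~\cite{KM06, KM08, KM10}), implemented via the Bahouri-G\'erard profile decomposition together with the small data theory (Proposition~\ref{small data}) and the nonlinear perturbation Lemma~\ref{l:pert}.

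\emph{Setup and profile decomposition.} Under the assumption that Theorem~\ref{t:main} fails, the threshold $A_C \in (A_0, \infty)$ is finite and positive. Choose a sequence of solutions $\vec u_n$ with $\|\vec u_n\|_{L^\infty_t \dot\H^{s_p}} \to A_C$ for which $\mathcal{SC}$ fails; after time translation and possibly time reversal, arrange that $\|u_n\|_{S([0, T_+(\vec u_n)))} = \infty$. Apply the Bahouri-G\'erard profile decomposition to the bounded sequence $\{\vec u_n(0)\} \subset \dot\H^{s_p}$ to decompose, up to subsequence,
\[
\vec u_n(0) = \sum_{j=1}^{J} \vec V_{L,n}^j(0) + \vec w_n^J,
\]
where the linear profiles $\vec V_{L,n}^j$ are free-wave evolutions modulated by asymptotically orthogonal parameters $(\lambda_n^j, x_n^j, t_n^j)$, the $\dot\H^{s_p}$ norms decouple in the Pythagorean sense, and the $S(\R)$ norm of the linear evolution of $\vec w_n^J$ tends to zero as $J \to \infty$ uniformly in large $n$. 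Define nonlinear profiles $\vec V^j$ as the solutions of~\eqref{eq:nlw} matched to $\vec V_{L,n}^j$ at the appropriate asymptotic times.

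\emph{Minimality argument.} If every nonlinear profile $\vec V^j$ satisfied $\mathcal{SC}$ (equivalently, had $\dot\H^{s_p}$ size strictly below $A_C$, and thus scattered), then Lemma~\ref{l:pert} applied to the superposition of the rescaled/translated evolutions of the $\vec V^j$ would produce an approximate solution with finite $S$-norm, yielding $\|u_n\|_{S(\R)} < \infty$ for large $n$ and contradicting the failure of $\mathcal{SC}(A_n; \vec u_n)$. Hence exactly one profile (say $j = 1$) carries the full critical mass $A_C$, all other profiles vanish, and $\vec w_n^J \to 0$ in $\dot\H^{s_p}$. Set $\vec u_c := \vec V^1$. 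It fails to scatter forward by construction, and the same minimality argument applied with time reversed (using that $\vec u_c(t)$ could otherwise be approximated by a globally scattering free wave in the past) excludes backward scattering, giving~\eqref{blow up}.

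\emph{Compactness of the orbit and continuous parameters.} For pre-compactness of the set~\eqref{eq:K}, take any sequence $\{t_n\} \subset I_{\max}(\vec u_c)$ and apply Bahouri-G\'erard to $\{\vec u_c(t_n)\}$. By the same minimality argument, a single profile of $\dot\H^{s_p}$ size $A_C$ must appear; reading off its scale and center produces parameters $\lambda_n > 0$ and $x_n \in \R^3$ along which the rescaled/translated configurations $\big(\lambda_n^{\frac{2}{p-1}} u_c(t_n, \lambda_n \cdot + x_n), \, \lambda_n^{\frac{2}{p-1}+1} \partial_t u_c(t_n, \lambda_n \cdot + x_n)\big)$ converge in $\dot\H^{s_p}$. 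Setting $N(t_n) := 1/\lambda_n$ and $x(t_n) := x_n$ gives the desired symmetry parameters; their continuous selection in $t$ follows from pre-compactness together with continuity of $t \mapsto \vec u_c(t)$ into $\dot\H^{s_p}$, in the standard way. The main technical obstacle is verifying that Lemma~\ref{l:pert} tolerates the superposition of profiles at asymptotically orthogonal scales, positions, and time-centers at the critical regularity $\dot\H^{s_p}$ rather than $\dot\H^1$; this requires careful bookkeeping in Strichartz-type norms with exponents tied to $s_p = \tfrac{3}{2} - \tfrac{2}{p-1}$ to show that the nonlinear cross terms vanish in the limit. As this is standard, I would invoke the analogous arguments of~\cite{KM10, Shen, TVZ, TVZ08} rather than reproduce them.
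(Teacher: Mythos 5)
Your proposal is correct and follows essentially the same route as the paper: the paper does not write out a proof of Proposition~\ref{p:ce} at all, but simply invokes the standard Kenig--Merle scheme built on the Bahouri--G\'erard profile decomposition and the perturbation Lemma~\ref{l:pert}, referring to \cite{KM10, Shen, TVZ, TVZ08} for the details, exactly the sources and the argument structure you sketch. Your outline (minimizing sequence at the threshold $A_C$, Pythagorean decoupling plus Lemma~\ref{l:pert} to reduce to a single profile, re-running the decomposition along arbitrary time sequences to get pre-compactness modulo $N(t), x(t)$, and continuous selection of the parameters) is the intended proof.
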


We make a few observations and reductions concerning the critical element found in Proposition~\ref{p:ce}. It will be convenient to proceed slightly more generally, starting by giving a name to the compactness property~\eqref{eq:K} satisfied by a critical element.

\begin{defn} \label{d:cp}  Let $I \ni 0$ be an interval and let $\vec u(t)$ be a nonzero solution to~\eqref{eq:nlw} on  $I$. We will say $\vec u(t)$ has the \emph{compactness property on $I$} if there are continuous functions $N: I \to (0, \infty)$ and $x: I \to \R^3$ so that the set 
{\small\ant{
 K_I:= \left\{ \left(\frac{1}{N^{\frac{2}{p-1}}(t)} u\left( t, \, x(t) + \frac{ \cdot}{N(t)} \right), \, \frac{1}{N^{\frac{2}{p-1}+1}(t)} u_t\left( t,  \,  x(t) + \frac{\cdot}{N(t)} \right) \right) \mid t \in I\right\}
}} 
is pre-compact in $\dot\HH^{s_p}$. 
\end{defn} 

We make the following standard remarks about solutions with the compactness property.  We begin with a local constancy property for the modulation parameters.

\begin{lem}\emph{\cite[Lemma 5.18]{KVClay}}\label{l:const} 
Let $\vec u(t)$ have the compactness property on a time interval $I \subset \R$ with parameters $N(t)$ and $x(t)$. Then there exists  constants $\eps_0>0$ and $C_0>0$ so that for every $t_0 \in I$ we have 
\EQ{
&[ t_0 - \frac{\eps_0}{ N(t_0)}, t_0 +  \frac{\eps_0}{N(t_0)}] \subset I,  \\ 
& \frac{1}{C_0}  \le \frac{N(t)}{N(t_0)} \le C_0 \mif \abs{ t- t_0} \le \frac{\eps_0}{N(t_0)}, \\ 
& \abs{ x(t) - x(t_0)} \le \frac{C_0}{N(t_0)} \mif \abs{ t- t_0} \le \frac{\eps_0}{N(t_0)} .
}
\end{lem}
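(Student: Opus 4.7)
The plan is to exploit the scaling symmetry~\eqref{eq:scaling} together with the continuous dependence provided by the perturbation lemma (Lemma~\ref{l:pert}), reducing everything to a compactness argument on the normalized orbit $K_I$. For each $t_0 \in I$, define the rescaled solution
\[
\vec v_{t_0}(\tau, y) := \bigl( N(t_0)^{-\frac{2}{p-1}} u(t_0 + \tfrac{\tau}{N(t_0)}, x(t_0) + \tfrac{y}{N(t_0)}),\ N(t_0)^{-\frac{2}{p-1}-1} u_t(t_0 + \tfrac{\tau}{N(t_0)}, x(t_0) + \tfrac{y}{N(t_0)})\bigr),
\]
which again solves~\eqref{eq:nlw} by the scaling symmetry, and whose initial data $\vec v_{t_0}(0)$ lies in the precompact set $K_I \subset \dot\H^{s_p}$.

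For the first assertion I would produce a uniform time of existence for these rescaled solutions. Since $\overline{K_I}$ is compact in $\dot\H^{s_p}$, a finite cover by sufficiently small $\dot\H^{s_p}$ balls combined with Lemma~\ref{l:pert} yields constants $\eps_0, A > 0$ such that, for every $t_0 \in I$, the rescaled solution $\vec v_{t_0}$ exists on $[-\eps_0, \eps_0]$ with
\[
\sup_{|\tau|\le \eps_0}\|\vec v_{t_0}(\tau)\|_{\dot\H^{s_p}} + \|v_{t_0}\|_{S([-\eps_0,\eps_0])} \le A.
\]
Undoing the rescaling via $t = t_0 + \tau/N(t_0)$ then gives $[t_0 - \eps_0/N(t_0), t_0 + \eps_0/N(t_0)] \subset I_{\max}(\vec u)$.

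For the bounds on $N(t)/N(t_0)$ and $N(t_0)|x(t) - x(t_0)|$ I would argue by contradiction, exploiting the fact that noncompact scaling and translation symmetries on $\dot\H^{s_p}$ send nonzero elements to weakly null sequences. If the conclusion fails, one extracts sequences $t_0^n \in I$ and $t_n \in [t_0^n - \eps_0/N(t_0^n), t_0^n + \eps_0/N(t_0^n)]$ so that, with $\mu_n := N(t_n)/N(t_0^n)$, $z_n := N(t_0^n)(x(t_n) - x(t_0^n))$, and $\sigma_n := N(t_0^n)(t_n - t_0^n)\in [-\eps_0,\eps_0]$, at least one of $\mu_n \to 0$, $\mu_n \to \infty$, or $|z_n| \to \infty$ holds. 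Passing to subsequences via compactness, one arranges $\vec v_{t_0^n}(0) \to \vec V_\infty$, $\vec w_{t_n} \to \vec W_\infty$ in $\dot\H^{s_p}$, and $\sigma_n \to \sigma_\infty$, where $\vec w_{t_n}$ denotes the normalized orbit element at time $t_n$. Continuous dependence (Lemma~\ref{l:pert}) gives $\vec v_{t_0^n}(\sigma_n) \to \vec V_\infty(\sigma_\infty)$ strongly in $\dot\H^{s_p}$, where $\vec V_\infty(\cdot)$ is the NLW evolution of $\vec V_\infty$. On the other hand, a direct change of variables identifies
\[
\vec v_{t_0^n}(\sigma_n, y) = \mu_n^{2/(p-1)} \vec w_{t_n}(\mu_n(y - z_n))
\]
up to the analogous rescaling on the time-derivative component. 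In the divergent regime the right-hand side converges weakly but not strongly to $0$ in $\dot\H^{s_p}$, forcing $\vec V_\infty(\sigma_\infty) = 0$; uniqueness for~\eqref{eq:nlw} then yields $\vec V_\infty \equiv 0$. Consequently $\vec u(t_0^n) \to 0$ in $\dot\H^{s_p}$, and the small data part of Proposition~\ref{small data} would make $\vec u$ scatter forward from $t_0^n$ for all large $n$, contradicting the existence of a nontrivial precompact trajectory (and, in the critical-element case, the non-scattering property~\eqref{blow up}).

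The main obstacle I anticipate is the algebraic bookkeeping that writes $\vec v_{t_0^n}(\sigma_n)$ explicitly as a symmetry action on $\vec w_{t_n}$, along with verifying rigorously that on a precompact family in the critical space $\dot\H^{s_p}$ the divergent scaling/translation regime yields only weak (and not strong) convergence to zero; the time-derivative component in particular carries a different scaling weight from the spatial component and requires separate care.
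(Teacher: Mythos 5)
Your overall route is the standard one for this lemma; the paper itself supplies no proof (it simply cites \cite[Lemma 5.18]{KVClay}), and your combination of (i) uniform local existence on the compact normalized orbit via a finite cover and Lemma~\ref{l:pert}, (ii) the change-of-variables identity expressing $\vec v_{t_0^n}(\sigma_n)$ as a scaling/translation of the normalized snapshot $\vec w_{t_n}$, and (iii) the weak vanishing of a fixed profile under divergent critical symmetries, is exactly the argument of the cited reference. The identity $\vec v_{t_0^n}(\sigma_n,y)=\mu_n^{2/(p-1)}\vec w_{t_n}(\mu_n(y-z_n))$ (with the extra factor $\mu_n$ on the velocity component) is correct, and your reading of the first conclusion as an inclusion into $I_{\max}(\vec u)$ is the intended one (for a non-maximal $I$ the inclusion into $I$ can fail at the endpoints, so the statement is really meant for $I=I_{\max}$, as in \cite{KVClay}).

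The one step that does not stand as written is the final contradiction. From $\vec V_\infty\equiv 0$ you correctly deduce $\|\vec u(t_0^n)\|_{\dot\H^{s_p}}\to 0$, but the chain ``small data $\Rightarrow$ $\vec u$ scatters $\Rightarrow$ contradiction with the nontrivial precompact trajectory'' is not valid in the generality of the lemma: the compactness property is assumed only on an interval $I$, and a small (hence scattering) solution restricted to a compact interval is a nonzero solution with the compactness property, so scattering per se contradicts nothing; even when $I=I_{\max}$, the incompatibility of scattering with the compactness property is a separate fact that rests on the weak vanishing of the linear part (Lemma~\ref{l:weak}), which you should not invoke here. The correct conclusion is quantitative: applying \eqref{eq:apsmall} at the times $t_0^n$ gives $\|u\|_{S(\R)}\lesssim \|\vec u(t_0^n)\|_{\dot\H^{s_p}}\to 0$, hence $\|u\|_{S(\R)}=0$ and so $\vec u\equiv 0$, contradicting the nontriviality built into Definition~\ref{d:cp} (your parenthetical appeal to \eqref{blow up} does cover the special case of a critical element, but not the general statement). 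With that sentence repaired, the proof is complete.
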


\begin{rem}\label{r:Reta}
For a solution with the  \emph{compactness property} on an interval $I$,  we can, after modulation,  control the $\dot{\HH}^{s_p}$ tails uniformly in $t \in I$. Indeed,  for any $\eta > 0$ there exists $R(\eta) < \infty$ such that
\EQ{\label{3.3}
&\int_{|x -  x(t)| \geq \frac{R(\eta)}{N(t)}}||\nabla|^{s_p} u(t,x)|^{2} \ud x + \int_{|\xi| \geq R(\eta)N(t)} |\xi|^{2s_p} |\hat{u}(t,\xi)|^{2} d\xi  \le \eta, \\
&\int_{|x-x(t)| \geq \frac{R(\eta)}{N(t)}}||\nabla|^{s_p-1} u_t(t,x)|^{2} \ud x + \int_{|\xi| \geq R(\eta)N(t)} |\xi|^{2(s_p-1)}| \hat{u}_{t}(t,\xi)|^{2} d\xi  \le  \eta, 
}
for all $t \in I$.  We call $R(\cdot)$ the \emph{compactness modulus}. 
\end{rem}

We also remark that that any Strichartz norm of the linear part of the evolution of a solution with the compactness property on $I_{\max}$ vanishes  as $t \to T_- $ and as $t \to T_+$. A concentration compactness argument then implies that the linear part of the evolution vanishes weakly in $\dot{\HH}^{s_p}$, that is, 
for each $t_{0} \in I_{\max}$,
\begin{equation}
S(t_{0} - t) u(t) \rightharpoonup 0
\end{equation}
weakly in $\dot{\HH}^{s_p}$ as $t \nearrow \sup I$, $t \searrow \inf I$, 
see  \cite[Section $6$]{TVZ08}\cite[Proposition $3.6$]{Shen}. 
This implies the following lemma, which we use crucially in the  proof of Theorem~\ref{t:main}. 

\begin{lem}\emph{ \cite[Section $6$]{TVZ08}\cite[Proposition $3.6$]{Shen} }\label{l:weak} Let $\vec u(t)$ be a  solution to~\eqref{eq:nlw} with the compactness property on its maximal interval of existence $I = (T_-, T_+)$. Then for any $t_0 \in I$ we can write 
\ant{
\int_{t_0}^T S(t_0 - s) (0, \abs{ u}^{p-1}u )  \, ds \rightharpoonup \vec u(t_0)  \mas T \nearrow T_+ \,  \textrm{weakly in} \,\, \dot{\HH}^{s_p},\\
-\int^{t_0}_T S(t_0 - s) (0, \abs{u}^{p-1} u )  \, ds \rightharpoonup \vec u(t_0)  \mas T \searrow T_- \,  \textrm{weakly in} \,\, \dot{\HH}^{s_p}.
}
\end{lem}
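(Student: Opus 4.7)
The plan is to deduce both convergences from the Duhamel identity combined with the fact that the \emph{linear part} of $\vec u(t)$ vanishes weakly in $\dot{\mathcal H}^{s_p}$ as $t$ approaches the endpoints of $I_{\max}$. For $t_0 \in I$ and any $T \in (t_0, T_+)$, the Duhamel formula for the defocusing case of~\eqref{eq:nlw} reads
\begin{equation*}
\vec u(t_0) \;=\; S(t_0 - T)\,\vec u(T) \;+\; \int_{t_0}^{T} S(t_0 - s)\,(0,\,|u|^{p-1}u(s))\,ds,
\end{equation*}
so the first claim is equivalent to the weak-dispersion statement
\begin{equation*}
S(t_0 - T)\,\vec u(T) \;\rightharpoonup\; 0 \quad \text{in } \dot{\mathcal H}^{s_p}\qquad \text{as } T \nearrow T_+,
\end{equation*}
and the second claim to the analogous statement as $T \searrow T_-$. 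Since $\{\vec u(t)\}_{t \in I}$ is uniformly bounded in $\dot{\mathcal H}^{s_p}$ and $S(\cdot)$ acts isometrically on this space, $\{S(t_0 - T)\vec u(T)\}$ admits weak subsequential limits $V$, and the task reduces to showing every such $V$ equals $0$.

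The proof then proceeds by contradiction along the lines of \cite[Section 6]{TVZ08} and \cite[Proposition 3.6]{Shen}. Suppose $T_n \nearrow T_+$ with $S(t_0 - T_n)\vec u(T_n) \rightharpoonup V$ in $\dot{\mathcal H}^{s_p}$. By the compactness property of Definition~\ref{d:cp}, after passing to a subsequence the modulated data
\begin{equation*}
\vec v_n := \left(N(T_n)^{-\frac{2}{p-1}} u(T_n, x(T_n) + \cdot/N(T_n)),\; N(T_n)^{-\frac{2}{p-1}-1} u_t(T_n, x(T_n) + \cdot/N(T_n))\right)
\end{equation*}
converges strongly in $\dot{\mathcal H}^{s_p}$ to some $\vec v_\infty$. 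Because the critical rescaling and spatial translations are isometries of $\dot{\mathcal H}^{s_p}$ that commute (in the scaling-covariant sense) with the free propagator, the weak behaviour of $S(t_0 - T_n)\vec u(T_n)$ is governed by that of $S(\tau_n)\vec v_n$, where $\tau_n := (t_0 - T_n)\,N(T_n)$. Passing to a further subsequence, either $|\tau_n| \to \infty$ or $\tau_n \to \tau_\infty \in \mathbb R$. In the former case, standard weak dispersion of free waves applied to the \emph{fixed} vector $\vec v_\infty$ gives $S(\tau_n)\vec v_\infty \rightharpoonup 0$, and the strong convergence $\vec v_n \to \vec v_\infty$ upgrades this to $S(\tau_n)\vec v_n \rightharpoonup 0$, so $V = 0$.

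The main obstacle is the remaining case $\tau_n \to \tau_\infty \in \mathbb R$: here $S(\tau_n)\vec v_n \to S(\tau_\infty)\vec v_\infty$ converges \emph{strongly}, and the limit need not vanish a priori. This case is closed by appealing to the blow-up criterion~\eqref{blow up}. By Proposition~\ref{small data}, $\vec v_\infty$ generates a nonlinear solution with finite $S$-norm on a neighborhood of $\tau_\infty$; rescaling and translating yields a comparison solution with finite $S$-norm on a physical window of size $\sim N(T_n)^{-1}$ around $T_n$. Lemma~\ref{l:const} ensures $N(t)$ is essentially constant on such a window, and the perturbation Lemma~\ref{l:pert} applied to $\vec u$ versus this comparison solution (using $\vec v_n \to \vec v_\infty$ strongly) produces a uniform $S$-bound for $u$ on a neighborhood of $T_n$ that reaches past $T_+$, contradicting $\|u\|_{S([0, T_+))} = \infty$ in~\eqref{blow up}. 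Hence $V = 0$ in this case as well, which completes the proof of the $T \nearrow T_+$ statement; the $T \searrow T_-$ statement is symmetric.
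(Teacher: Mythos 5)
Your Duhamel reduction and the subsequence/modulation setup are reasonable, but the way you close the two cases does not work, and Case 2 contains a genuine error. If $\tau_n=(t_0-T_n)N(T_n)\to\tau_\infty\in\R$, then Lemma~\ref{l:const} gives $N(t)\geq \eps_0/(T_+-t)$ whenever $T_+<\infty$, so boundedness of $\tau_n$ forces $T_+=\infty$ and hence $N(T_n)\to 0$. In that situation your contradiction has no content: a comparison solution with finite $S$-norm on a window of length $\sim N(T_n)^{-1}$ around $T_n$ cannot ``reach past $T_+$'' when $T_+=+\infty$, and uniform $S$-bounds on such windows are perfectly consistent with $\|u\|_{S([0,\infty))}=\infty$. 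Indeed this case genuinely occurs: for the self-similar-like scenario of Proposition~\ref{p:cases} one has $N(t)=t^{-1}$, and with $t_0=1$, $T_n=n$ one gets $\tau_n\to -1$, so no contradiction can be derived (note also that \eqref{blow up} is a property of the critical element from Proposition~\ref{p:ce}, not a hypothesis of the lemma you are proving). The correct resolution of this case is simpler: writing $S(t_0-T_n)\vec u(T_n)=g_n[S(\tau_n)\vec v_n]$, where $g_n$ is the (unitary) modulation at scale $N(T_n)^{-1}$ centered at $x(T_n)$, the strong convergence $S(\tau_n)\vec v_n\to S(\tau_\infty)\vec v_\infty$ together with $N(T_n)\to0$ gives weak convergence to zero, since $\langle g_n w,\phi\rangle=\langle w,g_n^{-1}\phi\rangle$ and $g_n^{-1}\phi$ concentrates at frequency scale $N(T_n)^{-1}\to\infty$, hence pairs to zero with any fixed $w$.

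There is also a gap in Case 1, though a fixable one: from $S(\tau_n)\vec v_n\rightharpoonup 0$ you cannot conclude $V=0$, because $V$ is the weak limit of $g_n[S(\tau_n)\vec v_n]$, and weak convergence to zero is not preserved under a sequence of symmetries $g_n$ that varies with $n$ (a translation or rescaling can recapture an escaping bump). What is actually needed is that $\langle S(\tau_n)\vec v_\infty, g_n^{-1}\phi\rangle\to 0$ for every fixed test function $\phi$ and arbitrary translations/dilations $g_n$, i.e.\ that a free wave evaluated at times $|\tau_n|\to\infty$ has no nonzero weak limit even after composing with the symmetry group. This is true (it is the orthogonality statement underlying the Bahouri--G\'erard profile decomposition, and it is precisely the ``concentration compactness argument'' alluded to in the text preceding the lemma and carried out in \cite{TVZ08,Shen}), but it is the substantive point of the proof and is asserted rather than proved in your write-up.
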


Remark~\ref{r:Reta} indicates that solutions $\vec u(t)$ with the compactness property have uniformly small tails in $\dot{\HH}^{s_p}$, where ``tails'' are taken to be centered at $x(t)$, and relative to the frequency scale $N(t)$ at which the solutions are concentrating. We would like to use this fact to obtain lower bounds for norms of the solution $u(t)$. The immediate issue that arises is that the object that obeys compactness properties is the pair $\vec u(t, x) = (u(t, x), u_t(t, x))$ and, \emph{a priori}, the solution could satisfy $u(t, x) = 0$ a fixed time $t$. Nonetheless, by averaging in time, such a lower bound still holds for the solution itself, $u(t)$. We can quantify this bound in several ways, starting with a result proved in~\cite{KV11b}. 

\begin{lem}\emph{\cite[Lemma 3.4]{KV11b}}\label{l:mass}  Let $\vec u(t)$ be a solution with the compactness property on $I_{\max} = \R$. Then for any $A>0$, there exists $\eta = \eta(A)$ such that 
\EQ{
\abs{ \{ t \in [t_0, t_0+  \frac{A}{N(t_0)}] \mid \| u (t) \|_{L^{\frac{3}{2}(p-1)}_x} \ge \eta \} }  \ge \frac{\eta}{N(t_0)}
}
for all $t_0 \in \R$. 
\end{lem}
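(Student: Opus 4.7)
The plan is to argue by contradiction, combining the compactness property with the Sobolev embedding $\dot H^{s_p}(\R^3) \hookrightarrow L^{\frac{3}{2}(p-1)}(\R^3)$ and small-data scattering. Suppose toward a contradiction that for some $A > 0$ no such $\eta$ exists; then I can extract $\eta_n \to 0^+$ and times $t_n \in \R$ with
\[
\bigl|\{t \in [t_n, t_n + A/N(t_n)] \mid \|u(t)\|_{L_x^{\frac{3}{2}(p-1)}} \ge \eta_n\}\bigr| < \eta_n/N(t_n).
\]
Rescaling into the moving frame via
\[
v_n(s, y) := N(t_n)^{-2/(p-1)}\, u\bigl(t_n + s/N(t_n),\, x(t_n) + y/N(t_n)\bigr),
\]
which is itself a solution of \eqref{eq:nlw} by scaling and translation invariance, the hypothesis becomes $|\{s \in [0, A] : \|v_n(s)\|_{L_x^{\frac{3}{2}(p-1)}} \ge \eta_n\}| < \eta_n$.

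I would next upgrade the compactness of Definition~\ref{d:cp} (which pertains to the orbit at each fixed time) to uniform-in-time precompactness of $\{\vec v_n(s) : n \in \N,\, s \in [0, A]\}$ in $\dot{\HH}^{s_p}$. Iterating Lemma~\ref{l:const} a number of times depending only on $A$ yields constants $c(A), C(A) > 0$ with $N(t)/N(t_n) \in [c(A), C(A)]$ and $N(t_n)|x(t) - x(t_n)| \le C(A)$ throughout $[t_n, t_n + A/N(t_n)]$. Consequently each $\vec v_n(s)$ is obtained from an element of the precompact set $K$ of Definition~\ref{d:cp} by a rescaling with factor in $[c(A), C(A)]$ and a translation of magnitude at most $C(A)$; since the associated family of transformations is compact in the strong operator topology on $\dot{\HH}^{s_p}$, the image is precompact. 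Passing to a subsequence, I can arrange $\vec v_n(0) \to \vec v_\infty(0)$ in $\dot{\HH}^{s_p}$, and then invoke the perturbation lemma (Lemma~\ref{l:pert}) — with the uniform precompactness providing an $S([0, A])$ bound on the $v_n$ and ruling out blow-up of the limit on $[0, A]$ — to upgrade this to $\vec v_n \to \vec v_\infty$ in $C([0, A]; \dot{\HH}^{s_p})$, where $v_\infty$ is the \eqref{eq:nlw} solution with data $\vec v_\infty(0)$.

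By Sobolev embedding, the convergence also holds in $C([0, A]; L_x^{\frac{3}{2}(p-1)})$. Combined with the contradiction hypothesis, this forces $\|v_\infty(s)\|_{L_x^{\frac{3}{2}(p-1)}} = 0$ for almost every $s \in [0, A]$, hence identically on $[0, A]$ by continuity; in particular $\vec v_\infty(0) = 0$. Since the $\dot{\HH}^{s_p}$ norm is invariant under the rescaling/translation relating $\vec v_n(0)$ and $\vec u(t_n)$, we conclude $\|\vec u(t_n)\|_{\dot{\HH}^{s_p}} \to 0$. Proposition~\ref{small data} then forces $u$ to be global with $\|u\|_{S(\R)} < \infty$, contradicting the non-scattering behavior of the critical elements to which this lemma is applied. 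The main obstacle in this plan is precisely the upgrade from the pointwise-in-time compactness built into Definition~\ref{d:cp} to uniform-in-time compactness with a bona fide limiting solution on $[0, A]$; this is where both the local-constancy lemma and the perturbation lemma enter essentially, and where one must use that $A/N(t_n)$ can span many units of the natural time scale $1/N(t)$.
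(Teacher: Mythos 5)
Your overall strategy---argue by contradiction, rescale to the frame $(t_n,x(t_n),N(t_n))$, pass to a limiting solution, show it vanishes on a time interval so that its data vanish, and close via the small-data theory---is the same compactness/contradiction argument as in the cited source \cite{KV11b} (the paper itself does not reprove this lemma; it only cites it). However, there is a genuine gap in the step where you claim that iterating Lemma~\ref{l:const} ``a number of times depending only on $A$'' yields $N(t)/N(t_n)\in[c(A),C(A)]$ and $N(t_n)|x(t)-x(t_n)|\le C(A)$ on all of $[t_n,t_n+A/N(t_n)]$. Each application of Lemma~\ref{l:const} advances time only by $\eps_0/N(t)$ evaluated at the current time, while allowing $N$ to grow by the factor $C_0$ at each step; in the worst case the total time covered by arbitrarily many iterations is at most $\tfrac{\eps_0 C_0}{C_0-1}N(t_n)^{-1}$, which is smaller than $A/N(t_n)$ once $A$ is large. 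So the iteration need not exhaust the window, and no bound of the form $\sup_{[t_n,t_n+A/N(t_n)]}N(t)\le C(A)N(t_n)$ follows from local constancy (nor is such a bound available in general for solutions with the compactness property---uncontrolled variation of $N$ over long windows is exactly what Sections 5--6 of the paper must contend with). Since your uniform-in-time precompactness of $\{\vec v_n(s):s\in[0,A]\}$, your $S([0,A])$ bound, and your use of Lemma~\ref{l:pert} on the full window all rest on that claim, the convergence $\vec v_n\to\vec v_\infty$ in $C([0,A];\dot\H^{s_p})$ is not justified as written.

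The repair is that you never need the whole window. Take a subsequential limit $\vec v_n(0)\to\vec v_\infty(0)$ in $\dot\H^{s_p}$ (single-time compactness, which is all Definition~\ref{d:cp} provides and all that is needed), let $v_\infty$ be the corresponding solution on some $[0,\delta]$ with $0<\delta\le A$ given by the local theory, and apply stability (Lemma~\ref{l:pert}) on $[0,\delta]$, where the limit solution itself supplies the finite $S$-norm; this gives $\vec v_n\to\vec v_\infty$ in $C([0,\delta];\dot\H^{s_p})$ with no control of $N(t)$ or $x(t)$ beyond the single time $t_n$. Since $\abs{\{s\in[0,A]:\|v_n(s)\|_{L^{\frac32(p-1)}_x}\ge\eta_n\}}<\eta_n\to0$, the limit vanishes a.e.\ on $[0,\delta]$, hence identically there by continuity, and vanishing on any interval of positive length already forces $\vec v_\infty(0)=0$. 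Finally, your closing contradiction should not appeal to ``non-scattering of critical elements,'' which is not a hypothesis of the lemma: from $\|\vec u(t_n)\|_{\dot\H^{s_p}}\to0$ and Proposition~\ref{small data} you get $\|u\|_{S(\R)}\lesssim\|\vec u(t_n)\|_{\dot\H^{s_p}}\to0$, hence $u\equiv0$, contradicting the nontriviality required in Definition~\ref{d:cp}.
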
 

Lemma \ref{l:mass} means that the $L^{\frac{3}{2}(p-1)}_x$ norm of $u(t)$ is nontrivial when averaged over intervals around $t_0$ of length comparable to $N(t_0)^{-1}$ uniformly in $t_0$.  By combining this lemma with Remark~\ref{r:Reta} and Sobolev embedding we obtain the following as an immediate consequence.  
\begin{cor}[Averaged concentration around $x(t)$]\label{C:acax}
Fix any $\de_0>0$.  Let $\vec u(t)$ be a solution with the compactness property on $I_{\max} = \R$. There exists a constant $C>0$ so that 
 \EQ{
N(t_0) \int_{t_0}^{t_0 + \frac{\de}{N(t_0)}}  \int_{\abs{ x- x(t)} \le \frac{C}{N(t)}} \abs{ u(t, x)}^{\frac{3}{2}(p-1)} \, \ud x \, \ud t \gtrsim 1
}
for all $t_0 \in \R$. 
\end{cor}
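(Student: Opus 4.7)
The plan is to combine the averaged nontriviality of $u(t)$ provided by Lemma~\ref{l:mass} with a uniform tail estimate in $L^{\frac{3}{2}(p-1)}$ that follows from $\dot{\H}^{s_p}$-compactness together with Sobolev embedding. First, apply Lemma~\ref{l:mass} with $A = \delta_0$ to produce $\eta_0 = \eta(\delta_0) > 0$ such that, for every $t_0 \in \R$, the set
\[
E_{t_0} := \Big\{\, t \in [t_0, t_0 + \tfrac{\delta_0}{N(t_0)}] \, : \, \|u(t)\|_{L_x^{\frac{3}{2}(p-1)}} \ge \eta_0 \,\Big\}
\]
has Lebesgue measure at least $\eta_0 / N(t_0)$.

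Second, upgrade the precompactness of $K_\R$ in $\dot\H^{s_p}$ (Definition~\ref{d:cp}) to a uniform tail bound in $L^{\frac{3}{2}(p-1)}$. Since the continuous embedding $\dot H^{s_p}(\R^3) \hookrightarrow L^{\frac{3}{2}(p-1)}(\R^3)$ transports precompact sets to precompact sets, the first component of $K_\R$ is precompact in $L^{\frac{3}{2}(p-1)}$, and precompactness in $L^q$ forces uniformly small tails outside a sufficiently large ball. Undoing the normalization by $N(t)$ and $x(t)$, one concludes: for every $\eta > 0$ there exists $R(\eta) > 0$ so that
\[
\|u(t)\|_{L^{\frac{3}{2}(p-1)}(|x - x(t)| \ge R(\eta)/N(t))} \le \eta \quad \text{for all } t \in \R.
\]

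Third, set $C := R(\eta_0/2)$. For each $t \in E_{t_0}$, splitting $u = u\mathbf{1}_{|x-x(t)| \le C/N(t)} + u\mathbf{1}_{|x-x(t)| > C/N(t)}$ and applying the triangle inequality in $L^{\frac{3}{2}(p-1)}$ yields
\[
\|u(t)\|_{L^{\frac{3}{2}(p-1)}(|x-x(t)| \le C/N(t))} \ge \|u(t)\|_{L_x^{\frac{3}{2}(p-1)}} - \tfrac{\eta_0}{2} \ge \tfrac{\eta_0}{2}.
\]
Raising to the power $\tfrac{3}{2}(p-1)$ and integrating over $t \in E_{t_0}$, a set of measure at least $\eta_0/N(t_0)$, produces
\[
\int_{t_0}^{t_0 + \delta_0/N(t_0)} \int_{|x-x(t)| \le C/N(t)} |u(t,x)|^{\frac{3}{2}(p-1)} \, \ud x \, \ud t \gtrsim \frac{1}{N(t_0)},
\]
and multiplying by $N(t_0)$ gives the claim. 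The argument is essentially routine once Lemma~\ref{l:mass} is in hand; the only point requiring minor care is the Sobolev–compactness upgrade from uniformly small $\dot H^{s_p}$-tails to uniformly small $L^{\frac{3}{2}(p-1)}$-tails, which is a standard consequence of the continuous embedding together with the characterization of precompactness in $L^q$.
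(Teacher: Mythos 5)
Your proof is correct and follows exactly the route the paper indicates, where this corollary is left as an immediate consequence of Lemma~\ref{l:mass} together with the compactness of the modulated trajectory (Remark~\ref{r:Reta}) and the Sobolev embedding $\dot H^{s_p}(\R^3)\hookrightarrow L^{\frac{3}{2}(p-1)}(\R^3)$. Your write-up — the set of good times of measure at least $\eta_0/N(t_0)$ from Lemma~\ref{l:mass}, the uniform $L^{\frac{3}{2}(p-1)}$ tail smallness obtained by pushing the precompact set $K$ through the embedding (using that the modulation leaves the $L^{\frac{3}{2}(p-1)}$ norm invariant), the choice $C=R(\eta_0/2)$, and the triangle-inequality splitting — is precisely the intended argument.
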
 

One can also deduce the following corollary, also proved in~\cite{KV11b}, which gives an lower bound on the localized $S$ norm of $u(t)$.  

\begin{cor}[$S$-norm concentration around $x(t)$] \label{c:Sloc}   Let $\vec u(t)$ be a solution with the compactness property on $I_{\max} = \R$.
Then there exists constants $c, C>0$ so that 
\EQ{  \label{eq:Sloc}
\int_{t_1}^{t_2} \int_{ \abs{x - x(t)} \le \frac{C}{N(t)}} \abs{ u(t, x) }^{2(p-1)} \, \ud x \, \ud t  \ge c  \int_{t_1}^{t_2} N(t) \, \ud t 
}
for any $t_1, t_2$ such that $t_2 - t_1 \ge \frac{1}{N(t_1)}$. 
\end{cor}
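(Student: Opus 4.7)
The plan is to upgrade Corollary \ref{C:acax}, which produces a lower bound on the $L^{3(p-1)/2}$ density, to a lower bound on the $L^{2(p-1)}$ density, and then tile $[t_1,t_2]$ by short intervals on which $N(t)$ is essentially constant.

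\textbf{Step 1: from $L^{3(p-1)/2}$ to $L^{2(p-1)}$ on one short interval.} Fix $t_0$ and set $A_t := \{|x - x(t)| \le C/N(t)\}$, which has measure $|A_t| \simeq N(t)^{-3}$. Since $3(p-1)/2 < 2(p-1)$, H\"older's inequality in $x$ with conjugate exponents $(4/3,4)$ gives
\[
\int_{A_t} |u(t,x)|^{\frac{3(p-1)}{2}} \, dx \le |A_t|^{1/4}\left(\int_{A_t}|u(t,x)|^{2(p-1)} \, dx\right)^{3/4} \simeq N(t)^{-3/4}\left(\int_{A_t}|u|^{2(p-1)}\,dx\right)^{3/4}.
\]
By the local constancy Lemma \ref{l:const}, on the interval $J_{t_0} := [t_0, t_0 + \eps_0/N(t_0)]$ we have $N(t) \simeq N(t_0)$. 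Inserting the above into Corollary \ref{C:acax} yields
\[
1 \lesssim N(t_0)^{1/4} \int_{J_{t_0}} \left(\int_{A_t}|u|^{2(p-1)}\,dx\right)^{3/4} dt.
\]
A further H\"older in time, again with exponents $(4/3,4)$, and the fact that $|J_{t_0}| \simeq 1/N(t_0)$, then gives
\[
1 \lesssim N(t_0)^{1/4} \left(\int_{J_{t_0}}\int_{A_t}|u|^{2(p-1)}\,dx\,dt\right)^{3/4} N(t_0)^{-1/4},
\]
and hence
\[
\int_{J_{t_0}}\int_{|x-x(t)|\le C/N(t)}|u(t,x)|^{2(p-1)}\,dx\,dt \gtrsim 1.
\]

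\textbf{Step 2: tiling.} We now partition $[t_1,t_2]$ into finitely many disjoint consecutive intervals $J_k = [s_k,s_{k+1}]$ of Lemma \ref{l:const} type, with $s_0 = t_1$ and $s_{k+1} = s_k + \eps_0/N(s_k)$, stopping at the last index $K$ with $s_{K+1} \le t_2$. The hypothesis $t_2 - t_1 \ge 1/N(t_1)$ guarantees $K \ge 1$. On each $J_k$, Step 1 furnishes a unit lower bound
\[
\int_{J_k}\int_{|x-x(t)| \le C/N(t)}|u|^{2(p-1)}\,dx\,dt \gtrsim 1,
\]
while the local constancy and the definition of $J_k$ give $\int_{J_k} N(t)\,dt \simeq |J_k|N(s_k) \simeq \eps_0$. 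Summing over $k$,
\[
\int_{t_1}^{t_2}\int_{|x-x(t)| \le C/N(t)}|u|^{2(p-1)}\,dx\,dt \ge \sum_{k=0}^{K}\int_{J_k} \cdots \gtrsim K \gtrsim \sum_k \int_{J_k} N(t)\,dt \simeq \int_{s_0}^{s_{K+1}} N(t)\,dt.
\]
The residual piece $[s_{K+1}, t_2]$ (of length less than $\eps_0/N(s_{K+1})$) contributes at most a comparable amount to $\int_{t_1}^{t_2} N(t)\,dt$ by local constancy, so it may be absorbed.

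\textbf{Main obstacle.} The only real point is to arrange the exponents so that the space--time H\"older inequalities exactly cancel the $N(t_0)$ powers coming from $|A_t|\simeq N(t)^{-3}$; the choice $(4/3,4)$ works because $3(p-1)/2$ is exactly three-quarters of $2(p-1)$ regardless of $p$. Everything else is bookkeeping using Lemma \ref{l:const}.
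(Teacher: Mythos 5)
Your proposal is correct and is essentially the argument the paper has in mind: the paper simply cites the proof of \cite[Corollary 3.5]{KV11b}, which is exactly this scheme of upgrading the averaged concentration bound (Corollary \ref{C:acax}) via H\"older on the ball $\{|x-x(t)|\le C/N(t)\}$ and then tiling $[t_1,t_2]$ by intervals of length $\simeq \eps_0/N$ on which $N(t)$ is locally constant (Lemma \ref{l:const}). Your exponent bookkeeping (the $(4/3,4)$ H\"older pairs in space and time, using $\tfrac32(p-1)=\tfrac34\cdot 2(p-1)$) and the absorption of the residual subinterval are both fine.
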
 

\begin{proof} 
The proof runs completely parallel to the argument in~\cite[Proof of Corollary 3.5]{KV11b} given for the averaged potential energy. 
\end{proof}

The fact that we have only averaged lower bounds on, e.g., the  $L^{\frac{3}{2}(p-1)}$ norm of a critical element will not be too much trouble. We will often pair the above with the fact that the compactness parameters $N(t), x(t)$ are approximately locally constant; see Lemma~\ref{l:const}. 

Lastly, we also need the following estimate proved in~\cite[Lemma 4.5]{DL1}.

\begin{lem}\emph{\cite[Lemma 4.5]{DL1}}  \label{l:DL1lem} Let $\vec u(t)$ have the compactness property on a time interval $I \subset \R$ with scaling parameter $N(t)$. Let $\eta>0$. Then there exists $\de>0$ such that 
\EQ{
 \| u \|_{L^{2(p-1)}_{t, x}( [t_0 -  \de/ N(t_0), \, t_0 + \de/ N(t_0)] \times \R^3)} \le \eta
}
uniformly in $t_0 \in I$. 
\end{lem}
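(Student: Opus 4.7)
The core idea is to reduce the bound, via the scaling symmetry, to the study of a uniform family of initial data, and then exploit compactness together with the local theory.

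First I would rescale at each time $t_0$. Define
\EQ{
\tilde u_{t_0}(s,y) := \frac{1}{N(t_0)^{\frac{2}{p-1}}} u\!\left(t_0 + \tfrac{s}{N(t_0)},\ x(t_0) + \tfrac{y}{N(t_0)}\right),
}
which is again a solution to \eqref{eq:nlw} by the scaling symmetry \eqref{eq:scaling}. Since the exponent $2(p-1)$ is precisely the one that makes $L^{2(p-1)}_{t,x}(\R^{1+3})$ invariant under \eqref{eq:scaling}, we have
\EQ{
\|u\|_{S([t_0 - \de/N(t_0),\, t_0 + \de/N(t_0)])} = \|\tilde u_{t_0}\|_{S([-\de,\de])},
}
so it suffices to produce a $\de>0$, independent of $t_0 \in I$, for which the right side is $\le \eta$. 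By construction $\vec{\tilde u}_{t_0}(0) \in K_I$, which is pre-compact in $\dot{\HH}^{s_p}$ by the compactness property. Moreover, by Lemma~\ref{l:const} the interval $[t_0-\de/N(t_0),t_0+\de/N(t_0)]$ lies inside $I$ provided $\de \le \eps_0$, so the rescaled solution $\tilde u_{t_0}$ is genuinely defined on $[-\de,\de]$ for such $\de$.

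Next I would fix a finite $\e$-net for the closure $\overline{K_I}$ in $\dot{\HH}^{s_p}$, say data $\vec f_1,\dots,\vec f_M$ with $\e = \e_0(1)/2$, where $\e_0$ is the threshold from the perturbation Lemma~\ref{l:pert} at level $A=1$. For each $j$, let $v_j$ be the solution with $\vec v_j(0)=\vec f_j$. By the small-data/local theory in Proposition~\ref{small data}, $v_j \in S(J_j)$ for some open interval $J_j \ni 0$, so in particular the function $\de \mapsto \|v_j\|_{S([-\de,\de])}$ tends to $0$ as $\de \to 0^+$ by dominated convergence. Pick $\de_j>0$ small enough that
\EQ{
\|v_j\|_{S([-\de_j,\de_j])} \le \tfrac{\eta}{2C_0(1)} \wedge \tfrac{1}{2},
}
where $C_0$ is the constant from Lemma~\ref{l:pert}, and set $\de := \min_j \de_j \wedge \eps_0$.

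Finally, given any $t_0 \in I$, choose $j$ such that $\|\vec{\tilde u}_{t_0}(0) - \vec f_j\|_{\dot{\HH}^{s_p}} \le \e$. Since $\glei(\tilde u_{t_0}) = \glei(v_j) = 0$ on $[-\de,\de]$, Strichartz estimates applied to $w_0(s) := S(s)\big(\vec{\tilde u}_{t_0}(0)-\vec f_j\big)$ give $\|w_0\|_{S([-\de,\de])} \lesssim \e$, and the perturbation Lemma~\ref{l:pert} (taking $v=v_j$, so that $A \le 1$) yields
\EQ{
\|\tilde u_{t_0} - v_j\|_{S([-\de,\de])} \lec C_0(1)\,\e \le \tfrac{\eta}{2},
}
whence $\|\tilde u_{t_0}\|_{S([-\de,\de])} \le \eta$ by the triangle inequality. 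As $\de$ was chosen independently of $t_0$, this completes the proof. The only mildly delicate point is arranging that $v_j$ qualifies as a perturbative background on $[-\de_j,\de_j]$ (i.e.\ $\|v_j\|_{S} \le 1$), which is handled simply by shrinking $\de_j$; compactness then automatically upgrades the pointwise-in-$t_0$ smallness to the required uniform one.
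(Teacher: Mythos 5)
Your strategy---rescale by $N(t_0)$ and translate by $x(t_0)$ so that the data at time $t_0$ becomes an element of the pre-compact set $K_I$, then run a finite $\e$-net through the local theory and the perturbation lemma---is the natural one, and it is essentially the argument behind the cited result: the paper itself offers no proof of Lemma~\ref{l:DL1lem}, only the reference to \cite[Lemma 4.5]{DL1}, where the same compactness-plus-stability reasoning is carried out in contrapositive form (a sequence $t_n$, $\de_n\to 0$ violating the bound, extraction of a strong $\dot\HH^{s_p}$ limit, and a contradiction with the vanishing of the limit's $S$-norm on shrinking intervals). Your scale-invariance computation for the $L^{2(p-1)}_{t,x}$ norm, the use of Lemma~\ref{l:const} to keep $[t_0-\de/N(t_0),\,t_0+\de/N(t_0)]$ inside $I$, and the dominated-convergence argument giving $\|v_j\|_{S([-\de_j,\de_j])}\to 0$ are all correct.

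Two parameter choices need repair, though neither affects the structure. First, you cannot take $A=1$ in Lemma~\ref{l:pert}: $A$ must dominate $\|\vec v_j\|_{L^\infty_t\dot\HH^{s_p}}$ in addition to $\|v_j\|_{S}$, and the $\dot\HH^{s_p}$ size of the net points is only controlled by $\sup_{t\in I}\|\vec u(t)\|_{\dot\HH^{s_p}}$, which is finite by pre-compactness but not $\le 1$ in general; shrinking $\de_j$ makes the $S$-part small but does nothing to the $L^\infty_t\dot\HH^{s_p}$ part. The fix is to set $A$ equal to this uniform bound plus one (a quantity depending only on $u$) and to use $\e_0(A)$, $C_0(A)$ throughout. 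Second, your net radius $\e=\e_0(1)/2$ is chosen independently of $\eta$, and the displayed conclusion $C_0(1)\,\e\le \eta/2$ is then false once $\eta< C_0(1)\e_0(1)$. The radius must be taken $\eta$-dependent, e.g.\ $\e\le \min\{\e_0(A),\,\eta/(2C_0(A))\}$ divided by the Strichartz constant that bounds $\|w_0\|_{S}$ by $\|(\vec{\tilde u}_{t_0}-\vec v_j)(0)\|_{\dot\HH^{s_p}}$; this is harmless since the net remains finite and $\de$ remains independent of $t_0$. With these corrections your proof closes and matches the cited argument in substance.
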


\subsection{Analysis of solutions with the compactness property}\label{subsec:analysis_of_props}
In the next subsection, we will prove a classification result for solutions with the compactness property. Our goal is to gather together a list of possibilities for the compactness parameters $N(t)$ and $x(t)$ that is exhaustive in the sense that if we rule out the existence of all members of the list, then Theorem~\ref{t:main} is true. Before stating these cases, we need to distinguish between two scenarios based on how fast $x(t)$ is moving relative to the speed of light. To make this distinction precise, we have the following definition. 
\begin{defn} \label{d:sl} 
Let $\vec u(t)$ be a solution to~\eqref{eq:nlw} with the compactness property on $I = \R$ with parameters $x(t)$ and $N(t) \ge 1$.  We will say that $x(t)$ is \emph{subluminal} if there exists a constant $A>1$ so that for all $t_0 \in \R$ there exists $t \in [t_0, t_0 + \frac{A}{N(t_0)}]$ such that 
\EQ{
\abs{x(t)  - x(t_0)} \le \abs{ t - t_0} - \frac{1}{ A N(t_0)}.
}
\end{defn} 

\begin{prop}\label{p:cases}
Suppose $\vec u(t)$ is a solution to~\eqref{eq:nlw} with the compactness property on its maximal interval of existence $I_{\max}$ with compactness parameters $N(t)$ and $x(t)$.  We can assume without loss of generality in the arguments that follow that $I_{\max}$, $N(t)$ and $x(t)$ falls into one of the following four scenarios:
\begin{itemize}
\item[(I)]  \emph{The soliton-like critical element}: $I_{\max} = \R$,  $N(t)  \equiv 1$ for all $t \in\R$ and $x(t)$ is subluminal in the sense of Definition~\ref{d:sl}. 
\item[(II)] \emph{The two-sided concentrating critical element}: $I_{\max} = \R$,  $N(t) \ge 1$ for all $t \in \R$,  $\limsup_{t \to \pm \infty} N(t) = \infty$, and $x(t)$ is subluminal.  
\item[(III)] \emph{The self-similar-like critical element}: $I_{\max} = (0, \infty)$, $N(t) = \frac{1}{t}$, and $x(t)\equiv 0$.  
\item[(IV)] \emph{The traveling wave critical element}: $I_{\max} = \R$, $N(t) \equiv 1$ for all $t \in \R$ and $|x(t) - (t, 0, 0)| \lesssim \sqrt{\abs{t}}$ for all $t \in \R$.   
\end{itemize} 

\end{prop}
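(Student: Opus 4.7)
The plan is to employ the standard concentration-compactness scale-normalization procedure (following Kenig--Merle \cite{KM10}, Killip--Visan \cite{KVClay}, and Shen \cite{Shen}) to handle Cases (I)--(III), combined with a new finite-speed-of-propagation argument for the non-subluminal scenario (IV).

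First I would use the scaling symmetry \eqref{eq:scaling} to normalize $N(t)$. If $\inf_{t \in I_{\max}} N(t) = 0$, choosing times $t_n$ with $N(t_n) \to 0$ and rescaling $\vec u_c$ via $\lambda_n = N(t_n)^{-1}$ produces a sequence whose Bahouri--G\'erard limit, controlled by the Perturbation Lemma~\ref{l:pert}, is a critical element blowing up in finite time; zooming in at the blow-up instant and exploiting translation symmetry yields the self-similar-like Case (III) with $N(t) = 1/t$ and $x(t) \equiv 0$ on $I_{\max} = (0, \infty)$. Otherwise, after a single rescaling, we may take $N(t) \geq 1$ globally on $I_{\max} = \R$. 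In this regime, if $\sup_t N(t) < \infty$ a further rescaling reduces to $N(t) \equiv 1$ (Case (I)); if $\sup_t N(t) = \infty$, the reduction to $\limsup_{t \to \pm\infty} N(t) = \infty$ in both time directions is standard (were the $\limsup$ finite at one endpoint, a zoom-in at the other would produce a critical element with one-sided scale blowup falling under Case (III)), producing Case (II). The subluminal/non-subluminal dichotomy from Definition~\ref{d:sl} is then invoked: if $x(t)$ is subluminal, we land in (I) or (II).

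Assume now that $x(t)$ fails to be subluminal. Finite speed of propagation applied at the compactness-modulus scale $R(\eta)/N(t)$ from Remark~\ref{r:Reta} gives $|x(t) - x(s)| \leq |t - s| + O(\max(N(t)^{-1}, N(s)^{-1}))$, while failure of the subluminal property provides the matching lower bound, yielding $|x(t)| = |t| + O(1)$. Consequently $x(t)/|t|$ must converge to a unit vector, normalized by rotation invariance of \eqref{eq:nlw} to $(1, 0, 0)$. Moreover, the simultaneous constraint by forward and backward light cones anchored at $\vec u_c(0)$, combined with the pre-compactness of the orbit, forces $N(t)$ to be uniformly bounded above and below: a scale divergence $N(t) \to \infty$ would require arbitrarily fine spatial structures to be supported along a one-dimensional light ray of $O(1)$ transverse thickness, which is incompatible with pre-compactness in $\dot{\HH}^{s_p}$ unless the solution is trivial. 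A final rescaling gives $N(t) \equiv 1$.

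The hard part will be establishing the parabolic deviation bound $|x(t) - (t, 0, 0)| \lesssim \sqrt{|t|}$. Writing $x(t) = (x_1(t), x_\perp(t))$ and letting $\alpha(t)$ denote the angle between $x(t)$ and the positive $x_1$-axis, the two-sided application of finite speed of propagation (anchored at $x(0) = O(1)$) sharpens the above estimate to $|x(t)|^2 = t^2 + O(|t|)$; combined with $x_1(t)^2 + |x_\perp(t)|^2 = |x(t)|^2$, matters reduce to showing $|x(t)| \sin^2 \alpha(t) = O(1)$, i.e., that the angular deviation decays like $1/\sqrt{|t|}$. I expect this to follow from a Vitali-type telescoping argument: by Lemma~\ref{l:const}, $x$ is approximately locally constant on scale $N(t)^{-1} = 1$, and along $\sim |t|$ such unit-length time steps the accumulated angular drift must sum, via a quantitative Cauchy--Schwarz estimate tied to the rigidity of the compactness modulus, to $O(\sqrt{|t|})$ rather than the naive $O(|t|)$. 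The remaining reductions can be extracted essentially from \cite{KVClay, KM10, Shen}.
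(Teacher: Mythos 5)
Your normalization of $N(t)$ into the three scale scenarios (and the reduction of the self-similar case to $x(t)\equiv 0$ by boundedness plus a fixed translation) follows the standard route and is consistent with the paper. The non-subluminal case, however, contains genuine gaps. First, failure of subluminality (the negation of Definition~\ref{d:sl}) only provides, for each $m$, some time $t_m$ and interval $[t_m, t_m+\tfrac{m}{N(t_m)}]$ on which the center moves at essentially light speed; it gives no lower bound $|x(t)|\ge |t|-O(1)$ valid at all times for the original solution, so your claim $|x(t)|=|t|+O(1)$ does not follow as stated. One must extract a possibly different critical element by taking strong limits along the witnessing times $t_m$ (and then a second limit along times where the new center hugs the light cone) to obtain the two-sided bound for all times; this is exactly why the proposition is phrased ``without loss of generality,'' and it is implemented in Propositions~\ref{p:nsl} and~\ref{p:hans}.

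Second, your argument that $N(t)$ must be bounded --- that arbitrarily fine spatial structure along a light ray would contradict pre-compactness --- is not a valid principle: the compactness in \eqref{eq:K} is modulo rescaling by $N(t)$, so fine spatial structure is precisely what the doubly-concentrating scenario permits, and the claimed $O(1)$ transverse thickness presupposes the traveling-wave structure you have not yet established. The actual mechanism is the scale-monotonicity Lemma~\ref{l:mon}: if $|x(t_1)-x(t_2)|\ge |t_1-t_2|-\tfrac{c}{N(t_1)}$ then $N(t_2)\lesssim N(t_1)$, whose proof needs small-data theory, finite speed of propagation, the non-concentration-on-slabs Lemma~\ref{L:strips}, and the centered spatial center of Proposition~\ref{P:centered}; applied on the intervals witnessing non-subluminality it forces $N(t)\simeq N(t_m)$ there and allows the renormalization $N\equiv 1$ in the limit. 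Third, the $\sqrt{|t|}$ deviation does not come from a telescoping or Cauchy--Schwarz cancellation over unit time steps: local constancy from Lemma~\ref{l:const} alone only yields $O(|t|)$ transverse drift, and the compactness modulus supplies no cancellation mechanism. The square-root gain comes from the pointwise two-sided constraint $|s|\le|\tilde x(s)|\le |s|+C$ (available only after the limit extractions) combined with the identity $|s_2\omega(s_2)-s_1\omega(s_1)|^2=|s_1-s_2|^2+s_1 s_2|\omega(s_2)-\omega(s_1)|^2$, which gives the H\"older-$\tfrac12$ modulus $|\omega(s_2)-\omega(s_1)|\lesssim \sqrt{(2C|s_1-s_2|+C^2)/(s_1 s_2)}$ and hence $|\tilde x(s)-s\omega_0|\lesssim \sqrt{|s|}$ after passing to a limit $\omega(s_m)\to\omega_0$. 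Your identification of the quantity to bound is correct, but the proposed mechanism would not produce it.
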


\begin{rem} \label{r:p=3b} 
In the case $p=3$, one must take into account the action of the Lorentz group, which will introduce additional cases to the list of critical elements in Proposition \ref{p:cases}. For $p \neq 3$, the hypothesis~\eqref{eq:fantasy} compactifies the action of the Lorentz group in the Bahouri-G\'erard profile decomposition at regularity ~$\dot \H^{s_p}$, which is why only a translations $x(t)$ and scaling $N(t)$ appear in the descriptions of critical elements. However, because $\dot \H^{\frac{1}{2}}$ is invariant under action of the Lorentz group one must confront critical elements with velocity $\ell(t)$ that approaches the speed of light. See the work of Ramos~\cite{Ramos1, Ramos2}, for Bahouri-G\'erard type profile decompositions in this setting. 
\end{rem} 

Before proving Proposition \ref{p:cases}, we note that ruling out Cases (I) - (IV) in the statement of the proposition will prove our main result, Theorem \ref{t:main}. Hence we will now focus on establishing Proposition \ref{p:cases} and proving that such critical elements cannot exist.

\medskip
We will prove this proposition in several steps. First, we will reduce the frequency parameter $N(t)$ to one of three possible cases. We state these reductions for $N(t)$, but we omit the proof as it follows readily from arguments similar to those in \cite[Theorem 5.25]{KVClay}.

\begin{prop}\label{p:3cases} 
Let $\vec u(t)$ denote the critical element found in Proposition~\ref{p:ce}. Passing to subsequences, taking limits, using scaling considerations and time reversal, we can assume, without loss of generality, that $T_+(\vec u) = + \infty$, and that the frequency scale $N(t)$ and maximal interval of existence $I_{\max} =I_{\max}(\vec u)$ satisfy one of the following three possibilities:  
\begin{itemize}
\item \emph{(Soliton-like scale)} $I_{\max} = \R$ and 
\ant{
N(t) \equiv 1 \quad \, \forall \, t \in \R.
}
\item \emph{(Doubly concentrating scale)} $I_{\max} = (-\infty, \infty)$ and  
\ant{
 \limsup_{t \to T_-} N(t) =\infty, \quad    \limsup_{t \to  \infty} N(t)  = \infty, \mand N(t) \ge 1 \quad \, \forall t \in \R.
 }  
\item \emph{(Self-similar scale)} $I_{\max} = (0, \infty)$ and $N(t) = t^{-1}$.
\end{itemize}
 
\end{prop}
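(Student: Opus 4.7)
The plan is to analyze the possible asymptotic behaviors of the scale function $N(t)$ at the endpoints of $I_{\max}$ and, using scaling, time reversal, and subsequential limits of rescaled critical elements, to replace $\vec u$ by another critical element falling into one of the three listed regimes. The structure mirrors the Killip--Visan trichotomy cited from \cite{KVClay}.

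First I would reduce to $T_+(\vec u) = +\infty$. By the finite-time blowup criterion in Proposition \ref{small data} together with the local constancy in Lemma \ref{l:const}, any finite endpoint of $I_{\max}$ forces $N(t) \to \infty$ there (otherwise a small $S$-norm increment would extend the solution across the endpoint, contradicting maximality). After applying time reversal if necessary, assume $T_+ > 0$; if still $T_+ < \infty$, pick $t_n \nearrow T_+$ with $N(t_n) \to \infty$ and form the rescaled/recentered critical elements $\vec v_n$ obtained from $\vec u$ by shifting time by $t_n$, shifting space by $x(t_n)$, and rescaling at scale $N(t_n)$. These have pre-compact orbits in $\dot \HH^{s_p}$, and their maximal forward times $(T_+-t_n)N(t_n)$ can be arranged to diverge; passing to a subsequential limit via the Bahouri--G\'erard profile decomposition and Lemma \ref{l:pert} yields a new critical element with $T_+ = +\infty$, which I then take as the working critical element.

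Next I would split on $\liminf_{t \to \infty} N(t)$. If this liminf is zero, choose $t_n \to \infty$ with $N(t_n) = \min_{[0, t_n]} N \to 0$, rescale the critical element by $N(t_n)$ and recenter at $(t_n, x(t_n))$, and pass to a subsequential limit $\vec v$. The constraint $N(t_n) = \min_{[0, t_n]} N$ combined with Lemma \ref{l:const} forces the new scale function $\tilde N$ to satisfy $\tilde N(\tau) \lesssim \tilde N(0)$ on arbitrarily long backwards intervals, which pins the left endpoint of $I_{\max}(\vec v)$ to $0$ and constrains $\tilde N(\tau) \simeq 1/\tau$; after a further rescaling and absorbing the residual translation into $x \equiv 0$, this yields the exact form of Case 3.

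If instead $\liminf_{t \to \infty} N(t) > 0$, rescale so that the forward infimum equals $1$ and apply an analogous extraction at $t \to -\infty$ to arrange $T_- = -\infty$ and $N(t) \ge 1$ on all of $\mathbb R$. The remaining dichotomy is whether $\sup_{\mathbb R} N < \infty$: in the bounded case a further diagonal argument, choosing times $t_n^\pm \to \pm \infty$ where $N$ approaches its infimum and extracting a limiting critical element, yields exactly $N \equiv 1$ (Case 1); in the unbounded case one verifies $\limsup_{t \to \pm \infty} N = \infty$ by a similar argument applied at each infinite endpoint, giving Case 2. The main obstacle I expect is extracting the \emph{exact} normalizations $N \equiv 1$ and $N(t) = 1/t$ (rather than merely $N \simeq 1$ or $N \simeq 1/t$): each such normalization requires iterating the rescaling-and-limit procedure within the class of critical elements, using at each step the profile decomposition together with Lemma \ref{l:pert} to ensure the limit is itself a critical element, and then a careful diagonal extraction to remove residual scaling freedom.
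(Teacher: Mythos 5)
The paper itself does not prove this proposition: immediately below the statement it says the reductions ``follow readily from arguments similar to those in \cite[Theorem 5.25]{KVClay}'' and refers the reader there. So there is no in-paper proof to compare against, only the standard Killip--Visan trichotomy. Your outline is in the right spirit --- rescale and recenter at well-chosen times, take subsequential limits in the class of critical elements, and case-split on the behavior of $N(t)$ --- but several of the pivotal steps are either false as stated or silently skip the quantitative control that the argument actually turns on.

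The most serious problem is in the opening reduction to $T_+(\vec u)=+\infty$. You claim that when $T_+<\infty$ one can pick $t_n\nearrow T_+$ so that the rescaled forward lifetimes $(T_+-t_n)N(t_n)$ ``can be arranged to diverge,'' and then extract a new critical element with infinite forward time. But this is exactly wrong in the self-similar regime: there $N(t)\simeq(T_+-t)^{-1}$, so $(T_+-t)N(t)$ is \emph{bounded} for all $t$ close to $T_+$, and no choice of $t_n$ makes the rescaled lifetimes diverge. Lemma~\ref{l:const} only gives the one-sided bound $(T_+-t_0)N(t_0)\geq\eps_0$; boundedness from above comes from finite speed of propagation and compactness and is the signature of self-similarity, not an obstruction you can maneuver around. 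The correct routing is the reverse of what you propose: a finite endpoint should be sent \emph{to} the self-similar scenario (after time reversal and rescaling so that $I_{\max}=(0,\infty)$), rather than used to manufacture a globally defined critical element.

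The extraction of the self-similar scenario from $\liminf_{t\to\infty}N(t)=0$ has the same flavor of gap. Choosing $t_n\to\infty$ with $N(t_n)=\min_{[0,t_n]}N\to 0$ and rescaling at $(t_n,x(t_n))$ gives a rescaled scale function $\tilde N_n(\tau)=N(t_n+\tau/N(t_n))/N(t_n)$ that is $\geq 1$ for $\tau\in[-t_nN(t_n),0]$ (you wrote $\lesssim\tilde N(0)$, which is the wrong direction). To conclude that the limit has $I_{\max}=(0,\infty)$ you need $t_nN(t_n)$ to remain bounded; but if $t_nN(t_n)\to\infty$, the limit satisfies $\tilde N\geq 1$ on all of $(-\infty,0]$, which certainly does not force a finite past endpoint and looks nothing like $N(t)=t^{-1}$. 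Establishing boundedness of $t_nN(t_n)$ (equivalently that the original solution already has a finite past endpoint, or can be reduced to one) is exactly the content that is being skipped. Likewise, for the doubly concentrating case you assert that $\sup_\R N=\infty$ with $N\geq 1$ can be upgraded to $\limsup_{t\to\pm\infty}N=\infty$ at \emph{both} temporal ends ``by a similar argument,'' but continuity of $N$ and $I_{\max}=\R$ only guarantees one of the two lim sups is infinite; producing the other requires a further extraction (or an appeal to the finiteness arguments above to rule out bounded-at-one-end behavior), which you have not supplied. Each of these is a fixable but nontrivial step in the [KVClay] argument, and the proposal as written does not close them.
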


We will now make a few further reductions, mostly concerning the spatial center $x(t)$ of a critical element that is global in time. 

We will show that in all cases where we have a solution with the compactness property with translation parameter $x(t)$ that fails to be subluminal, we may extract a traveling wave solution. To prove this, we will need to analyze the properties of solutions with the compactness property and more specifically, properties of their spatial centers, $x(t)$. We turn to this analysis now. First, we note that in the case that $x(t)$ is subluminal (see Definition~\ref{d:sl}) we can derive the following consequence. 

\begin{lem}\emph{\cite[Proposition 4.3]{KV11b}}  \label{l:sl} 
Let $\vec u(t)$ be a solution to~\eqref{eq:nlw} with the compactness property on $I = \R$ with parameters $x(t)$ and $N(t) \ge 1$. Suppose $x(0)=0$ and that $x(t)$ is subluminal in the sense of Definition~\ref{d:sl}. Then there exists a $\de_0>0$ so that 
\EQ{
\abs{x(t) - x(\tau)} \le (1- \de_0) \abs{t - \tau}
}
for all $t, \tau$ with 
\EQ{
\abs{t- \tau} \ge \frac{1}{\de_0 N_{t, \tau}},
} 
where  $N_{t, \tau} := \inf_{s \in [t, \tau]} N(s) $. 
\end{lem}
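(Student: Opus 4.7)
The plan is to bootstrap the local, one-sided subluminal condition in Definition~\ref{d:sl} to the stated two-sided linear bound by iteration. Without loss of generality take $\tau < t$. Starting from $t_0 = \tau$, I would inductively apply Definition~\ref{d:sl} at the time $t_k$: this produces some $t_{k+1} \in (t_k, t_k + A/N(t_k)]$ with
\[
|x(t_{k+1}) - x(t_k)| \le (t_{k+1} - t_k) - \frac{1}{A\,N(t_k)}.
\]
I would continue as long as $t_k + A/N(t_k) \le t$, and let $n$ be the first index with $t_n + A/N(t_n) > t$.

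The key arithmetic step is that at each iteration the time advance is at most $A/N(t_k)$ while the gain over the speed of light is at least $1/(A\,N(t_k))$, giving a uniform ratio: $\frac{1}{A\,N(t_k)} \ge \frac{t_{k+1}-t_k}{A^2}$. Summing via the triangle inequality,
\[
|x(t_n) - x(\tau)| \le \sum_{k=0}^{n-1} \Bigl[(t_{k+1}-t_k) - \tfrac{1}{A N(t_k)}\Bigr] \le \Bigl(1 - \tfrac{1}{A^2}\Bigr)(t_n - \tau).
\]
Since each $t_k \in [\tau, t]$, every $N(t_k) \ge N_{t,\tau}$, so in particular $t - t_n \le A/N(t_n) \le A/N_{t,\tau}$.

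To control the leftover boundary contribution $|x(t) - x(t_n)|$, I would invoke Lemma~\ref{l:const}: by partitioning $[t_n, t]$ into $O(A/\varepsilon_0)$ subintervals of length $\le \varepsilon_0/N(t_n)$ and applying the local constancy estimate for $x$ on each piece, I obtain $|x(t) - x(t_n)| \le C(A)/N_{t,\tau}$ for some constant $C(A)$ depending only on $A$ and the constants $\varepsilon_0, C_0$ from Lemma~\ref{l:const}. Combining the two estimates and using that $t_n - \tau \le t - \tau$,
\[
|x(t) - x(\tau)| \le \Bigl(1 - \tfrac{1}{A^2}\Bigr)(t-\tau) + \frac{C(A)}{N_{t,\tau}}.
\]

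Finally, under the hypothesis $|t-\tau| \ge 1/(\delta_0 N_{t,\tau})$, the boundary error satisfies $C(A)/N_{t,\tau} \le C(A)\delta_0|t-\tau|$, so choosing $\delta_0 > 0$ small enough (depending only on $A$ and the constants from Lemma~\ref{l:const}) that $C(A)\delta_0 \le 1/(2A^2)$ yields the claimed bound with, say, $1 - 1/(2A^2)$ replacing $1 - \delta_0$; relabeling $\delta_0$ if necessary completes the argument. The main obstacle I expect is handling the terminal subinterval $[t_n, t]$, which has length of order $1/N_{t,\tau}$ but need not satisfy the short-time hypothesis $\varepsilon_0/N(t_n)$ directly; this is why Lemma~\ref{l:const} must be iterated a bounded number of times there, and why the hypothesis $|t - \tau| \ge 1/(\delta_0 N_{t,\tau})$ is precisely what is needed to absorb this error into the linear term.
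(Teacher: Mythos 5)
Your main iteration is sound and is essentially the standard route: the paper itself gives no argument here (it defers to \cite[Proposition 4.3]{KV11b}), and chaining the one-sided gains from Definition~\ref{d:sl}, with the observation that each step advances by at most $A/N(t_k)$ while gaining at least $1/(A N(t_k)) \ge (t_{k+1}-t_k)/A^2$, is exactly the right mechanism; the stopping rule, the bound $t-t_n \le A/N_{t,\tau}$, and the final absorption of the $O(1/N_{t,\tau})$ error using the hypothesis $|t-\tau|\ge 1/(\de_0 N_{t,\tau})$ are all correct.

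The one step that fails as written is your treatment of the terminal interval $[t_n,t]$. You partition it into $O(A/\eps_0)$ subintervals of length $\le \eps_0/N(t_n)$ and apply Lemma~\ref{l:const} on each piece, but that lemma, applied at a left endpoint $s_j$, only controls $x$ on a window of length $\eps_0/N(s_j)$, and nothing guarantees $N(s_j)\simeq N(t_n)$ throughout $[t_n,t]$: local constancy gives comparability only on windows scaled by the \emph{local} value of $N$, and over a span of length $A/N(t_n)$ with $A\gg\eps_0$ the frequency scale can grow by an arbitrarily large factor (self-similar-type growth $N(s)\sim (t^*-s)^{-1}$ is perfectly consistent with Lemma~\ref{l:const}); the only uniform information is $N(s)\ge N_{t,\tau}$, which points the wrong way, so the lemma need not apply on subintervals of length $\eps_0/N(t_n)$ and the count $O(A/\eps_0)$ is not justified. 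The conclusion you want, $|x(t)-x(t_n)|\lesssim_A 1/N_{t,\tau}$, is nevertheless true and easy to recover: either invoke the crude finite-speed bound \eqref{eq:dxt}, which is proved independently of this lemma and gives $|x(t)-x(t_n)|\le (t-t_n)+C/N(t)+C/N(t_n)\le (A+2C)/N_{t,\tau}$; or run an \emph{adaptive} iteration of Lemma~\ref{l:const} with steps $s_{j+1}=\min\{t,\,s_j+\eps_0/N(s_j)\}$, noting that each full step contributes displacement $C_0/N(s_j)=\tfrac{C_0}{\eps_0}(s_{j+1}-s_j)$, so the total displacement is at most $\tfrac{C_0}{\eps_0}(t-t_n)+C_0/N_{t,\tau}\le (\tfrac{C_0A}{\eps_0}+C_0)/N_{t,\tau}$ regardless of how many steps occur (finiteness of the number of steps follows from continuity of $N$ on $[t_n,t]$). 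With either repair, the rest of your argument goes through verbatim.
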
 
\begin{proof} See the proof of Proposition 4.3 in \cite{KV11b}.
\end{proof} 
Using Lemma~\ref{l:const} together with Lemma~\ref{l:mass} and a  domain of dependence argument based on the finite speed of propagation, we obtain a preliminary bound on how fast  $x(t)$ can grow.  (See e.g. \cite[Proposition~4.1]{KV11b}.)
\begin{lem} 
Let $\vec u(t)$ have the compactness property on a time interval $I \subset \R$ with parameters $N(t)$ and $x(t)$. Then there exists a constant $C>0$ so that for any $t_1, t_2 \in I$ we have 
\EQ{ \label{eq:dxt} 
\abs{x(t_1) - x(t_2)} \le \abs{t_1 - t_2} + \frac{C}{N(t_1)} + \frac{C}{N(t_2)}.
}
In fact, if $\vec u(t)$ is global in time, we have $N(t) \abs{t} \to \infty$ as $\abs{t } \to \infty$ and we normalize so that $x(0) = 0$, from which the above yields 
\EQ{ \label{eq:fsp2} 
\lim_{t \to \pm \infty} \frac{\abs{ x(t)}}{\abs{t}} \le 1.
}
\end{lem}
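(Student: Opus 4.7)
The approach is to prove \eqref{eq:dxt} by contradiction, combining sharp finite speed of propagation for \eqref{eq:nlw}, the spatial localization guaranteed by the compactness property (Remark~\ref{r:Reta}), an $L^{\frac{3}{2}(p-1)}$ mass lower bound near $x(t)$ from Lemma~\ref{l:mass} together with Lemma~\ref{l:const}, and the small-data theory (Proposition~\ref{small data}). Once \eqref{eq:dxt} is in hand, the asymptotic \eqref{eq:fsp2} follows by dividing through by $|t|$, provided one first establishes the auxiliary claim that $|t|N(t)\to\infty$ as $|t|\to\infty$ for a globally defined solution.

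For the main contradiction argument, fix $\eta>0$ much smaller than both the small-data threshold $\delta_0$ of Proposition~\ref{small data} and the universal concentration constant $\eta_0$ produced by Lemma~\ref{l:mass}. Remark~\ref{r:Reta} furnishes $R(\eta)<\infty$ with $\|\vec u(t)\|_{\dot\H^{s_p}(|x-x(t)|>R(\eta)/N(t))}\le\eta$ for every $t\in I$. Next, Lemma~\ref{l:mass} combined with the local constancy in Lemma~\ref{l:const} supplies, for each $t_i\in I$, a nearby time $s_i$ with $|s_i-t_i|\lesssim 1/N(t_i)$, $N(s_i)\simeq N(t_i)$, and $|x(s_i)-x(t_i)|\lesssim 1/N(t_i)$, such that
\[
\|u(s_i)\|_{L^{\frac{3}{2}(p-1)}_x(|x-x(s_i)|\le R(\eta)/N(s_i))}\ge \tfrac{\eta_0}{2},
\]
the tail contribution being absorbed via the Sobolev embedding $\dot H^{s_p}\hookrightarrow L^{\frac{3}{2}(p-1)}$. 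Now suppose toward contradiction that $|x(t_1)-x(t_2)|>|t_1-t_2|+C/N(t_1)+C/N(t_2)$ for a large $C=C(\eta)$; then the backward cone base $B_2^+:=B(x(s_2),R(\eta)/N(s_2)+|s_2-s_1|)$ at time $s_1$ is disjoint from the bulk ball $B_1:=B(x(s_1),R(\eta)/N(s_1))$. Take a smooth cutoff $\chi$ with $\chi\equiv 1$ on $B_2^+$, supported in a slight enlargement of $B_2^+$ still disjoint from $B_1$, and let $w$ solve \eqref{eq:nlw} with initial data $\chi\vec u(s_1)$ at time $s_1$. Since $\supp\chi$ is disjoint from $B_1$, a commutator expansion using Lemma~\ref{L:commutator} yields $\|\chi\vec u(s_1)\|_{\dot\H^{s_p}}\lesssim\eta<\delta_0$; by Proposition~\ref{small data}, $w$ is global with $\|w\|_{S(\R)}\lesssim\eta$ and $\|\vec w(t)\|_{\dot\H^{s_p}}\lesssim\eta$, so also $\|w(t)\|_{L^{\frac{3}{2}(p-1)}}\lesssim\eta$ uniformly in $t$ by Sobolev embedding. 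Sharp finite speed of propagation for \eqref{eq:nlw}, applied backwards from $B(x(s_2),R(\eta)/N(s_2))$ at time $s_2$, forces $u(s_2)=w(s_2)$ on that ball, yielding
\[
\tfrac{\eta_0}{2}\le\|u(s_2)\|_{L^{\frac{3}{2}(p-1)}(B(x(s_2),R(\eta)/N(s_2)))}=\|w(s_2)\|_{L^{\frac{3}{2}(p-1)}(\ldots)}\lesssim\eta,
\]
a contradiction once $\eta$ is taken sufficiently small.

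For \eqref{eq:fsp2}, I would first establish $|t|N(t)\to\infty$ as $|t|\to\infty$ for a globally defined $\vec u$: if some sequence $t_n\to\infty$ satisfied $t_nN(t_n)\le M$, a rescaling and limiting argument --- passing to a subsequential $\dot\H^{s_p}$-limit of the solutions rescaled by $N(t_n)$ and translated by $x(t_n)$, valid by pre-compactness --- produces a non-trivial limit solution whose backward rescaled orbit on $(-M,0]$ corresponds to a rescaling of $\vec u$ down to very early times, which when confronted with the mass concentration of Corollary~\ref{C:acax} and the smallness of Strichartz norms on shrinking intervals (Lemma~\ref{l:DL1lem}) yields a contradiction. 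Granted this, applying \eqref{eq:dxt} with $t_1=0$, $t_2=t$, and $x(0)=0$, dividing by $|t|$ and sending $|t|\to\infty$ produces \eqref{eq:fsp2}. The main technical obstacle is the bound $\|\chi\vec u(s_1)\|_{\dot\H^{s_p}}\lesssim\eta$: since $\vec u(s_1)$ is only controlled at the critical regularity $s_p$, one must commute the fractional derivative past $\chi$ carefully via Littlewood-Paley decomposition and the commutator estimates of Lemma~\ref{L:commutator}, choosing the cutoff at a spatial scale comparable to that of the compactness tail region so that the derivative of $\chi$ never amplifies the small mass of $\vec u(s_1)$ on $\supp\chi$.
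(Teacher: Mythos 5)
Your proof of \eqref{eq:dxt} is correct and is essentially the argument the paper intends: the paper's one-line proof invokes exactly the same ingredients (Lemma~\ref{l:const}, Lemma~\ref{l:mass}, the compactness tails of Remark~\ref{r:Reta}, finite speed of propagation and the small data theory, following \cite[Proposition~4.1]{KV11b}), and your cutoff-smallness step, which you treat via Lemma~\ref{L:commutator}, is obtained in the paper more directly from pre-compactness of $K$, as in \eqref{equ:small_data_soliton}. For the second assertion, note that the paper simply states $N(t)\abs{t}\to\infty$ without proof (and in every application of \eqref{eq:fsp2} one already has $N(t)\ge 1$, so the claim is immediate there); your rescaling sketch is vaguer than the rest of your argument, and the contradiction you cite via Corollary~\ref{C:acax} and Lemma~\ref{l:DL1lem} is not clearly sufficient as stated --- a cleaner route is to observe that if $t_nN(t_n)\le M$ then the rescaled solutions have $S$-norm on $[-t_nN(t_n),0]\subset[-M,0]$ equal to $\|u\|_{S([0,t_n])}$, which diverges by Corollary~\ref{c:Sloc}, contradicting the uniform bound furnished by Lemma~\ref{l:pert} around the limiting solution.
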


\begin{rem} 
We  remark that by finite speed of propagation and compactness, we can assume that 
\EQ{
\lim_{t \to T_{\pm}(\vec u)} \abs{t} N(t) \in [1, \infty) .
}
\end{rem}

Note that according to the definition of the compactness property, the function $x(t)$ is not uniquely defined;  indeed, one can always modify $x(t)$ up to a radius of $\mathcal{O}(N(t)^{-1})$, provided one also modifies the compactness modulus appropriately.  
Note, however, that the compactness property, together with monotone convergence, prevents $\vec{u}$ from concentrating on very narrow strips, as measured in units of $N(t)^{-1}$. See \cite[Lemma~4.2]{KV11b}.
\begin{lem}\label{L:strips} Let $\vec u$ be an solution to \eqref{eq:nlw} with the compactness property on an interval $I$.  Then for any $\eta>0$, there exists $c(\eta)>0$ so that
\[
\sup_{\omega\in\mathbb{S}^2} \int_{|\omega\cdot[x-x(t)]|\leq c(\eta)N(t)^{-1}} |\nsp u|^2 + |\nspt u_t|^2\,\ud x <\eta. 
\]
\end{lem}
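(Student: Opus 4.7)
The plan is to reduce the claim to a statement about precompact families in $L^2$ via the rescaling built into the definition of the compactness property, then establish a uniform-in-$\omega$ slab decay result using Schwartz approximation and the finite covering property.

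First I would use the substitution $y = N(t)(x - x(t))$ to recast the slab integral in terms of the rescaled profiles. Setting $v(t,y) := N(t)^{-2/(p-1)}u(t,x(t)+y/N(t))$ and $w(t,y) := N(t)^{-2/(p-1)-1}u_t(t,x(t)+y/N(t))$, the scale-invariance of $\dot H^{s_p}\times \dot H^{s_p-1}$ (note $s_p = \tfrac32 - \tfrac{2}{p-1}$) gives
\[
\int_{|\omega\cdot[x-x(t)]|\le c/N(t)}|\nsp u|^2 + |\nspt u_t|^2\,\ud x
= \int_{|\omega\cdot y|\le c}\bigl(||\nabla|^{s_p} v(t,\cdot)|^2 + ||\nabla|^{s_p-1}w(t,\cdot)|^2\bigr)\,\ud y.
\]
By the compactness property, the set $K := \{(|\nabla|^{s_p}v(t,\cdot),\,|\nabla|^{s_p-1}w(t,\cdot)) : t\in I\}$ is precompact in $L^2(\R^3)\times L^2(\R^3)$. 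Therefore the lemma reduces to showing that for any precompact $\mathcal{F}\subset L^2(\R^3)$,
\[
\sup_{h\in \mathcal{F},\,\omega\in\mathbb{S}^2}\int_{|\omega\cdot y|\le c}|h(y)|^2\,\ud y \longrightarrow 0 \qquad\text{as}\quad c\to 0^+.
\]

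To prove this reduction, I would use an $\eta'$-net plus Schwartz approximation. Fix $\eta'>0$ and choose a finite collection $h_1,\dots,h_N\in\mathcal{F}$ whose $L^2$-balls of radius $\eta'$ cover $\mathcal{F}$. For each $h_i$, pick a Schwartz function $\phi_i$ with $\|h_i-\phi_i\|_{L^2}<\eta'$. The key point is that Schwartz functions satisfy a slab bound \emph{uniform} in $\omega\in\mathbb{S}^2$: for any $R>0$,
\[
\int_{|\omega\cdot y|\le c}|\phi_i(y)|^2\,\ud y \;\le\; \|\phi_i\|_{L^\infty}^2\,\bigl|\{|y|\le R,\;|\omega\cdot y|\le c\}\bigr| + \int_{|y|>R}|\phi_i|^2\,\ud y \;\lesssim\; c R^2\|\phi_i\|_{L^\infty}^2 + \int_{|y|>R}|\phi_i|^2,
\]
since a slab of thickness $2c$ intersected with a ball of radius $R$ has volume $\lesssim cR^2$ regardless of $\omega$. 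Choosing $R$ large (depending on $\phi_i$) and then $c$ small (depending on $R$ and $\phi_i$), the right-hand side can be made $<\eta'^2$ for all $\omega$ simultaneously, for every $i=1,\dots,N$. Then for an arbitrary $h\in\mathcal{F}$, selecting $i$ with $\|h-h_i\|_{L^2}<\eta'$ and applying the triangle inequality inside the slab yields
\[
\int_{|\omega\cdot y|\le c}|h|^2\,\ud y \;\le\; 3\|h-h_i\|_{L^2}^2 + 3\|h_i-\phi_i\|_{L^2}^2 + 3\int_{|\omega\cdot y|\le c}|\phi_i|^2\,\ud y \;<\; 9\eta'^2,
\]
uniformly in $\omega\in\mathbb{S}^2$ and $h\in\mathcal{F}$. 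Choosing $\eta'$ sufficiently small in terms of $\eta$ and then $c(\eta)$ accordingly finishes the proof.

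The main (modest) obstacle is securing uniformity in $\omega\in\mathbb{S}^2$; this is precisely why the reduction to Schwartz functions is useful, since their decay furnishes a volume bound on slabs that is orientation-free. All remaining ingredients — the scaling identity and the passage from precompact subsets of $\dot{\mathcal H}^{s_p}$ to precompact subsets of $L^2\times L^2$ — are direct consequences of the definition of the compactness property.
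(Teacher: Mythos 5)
Your proof is correct. The scaling identity you use is valid precisely because $2s_p+\tfrac{4}{p-1}=3$, so after the substitution $y=N(t)(x-x(t))$ the powers of $N(t)$ cancel and the slab integral for $\vec u(t)$ equals the corresponding unit-scale slab integral for the modulated profile; since the compactness property says exactly that these profiles form a precompact subset of $\dot H^{s_p}\times\dot H^{s_p-1}$ (equivalently, their images under $(|\nabla|^{s_p},|\nabla|^{s_p-1})$ form a precompact subset of $L^2\times L^2$), the lemma does reduce to the assertion that a precompact family in $L^2(\R^3)$ cannot carry uniformly positive mass on slabs of shrinking thickness, uniformly over orientations. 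Your finite $\eta'$-net, Schwartz approximation, and the orientation-free volume bound $|\{|y|\le R,\ |\omega\cdot y|\le c\}|\lesssim cR^2$ settle that cleanly, and the choice of $c$ depends only on the finitely many approximants, hence only on $\eta$. The paper itself gives no proof but cites \cite[Lemma~4.2]{KV11b}, whose argument runs the same reduction by contradiction: one extracts a subsequence of times and directions, uses strong $L^2$ convergence of the rescaled profiles and $\omega_n\to\omega$, and concludes via monotone convergence that a fixed $L^2$ function would have positive mass on a hyperplane. Your version is a quantitative, constructive rendering of the same underlying fact; it buys a slightly more explicit dependence $c=c(\eta)$ at no extra cost, while the cited argument is marginally shorter to write. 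Either route is acceptable.
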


To deal with ambiguity in the definition of $x(t)$, we use the notion of a `centered' spatial center as in \cite{KV11b}, that is, a choice of $x(t)$ such that each plane through $x(t)$ partitions $\vec u(t)$ into two non-trivial pieces. 

\begin{defn}\label{D:centered} Let $\vec u$ be a solution to \eqref{eq:nlw} with the compactness property on an interval $I$ with spatial center $x(t)$. We call $x(t)$ \emph{centered} if there exists $C(u)>0$ such that for all $\omega\in\mathbb{S}^2$ and $t\in I$, 
\[
\int_{\omega\cdot[x-x(t)]>0} |\nsp u(t,x)|^2 + |\nspt u_t(t,x)|^2 \,\ud x \geq C(u).  
\]
\end{defn}

\begin{prop}\label{P:centered} Let $\vec u$ be a global solution to \eqref{eq:nlw} with the compactness property.  Then there exists a centered spatial center for $\vec u$.
\end{prop}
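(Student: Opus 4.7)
My plan is to view the $\dot\H^{s_p}$-mass distribution of $\vec u(t)$ as a finite positive measure on $\R^3$ and apply a classical centerpoint-type result to produce a new spatial center $y(t)$, close to the original $x(t)$, such that every half-space through $y(t)$ captures a uniform fraction of the total mass. Concretely, for each $t$ set
\[
\ud\mu_t(x) := \bigl(|\nsp u(t,x)|^2 + |\nspt u_t(t,x)|^2\bigr)\,\ud x, \qquad m(t) := \mu_t(\R^3).
\]
I would first establish the uniform lower bound $m(t) \geq c_0 > 0$. Indeed, if $m(t_n) \to 0$ along some sequence, then by the small-data theory of Proposition \ref{small data} applied to the Cauchy problem at time $t_n$, the solution $\vec u$ would be global and scatter; on the other hand, a nontrivial solution with the compactness property on $\R$ cannot scatter, since scattering forces $\|u(t)\|_{L^{3(p-1)/2}} \to 0$, contradicting the averaged lower bound of Lemma \ref{l:mass}.

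Next, I would invoke the classical centerpoint theorem in $\R^3$ (a consequence of Helly's theorem applied to the family of closed half-spaces of $\mu_t$-mass exceeding $3m(t)/4$) to produce $y(t) \in \R^3$ such that every closed half-space $H$ with $y(t) \in \partial H$ satisfies $\mu_t(H) \geq m(t)/4 \geq c_0/4$. Setting $C(u) := c_0/4$ will then give the centered inequality of Definition \ref{D:centered} once $y(t)$ replaces $x(t)$. To check that $y(t)$ is a valid spatial center in the sense of Definition \ref{d:cp}, fix $\eta < c_0/4$; by Remark \ref{r:Reta}, $\mu_t(\R^3 \setminus B(x(t), R(\eta)/N(t))) \leq \eta$. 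If $|y(t) - x(t)| > 2R(\eta)/N(t)$, then taking $\omega := (y(t) - x(t))/|y(t) - x(t)|$, the half-space $\{x : \omega \cdot (x - y(t)) > 0\}$ would miss $B(x(t), R(\eta)/N(t))$, forcing $\mu_t$-mass at most $\eta < c_0/4$, contradicting the centerpoint property. Hence $|y(t) - x(t)| \lesssim 1/N(t)$, which is an $O(1)$ shift in the rescaled coordinates of Definition \ref{d:cp}, so pre-compactness of the rescaled trajectory in $\dot\H^{s_p}$ is preserved by uniform continuity of translation on the pre-compact set.

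The main obstacle is arranging $t \mapsto y(t)$ to be continuous, as required by Definition \ref{d:cp}. The set $\mathcal{Y}(t)$ of valid centerpoints is a nonempty, compact, convex subset of $B(x(t), O(1/N(t)))$, and $t \mapsto \mu_t$ is continuous in total variation since $\vec u \in C(\R; \dot\H^{s_p})$ implies $t \mapsto |\nsp u(t)|^2 + |\nspt u_t(t)|^2$ is continuous in $L^1$. Invoking Lemma \ref{L:strips} to rule out concentration of $\mu_t$ on narrow strips, the half-space depth function is strictly monotone across parallel planes at the scale $1/N(t)$, from which I expect Hausdorff-continuity of $t \mapsto \mathcal{Y}(t)$ and hence continuity of, for instance, the centroid of $\mathcal{Y}(t)$. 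If this direct approach proves unwieldy, an alternative is to time-mollify the above $y(t)$ on a scale much smaller than $1/N(t)$, which leaves the closeness to $x(t)$ and the centered property intact.
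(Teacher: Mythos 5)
Your route is genuinely different from the paper's, and its core is sound: realizing the $\dot\H^{s_p}$-density as a measure $\mu_t$, getting the uniform lower bound $m(t)\geq c_0$ (your scattering argument via Lemma~\ref{l:mass} works; even more directly, \eqref{eq:apsmall} applied at a time $t_n$ with $\|\vec u(t_n)\|_{\dot\H^{s_p}}\to 0$ forces $\|u\|_{S(\R)}=0$), invoking the centerpoint theorem to get mass $\geq m(t)/4$ in every half-space through $y(t)$ (open vs.\ closed half-spaces is harmless since $\mu_t$ has an $L^1$ density), and the Remark~\ref{r:Reta} argument pinning $y(t)$ to within $O(N(t)^{-1})$ of $x(t)$. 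Note one trade-off: the centerpoint theorem hands you the centered property for free, whereas the paper has to use Lemma~\ref{L:strips}; but the paper's construction is far lighter. It simply replaces $x(t)$ by the center of mass of $\varphi(t,x)=|\nsp u|^2+|\nspt u_t|^2$ over the ball $B(t)=\{|x-x(t)|\leq CN(t)^{-1}\}$; the vanishing first moment about $\tilde x(t)$, combined with Lemma~\ref{L:strips} (mass cannot hide in a thin slab around the zero-level plane) and the trivial bound $|x-\tilde x(t)|\lesssim N(t)^{-1}$ on $B(t)$, forces a definite fraction of mass on each side of every plane through $\tilde x(t)$. Crucially, that formula is manifestly continuous in $t$ (given continuity of $x(t)$, $N(t)$ and of $t\mapsto\varphi(t,\cdot)$ in $L^1$), so the issue you correctly identify as your main obstacle never arises.

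That continuity issue is where your plan has a genuine gap as written. Your primary fix asserts Hausdorff-continuity of the centerpoint set $\mathcal{Y}(t)=\{y:\ \inf_\omega \mu_t(\{\omega\cdot(x-y)\geq 0\})\geq m(t)/4\}$ from a strict-monotonicity heuristic; this is not justified. Total-variation continuity of $t\mapsto\mu_t$ only gives uniform convergence of the Tukey depth functions, and a superlevel set taken at the exact centerpoint level $m(t)/4$ can degenerate (e.g.\ collapse to a point when the maximal depth equals $m/4$), in which case uniform convergence of the depths does not yield Hausdorff continuity of the level sets, nor continuity of their centroids. Your fallback (time-mollifying a measurable selection $y(t)$) can be made rigorous, but the step you omit is the one that matters: you must check that the mollified point is still centered. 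This does follow, at the cost of a smaller constant, from two facts you did not state: (i) $|\mu_t(H)-\mu_s(H)|\leq\|\mu_t-\mu_s\|_{TV}$, so for $s$ near $t$ each $y(s)$ has $\mu_t$-depth $\geq m(t)/8$, and (ii) the depth function is quasi-concave, so its superlevel set $\{d_{\mu_t}\geq m(t)/8\}$ is convex and therefore contains the average $\tilde y(t)$ of the $y(s)$. One also needs the TV-modulus of continuity to be controlled locally uniformly in $t$ (available from Lemma~\ref{l:const} and compactness) to run the mollification with a single scale, and a measurable selection of $\mathcal{Y}(t)$ to mollify in the first place. So the approach can be completed, but as proposed it leaves the decisive continuity step unproved; the paper's center-of-mass construction avoids all of this machinery.
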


\begin{proof} The argument is similar to the proof of \cite[Proposition~4.1]{KV11b}. Let $x(t)$ be any spatial center for $\vec{u}$.  To shorten formulas, we introduce the notation
\[
\varphi(t,x)=|\nsp u(t,x)|^2 + |\nspt u_t(t,x)|^2.
\]
By compactness, there exists $C=C(u)$ large enough that 
\[
\inf_{t\in\R}\int_{B(t)} \varphi(t,x)\,\ud x \gtrsim_u 1,\qtq{where}B(t):=\{x:|x-x(t)|\leq CN(t)^{-1}\}. 
\]
Now set 
\[
\tilde x(t) = x(t) + \frac{\int_{B(t)}[x-x(t)]\varphi(t,x)\,\ud x}{\int_{B(t)} \varphi(t,x)\,\ud x}. 
\]

By definition, $|x(t)-\tilde x(t)| \leq CN(t)^{-1}$, and hence $\tilde x(t)$ is a valid spatial center for $\vec u$ (one only needs to add $C$ to the compactness modulus).  We now claim that $\tilde x(t)$ is centered. To see this, first note that by construction one has
\[
\int_{B(t)} \omega\cdot[x-\tilde x(t)] \varphi(t,x)\,\ud x = 0.
\]
On the other hand, combining non-triviality on $B(t)$ together with Lemma~\ref{L:strips}, we have
\[
\int_{B(t)\cap |\omega\cdot[x-\tilde x(t)]|>cN(t)^{-1}} \varphi(t,x)\,\ud x\gtrsim_u 1
\]
for some $c=c(u)>0$. Thus
\[
\int_{B(t)}|\omega\cdot[x-\tilde x(t)]|\varphi(t,x)\,\ud x \gtrsim_u N(t)^{-1},
\]
and so
\[
\int_{B(t)} \{\omega\cdot[x-\tilde x(t)]\}_+ \varphi(t,x)\,\ud x \gtrsim_u N(t)^{-1},
\]
where $+$ denotes the positive part.  As $|x-\tilde x(t)|\leq 2CN(t)^{-1}$ for $x\in B(t)$, we finally deduce
\[
1\lesssim_u \int_{B(t)} \frac{\{\omega\cdot[x-\tilde x(t)]\}_+}{2CN(t)^{-1}} \varphi(t,x)\,\ud x\lesssim_u \int_{\omega\cdot[x-\tilde x(t)]>0}\varphi(t,x)\,\ud x 
\]
for all $\omega\in\mathbb{S}^2$, as needed.\end{proof}

\begin{prop}\label{p:hans} Suppose that $\vec u(t)$ is a solution with the compactness property on $\R$ with parameters $N(t)$ and $x(t)$. Suppose in addition that $N(t) = 1$ for all $t \in \R$, and that $x(t)$ fails to be subluminal in the sense of Definition~\ref{d:sl}. Then there exists a (possibly different) solution $\vec w(s)$ to~\eqref{eq:nlw} with the compactness property on $\R$ with parameters $N(s)$ and $x(s)$ satisfying
\EQ{
N(s) \equiv 1, \quad  |x(s) - (s, 0, 0)| \lesssim \sqrt{{\abs{s}}} \quad  \forall s \in \R. 
}
\end{prop}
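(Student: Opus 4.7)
The plan is to extract, by translating the critical element $\vec u$ in time, a new critical element $\vec w$ whose spatial center traces out — up to $\sqrt{|s|}$ perpendicular error — the light-like line $s \mapsto (s,0,0)$. The argument has three main steps: symmetrizing the failure of subluminality around the midpoint of a long interval, deriving angular rigidity for the translation parameters, and extracting a compactness limit. For the first step, fix a sequence $A_n \to \infty$; by hypothesis, there exist $t_n \in \R$ such that $|x(t_n + t) - x(t_n)| > t - A_n^{-1}$ for every $t \in [0, A_n]$. Recenter to the midpoint by setting $\tilde t_n := t_n + A_n/2$ and $y_n(s) := x(\tilde t_n + s) - x(\tilde t_n)$ for $s \in [-A_n/2, A_n/2]$. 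A triangle inequality pivoted through $x(t_n)$, combining the failure lower bound with the finite-speed bound \eqref{eq:dxt}, yields the symmetric estimates
\[
|y_n(s)| = |s| + O(1), \qquad |y_n(s_1) - y_n(s_2)| = |s_1 - s_2| + O(1),
\]
uniform in $n$ and in $s, s_j \in [-A_n/2, A_n/2]$.

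Next, write $\hat e_n(s) := y_n(s)/|y_n(s)|$ and expand
\[
|y_n(s_1) - y_n(s_2)|^2 = |y_n(s_1)|^2 + |y_n(s_2)|^2 - 2\,|y_n(s_1)|\,|y_n(s_2)|\,\hat e_n(s_1) \cdot \hat e_n(s_2).
\]
Substituting the estimates from the first step produces the angular identity
\[
\hat e_n(s_1) \cdot \hat e_n(s_2) = \sgn(s_1 s_2) + O(|s_1|^{-1} + |s_2|^{-1}).
\]
Pass to a subsequence and rotate coordinates so that $\hat e_n(A_n/2) \to e_1 := (1,0,0)$. Specializing $s_1 = s$ and $s_2 = A_n/2$ gives the angular bound $|\hat e_n(s) - \sgn(s) e_1| \lesssim |s|^{-1/2}$. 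Combining this with $|y_n(s)| = |s| + O(1)$ yields the uniform estimate
\[
|y_n(s) - s\, e_1| \lesssim \sqrt{|s|} + 1 \qquad \text{for all } |s| \le A_n/2.
\]

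For the third step, define $\vec w_n(s) := \vec u(\tilde t_n + s,\, \cdot + x(\tilde t_n))$. By the compactness property of $\vec u$, a subsequence of $\vec w_n(0) = \vec u(\tilde t_n,\, \cdot + x(\tilde t_n))$ converges in $\dot\HH^{s_p}$ to some $\vec w(0)$. The uniform $\dot\HH^{s_p}$ bound on $\vec u$ together with the perturbation lemma \ref{l:pert} promotes this to $\vec w_n \to \vec w$ in $C^0_{\mathrm{loc}}(\R;\dot\HH^{s_p})$, where $\vec w$ is a global solution of \eqref{eq:nlw}. A diagonal extraction, enabled by the pointwise bound $|y_n(s)| \le |s| + O(1)$, yields $y_n(s) \to y(s) \in \R^3$ on a countable dense set; this $y$ serves as a valid spatial center for $\vec w$, with compactness modulus inherited from $\vec u$ and with $N(s) \equiv 1$ inherited from the fact that $\vec u$ has frequency scale $1$. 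The uniform estimate from the second step transfers to the limit, giving $|y(s) - s\, e_1| \lesssim \sqrt{|s|} + 1$ for all $s \in \R$, completing the construction.

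The main technical obstacle is the angular rigidity step, whose success depends on the \emph{two-sided} lower bound $|y_n(s)| \ge |s| - O(1)$ produced by recentering at $\tilde t_n$. Without symmetrization, one would have access only to $|y_n(s)| \ge s - O(1)$, which is trivial for $s \le 0$ and would fail to pin down $\hat e_n(s)$ in the negative direction. The fact that the perpendicular error reads $\sqrt{|s|}$ — rather than the $\sqrt{A_n}$ that a naive lens argument between the two endpoints of $[t_n, t_n + A_n]$ would yield — is the crucial gain from exploiting the failure inequality at every intermediate $s$, not only at the endpoints.
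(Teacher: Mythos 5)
Your argument is correct, and it ends with the same quantitative kernel as the paper --- the law-of-cosines expansion showing that two nearly radial increments of a nearly lightlike path force an angular deviation of size $|s|^{-1/2}$ --- but it organizes the compactness bookkeeping differently. The paper performs \emph{two} successive extractions: it first translates to the left endpoints $t_m$ of the failure windows to obtain a half-line solution whose center satisfies $|x_\I(\tau)|\ge\tau$ and, by finite speed, $|x_\I(\tau)|/\tau\to1$; it then recenters a second time at late times where $|x_\I(\tau)|$ hugs the cone to within $2^{-m}$, producing a solution on all of $\R$ with the two-sided increment bound $|s_1-s_2|\le|\ti x(s_1)-\ti x(s_2)|\le |s_1-s_2|+\ti C$, and only then runs the polar-coordinate estimate on the limiting center. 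You instead recenter once, at the midpoint of the failure window, and get the two-sided bound $|y_n(s_1)-y_n(s_2)|=|s_1-s_2|+O(1)$ directly on the original solution by pivoting the triangle inequality through $x(t_n)$ (failure lower bound from the left endpoint plus the finite-speed bound \eqref{eq:dxt}), proving the angular rigidity pre-limit and passing to a single limit; this merges the paper's two limiting procedures into one and treats both time directions symmetrically from the start, at the harmless cost of an $O(1)$ rather than $o(1)$ loss in the lower increment bound. Two small points of care: the claimed uniformity in $n$ of $|\hat e_n(s)-\sgn(s)e_1|\lesssim|s|^{-1/2}$ only holds if you rotate each $\vec w_n$ so that $\hat e_n(A_n/2)=e_1$ exactly (otherwise there is an extra $|s|\,o_n(1)$ error, which anyway vanishes in the limit, exactly as the paper's ``fixed spatial rotation'' does), and the term $|s|A_n^{-1/2}$ arising from the $O(A_n^{-1})$ error in the dot-product identity is bounded by $\sqrt{|s|}$ only because $|s|\le A_n/2$ --- worth stating explicitly. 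The globality of the limit and the validity of the limiting center (defined via diagonal extraction and adjusted by $O(1)$) are asserted at the same ``standard'' level of detail as in the paper's own proof, which is acceptable.
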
 

\begin{proof}
Let $\vec u(t)$ be a solution to~\eqref{eq:nlw} with the compactness property on $\R$ with parameters $N(t)  \equiv 1$ and $x(t)$ failing to be subluminal. This means we can find a sequence $t_m$ and intervals 
\ant{
I_m:= [t_m, t_m + m]
}
such that 
\EQ{\label{eq:nsl1} 
\abs{x(t_m) - x(t)} \ge \abs{t_m - t} - \frac{1}{m} \quad \forall t \in I_m.
}
We construct a sequence as follows. Set 
\EQ{
\vec u_m(0) :=  \vec u(t_m,  \cdot - x(t_m)).
}
Using the pre-compactness of the trajectory of $\vec u$ modulo the translations by $x(t)$ we can (passing to a subsequence) extract a strong limit 
\EQ{
\vec u_m(0) \to \vec u_\I(0) \in \dot \HH^{s_p} \mas m \to \infty.
}
Let $\vec u_\infty(\tau)$ be the solution to~\eqref{eq:nlw} with initial data $\vec u_{\infty}(0)$. One can show that we must have $[0, \infty) \subset I_{\max}(\vec u_{\infty})$ and that $\vec u_{\infty}$ satisfies the following compactness property on $[0, \infty)$: the set 
\EQ{
 K_\infty  := \{ \vec u_\I(  \tau,  \cdot - x_\infty(\tau)) \mid \tau \in [0, \infty)\}
 }
 is pre-compact in $\dot \HH^{s_p}(\R^3)$, where for each $\tau>0$ the function $x_\infty(\tau) $ is defined by 
 \EQ{
 x_\infty(\tau):= \lim_{m \to \infty} ( x(t_m + \tau) - x( t_m)).
 }
 Note that  for each $\eps>0$ and for all  $\tau \in [0, \infty)$ we can choose $M>0$ large enough so that for all $m  \ge M$ we have 
 \ant{
 \abs{ x_\I (\tau)} + \eps \ge \abs{ x(t_m + \tau) - x( t_m)} \ge  \abs{\tau} - \frac{1}{m},
 }
 where the last inequality follows from~\eqref{eq:nsl1}.  Letting $m \to \infty$ above, we conclude that in fact 
 \EQ{
 \abs{x_\I(\tau)} \ge  \tau \quad \forall  \tau \in [0, \infty),
 }
 By finite speed of propagation (see~\eqref{eq:fsp2}) we can conclude that in fact
 \ant{
 \lim_{ \tau \to \infty} \frac{ \abs{x_\I(\tau)}}{\tau}    = 1,
 }
 We now refine our solution again, this time constructing a suitable limit from $\vec u_{\infty}(\tau)$. Indeed, using the previous two lines, choose a sequence $\tau_m \to \infty$ so that
 \EQ{
 \tau \le  \abs{x_\I(\tau)} \le \tau + 2^{-m} \quad \forall \tau \in J_m:= [\tau_m- m, \tau_m +m],
 }
 It is clear from elementary geometry that for large enough $m$, 
 \EQ{\label{eq:dxtau}
 \abs{ x_\I(\tau) - x_\I(t)} \ge \abs{ t - \tau} - \frac{1}{m} \quad \forall t, \tau \in J_m,
 }
 As before we extract a limit from the sequence 
 \ant{
 \vec u_{\I,m}(0) :=  \vec u_\I(\tau_m,  \cdot - x_\I(\tau_m)) \to \vec v(0) \in \HH^{s_p}
 }
 and we note that the solution $\vec v(s)$ to~\eqref{eq:nlw} with data $\vec v(0)$ has the compactness property on $\R$ with parameters $\ti N(s) \equiv 1$ and $\ti x(s)$ defined by 
 \ant{
 \ti x(s):= \lim_{m \to \infty} ( x_{\infty}(\tau_m + s) - x_{\I}(\tau_m))
 }
 Using~\eqref{eq:dxtau} along with~\eqref{eq:dxt}  we see that for all $s_1, s_2 \in \R$ we have 
 \EQ{ \label{eq:tiC} 
 \abs{s_1 - s_2} \le \abs{ \ti x(s_1) - \ti x(s_2)} \le \abs{s_1 -s_2} + \ti C
 }
 for some absolute constant $\ti C>0$ and moreover we still have that 
 \EQ{ \label{xbigs} 
 1 \le \frac{\abs{\ti x(s)}}{\abs{s}} \to 1 \mas s \to \pm \infty,
 }

 Now we express $\ti x(s)$ in polar coordinates, finding $r(s) \ge 0$ and $\om(s) \in \Sp^2$ so that 
 \EQ{
 \ti x(s) = r(s) \om (s) \quad \forall s \in [0, \infty).
 }
Note that by~\eqref{xbigs} we have 
 \EQ{
 \frac{r(s)}{s} \to 1 \mas s \to \infty.
 }
 Since $\om(s) \in \Sp^2$ we can find a sequence $s_m \to \infty$ and we can (up to passing to a subsequence) find a limit $\om_0$ so that 
 \EQ{
 \om(s_m) \to  \om_0 \mas m \to \infty.
 }
To prove the claim, it suffices to verify that
\[
|\tilde{x}(s) - s\omega_0| \leq C \sqrt{s},
\]
since then we obtain the desired result applying a fixed spatial rotation. Note that
\[
|s_2\omega(s_2)-s_1\omega(s_1)|^2 = |s_1-s_2|^2 + s_1 s_2 |\omega(s_2)-\omega(s_1)|^2,
\]
which by finite speed of propagation yields
\[
|\omega(s_2)-\omega(s_1)| \leq \sqrt{\frac{2C|s_1-s_2|+C^2}{s_1s_2}}. 
\] 
Then
\begin{align*}
|&(s_n+s)\omega(s_n+s)-s_n\omega(s_n)-s\omega_0| \\
&\leq |s_n+s|\bigl| \omega(s_n+s)-\omega(s_n)\bigr| + s|\omega(s_n)-\omega_0| \\
& \leq \sqrt{(2Cs+C^2)(1+\tfrac{s}{s_n})} + s|\omega(s_n)-\omega_0|, 
\end{align*} 
which implies
\[
|\tilde{x}(s)-s\omega_0| \leq \sqrt{2Cs+C^2},
\]
as required.

  \end{proof}

In the case that $N(t)  \ge 1$  and $x(t)$ is not subluminal, we will now show that we can also reduce to the case when $N(t) = 1$ for all $t \in \R$ and $x(t) = (t, 0, 0) + O(\sqrt{{\abs{t}}})$. We will need the following lemma.
\begin{lem}\label{l:mon} 
Let $\vec u(t)$ have the compactness property on $I \subset \R$ with parameters $N(t)$ and $x(t)$. Let $t_1, t_2 \in I$ be any times so that $N(t_1) \le  N(t_2)$. Then there exists a uniform constant $c \in (0, 1)$ such that 
\EQ{ \label{eq:mon} 
 \abs{x(t_1) - x(t_2)} \ge \abs{ t_1 - t_2} - \frac{c}{ N(t_1)}  \Longrightarrow N(t_2) \le \frac{1}{c^2} N(t_1)
}
\end{lem}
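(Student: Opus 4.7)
I would argue by contradiction, combining finite speed of propagation for the nonlinear wave equation, the perturbation Lemma~\ref{l:pert}, and the non-concentration on thin strips from Lemma~\ref{L:strips}.

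Suppose the conclusion fails: there are times $t_1, t_2$ satisfying the hypotheses but with $N(t_2) > N(t_1)/c^2$, where $c \in (0,1)$ is to be chosen small. Write $T := |t_1 - t_2|$. By Lemma~\ref{l:const}, if $T \le \epsilon_0/N(t_1)$ then $N(t_2) \le C_0 N(t_1)$; taking $c < 1/\sqrt{C_0}$ rules this case out, so I may assume $T > \epsilon_0/N(t_1)$. Fix $\eta_0 > 0$ smaller than a fixed fraction of the uniform lower bound on $\|\vec u(t)\|_{\dot\HH^{s_p}}$ provided by Corollary~\ref{C:acax} together with Sobolev embedding, and set $R := R(\eta_0)$ from Remark~\ref{r:Reta} and $c_1 := c(\eta_0)$ from Lemma~\ref{L:strips}. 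Write $r_i := R/N(t_i)$, so that under the contradiction hypothesis $r_2 < c^2 r_1$.

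Next I would localize the mass of $\vec u(t_1)$ to a lens by a perturbation argument. Let $\chi$ be a smooth cutoff supported on a slight enlargement of $B(x(t_2), r_2)$ and set $\vec w(t_2) := \chi\,\vec u(t_2)$. By Remark~\ref{r:Reta} and the commutator estimate of Lemma~\ref{L:commutator}, $\|\vec w(t_2) - \vec u(t_2)\|_{\dot\HH^{s_p}} \lesssim \eta_0$. Propagating $\vec w$ backward in time to $t_1$ via the nonlinear wave flow and applying Lemma~\ref{l:pert} (the required Strichartz bounds on $[t_1,t_2]$ being available from Proposition~\ref{small data} together with the compactness of the trajectory) gives $\|\vec w(t_1) - \vec u(t_1)\|_{\dot\HH^{s_p}} \lesssim \eta_0$. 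Finite speed of propagation for the nonlinear wave equation forces $\vec w(t_1)$ to be supported in $B(x(t_2), r_2 + T)$, and combining with the tail estimate from Remark~\ref{r:Reta} at time $t_1$, essentially all of the $\dot\HH^{s_p}$-mass of $\vec u(t_1)$ concentrates in the lens
\[
\Omega := B(x(t_1), r_1) \cap B(x(t_2), r_2 + T),
\]
i.e.\ $\|\vec u(t_1)\|_{\dot\HH^{s_p}(\Omega)} \gtrsim 1$.

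The main obstacle is the final geometric step. Set $\omega_0 := (x(t_2) - x(t_1))/|x(t_2)-x(t_1)|$ and decompose $x - x(t_1) = \alpha\,\omega_0 + y_\perp$. The two defining constraints of $\Omega$ read $\alpha^2 + |y_\perp|^2 \le r_1^2$ and $(d-\alpha)^2 + |y_\perp|^2 \le (r_2 + T)^2$ with $d := |x(t_1)-x(t_2)| \ge T - cr_1/R$. The goal of this last step is to use $r_2 < c^2 r_1$ and the near-luminal displacement bound to conclude that $\Omega$ sits inside a strip $\{|\omega \cdot (x-x(t_1))| \le c_1/N(t_1)\}$ for some $\omega \in \mathbb{S}^2$ to which Lemma~\ref{L:strips} applies; once that geometric inclusion is in hand, Lemma~\ref{L:strips} yields $\|\vec u(t_1)\|_{\dot\HH^{s_p}(\Omega)} \le \eta_0$, contradicting the lower bound above for $\eta_0$ sufficiently small. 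Since the lens is generically not thin in the $\omega_0$-direction over the whole range of admissible $d$, the delicate point is to exploit the tight relation between the two sphere constraints — for instance by splitting into cases according to the relative size of $T$ versus $r_1$, in one case extracting thinness in the radial direction from $x(t_2)$ where the thickness scales like $r_2 + cr_1/R \ll c_1/N(t_1)$ — and this is where the choice of $c$ small relative to $c_1/R$ is used.
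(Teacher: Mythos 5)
There is a genuine gap, and it lies in the step you yourself flag as delicate: the lens is simply not thin, and no choice of $c$ makes it so. In the main regime $T:=\abs{t_1-t_2}\gg r_1=R(\eta_0)/N(t_1)$, the hypothesis gives $\abs{x(t_1)-x(t_2)}\ge T-(c/R)r_1$ while finite speed of propagation gives $\abs{x(t_1)-x(t_2)}\le T+O(r_1)$; hence the boundary sphere of $B(x(t_2),r_2+T)$ passes within $O(r_1)$ of $x(t_1)$ and is essentially flat at scale $r_1$ (curvature corrections are $O(r_1^2/T)$). Consequently $\Omega=B(x(t_1),r_1)\cap B(x(t_2),r_2+T)$ contains, up to an $O((c/R+c^2)r_1)$ translate, the entire half of $B(x(t_1),r_1)$ facing $x(t_2)$ — a set of thickness comparable to $R(\eta_0)/N(t_1)$, far larger than the admissible strip width $c(\eta)N(t_1)^{-1}$ of Lemma~\ref{L:strips}. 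So the localization of the $\dot\HH^{s_p}$ density of $\vec u(t_1)$ to $\Omega$ contradicts nothing by itself: since $x(t)$ is only determined up to $O(1/N(t))$, mass confined to a half-ball around $x(t_1)$ is perfectly consistent with the compactness property. The missing ingredient is the \emph{centered} normalization of the spatial center (Definition~\ref{D:centered}, Proposition~\ref{P:centered}), which is exactly what the paper's proof contradicts: for a centered $x(t)$, every half-space through $x(t_1)$ carries a fixed amount $C(u)$ of $\abs{\nsp u(t_1)}^2+\abs{\nspt u_t(t_1)}^2$, whereas the localization shows the half-space on the far side of $x(t_1)$ from $x(t_2)$ carries only $O(\eta^2)$; Lemma~\ref{L:strips} is used only to cross the strip of width $O(c/N(t_1))$ between that region and the exact half-space through $x(t_1)$. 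Without invoking centeredness your argument cannot close.

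There is also a quantitative problem in your localization step. You keep the bulk of $\vec u(t_2)$ and propagate it back to $t_1$ via Lemma~\ref{l:pert}; but that lemma requires an a priori bound $A$ on $\|u\|_{S([t_1,t_2])}$, and its threshold $\e_0(A)$ and constant $C_0(A)$ degenerate as $A$ grows. Since $\|u\|_{S([t_1,t_2])}$ is not uniformly bounded in the pair $(t_1,t_2)$ (it grows with $\int_{t_1}^{t_2}N(t)\,\ud t$), you cannot choose $\eta_0$, hence $c$, uniformly — yet the lemma asserts a single uniform $c$. The paper's proof avoids this by reversing the cutoff: it removes the \emph{exterior} piece of $\vec u(t_2)$ on a half-space at distance $\sim c/N(t_1)\gg R(\eta)/N(t_2)$ behind $x(t_2)$ (small by Remark~\ref{r:Reta}, using precisely $N(t_2)\ge c^{-2}N(t_1)$), evolves that small piece globally by the small-data theory, and uses finite speed of propagation to conclude that $u(t_1)$ coincides with this uniformly small solution on the far half-space — no perturbation argument over a long interval is needed.
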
 
\begin{proof}[Proof of Lemma~\ref{l:mon}]
The argument adapts readily from \cite[Lemma~4.4]{KV11b}, using the arguments from \S\ref{subsec:analysis_of_props}. Exploiting time-reversal symmetry, space-translation symmetry, and rotation symmetry, we may assume $t_1<t_2$, $x(t_1)=0$, and $x(t_2)=(x_1(t_2),0,0)$ with $x_1(t_2)\geq 0$.  Further, we may choose $x(t)$ to be centered by Proposition \ref{P:centered}.

Suppose for contradiction that for times $t_1, t_2$ as in the statement of the lemma,
\[
 \abs{x(t_1) - x(t_2)} \ge \abs{ t_1 - t_2} - \frac{c}{ N(t_1)} 
 \]
but $cN(t_1)^{-1} \geq c^{-1} N(t_2)^{-1}$, where $c=c(u)$ will be chosen sufficiently small below.   

Let $\psi:\R\to[0,\infty)$ be a cutoff so that $\psi=1$ for $x\leq -1$ and $\psi =0$ for $x\geq -\frac12$.  Set $\psi_2(x_1)=\psi(\frac{x_1-x_1(t_2)}{cN(t_1)^{-1}})$.  Then, given $\eta>0$ and choosing $c=c(\eta)$ sufficiently small, we have
\[
\|(\psi_2u(t_2),\psi_2u_t(t_2))\|_{\H^{s_p}} < \eta. 
\]
Choosing $\eta$ small enough, the small-data theory and finite speed of propagation for \eqref{eq:nlw} imply 
\[
\int_{\Omega} |\nsp u(t_1,x)|^2 + |\nspt u_t(t_1,x)|^2 \,\ud x \lesssim \eta^2,
\]
where
\[
\Omega = \{x:x_1 \leq x_1(t_2)-(t_2-t_1)-cN(t_1)^{-1}\}. 
\]
Using the assumption on $|x(t_2)-x(t_1)|$ and the normalizations above, one finds
\[
\Omega\supset\{x:-e_1\cdot[x-x(t_1)]\geq 2cN(t_1)^{-1}\}, 
\]
so that
\[
\int_{-e_1\cdot[x-x(t_1)]\geq 2cN(t_1)^{-1}} |\nsp u(t_1,x)|^2 + |\nspt u_t(t_1,x)|^2 \,\ud x \lesssim \eta^2. 
\]
On the other hand, choosing $c=c(\eta)$ sufficiently small, Lemma~\ref{L:strips} implies
\[
\int_{0<-e_1\cdot[x-x(t_1)]< 2cN(t_1)^{-1}} |\nsp u(t_1,x)|^2 + |\nspt u_t(t_1,x)|^2 \,\ud x < \eta^2.
\]
We now choose $\eta^2\ll C(u)$, where $C(u)$ is as in Definition~\ref{D:centered}, to reach a contradiction to Proposition \ref{P:centered}.\end{proof}

We are now in a position to prove that we can extract a traveling wave solution from any solution with compactness property  with translation parameter $x(t)$ that fails to be subluminal.
\begin{prop} \label{p:nsl}
 Suppose that $\vec u(t)$ is a solution with the compactness property on $\R$ with parameters $N(t)$ and $x(t)$. Suppose that either $N(t)$ is soliton-like or doubly concentrating in the sense of Proposition~\ref{p:3cases}  and that $x(t)$ fails to be subluminal in the sense of Definition~\ref{d:sl}. Then there exists a (possibly different) solution $\vec w(s)$ to~\eqref{eq:nlw} with the compactness property 
on $\R$ with parameters $N(s)$ and $x(s)$ satisfying
\EQ{
N(s) \equiv 1, \quad  |x(s) - (s, 0, 0)| \lesssim \sqrt{s} \quad  \forall s \in \R. 
}
\end{prop}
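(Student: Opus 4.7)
The soliton-like case ($N(t) \equiv 1$) is exactly Proposition \ref{p:hans}. The overall plan for the doubly concentrating case ($N(t) \geq 1$ with $\limsup_{t \to \pm\infty} N(t) = \infty$, and $x(t)$ not subluminal) is to extract, through two successive limits, a solution with the compactness property on all of $\R$ whose frequency parameter is bounded above and below and whose spatial center admits two-sided near-light-speed motion. Scaling invariance of $\dot \H^{s_p}$ will then permit replacing the bounded frequency parameter by the constant $1$, reducing matters to Proposition \ref{p:hans}, which yields the desired traveling wave.

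For the first extraction, use the failure of subluminality to choose sequences $A_m \to \infty$ and $t_m$ with $|x(t) - x(t_m)| > |t - t_m| - 1/(A_m N(t_m))$ for $t \in [t_m, t_m + A_m/N(t_m)]$, and rescale by $N(t_m)$ to form $\vec u_m$ with compactness parameters $\tilde N_m(0) = 1$, $\tilde x_m(0) = 0$, and $|\tilde x_m(\tau) - \tilde x_m(0)| > \tau - 1/A_m$ for $\tau \in [0, A_m]$. The crucial step is to apply Lemma \ref{l:mon} in both directions: if $\tilde N_m(\tau) \geq 1$, apply it with $(t_1, t_2) = (0, \tau)$ to obtain $\tilde N_m(\tau) \leq c^{-2}$, where $c$ is the constant from Lemma \ref{l:mon}; if $\tilde N_m(\tau) \leq 1$, apply it with $(t_1, t_2) = (\tau, 0)$, noting that $\tilde N_m(\tau) \leq 1$ makes $1/A_m \leq c/\tilde N_m(\tau)$ once $A_m$ is large, to obtain $\tilde N_m(\tau) \geq c^2$. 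Hence $\tilde N_m(\tau) \in [c^2, c^{-2}]$ on $[0, A_m]$. The compactness property of $\vec u$ yields a strong subsequential limit $\vec u_m(0) \to \vec u_\infty(0)$ in $\dot \H^{s_p}$; Lemma \ref{l:pert} extends $\vec u_\infty$ globally (the self-similar alternative from Proposition \ref{p:3cases} is excluded by the boundedness of $\tilde N_\infty$), giving a critical element with compactness parameters satisfying $\tilde N_\infty(\tau) \in [c^2, c^{-2}]$ and $|\tilde x_\infty(\tau)| \geq \tau$ for $\tau \geq 0$. The finite-speed bound \eqref{eq:dxt} then forces $|\tilde x_\infty(\tau)|/\tau \to 1$ as $\tau \to \infty$.

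The second extraction mimics the corresponding step in the proof of Proposition \ref{p:hans}. Select $\tau_m \to \infty$ with $|\tilde x_\infty(s)| \in [s, s + 2^{-m}]$ for $s \in [\tau_m - m, \tau_m + m]$, and extract from the shifted sequence $\vec u_\infty(\tau_m + \cdot\, ,\, \cdot + \tilde x_\infty(\tau_m))$ a limit $\vec v$ with the compactness property on all of $\R$. Exactly as in the argument for Proposition \ref{p:hans}, the geometric identity $|s_1 \omega_1 - s_2 \omega_2|^2 = |s_1 - s_2|^2 + s_1 s_2 |\omega_1 - \omega_2|^2$ combined with the finite-speed bound forces $|\tilde x_\infty(\tau_1) - \tilde x_\infty(\tau_2)| \geq |\tau_1 - \tau_2| - 1/m$ on this interval, which transfers in the limit to $|\hat x(s_1) - \hat x(s_2)| \geq |s_1 - s_2|$ for all $s_1, s_2 \in \R$; in particular $\hat x$ fails to be subluminal. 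The bounded frequency transfers as well, giving $\hat N(s) \in [c^2, c^{-2}]$ for $\vec v$.

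Finally, since the scaling action $f \mapsto \lambda^{-2/(p-1)} f(\cdot/\lambda)$ is continuous in $(\lambda, f) \in \R^+ \times \dot H^{s_p}$, and $\hat N(s)$ ranges in the compact set $[c^2, c^{-2}]$, applying a uniformly bounded family of rescalings to the precompact orbit defining $\vec v$'s compactness property still yields a precompact set. Hence the compactness property of $\vec v$ is preserved (at the cost of enlarging the compactness modulus) when $\hat N$ is replaced by the constant $1$. Thus $\vec v$ satisfies the hypotheses of Proposition \ref{p:hans}, and that result produces the traveling wave. I expect the main obstacle to lie in verifying the two-sided near-light-speed bound for $\hat x$ in the second extraction: this depends delicately on the sharp control $|\tilde x_\infty(s)|/s \to 1$ combined with the geometric triangle-inequality argument from Proposition \ref{p:hans}. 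A secondary subtlety is ensuring $\vec u_\infty$ is globally defined on $\R$ with the compactness property — handled by the perturbation lemma, the fact that bounded $\tilde N_\infty$ rules out self-similarity, and minimality of the critical element.
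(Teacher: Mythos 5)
Your proposal is correct and follows essentially the same route as the paper: use the failure of subluminality to select times $t_m$ with long light-speed intervals measured in units of $N(t_m)^{-1}$, apply Lemma~\ref{l:mon} in both directions to pin the rescaled frequency parameter in $[c^2,c^{-2}]$, extract a limit with $|\tilde x(s)|\ge s$, and then reduce to Proposition~\ref{p:hans}. The paper compresses the final steps (the second extraction along $\tau_m\to\infty$ and the renormalization of the bounded frequency parameter to $N\equiv 1$) into a remark that they are straightforward, and your proposal simply fills in those same details.
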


\begin{proof}[Proof of Proposition~\ref{p:nsl}]
Note that by Proposition~\ref{p:hans} it suffices to show that we can extract a solution with the compactness property on $\R$ with parameters $N(t)  =1$ and $x(t)$ failing to be subluminal.  By our assumption that $x(t)$ fails to be subluminal, for each $m \in \N$ there exists $t_m \in \R$ so that 
\EQ{\label{eq:nsl2} 
 \abs{ x(t_m) - x(t)}  \ge \abs{t - t_m} - \frac{1}{m N(t_m)} \quad  \forall t \in I_m:= [t_m, \, t_m + \frac{m}{N(t_m)}].
 }
 We will show that $N(t) \simeq N(t_m)$ for all $t \in I_m$ with constants independent of $m$. First assume that $$N(t_m) \le N(t).$$ Then 
 by Lemma~\ref{l:mon} we can  find a constant $c>0$ so that 
 \EQ{\label{eq:ub2} 
 c^2 N(t) \le  N(t_m) \le  N(t) \quad \forall t \in I_m.
 }
 Next assume that 
\ant{
 N(t) \le N(t_m).
 }
 This means that 
 \ant{
 -\frac{1}{N(t_m)} \ge - \frac{1}{N(t)}
 }
 and thus from~\eqref{eq:nsl2} we see that 
 \EQ{
  \abs{ x(t_m) - x(t)}  \ge \abs{t - t_m} - \frac{1}{m N(t_m)}  \ge \abs{t - t_m} -\frac{1}{M N(t)}.
 }
 Another application of Lemma~\ref{l:mon} then gives
 \EQ{
 N(t) \le N(t_m) \le \frac{1}{c^2} N(t).
 }
 As we can assume in Lemma~\ref{l:mon} that $c<1$ we deduce that 
 \EQ{\label{eq:cnc} 
 c^2 N(t)  \le N(t_m) \le \frac{1}{c^2} N(t) \quad \forall t \in I_m.
 }
We can then extract, in the usual manner a new solution $\vec w(s)$ with the compactness property on $[0, \infty)$ with 
\EQ{
&\ti N(s):= \lim_{m \to \infty} \frac{N( t_m + \frac{s}{N(t_m)})}{N(t_m)}, \\ 
&\ti x(s) := \lim_{m \to \infty} N(t_m) ( x(t_m + \frac{s}{N(t_m)}) - x(t_m)). 
}
Note that by~\eqref{eq:cnc} we must have 
\EQ{
c_1 \le \ti N(s) \le C_1  \quad \forall \, s \in[0, \infty).
}
 Moreover, using~\eqref{eq:nsl2}, for each $\eps>0$ we can find $M>0$ large enough so that for each $m \ge M$ we have
\ant{
\abs{ \ti x(s)}  + \eps  &\ge \abs{N(t_m) ( x(t_m + \frac{s}{N(t_m)}) - x(t_m)) }\\
& \ge N(t_m) \abs{ \frac{s}{N(t_m)} - \frac{1}{m N(t_m))}}  \ge  s - \frac{1}{m}.
}
Letting $m \to \infty$ we obtain
\EQ{
\abs{\ti x(s)} \ge s \quad  \forall s \in [0, \infty).
}
Noting that $\ti x(0) = 0$ and combining the above with~\eqref{eq:fsp2} we conclude that 
\EQ{
1 \le \frac{ \abs{\ti x(s)}}{s}  \to 1 \mas s \to \infty.
}
From here it is straightforward to obtain a new solution $\vec w(s)$ with the compactness property on all of $\R$ with parameters $N(s) \equiv 1$ and $x(s)$ failing to be subluminal in the sense of Definition~\ref{d:sl}, and we apply Proposition \ref{p:hans} to conclude. 
\end{proof} 

Finally, we now have the ingredients necessary to prove Proposition~\ref{p:cases}.

\begin{proof}[Proof of Proposition~\ref{p:cases}]
Suppose $\vec u(t)$ is a solution to~\eqref{eq:nlw} with the compactness property on its maximal interval of existence $I_{\max}$ with compactness parameters $N(t)$ and $x(t)$. By Proposition~\ref{p:3cases}, if the solution has the compactness property with $N(t) = t^{-1}$,  then we may also assume without loss of generality that it has the compactness property with translation parameter $x(t) = 0$: by finite speed of propagation, $x(t)$ must remain bounded, and hence we may, up to passing to a subsequence, obtain a pre-compact solution with $x(t) = 0$ by applying a fixed translation. Thus, in the case that $N(t) = t^{-1}$ we obtain a self-similar solution, i.e. we have reduced to case (III). 

In the remaining cases we must address different scenarios depending on whether or not $x(t)$ is subluminal in the sense of Definition~\ref{d:sl}. If $x(t)$ is subluminal, then we have reduced ourselves to cases (I) and (II). If $x(t)$ fails to be subluminal, then by Proposition~\ref{p:nsl} we can find a critical element as in the traveling wave scenario, i.e. case (IV). This concludes the proof.
\end{proof}

\section{The soliton-like critical element} \label{s:soliton} 

In this section we show that the soliton-like critical element, that is, case (I) from Proposition~\ref{p:cases}, cannot exist. The main result is the following proposition: 
 
% \begin{prop}\label{prop:soliton_zero}
%Suppose  $\vec u(t)$ is a global solution to~\eqref{eq:nlw} with the compactness property on $\R$ as defined in~\eqref{d:cp} with parameters $N(t)  \equiv 1$, and $x(t)$ subluminal in the sense of Definition~\ref{d:sl}. Then $\vec u(t) \equiv 0$.
% \end{prop}

 \begin{prop}\label{prop:soliton_zero}
There are no soliton-like critical elements for~\eqref{eq:nlw}, in the sense of Case (I) of Proposition \ref{p:cases}.
 \end{prop}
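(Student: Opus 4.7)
The plan is to argue by contradiction in two steps: first, upgrade any putative soliton-like critical element $\vec u_c$ from $\dot\H^{s_p}$ to $\dot\H^{s_p}\cap\dot\H^{1+\eps}$ for some $\eps > 0$, so that its trajectory is pre-compact in $\dot\H^1$; second, use a Lorentz-compactified virial argument at the energy level to produce a contradiction. The extra regularity is the technical heart of the argument and occupies the bulk of the section.

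The regularity step proceeds via the double Duhamel trick. Invoking Lemma~\ref{l:weak} at $t_0 = 0$ and applying the Littlewood-Paley projector $P_k$ to each Duhamel integral, it suffices to show that
\[
\Big\langle P_k \! \int_{T_1}^{0} S(-s)(0,\abs{u_c}^{p-1}u_c)\, \ud s,\ P_k \! \int^{T_2}_{0} S(-\tau)(0,\abs{u_c}^{p-1}u_c)\, \ud\tau \Big\rangle_{\dot H^1\times L^2}
\]
obeys a frequency envelope bound of the form $\lesssim \be_k^2$ with $\{\be_k\}\in\ell^2$, uniformly in $T_1 < 0 < T_2$. Since $N(t)\equiv 1$, Remark~\ref{r:Reta} together with Proposition~\ref{P:centered} furnishes, for any small $\eta_0 > 0$, a single radius $R = R(\eta_0)$ such that the $\dot\H^{s_p}$ mass of $\vec u_c(t)$ outside $B(x(t), R)$ is at most $\eta_0$ for all $t\in\R$. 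I would then split the spacetime domain of the two time integrations into three types of regions centered at $t_0 = 0$:
\begin{enumerate}
\item \textbf{Fixed time slab} $A := [-R,R]\times\R^3$. Both Duhamel integrals are estimated in a Strichartz norm at regularity $\dot\H^1$ via Proposition~\ref{p:strich}; since $|A| < \infty$ and $\|\vec u_c(t)\|_{\dot\H^{s_p}}$ is bounded uniformly, interpolation and Bernstein (Lemma~\ref{l:bern}) convert the $s_p$-regularity into the needed $\dot H^1$-control with $2^k$-dependence summable in a frequency envelope.
\item \textbf{Exterior of the cone}, $B_\pm := \{(t,x):\ \pm t > R,\ |x - x(t)| > R + |t|\}$. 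Subluminality of $x(t)$ (Lemma~\ref{l:sl}) propagates the $\dot\H^{s_p}$-tail smallness of $\vec u_c$ into $B_\pm$ on each time slice; a small-data/perturbation argument using Lemma~\ref{l:pert} and Proposition~\ref{small data} absorbs the time integration.
\item \textbf{Interior of the cone at opposite times}, $C_\pm := \{\pm t > R,\ |x - x(t)| \le R + |t|\}$. This is the crucial region: the cross pairing $\langle C_+, C_-\rangle$ vanishes identically by the sharp Huygens principle in $3+1$ dimensions, since the forward cone from any point in $C_+$ and the backward cone from any point in $C_-$, all emanating from times $|t| > R$, have disjoint projection onto $\{t_0 = 0\}$.
\end{enumerate}
The mixed pairings $\langle A, B_\pm\rangle$ and $\langle A, C_\pm\rangle$ are handled by combining Strichartz on $A$ with either smallness on $B_\pm$ or Huygens-type kernel separation on $C_\pm$. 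Summing the resulting envelope estimate and appealing to standard bootstrap (choosing the envelope parameter $\sigma$ from Definition~\ref{d:fren} small enough relative to $\eps$, and $\eps$ small enough relative to $p-3$) yields a uniform $\dot\H^{1+\eps}$ bound on $\vec u_c(t)$, which by compactness lifts $K_{\R}$ to be pre-compact in $\dot\H^1$.

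With $\vec u_c$ now of finite and conserved energy, the action of the Lorentz boosts is compactified on the trajectory, and a standard reduction sends us to the case of vanishing total momentum $\vec P(\vec u_c) = 0$. A virial/Morawetz computation with a weight centered at $x(t)$ and truncated at scale $R(\eta_0)$ --- where subluminality of $x(t)$ controls the transport error coming from $\dot x(t)$ --- then produces a strictly positive lower bound on the time derivative of a quantity that is a priori bounded by compactness, yielding the desired contradiction as in \cite{KM08}. The main obstacle throughout is the regularity step: in \cite{DL1} the analog of the interior-cone estimate relied crucially on radial Sobolev embedding, which is unavailable here. Replacing that input with the sharp Huygens cancellation on $C_\pm$, and using the slab $A$ and the tails $B_\pm$ to mop up the remaining pieces, is the geometric observation that makes the non-radial argument close.
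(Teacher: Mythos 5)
Your overall architecture is the same as the paper's (double Duhamel pairing split into a compact time slab, an exterior small-data region, and interior-cone pieces killed by sharp Huygens, followed by a Lorentz-compactified virial argument), but the crucial region-$C$ cancellation does not hold as you have set it up. First, you take the slab to be $[-R,R]$, i.e.\ you place the cone vertices at times $\pm R$. The vanishing requires that the ball $\{|x-x(\pm T)|\le R\}$ lie strictly inside the light cone through the origin with a definite margin; this is exactly why the paper takes $T=2R\delta_0^{-1}$, so that subluminality (Lemma~\ref{l:sl}) gives $|x(\pm T)|\le(1-\delta_0)T$ and hence the inclusion \eqref{scsl} with margin $\tfrac{\delta_0}{2}T$. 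With $T=R$ the difference $F(u)-F(v)$ at time $t>R$ is only supported in $\{|x|\le t+(1-\delta_0)R\}$, which protrudes outside the cone, and the $S(t-\tau)$-propagated supports from the two time directions then overlap in a shell of width $\sim R$: the cross term does not vanish. Second, your stated reason for the vanishing, disjointness of the projections onto $\{t=0\}$, is false even with the correct $T$: evolving each interior piece to time $0$ produces two large overlapping annuli. The actual mechanism is a pairing at a single time: by sharp Huygens, $S(\tau-t)$ applied to the piece supported in $\{|x|\le t-\tfrac{\delta_0}{2}T\}$ lands in $\{|x|\ge|\tau|+\tfrac{\delta_0}{2}T\}$, i.e.\ outside the ball $\{|x|\le|\tau|-\tfrac{\delta_0}{2}T\}$ supporting the other piece at time $\tau$. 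Third, this support argument is incompatible with the sharp projections $P_k$ you use, whose kernels are not compactly supported; the paper introduces the mollified projections $Q_{<k}$ of \eqref{qk_def} precisely so that frequency localization blurs supports only by $O(2^{-k})$, which the $\tfrac{\delta_0}{4}T$ margin absorbs for $k$ large.

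Two further gaps. You propose to estimate the mixed pairings $\langle A,C_\pm\rangle$ and $\langle A,B_\pm\rangle$ directly, invoking ``Huygens-type kernel separation'' for the former; but there is no support separation between the slab and cone contributions, and, more importantly, $C$ and $C'$ are not individually bounded in the relevant norm (that is the whole point of the double Duhamel trick), so Cauchy--Schwarz is unavailable. The paper avoids all mixed terms at once via the algebraic inequality \eqref{linal}, valid because $A+B+C=A'+B'+C'$. Finally, the one-shot jump from $\dot\H^{s_p}$ to $\dot\H^{1+\eps}$ cannot work with an envelope at $s_p$ regularity: the admissible gain per bootstrap is $\sigma<\tfrac2q-s_p<1-s_p$, and the slab estimate at the $\dot H^1$ level uses $\|u\|_{L_t^pL_x^{2p}}$, a Strichartz norm at regularity $s_0=\tfrac32-\tfrac5{2p}>s_p$ that is not controlled by the $\dot\H^{s_p}$ bound alone. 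This is why the paper proceeds in stages ($s_p\to s<1$, then $s_0\to 1$, then $1\to s>1$), with the final gain above $\dot\H^1$ (needed for pre-compactness in $\dot\H^1$) exploiting decay available only after $\dot\H^1$ boundedness is known. Your rigidity sketch (finite energy, compactified boosts, zero momentum, virial) is an acceptable alternative in outline to the paper's time-averaged virial identity for the Lorentz-transformed solution, but the regularity step as written has the concrete defects above.
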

 We recall that soliton-like means that $\vec u(t)$ is a global solution to~\eqref{eq:nlw} with the compactness property on $\R$ as defined in~\eqref{d:cp} with parameters $N(t)  \equiv 1$, and $x(t)$ subluminal in the sense of Definition~\ref{d:sl}. We will show that any such solution with the compactness property is necessarily $\equiv 0$.
 
 The proof will be accomplished in two main steps. We are ultimately aiming to employ a rigidity argument based on a virial identity, which will show that any such critical element must then be identically 0. The key point here is that in order to access the virial identity, which is at $\dot \HH^1$ regularity, and to use it to prove Proposition \ref{prop:soliton_zero}, we first must prove that our critical element actually lies in a precompact subset of ~$\dot \HH^1$. Thus, we must first show that a soliton-like critical element must be more regular than expected. In fact, we will prove that the trajectory $K$ of any soliton-like critical element (see Definition \ref{d:cp})  must be pre-compact in $\dot \HH^1 \cap \dot \HH^{s_p}$.

Throughout this section, we assume towards a contradiction that $\vec u(t)$ is a critical element with $x(t)$ subluminal in the sense of Definition~\ref{d:sl} and $N(t)\equiv 1$.  In particular, by Lemma \ref{l:sl}  there exists $\delta_0>0$ so that
\[
|x(t)-x(\tau)|<(1-\delta_0)|t-\tau|\qtq{for all}|t-\tau|>\frac1{\delta_0}. 
\]

\subsection{Additional regularity} \label{s:soliton-reg}  We first prove that if the soliton-like critical element $\vec u$ has some additional regularity to begin with, then we can achieve $\dot \H^1$ regularity.  The key ingredient in our proof will be a double Duhamel argument, which will enable us to gain the requisite regularity for critical elements, while our main technical tool will be the use of a frequency envelope which controls the $\dot \H^1$ norm (see Definition~\ref{d:fren}). In order to exploit the sharp Huygens principle, we will use the following modified frequency projection operators:  let $\psi\geq 0$ be a smooth function supported on $|x|\leq 2$ satisfying $\psi=1$ on $|x|\leq 1$.  For $k\geq 0$, let
\begin{align}\label{qk_def}
Q_{<k} f(x) = \int_{\R^3} 2^{3k} \psi(2^k(x-y))f(y)\,dy. 
\end{align}
These satisfy the same estimates as the usual Littlewood--Paley projections (which instead use sharp cutoffs in frequency space), e.g. the Bernstein estimates in Lemma~\ref{l:bern}. 

We summarize the main ingredient in Proposition \ref{prop:soliton_zero}, the aforementioned additional regularity result, in the following proposition.

\begin{prop} \label{p:sol-reg} 
Suppose $\vec u$ is a soliton-like critical element. Then
\[
\vec u\in L_t^\infty\dot\H^{s_p} \quad \Longrightarrow \quad \vec u\in L_t^\infty  \dot\H^{s}. 
\]
for some $s > 1$. In particular, the set 
\EQ{
K := \{  \vec u(t, \cdot - x(t)) \mid t \in \R\}  \subset \dot{\HH^{s_p}} \cap \dot \HH^{1} 
}
is pre-compact in $\dot{\HH^{s_p}} \cap \dot \HH^{1} $. 
\end{prop}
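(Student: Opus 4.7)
The approach is to bootstrap a frequency envelope. By compactness of the trajectory $K$ in $\dot\H^{s_p}$ there is a frequency envelope $\beta=\{\beta_k\}_{k\in\Z}$ in the sense of Definition~\ref{d:fren} underneath which $\vec u(t)$ lies uniformly in $t\in\R$, with $\beta_k\to0$ as $|k|\to\infty$ by equicontinuity of the Fourier tails. The bootstrap claim is that there exists $\sigma_1>0$, depending only on $p$, such that
\[
\beta_k \;\lesssim\; 2^{-\sigma_1 k}\qquad (k\ge 0).
\]
Iterating this bound a bounded number of times raises the a~priori regularity from $\dot\H^{s_p}$ to $\dot\H^{s}$ for some $s>1$, whence pre-compactness of $K$ in $\dot\H^1$ follows from the pre-compactness in $\dot\H^{s_p}$, the uniform $\dot\H^s$-bound, and interpolation.

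The core tool is the double Duhamel identity. Fix $k\ge 0$ and $t_0\in\R$; after translating and using symmetries, assume $t_0=0$ and $x(0)=0$. Since $\vec u$ fails to scatter in either time direction, Lemma~\ref{l:weak} gives the two weak representations in $\dot\H^{s_p}$,
\[
\vec u(0) \;=\; -\int_0^{\infty} S(-\tau)F(\tau)\,\ud\tau \;=\; \int_{-\infty}^{0} S(-t)F(t)\,\ud t, \qquad F:=(0,\pm|u|^{p-1}u),
\]
so that inserting $P_k$ and pairing the two expressions yields
\[
\|P_k\vec u(0)\|_{\dot\H^{s_p}}^2 \;=\; -\Big\langle P_k\!\int_{-\infty}^{0}\! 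S(-t)F\,\ud t,\; P_k\!\int_{0}^{\infty}\! S(-\tau)F\,\ud\tau\Big\rangle_{\dot\H^{s_p}}.
\]
Fix small $\eta>0$ and let $R=R(\eta)$ be the compactness radius from Remark~\ref{r:Reta}, enlarged if necessary so that Lemma~\ref{l:sl} gives $|x(t)|\le(1-\delta_0)|t|$ for all $|t|\ge R$. Split each time integral at $|t|=R$, producing four bilinear pieces: near--near, near--far, far--near, and far--far.

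The three pieces involving the compact time window $[-R,R]$ are handled by the $\dot\H^{s_p}$-Strichartz estimates of Proposition~\ref{p:strich}. H\"older in time on a fixed interval, the Sobolev embedding $\dot H^{s_p}\hookrightarrow L^{3(p-1)/2}$, and the envelope control on the nonlinearity $|u|^{p-1}u$ (via standard product-rule / fractional chain arguments) bound each such piece by $C_R\cdot 2^{-2\sigma_1 k}$ for some $\sigma_1\in(0,1-s_p)$ chosen to exploit the spare regularity available when $3<p<5$. Crucially, $R$ is fixed once $\eta$ is, so $C_R$ is independent of $k$.

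The main obstacle is the far--far pairing, which cannot be handled by a sharp-frequency double Duhamel because the kernel of $P_k$ is not compactly supported in space, destroying the finite speed of propagation needed for Huygens cancellation. The plan is to replace $P_k$ in the far--far pairing by $P_k Q_{<k}+P_k(I-Q_{<k})$, with $Q_{<k}$ as in~\eqref{qk_def}. The operator $P_k(I-Q_{<k})$ is a pseudodifferential remainder with kernel of rapid off-diagonal decay at scale $2^{-k}$; it contributes a harmless term at the envelope level. For the main piece $P_kQ_{<k}$, further decompose at each time $F(t)=F(t)\mathbf{1}_{B(x(t),R)}+F(t)\mathbf{1}_{B(x(t),R)^c}$: the exterior factor carries $\dot H^{s_p-1}$-mass of order $\eta^{p-1}\beta_k$ by Remark~\ref{r:Reta} and the fractional chain rule, producing an overall contribution $\lesssim\eta\,\beta_k^2$ that is absorbed on the left for small $\eta$. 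The interior factor is spatially supported in $B(x(t),R)$, and $Q_{<k}$ preserves this support up to a buffer of width at most $2$; hence $Q_{<k}F(t)\mathbf{1}_{B(x(t),R)}$ is supported in $B(x(t),R+2)$. By the subluminality bound and the sharp Huygens principle for the free wave group in $\R^{1+3}$, the spatial support of $S(-t)$ applied to this compactly supported integrand for $t\le-R$ lies in an annulus about the origin that is disjoint from the corresponding annulus for $\tau\ge R$, provided $R$ is chosen large relative to $\delta_0^{-1}$. Thus the dominant interior--interior far--far pairing vanishes identically, and assembling all pieces produces
\[
\beta_k^2 \;\lesssim\; 2^{-2\sigma_1 k} + \eta\,\beta_k^2,
\]
which closes the bootstrap after absorbing $\eta\beta_k^2$ on the left. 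This scheme replaces the radial Sobolev embedding used in \cite{DL1} by the combination of $x(t)$-centered spatial localization and the modified projector $Q_{<k}$, which is the key technical point required in the non-radial setting.
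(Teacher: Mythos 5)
Your overall architecture (double Duhamel at a fixed time, compact time window plus exterior smallness plus Huygens cancellation for the interior--interior far interaction, closed by a frequency-envelope bootstrap) is the same as the paper's, but two of your key steps do not work as stated. First, the splitting $P_k=P_kQ_{<k}+P_k(I-Q_{<k})$ with the claim that $P_k(I-Q_{<k})$ is a ``harmless remainder'' is false: the symbol of $Q_{<k}$ is $\hat\psi(2^{-k}\xi)$ with $\psi$ compactly supported, so $\hat\psi$ is entire and cannot equal $1$ on any open set; on the annulus $|\xi|\simeq 2^k$ the factor $1-\hat\psi(2^{-k}\xi)$ is of size $O(1)$, and by scaling $P_k(I-Q_{<k})$ is a fixed unit-size operator, with no gain in $k$ and with a kernel that is not compactly supported, so the Huygens support argument does not apply to it. At the envelope level this piece contributes a full-size term $\simeq\beta_k^2$ with no small prefactor, and your bootstrap inequality does not close. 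One could try to salvage this by playing the rapid (scale $2^{-k}$) decay of that kernel against the fixed $O(\delta_0 T)$ support separation, but you do not do this; the paper instead runs the entire argument with the compact-kernel projections $Q_j$ and recovers the Sobolev norm through the square-function equivalence obtained by Schur's test, which is exactly the device your decomposition is trying to avoid. Second, your treatment of the exterior factor $F(t)\mathbf{1}_{B(x(t),R)^c}$ relies on fixed-time smallness of the $\dot H^{s_p}$ tails, but a bound on $\sup_t$ of a fixed-time norm gives no control of $\int_{-\infty}^{-R}S(-t)(\cdot)\,\ud t$ over an infinite time interval, and you cannot substitute spacetime norms of $u$ there because the critical element has infinite $S$-norm on each time half-line. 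The paper resolves this by introducing the auxiliary solution $\vec v$ with data $\chi_R\vec u(T)$, using the small-data theory to get global spacetime Strichartz bounds for $v$, finite speed of propagation to identify $u\equiv v$ outside the cone, and the commutator estimate of Lemma~\ref{L:commutator} to transfer the envelope from $u(T)$ to $v(T)$; none of this machinery appears in your sketch, and without it the far-region contribution is not estimated.

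There is also a quantitative problem in the near pieces: your claim that the terms involving $[-R,R]$ are bounded by $C_R 2^{-2\sigma_1 k}$ with no envelope on the right cannot be correct, since the frequency-$2^k$ output of $|u|^{p-1}u$ generically contains an input at frequency $\gtrsim 2^k$ whose size is precisely what is being estimated. The correct statement is envelope-recursive: after splitting the nonlinearity at a compactness-determined frequency $k_0$, the high-frequency interactions appear multiplied by the small factor coming from Lemma~\ref{l:DL1lem} on short subintervals (with the number of subintervals controlled because $N(t)\equiv1$, so that one needs $\eta^{p-1}T^{1/2}\ll1$), and only the all-low-frequency term $u_{\le k_0}u^{p-1}$ produces the $2^{-k\sigma}$ gain; this is the engine of Proposition~\ref{P:improve-reg} that your ``standard product-rule/fractional chain arguments'' elide. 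Finally, the single-bootstrap claim that iterating $\beta_k\lesssim2^{-\sigma_1 k}$ finitely many times reaches some $s>1$ glosses over the fact that the per-step gain is capped (in the paper, $\sigma<\tfrac2q-s_p<1-s_p$), which is why the paper crosses $s=1$ by the separate arguments of Propositions~\ref{P:reg-jump} and~\ref{P:improve-reg2}; if you keep your one-step structure you must verify that the estimates survive at each intermediate regularity and that the gain does not degenerate.
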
 

We will prove Proposition \ref{p:sol-reg} in several steps. To make this precise, we define the parameter
\EQ{ \label{eq:s0def} 
s_0=s_p+\frac{5-p}{2p(p-1)}=\frac32-\frac5{2p}.
}
This exponent is chosen so that $\dot \H^{s_0}$ has the same scaling as $L_t^p L_x^{2p}$, and we note that crucially, $s_p < s_0 < 1$.  

\subsection{The jump from \texorpdfstring{$\dot \cH^{s_0}(\bR^3)$}{H} regularity to \texorpdfstring{$\dot \cH^{1}(\bR^3)$}{H} regularity}  We begin with the first, easier gain in regularity, namely passing from $\dot \H^{s_0}(\R^3)$ to $\dot \H^1(R^3)$.

\begin{prop}\label{P:reg-jump} Suppose $\vec u$ is a soliton-like critical element.  Let $s_0>s_p$ be defined as in~\eqref{eq:s0def}. Then
\[
\vec u\in L_t^\infty\dot\H^{s_0} \quad \Longrightarrow \quad \vec u\in L_t^\infty  \dot\H^{1}. 
\]
\end{prop}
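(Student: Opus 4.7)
The proof proceeds via a double Duhamel argument governed by a frequency envelope at $\dot\H^1$. By the compactness property and translation invariance, I fix a base time $t_0=0$ with $x(0)=0$, and set
$$\beta_k:=\sup_{j\in\Z}2^{-\sigma|j-k|}\|P_j\vec u(0)\|_{\dot\H^1}$$
for a sufficiently small $\sigma>0$ (depending on $p$); the desired conclusion reduces to showing $\beta\in\ell^2$. The main input is Lemma \ref{l:weak}, which gives two weakly convergent Duhamel representations of $\vec u(0)$: one as $T_1\to-\infty$ from the past and one as $T_2\to+\infty$ from the future. Pairing these two representations against each other produces an expression for $\|P_k\vec u(0)\|_{\dot\H^1}^2$ as a double spacetime integral in $(t,\tau)\in(-\infty,0]\times[0,\infty)$, and the strategy is to decompose and estimate this integral region by region.

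\textbf{Spacetime decomposition.} Using the subluminal property Lemma \ref{l:sl} together with the compactness modulus $R(\eta)$ from Remark \ref{r:Reta}, I would choose $R=R(\delta_0,\eta)$ so that for $|t|\geq R$ the essential $\dot\H^{s_p}$-support of $\vec u(t)$ lies strictly inside the light cone $\{|x|\leq(1-\tfrac{\delta_0}{2})|t|\}$ centered at $(0,0)$, up to an $\dot\H^{s_p}$-tail of mass at most $\eta$. Then split the pairing into three zones in the bowtie geometry described in the introduction: region (A), the fixed time slab $|t|,|\tau|\leq R$; region (B), the portion with $|t|\geq R$ or $|\tau|\geq R$ with spatial integration restricted \emph{outside} the cone; and region (C), the interior-of-cone portion with $t\leq-R$ and $\tau\geq R$.

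\textbf{Region estimates and closure.} Region (A) lives on the bounded interval $[-R,R]$, and I would estimate it using Strichartz in the $(p,2p)$ pair, which is precisely the pair associated with $\dot\H^{s_0}$; this is the motivation for the choice \eqref{eq:s0def}. The resulting bound is a constant depending on $R$ and $\|u\|_{L_t^\infty\dot\H^{s_0}}$ times envelope factors concentrated near frequency $k$, which is summable in $k$. Region (B) is handled by the small-data theory of Proposition \ref{small data}: after discarding the small cone-exterior data (whose $\dot\H^{s_p}$-norm is at most $\eta$), the nonlinear contribution acquires an $\eta$-gain. Region (C) is the heart of the double Duhamel trick: I insert the modified frequency projections $Q_{<k}$ from \eqref{qk_def}, whose convolution kernels have compact physical-space support, at each end of the pairing; their deviation from ordinary Littlewood--Paley projections is acceptable by Lemma \ref{L:commutator}, and the sharp Huygens principle in dimension $3$ then forces the supports of the past integrand (propagated inward from $t\leq-R$) and the future integrand (propagated inward from $\tau\geq R$) to be disjoint, so this contribution vanishes exactly. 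Combining the three regions yields a frequency-envelope inequality of schematic form
$$\beta_k^2\lesssim \eta\sum_{j,l}c(k,j,l)\beta_j\beta_l+C(R,\|u\|_{L_t^\infty\dot\H^{s_0}})\,\gamma_k,$$
where $\gamma\in\ell^2$ comes from the $\dot\H^{s_0}$ hypothesis; choosing $\eta$ small enough closes the bootstrap and yields $\beta\in\ell^2$, i.e.\ $\vec u(0)\in\dot\H^1$.

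\textbf{Main obstacle.} The principal difficulty relative to the radial argument of \cite{DL1} is the absence of radial Sobolev embedding, which was used there to mediate between physical and frequency scales. The substitute is the pair $(Q_{<k},P_k)$ of modified and ordinary Littlewood--Paley projections, coupled via Lemma \ref{L:commutator}: the $Q_{<k}$ have compactly supported convolution kernels and so respect the support requirements of the sharp Huygens principle, while the $P_k$ provide the clean frequency decomposition needed to run the envelope bootstrap. Careful accounting of the commutator and H\"older errors produced when distributing derivatives across $|u|^{p-1}u$, and calibrating $\sigma$ small enough depending on $p\in(3,5)$, is exactly what allows the bootstrap to close at the $\dot\H^1$ level rather than stopping at some intermediate regularity between $s_0$ and $1$.
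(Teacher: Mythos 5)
Your overall architecture is the paper's: complexify the solution, use Lemma \ref{l:weak} to discard the free parts, pair the past and future Duhamel integrals, and split spacetime into a compact time slab estimated by Strichartz at the $(p,2p)$ pair (precisely the reason for the choice \eqref{eq:s0def}), a cone-exterior region governed by the small-data theory, and a cone-interior region killed by the sharp Huygens principle after inserting the compact-kernel projections $Q_{<k}$. Two points of precision: the exterior region is handled in the paper not by restricting the spatial integration of $F(u)$, but by introducing the auxiliary solution $v$ with data $\chi_R\vec u(\pm T)$, so that $u\equiv v$ outside the cone by finite speed of propagation, the $B$-terms are estimated through the global small-data bounds for $v$, and the $C$-terms involve $F(u)-F(v)$, which is genuinely supported inside the cone; also, Lemma \ref{L:commutator} plays no role in this step (it is used in the $s_p\to s_0$ jump to compare the truncated data with $u(T)$ frequency by frequency) --- what matters about $Q_{<k}$ here is only that its kernel has support of size $2^{-k}$, which is absorbed by the subluminality margin $\tfrac14\delta_0 T$ once $k\geq k_0$.

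Where your proposal has a genuine gap is the closing step. You set up a frequency envelope $\beta_k$ at $\dot\H^1$ regularity and claim a bound of the form $\beta_k^2\lesssim\eta\sum c(k,j,l)\beta_j\beta_l+C\gamma_k$ with $\gamma\in\ell^2$. First, under the hypothesis $\vec u\in L_t^\infty\dot\H^{s_0}$ alone one only knows $\|P_j\vec u(0)\|_{\dot\H^1}\lesssim 2^{j(1-s_0)}$, so for $\sigma<1-s_0$ your envelope need not be finite a priori and the bootstrap cannot even be started. Second, the slab contribution carries neither decay in $k$ nor smallness: the Strichartz estimate on $[-T,T]$ gives $|A|^2+|A'|^2\lesssim (T/\eps)\|u\|_{L_t^\infty\dot\H^{s_0}}^p$ uniformly in $k$, and extracting $\ell^2$-summable decay in $k$ from this region would require the paraproduct-plus-envelope analysis of the harder $s_p\to s_0$ step (Proposition \ref{P:improve-reg}), which is not available at this stage; similarly the exterior terms are $O(\|u\|^p_{L_t^\infty\dot\H^{s_0}})$, the parameter $\eta$ entering only through the small-data bootstrap for $v$, not as a factor multiplying an envelope of $\vec u(0)$. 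The repair is to drop the envelope entirely: estimate $\langle Q_{<k}w(0),Q_{<k}w(0)\rangle_{\dot H^1}$ with the cumulative projections, observe that $|A|^2+|A'|^2+|B|^2+|B'|^2\lesssim_{T}1$ uniformly in large $k$ while $\langle C,C'\rangle=0$ by Huygens, and let $k\to\infty$; this uniform bound already yields $\vec u\in L_t^\infty\dot\H^1$, which is exactly how the paper concludes.
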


\begin{proof}  
 By time-translation symmetry, it suffices to estimate the $\dot \H^{1}$-norm at time $t=0$.  We complexify the solution, letting 
 \[
 w = u+ \frac{i}{\sqrt{-\Delta}} u_t.
 \]
 Then
 \[
 \|w(t) \|_{\dot H^1} \simeq \|\vec u(t) \|_{\dot H^1 \times L^2},
 \]
 and if $\vec u(t)$ solves \eqref{eq:nlw}, then $w(t)$ is a solution to
 \begin{align}
 w_t = -i \sqrt{-\Delta} w \pm \frac{i}{\sqrt{-\Delta}} |u|^{p-1} u.
 \end{align}
 By Duhamel's principle, for any $T$, we have
 \[
 w(0) = e^{iT \sqrt{-\Delta}} w(T) \pm \frac{i}{\sqrt{-\Delta}} \int_T^0 e^{i \tau \sqrt{-\Delta}} F(u)(\tau) d\tau
 \]
 where $F(u) = |u|^{p-1} u$.  By compactness (see Lemma~\ref{l:weak}),
\begin{equation}\label{weakH10}
\lim_{T\to\infty} Q_{<k} e^{-i T\sqrt{-\Delta}} w (T) = \lim_{T\to\infty} Q_{<k} e^{iT \sqrt{-\Delta} } w(-T) = 0
\end{equation}
as weak limits in $\dot H^{1}$ for any $k\geq 0$.  We next write
\begin{align*}
Q_{<k} w (0) & = e^{-iT\sqrt{-\Delta} }Q_{<k} w (T)  {\mp \frac{1}{\sqrt{-\De}} }\int_0^T e^{-it\sqrt{-\Delta} }Q_{<k} F (u(t))\,\ud t \\
& = e^{iT\sqrt{-\Delta} }Q_{<k} w (-T)  {\mp \frac{1}{\sqrt{-\De}} } \int_{-T}^0 e^{-it\sqrt{-\Delta} }Q_{<k} F(u(t))\,\ud t.
\end{align*}
Using \eqref{weakH10}, and arguing as in~{\cite[Section 4]{DL1}} we can deduce 
\begin{align}
&\langle Q_{<k} w(0),Q_{<k} w (0)\rangle_{\dot H^{1}} \nonumber  \\
& = \lim_{T\to\infty}  \big\langle\int_0^T e^{-it\sqrt{-\Delta}}Q_{<k} F(u(t))\,\ud t,  \int_{-T}^0 \hspace{-2mm} e^{-i\tau \sqrt{-\Delta} }Q_{<k} F(u(s))\,\ud t\big\rangle_{L^{2}}. \label{scdd1}
\end{align}

We fix a large parameter $R>0$ to be determined below.  Let $\de_0$ be as in the statement of Lemma \ref{l:sl} and take $T=2 R\delta_0^{-1}$.  We define
\begin{equation}\label{eq:regions}
\begin{split}
\textup{Region }\textbf{A} &:= \{(t,x) \,:\, 0 \leq t \leq T\}, \\  
\textup{Region }\textbf{B} &:=\{(t,x) \,:\, |x- x(T)| \geq R + |t - T|\},\\
\textup{Region }\textbf{C} &:= \{(t,x) \,:\, |x- x(T)| < R + |t - T|\}.
\end{split}
\end{equation}
See Figure~\ref{f:bowtie}.

\begin{figure}[h] \label{f:bowtie} 
  \centering
  \includegraphics[width=14cm]{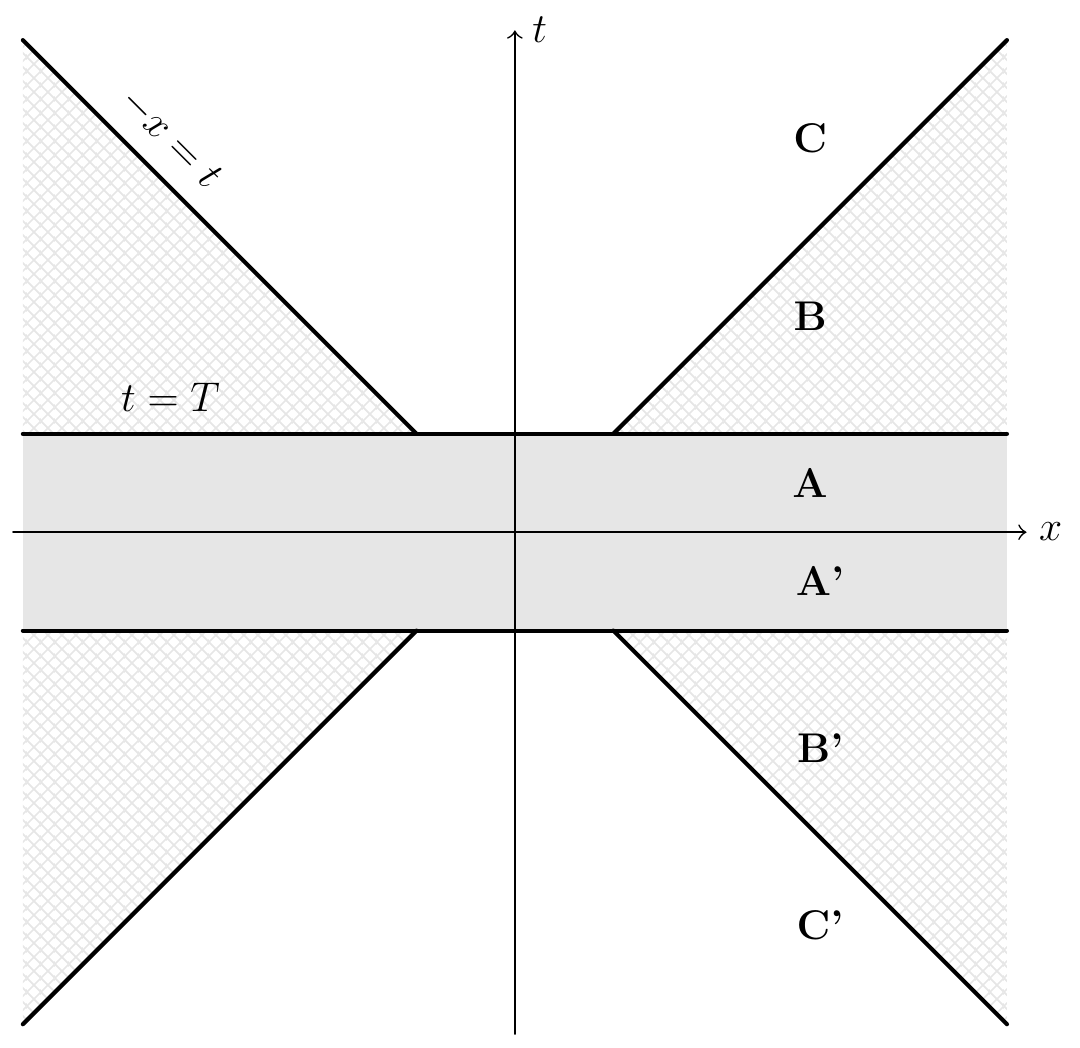}
  \caption{A depiction of the spacetime regions \textbf{A, A', B, B'} and \textbf{C, C'} in the case that $x(t) = 0$.} \label{fig:regions_soliton}
\end{figure}

We will treat these regions separately. Our goal is to bound $u$ on region \textbf{A} using that we are estimating the solution on a compact time interval, and on region \textbf{B} using the small data theory and finite speed of propagation. We will then use the double Duhamel trick, together with the sharp Huygens principle on region \textbf{C}, to conclude the proof.

Let $\chi_R$ denote a smooth cutoff to the set 
\[
\{|x-x(T)| > R\} \subseteq \mathbb{R}^3.
\]
Now fix a small parameter $\eta>0$.  By compactness of $\vec u$, if $R = R(\eta)$ is sufficiently large then we have
\begin{align}\label{equ:small_data_soliton}
\| \chi_R \vec u(T)\|_{\H^{s_p}} \le \eta. 
\end{align}
We let $\vec v = (v,v_t)$ be the solution to \eqref{eq:nlw} with initial data 
\[
\vec v(T)=\chi_R \vec u (T).
\]
By finite speed of propagation, we have that
\[
u \equiv v \qtq{for} |x-x(T)|\geq R+|t-T|,
\]
We now rewrite \eqref{scdd1}, and abusing notation slightly, we define
\begin{equation}\label{scdd-decomp1}
\begin{split}
&\int_0^\infty e^{-it\sqrt{-\Delta}}Q_{<k} F(u(t))\,\ud t = A+B+C, \\
&A=\int_0^T e^{-it\sqrt{-\Delta} } Q_{<k} F(u(t))\,\ud t, \\
&B=\int_T^\infty e^{-it \sqrt{-\Delta} } Q_{<k} F(v(t))\,\ud t, \\
&C = \int_T^\infty e^{-it\sqrt{-\Delta}} Q_{<k}[ F(u(t))- F(v(t))]\,\ud t.
\end{split}
\end{equation}
Note that the notation in \eqref{scdd-decomp1} is such that each term relates to an estimate for the solution on the correspondingly named region from \eqref{eq:regions}. 

We can carry out a similar construction at time $-T$, yielding a small solution $\widetilde v$ that agrees with $u$ whenever $|x-x(-T)|\geq R+|t+T|$, and we obtain three terms in the negative time direction
\begin{equation}\label{scdd-decomp2}
\begin{aligned}
&\int_{-\infty}^0 e^{-i \tau \sqrt{-\Delta}}Q_{<k}  F(u(\tau))\,d\tau = A'+B'+C', \\
&A'=\int_{-T}^0 e^{-i\tau\sqrt{-\Delta}} Q_{<k} F(u(\tau))\,d\tau, \\
& B'=\int_{-\infty}^{-T} e^{-i\tau\sqrt{-\Delta}} Q_{<k} F(\widetilde v(\tau))\,d\tau, \\
&C' = \int_{-\infty}^{-T} e^{-i\tau\sqrt{-\Delta}} Q_{<k}[ F(u(\tau))-F(\widetilde v(\tau))]\,d\tau. 
\end{aligned}
\end{equation}
Using the elementary linear algebra estimate
\begin{equation}\label{linal}
|\langle A+B+C,A'+B'+C'\rangle| \lesssim |A|^2 + |A'|^2 + |B|^2 + |B'|^2 + |\langle C,C'\rangle|
\end{equation}
whenever $A+B+C=A'+B'+C'$, we may estimate
\[
 \langle Q_{<k} w(0),Q_{<k} w(0)\rangle_{\dot H^{1}}
\]
by obtaining bounds for $A, A'$ and $B, B'$ and $\langle C, C' \rangle$.

\subsection*{Region A} To estimate the $A$ and $A'$ terms, first we establish the bound
\begin{align}\label{equ:a_est1}
\|u\|_{L_{t,x}^{2(p-1)}([-T,T]\times\R^3)}\lesssim \left(\frac{T}{\eps} \right)^{\frac{1}{2(p-1)}}
\end{align}
for some suitably small $\eps>0$.  To prove this, we rely on the fact that $\vec u$ is a soliton-like critical element.  Fix $\eta >0$. Since $N(t)= 1$, there exists $\eps >0$ small enough that the $L_{t,x}^{2(p-1)}$-norm is bounded by $\eta$ on any interval of length $\eps$; see Lemma~\ref{l:DL1lem}.   Thus to obtain the desired bound, we divide $[-T,T]$ into $\sim \lceil T / \eps \rceil$ intervals $J_k$ of length $\eps$,  and
\[
\|u\|_{L_{t,x}^{2(p-1)}([-T,T]\times\R^3)}^{2(p-1)} \simeq \sum_{k=1}^{\lceil T / \eps \rceil}  \|u\|_{L_{t,x}^{2(p-1)}(J_k \times\R^3)}^{2(p-1)} \lesssim \frac{T}{\eps}.
\]

Using a similar argument together with Strichartz estimates and the hypothesis
\[
\|u\|_{L_t^\infty \H^{s_0}} \lesssim 1,
\]
we obtain that
\begin{align}\label{equ:a_est2}
\| u\|_{L_t^p L_x^{2p}([-T,T]\times\R^3)}\lesssim \left(\frac{T}{\eps} \right)^{\frac{1}{p}}\|u\|_{L_t^\infty \H^{s_0}}.
\end{align}
Thus, using \eqref{equ:a_est1}, \eqref{equ:a_est2} and Strichartz estimates, we can estimate
\begin{align*}
|A|^2 + |A'|^2 
%&\lesssim \|u^p\|_{L_t^1 L_x^2([-T,T]\times\R^3)} \\
&\lesssim \|u\|_{L_t^p L_x^{2p}([-T,T]\times\R^3)}^{p} \lesssim \left(\frac{T}{\eps} \right) \|u\|_{L_t^\infty \H^{s_0}}^p. 
\end{align*}

\subsection*{Region B} For the estimates of $B$ and $B'$, we use the small data theory to bound the solutions $v$ and $\widetilde v$. We argue only for $v$ as the estimates for $\widetilde{v}$ are identical. By the small data theory, for
$\eta$ chosen sufficiently small in \eqref{equ:small_data_soliton}, we have 
\[
\|v\|_{L_{t,x}^{2(p-1)}(\R^{1+3})} \lesssim \eta.
\]
Using Strichartz estimates, we bound
\begin{align*}
\| |\nabla|^{\frac{3(p-3)}{2p}}v\|_{L_t^p L_x^{\frac{2p}{p-2}}} & \lesssim \|v(T)\|_{\H^{s_0}}+\| |\nabla|^{\frac{3(p-3)}{2p}}(|v|^{p-1}v)\|_{L_t^{\frac{2p}{p+2}}L_x^{\frac{p}{p-1}}} \\
& \lesssim \|u(T)\|_{\H^{s_0}} + \|v\|_{L_{t,x}^{2(p-1)}}^{p-1}\| |\nabla|^{\frac{3(p-3)}{2p}}v\|_{L_t^p L_x^{\frac{2p}{p-2}}},
\end{align*}
with all space-time norms over $\R^{1+3}$. Note that $(p,\frac{2p}{p-2})$ is wave admissible.  Thus, for $\eta$ sufficiently small, we deduce
\[
\| |\nabla|^{\frac{3(p-3)}{2p}}v\|_{L_t^p L_x^{\frac{2p}{p-2}}} \lesssim \|u\|_{L_t^\infty \H^{s_0}(\R^{1+3})}, 
\]
and hence it follows from Sobolev embedding that
\[
 \quad \| v\|_{L_t^p L_x^{2p}(\R^{1+3})}\lesssim \|u\|_{L_t^\infty \H^{s_0}}. 
 \]
Thus, we have shown that
\[
|B|^2+|B'|^2 \lesssim \|v\|_{L_t^p L_x^{2p}(\R^{1+3})}^{p} \lesssim \|u\|_{L_t^\infty \H^{s_0}}^p. 
\]

\subsection*{Region C} Finally, we claim that
\begin{align}\label{equ:c_vanish}
\langle C,C'\rangle \equiv 0. 
\end{align}
To see this, write
\begin{align*}
\langle C,C'\rangle = \int_T^\infty\hspace{-1.5mm}\int_{-\infty}^{-T} &\langle e^{i(\tau-t)\sqrt{-\Delta}}Q_{<k}[F(u(t))- F(v(t))], \\
&\quad Q_{<k}[F(u(\tau))- F(\widetilde v(\tau))]\rangle\,d\tau\,\ud t,
\end{align*}
and note that by subluminality and the fact that $x(0)=0$, we have for $T=2 R\delta_0^{-1}$ the inclusion
\begin{equation}\label{scsl}
\{|x-x(\pm T)|\leq R\}\subset \{|x|\leq (1- 2^{-1} \delta_0) T\}.
\end{equation}
We recall that the operator $Q_{<k}$ defined in \eqref{qk_def}, is given by convolution with the function $2^{3k} \psi(2^k x)$ for a fixed function $\psi \in C_0^\infty(\R^3)$. Hence for $k \geq k_0$, a sufficiently large, fixed constant depending on the support of $\psi$, $\delta_0$ and $T$, we can ensure
\[
\textup{supp } \left( Q_{<k}[F(u(\tau))- F(\widetilde v(\tau))] \right) \subseteq \bigl\{|x|\leq |\tau| - 4^{-1} \delta_0 T \bigr\}.
\]
Similarly, using the properties of the $Q_{<k}$ and the sharp Huygens principle, we can ensure that for $k$ sufficiently large,
%states that in particular
%\[
%\left( e^{i(\tau-t) \sqrt{-\Delta}} [F(u) - F(v)] (t) \right)(x)
%\]
%is supported on the set..., and since $t > 0$, $\tau < 0$
\[
\textup{supp } \left(e^{i(\tau-t)\sqrt{-\Delta}}Q_{<k}[F(u(t))- F(v(t))] \right) \subseteq \bigl\{ |x| > |t - \tau| - 4^{-1} \delta_0 T \bigr\}.
\]
Since $t > 0$ and $\tau < 0$, we have that $|t-\tau| > |\tau|$, this yields \eqref{equ:c_vanish}, as required.

 \medskip 
Collecting these estimates, we obtain that
\[
\|Q_{<k} w(0)\|_{\dot H^{1}}^2 = \langle Q_{<k} w(0),Q_{<k} w(0)\rangle_{\dot H^{1}} \lesssim 1
\]
uniformly in $k\geq 0$.  The desired result then follows. \end{proof}

\subsection{The jump from \texorpdfstring{$\dot \cH^{s_p}(\bR^3)$}{H} regularity to \texorpdfstring{$\dot \cH^{s_0}(\bR^3)$}{H} regularity}  
Now we turn to the more difficult estimates. Here, we will need a finer analysis based on frequency envelope machinery. We prove the following.

\begin{prop}\label{P:improve-reg}
Suppose $\vec u$ is a soliton-like critical element. Then
\[
\vec u\in L_t^\infty\dot\H^{s_p} \quad \Longrightarrow \quad \vec u\in L_t^\infty  \dot\H^{s}. 
\]
for any $s_p \leq s < 1$.
\end{prop}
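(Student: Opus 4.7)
The plan is to perform a frequency-envelope version of the double Duhamel argument from Proposition~\ref{P:reg-jump}, controlling $\|P_k \vec u(0)\|_{\dot\H^s}$ directly in terms of an $\ell^2_k$ envelope built from the $\dot\H^{s_p}$ norm of $\vec u$. Fix $s \in (s_p, 1)$; by time-translation invariance it suffices to work at $t = 0$. Introduce the $\dot\H^{s_p}$ frequency envelope
\[
\alpha_k := \sup_{t \in \R} \sum_{j \in \Z} 2^{-\sigma|k-j|}\,\|P_j \vec u(t)\|_{\dot\H^{s_p}},
\]
with $\sigma > 0$ sufficiently small depending on $p$ and $s$. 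By hypothesis $\{\alpha_k\} \in \ell^2$, and by the compactness property (Remark~\ref{r:Reta}) $\alpha_k \to 0$ as $k \to \infty$, which provides the crucial smallness for high-frequency tails.

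I would carry out the double Duhamel pairing at frequency $2^k$, now in $\dot\H^s$ rather than $\dot\H^1$, with the $Q_{<k}$ of Proposition~\ref{P:reg-jump} replaced by a standard Littlewood--Paley $P_k$. Fix $R = R(\eta)$ large enough that $\|\chi_R \vec u(T)\|_{\dot\H^{s_p}} < \eta$ with $T = 2R\delta_0^{-1}$ and decompose spacetime into the regions $A$, $B$, $C$ of \eqref{eq:regions}. The cross term $\langle C_k, C_k'\rangle$ should vanish up to a Schwartz-tail error via the sharp Huygens principle applied to the $P_k$-kernel; this error is harmless provided $k$ is taken sufficiently large in comparison with $T$, $\delta_0$, and $\sigma$.

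Region $A$ on $[-T, T]$ I would estimate by Strichartz together with a Littlewood--Paley paraproduct decomposition of $P_k(|u|^{p-1}u)$: place the $|\nabla|^{s - s_p}$ gain on the highest-frequency factor $P_{k_1} u$ with $k_1 \gtrsim k$ and control it through the envelope $\alpha$, while bounding the remaining $p - 1$ factors in the scaling-critical norm $L^{2(p-1)}_{t,x}([-T, T] \times \R^3)$ via Lemma~\ref{l:DL1lem}. Region $B$ I would treat via small-data theory: the bound $\|\chi_R \vec u(T)\|_{\dot\H^{s_p}} < \eta$ combined with persistence of regularity at level $s$ and a Strichartz bootstrap yields $|B_k|^2 \lesssim \eta^{2(p-1)} \beta_k^2$, where $\beta_k := \sup_t \sum_j 2^{-\sigma|k-j|}\|P_j \vec u(t)\|_{\dot\H^s}$ is the putative envelope at level $s$; choosing $\eta$ small enough absorbs this contribution into the left-hand side.

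The main obstacle is organizing the paraproduct estimates so that the overall bound $\beta_k \lesssim (\text{summable in }\ell^2)$ closes: the tail decay of $\alpha_{k_1}$ as $k_1 \to \infty$ must compensate the growth factor $2^{(s - s_p)k_1}$ produced by the derivatives in order to yield an $\ell^2_k$ bound on $\beta_k$. For $s$ close to $s_p$ the direct bound should close; to reach arbitrary $s < 1$ I would iterate the gain finitely many times, using the improved regularity at each stage as the hypothesis for the next. Secondary subtleties include the treatment of high-high paraproduct interactions (handled via Bernstein and the envelope structure) and the control of the Schwartz-tail error from the non-compactly-supported $P_k$-kernel in the Huygens step.
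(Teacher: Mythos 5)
Your overall architecture is the same as the paper's: a double Duhamel pairing at frequency $2^k$, the regions \textbf{A}, \textbf{B}, \textbf{C} of \eqref{eq:regions} with $T=2R\delta_0^{-1}$, smallness of high-frequency tails from compactness, small-data theory on region \textbf{B}, and a frequency-envelope bootstrap. The genuine gap is in your Huygens step. The paper replaces the sharp projections $P_k$ by the operators $Q_{<k}$ of \eqref{qk_def}, whose kernels are \emph{compactly supported} in physical space, precisely so that $\langle C,C'\rangle$ vanishes identically: for $t\geq T$, $\tau\leq -T$ the support of $S(t-\tau)Q_{<k}[F(u)-F(v)](\tau)$ and the support of $Q_{<k}[F(u)-F(\widetilde v)](t)$ stay a fixed distance $\simeq\delta_0T$ apart. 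With sharp cutoffs you instead incur a tail error at each fixed $(t,\tau)$, and your justification --- take $k$ large compared with $T$, $\delta_0$, $\sigma$ --- does not close the estimate, because that error must then be integrated over the unbounded region $\{t\geq T\}\times\{\tau\leq-T\}$. The separation from the light cone is \emph{uniform} in $(t,\tau)$ (for $x$ in the support of $G(t)$ and $y$ in the support of $G(\tau)$ one only has $|t-\tau|-|x-y|\gtrsim\delta_0T$, which does not grow), so the rapid-decay factor $(2^k\delta_0T)^{-L}$ is constant in time, and the only time decay supplied by the kernel of $P_kS(t-\tau)$ is $|t-\tau|^{-1}$, which is not integrable over that region; nor do you control spatial norms of $F(u)-F(v)$ beyond scaling-critical ones. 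A repair is possible (split the $P_k$ kernel into a piece supported in $|x|\lesssim 2^{-(1-\epsilon)k}$, for which the exact Huygens vanishing survives the small blurring of supports, plus a far tail of operator norm $O_L(2^{-L\epsilon k})$, paired against truncated Duhamel integrals that are uniformly bounded in $\dot\HH^{s_p}$ by Lemma~\ref{l:weak} and weak convergence), but this is precisely the work the paper avoids by building compact support into $Q_{<k}$ from the start; as written, your ``secondary subtlety'' is the crux, not a technicality.

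Two further points. First, your absorption step plays $\eta^{2(p-1)}\beta_k^2$ off against the left-hand side, where $\beta_k$ is an envelope for $u$ itself at regularity $s$, taken over all time; when $s-s_p>\sigma$ this quantity is not known to be finite a priori, so the absorption is illegitimate as stated. Your iteration in $s$ by small increments would incidentally cure this (the level-$s$ envelope is finite once $s$ exceeds the previously established regularity by less than $\sigma$), but you should make that the stated purpose; the paper sidesteps the issue entirely by bootstrapping only the $\dot\H^{s_p}$-level envelope $\gamma_k$ and encoding the gain as the decay $\gamma_k(0)\lesssim 2^{-k\sigma}$, which yields every $s<1$ in a single pass by taking $q$ close to $2$ (so $\sigma$ close to $1-s_p$), with no iteration in $s$. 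Second, on region \textbf{B} you must compare the envelope of the truncated data $\chi_R\vec u(T)$ with that of $\vec u(T)$; the cutoff does not commute with the frequency projections, and this is exactly where the commutator estimate, Lemma~\ref{L:commutator}, enters in the paper's argument --- your sketch omits it.
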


\begin{proof}
  Once again, we define
\begin{align}
\textup{Region }\textbf{A} &:= \{(t,x) \,:\, |t| \leq T\}, \\  
\textup{Region }\textbf{B} &:=\{(t,x) \,:\, |x- x(T)| \geq R + |t - T|\},\\
\textup{Region }\textbf{C} &:= \{(t,x) \,:\, |x- x(T)| < R + |t - T|\}.
\end{align}
with corresponding regions $A', B' , C'$ in the negative time direction. We further introduce
\[
Q_k = Q_{<2k}-Q_{<k} \qtq{for}k >0, \qquad Q_0 = Q_{<0}.
\]
By Schur's test, we can conclude that these frequency projections are a good partition of frequency space, in the sense that
\[
\|f\|_{\dot H^s}^2 \sim \|Q_0 f\|_{\dot H^s}^2 + \sum_{k\geq0} 2^{ks} \|Q_k f\|_{L^2}^2. 
\]
We will also need to introduce an exponent $q$ satisfying
\[
2<q<\frac{2}{s_p}. 
\]

\subsection*{Region \textbf{A}}

We begin by defining suitable frequency envelopes with a parameter $\sigma>0$ to be determined shortly.  We set
\begin{equation}\label{sc-fe}
\begin{aligned}
\gamma_k(t_0)&=\sum_j 2^{-\sigma |j-k|}\bigl[ 2^{s_p j}\|Q_j u(t_0)\|_{L^2}+2^{j(s_p-1)}\|Q_j\partial_t u(t_0)\|_{L^2}\bigr],\\
\alpha_k (J) &= \sum_j  2^{-\sigma |j-k|}\ \bigl[ 2^{-j(\frac{2}{q}-s_p)}\|Q_j u\|_{L_t^q L_x^{\frac{2q}{q-2}}(J\times\R^3)} \\
& \hspace{24mm} + 2^{j\left(\frac{2}{q}-1+s_p\right)}\|Q_ju\|_{L_t^{\frac{2q}{q-2}} L_x^q(J\times\R^3)}\bigr]
\end{aligned}
\end{equation}
for $k \geq 0$. Note $(q,\frac{2q}{q-2})$ is sharp admissible and that each of the quantities appearing in the definition of $\beta_k$ has the same scaling as $\dot \H^{s_p}$.  We will choose 
\[
0<\sigma<\frac2q-s_p. 
\]
Our goal is to prove that
\begin{equation}\label{alphak}
\alpha_k([-T, T]) \lesssim \gamma_k(0) + C_02^{-k \sigma},
\end{equation}
where $C_0=C_0(T)$. 

We begin by recording the some space-time estimates for $\vec u$ that are consequences of the pre-compactness of the set $K$, see Definition \ref{d:cp}.  We fix $\eta >0$. Since $N(t)= 1$, there exists $\eps >0$ small enough that the $L_{t,x}^{2(p-1)}$ norm is $< \eta$ on any interval of length $\eps$; see Lemma~\ref{l:DL1lem}. Furthermore, we can find $k_0 = k_0(\eta)$ such  so that for any $k >k_0$, 
\[
\|Q_{>k}u\|_{L_{t,x}^{2(p-1)}([-T,T]\times\R^3)} < \eta T^{\frac{1}{2(p - 1)}}. 
\]
With these bounds in hand, we turn to the proof of \eqref{alphak}. In the following, all space-time norms will be taken over $[-T,T]\times\R^3$. For any $j$, we decompose the nonlinearity as follows.  Writing $u_{\leq j}=Q_{\leq j}u$ (and similarly for $u_{>j}$), we write
\[
F(u)=F(u_{>k_0}) +F(u)-F(u_{>k_0}),
\]
where $k_0(\eta)$ is as above.  By Taylor's theorem, we have that
\[
F(u)= F(u_{>k_0}) +  u_{\leq k_0 } \int_0^1 F'(\theta u_{\leq  k_0 } +  u_{ > k_0 }) d\theta,
\]
and hence to estimate the nonlinearity, it suffices to estimate three type of terms
\begin{equation}\label{sc-nonlinear-decomp}
u_{>k_0}^{p-1}u_{>j}, \qquad  u_{>k_0}^{p-1}u_{\leq j}, \qquad u_{\leq k_0}u^{p-1}.
\end{equation}
Using the inhomogeneous Strichartz estimates, we obtain
\begin{align} \label{equ:strichartz_duhamel}
&2^{-j(\frac2q-s_p)}\biggl\|\int_0^t e^{i(t-s)\sqrt{-\Delta}} Q_j F(u(s))\,ds\biggr\|_{L_t^q L_x^{\frac{2q}{q-2}}} \\ 
& \quad + 2^{j \left(\frac2q-1+s_p\right)}\biggl\|\int_0^t e^{i(t-s)\sqrt{-\Delta} } Q_j F(u(s))\,ds\biggr\|_{L_t^{\frac{2q}{q-2}}L_x^q} \\
& \lesssim \min\bigl\{ 2^{j (\frac2q-1+s_p)}\|F(u)\|_{L_t^{\frac{q}{q-1}}L_x^{\frac{2q}{q+2}}}, 2^{-j(\frac2q-s_p)}\|F(u)\|_{L_t^{\frac{2q}{q+2}}L_x^{\frac{q}{q-1}}}\bigr\}. 
\end{align}
Now let $J$ be an interval with $|J| < \eps$, and let $t_0 = \inf J$. In the next estimates, all norms will be taken over $J \times \mathbb{R}^3$. Using Strichartz estimates, we estimate
\begin{align*}
2^{-j(\frac2q-s_p)}&\|Q_ju\|_{L_t^q L_x^{\frac{2q}{q-2}}(J \times \mathbb{R}^3)} + 2^{j (\frac2q-1+s_p)}\|Q_j u\|_{L_t^{\frac{2q}{q-2}} L_x^q(J \times \mathbb{R}^3) } \nonumber\\
&\lesssim 2^{js_p}\|u_j(t_0)\|_{L_x^2} + 2^{j(s_p-1)}\|\partial_t u_j(t_0)\|_{L_x^2} \\
& \quad + 2^{j \left(\frac2q-1+s_p\right)}\|u_{>k_0}^{p-1}u_{>j}\|_{L_t^{\frac{q}{q-1}}L_x^{\frac{2q}{q+2}}} \\
& \quad + 2^{-j(\frac2q-s_p)}\|u_{>k_0}^{p-1}u_{\leq j}\|_{L_t^{\frac{2q}{q+2}}L_x^{\frac{q}{q-1}}} \\
& \quad + 2^{-j(\frac2q-s_p)}\|u_{\leq k_0} u^{p-1}\|_{L_t^{\frac{2q}{q+2}}L_x^{\frac{q}{q-1}}}. \\
& \lesssim 2^{js_p}\|u_j(t_0)\|_{L_x^2} + 2^{j(s_p-1)}\|\partial_t u_j(t_0)\|_{L_x^2} + I + II + III.
\end{align*}
We first estimate Term I. We obtain
\begin{align*}
 &2^{j (\frac2q-1+s_p)}\|u_{>k_0}^{p-1}u_{>j}\|_{L_t^{\frac{q}{q-1}}L_x^{\frac{2q}{q+2}}} \\
 & \lesssim 2^{j(\frac2q-1+s_p)}\|u_{>k_0}\|_{L_{t,x}^{2(p-1)}}^{p-1}\|u_{>j}\|_{L_t^{\frac{2q}{q-2}} L_x^q} \\
& \lesssim \eta^{(p-1)}  T^{1/2} 2^{j (\frac2q-1+s_p)}\sum_{\ell>j} 2^{-\ell(\frac2q-1+s_p)} \bigl[2^{\ell(\frac2q-1+s_p)}\|Q_\ell u\|_{L_t^{\frac{2q}{q-2}} L_x^q}\bigr].
% \Red{T^{1/2}} 
%& \lesssim \eta^{(p-1)} \Red{T^{1/2}} \beta_\Red{j} \Red{(T^{1/2})} 2^{j \left(\frac2q-1+s_p\right)} \sum_{L>M} L^{-(\frac2q-1+s_p)} \Red{(\frac{M}{L}\wedge \frac{L}{M})^\sigma.} \Red{\textrm{fix or redefine wedge it looks backwards here}}
\end{align*}
Similarly, for Term II we obtain
\begin{align*}
&2^{-j(\frac2q-s_p)}\|u_{>k_0}^{p-1}u_{\leq j}\|_{L_t^{\frac{2q}{q+2}}L_x^{\frac{q}{q-1}}} \\
  &\lesssim 2^{-j(\frac2q-s_p)}\|u_{>k_0}\|_{L_{t,x}^{2(p-1)}}^{p-1}\|u_{\leq j}\|_{L_t^q L_x^{\frac{2q}{q-2}}} \\
    &\lesssim 2^{-j(\frac2q-s_p)}\|u_{>k_0}\|_{L_{t,x}^{2(p-1)}}^{p-1} \sum_{ \ell \leq j} 2^{\ell (\frac2q-s_p)} \bigl[2^{-\ell(\frac2q-s_p)} \|u_{\ell}\|_{L_t^q L_x^{\frac{2q}{q-2}}}\bigr] .
%&\lesssim \Red{\eta^{2(p-1)}\beta_M(T) M^{-(\frac2q-s_p)}\sum_{L\leq M} L^{\frac2q-s_p} (\frac{M}{L}\wedge \frac{L}{M})^\sigma. }\Red{\textrm{change wedge notation, and put right thing here}}
\end{align*}
Finally, for Term III, using smallness of the interval and we obtain
\begin{align}
2^{-j(\frac2q-s_p)}&\|u_{\leq k_0} u^{p-1}\|_{L_t^{\frac{2q}{q+2}}L_x^{\frac{q}{q-1}}}  \\
&\lesssim 2^{-j(\frac2q-s_p)}\|u\|_{L_{t,x}^{2(p-1)}}^{p-1}\|u_{\leq k_0}\|_{L_t^q L_x^{\frac{2q}{q-2}}} \\
&\lesssim 2^{-j(\frac2q-s_p)}2^{k_0(\frac2q-s_p)} \eta^{p-1} T^{1/2}. 
\end{align}
Multiplying by $2^{-\sigma |j-k|}$ and summing in the above bounds, recalling that $\sigma<\frac2q-s_p$ in our definition of the frequency envelopes in \eqref{sc-fe}, it follows that (for $t_0 = \inf J$) we have
% \Red{\textrm{(record T dependence)}}
\[
\gamma_k(t_1) + \alpha_{k}(J) \lesssim \gamma_k(t_0) +  T^{1/2} \eta^{p-1} \alpha_{k}(J) + C_0 (T) 2^{-k\sigma} . 
\]
For $\eta \equiv \eta (T)$ small enough so that
\[
C \eta^{p-1} T^{1/2} < \frac{1}{2}
\]
with $C$ the implicit constant in Strichartz estimates, this implies
\[
\alpha_k(J) \lesssim \gamma_k(t_0) + C_0  2^{-k\sigma}.
\]
Iterating this procedure $\lceil T/\eps \rceil$ times on $[-T,T]$, we may also conclude that
\[
\gamma_k(t_0) \lesssim \gamma_k(0),
\]
from which \eqref{alphak} follows by summing up these estimates. %Note that we will obtain an exponential constant in front of this estimate depending only on $\lceil T(\eta_0) / \eps \rceil$.

\subsection*{Region \textbf{B}}

To implement the double Duhamel argument, we will again consider the solution $v$ to \eqref{eq:nlw} with data $\vec v(T)=\chi_R \vec u(T)$.  To control this solution, we define the frequency envelopes 
\[
\widetilde\gamma_k(t_0)\qtq{and}  \beta_k
\]
analogously to \eqref{sc-fe}, but with space-time norms over $\R^{1+3}$.  We will prove
\begin{equation}\label{betak}
\beta_k \lesssim \gamma_k(0) + C_0 2^{-k\sigma}.
\end{equation}
First observe that
\[
\|v\|_{L_{t,x}^{2(p-1)}(\R^{1+3})}\lesssim \eta.
\]
Thus
\begin{align*}
\| |\nabla|^{-(\frac2q-s_p)} &v\|_{L_t^q L_x^{\frac{2q}{q-2}}} + \| |\nabla|^{\frac2q-1+s_p} v\|_{L_t^{\frac{2q}{q-2}} L_x^q} \\
& \lesssim \|v(T)\|_{\H^{s_p}} + \| |\nabla|^{\frac2q-1+s_p}(v|v|^{p-1})\|_{L_t^{\frac{q}{q-1}}L_x^{\frac{2q}{q+2}}} \\
& \lesssim \eta + \|v\|_{L_{t,x}^{2(p-1)}}^{p-1}\||\nabla|^{\frac2q-1+s_p} v\|_{L_t^{\frac{2q}{q-2}} L_x^q} \\
& \lesssim \eta + \eta^{p-1}||\nabla|^{\frac2q-1+s_p} v\|_{L_t^{\frac{2q}{q-2}} L_x^q},
\end{align*}
which implies in particular that
\begin{equation}\label{bk-vgd}
\|v_{\leq 1}\|_{L_t^q L_x^{\frac{2q}{q-2}}(\R^{1+3})} \lesssim\eta. 
\end{equation}

We now estimate $\beta_k$ in essentially the same manner as $\alpha_k$.  The main difference is that we split at frequency $1$ instead of at frequency $k_0$ as above.  Estimating as above, but using \eqref{bk-vgd}, we deduce
\[
\beta_k \lesssim \widetilde\gamma_k(T) + \eta^{p-1} \beta_k + C_0 2^{-k\sigma},
\]
which implies
\begin{align} \label{equ:first_est}
\beta_k \lesssim \widetilde \gamma_k(T) + C_0 2^{-k\sigma}.
\end{align}
In order to prove \eqref{betak}, we need to relate $\widetilde\gamma_k(T)$ to $\gamma_k(0)$.  Similar arguments as in \eqref{betak} yield
\[
\gamma_k(T) \lesssim \gamma_k(0) + \eta^{p-1}\beta_k + C_0 2^{-k\sigma} \lesssim \gamma_k(0) + C_0 2^{-k\sigma},
\]
so it therefore suffices to relate $\widetilde \gamma_k(T)$ to $\gamma_k(T)$.  Using that $\vec v(T)=\chi_R \vec u(T)$, we apply the commutator estimate Lemma~\ref{L:commutator} to deduce
\begin{align*}
2^{ks_p}\|Q_k v(T)\|_{L^2} & \lesssim 2^{ks_p}\|Q_k u(T)\|_{L^2} + (2^k R)^{-(1-s_p)}\|u\|_{L_t^\infty \dot H^{s_p}}, \\
2^{k(s_p-1)}\|Q_k\partial_t v(T)\|_{L^2} & \lesssim 2^{k(s_p-1)}\|Q_k\partial_t u(T)\|_{L^2} + 2^{-k}R^{-1}\|\partial_t u\|_{L_t^\infty \dot H^{s_p-1}}. 
\end{align*}
In particular, since $\sigma<\frac2q-s_p<1-s_p$, we deduce that
\[
\widetilde\gamma_k(T) \lesssim \gamma_k(T) + C_0 2^{-k\sigma}.
\]
Putting this together with \eqref{equ:first_est} above, we conclude
\[
\beta_k \lesssim \gamma_k(0)+C_02^{-k\sigma},
\]
which completes the proof of \eqref{betak}. 

\medskip
We will now carry out the double Duhamel argument with the complexified solutions $w$.  
%We will argue similarly to the case of Proposition~\ref{P:reg-jump}.
% 
% As in the proof of Proposition~\ref{P:reg-jump},  it suffices to estimate at $t=0$ and assume $x(0)=0$. 
  We write
\begin{align}\label{equ:double_duh_term}
&\langle Q_jw(0),Q_jw(0)\bigl\rangle_{\dot H^1}\\
& = \lim_{T \to\infty}\langle \int_0^T e^{-i t\sqrt{-\Delta} }Q_j F(u(t))\,\ud t,\int_{-T}^0 e^{-i \tau \sqrt{-\Delta}}Q_j F(u(\tau))\,d\tau\bigr\rangle_{\dot H^1}
\end{align}
and (as before) decompose
\begin{align}
\int_0^\infty e^{-i t\sqrt{-\Delta} }Q_j F(u(t))\,\ud t &= A+B+C\\
&=A'+B'+C'=\int_{-\infty}^0 e^{-i \tau \sqrt{-\Delta}}Q_jF(u(\tau))\,d\tau,
\end{align}
for components as in  \eqref{scdd-decomp1} and \eqref{scdd-decomp2}.  Once again, we rely on the algebraic inequality
\begin{align} \label{regularity1}
\langle Q_jw(0),Q_jw(0)\bigl\rangle_{\dot H^1} \lesssim |A|^2 + |A'|^2 + |B|^2 + |B'|^2 + |\langle C,C'\rangle|
\end{align}
 and we note that by construction and the argument above relying on the sharp Huygens principle, $\langle C,C'\rangle_{\dot \H^1}\equiv 0$. 
 
 To treat the other terms, we recall the definition of the frequency envelope $\alpha_k$ in \eqref{sc-fe}, and we use \eqref{alphak} and \eqref{betak}. To this end, we multiply the left-hand side of \eqref{equ:double_duh_term} by $2^{-\sigma| j-k|}$ and we sum over $j \geq 0$ to obtain
  \begin{align}\label{equ:outcome0}
 \gamma_k(0) \lesssim \eta^{p-1} \gamma_k(0) + C_0 2^{-k \sigma},
 \end{align}
 which, choosing $\eta$ sufficiently small depending only on the implicit constant, implies
  \begin{align}\label{equ:outcome1}
 \gamma_k(0) \lesssim C_0 2^{-k \sigma},
 \end{align}
which yields $\vec u\in \dot \H^{s}$ for any $s_p\leq s<s_p+\sigma$.  Since that we may choose any $\sigma<\frac2q-s_p$ and $q$ arbitrarily close to $2$, we deduce $\vec u\in L_t^\infty \dot \H^{s}$ for any $s_p\leq s<1$.  This completes the proof of Proposition \ref{P:improve-reg}. \end{proof}

Propositions \ref{P:reg-jump} and \ref{P:improve-reg} immediately yield the following corollary.

\begin{cor}
Suppose $\vec u$ is a soliton-like critical element. Then
\[
\vec u\in L_t^\infty\dot\H^{s_p} \quad \Longrightarrow \quad \vec u\in L_t^\infty  \dot\H^{1}. 
\]
\end{cor}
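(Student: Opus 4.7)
The plan is to chain Propositions~\ref{P:reg-jump} and \ref{P:improve-reg} through the intermediate regularity $s_0$ introduced in \eqref{eq:s0def}. The first step is to verify that $s_0$ lies in the admissible interval for Proposition~\ref{P:improve-reg}, i.e.\ that $s_p \le s_0 < 1$. The lower bound $s_0 > s_p$ is built into the definition $s_0 = s_p + \frac{5-p}{2p(p-1)}$, while $s_0 < 1$ reduces to $\tfrac{3}{2} - \tfrac{5}{2p} < 1$, equivalently $p < 5$, which holds throughout the subcritical range under consideration.

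With this compatibility in hand, I would simply compose the two implications. Starting from the hypothesis $\vec u \in L_t^\infty \dot\H^{s_p}$, apply Proposition~\ref{P:improve-reg} with the specific choice $s = s_0 \in [s_p,1)$ to obtain $\vec u \in L_t^\infty \dot\H^{s_0}$. This is precisely the hypothesis of Proposition~\ref{P:reg-jump}, whose application then yields $\vec u \in L_t^\infty \dot\H^{1}$, as required.

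There is no meaningful obstacle at this stage: all of the heavy lifting (the double Duhamel trick, the sharp Huygens principle in region~C, the frequency envelope argument, and the small data bounds on region~B) has been carried out in the proofs of the two preceding propositions. The only delicate point is conceptual, namely the role of $s_0$: it is designed to match the scaling of $L_t^p L_x^{2p}$, which is exactly the spacetime norm used in the jump from $s_0$ to $1$ in Proposition~\ref{P:reg-jump}, and simultaneously it satisfies $s_0 < 1$ in the entire subcritical range, which is what makes the composition of the two propositions legitimate.
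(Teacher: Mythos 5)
Your proposal is correct and is exactly the argument the paper intends when it says the two propositions ``immediately yield'' the corollary: apply Proposition~\ref{P:improve-reg} with $s=s_0\in[s_p,1)$ (the check $s_0<1\iff p<5$ is the right compatibility condition) and then feed the resulting $L_t^\infty\dot\H^{s_0}$ bound into Proposition~\ref{P:reg-jump}. Nothing further is needed.
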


\subsection{The jump from \texorpdfstring{$\dot \cH^{1}(\bR^3)$}{H} regularity to \texorpdfstring{$\dot \cH^{s}(\bR^3)$}{H} regularity}  
As mentioned above, in order to employ the rigidity argument based on a certain virial identity, we also need to prove that the trajectory of a critical element in fact lies in a pre-compact subset of $\dot \H^1$. We will achieve this by proving that in fact, we can gain a bit more regularity, specifically we can place the solution in $\dot \H^s$ for some $s > 1$. The key idea here is that we actually have a bit of room in the previous estimates given the additional assumption of $\dot \cH^1$ regularity, and this will provide some extra decay which we can use to establish the additional regularity.

\begin{prop}\label{P:improve-reg2}
Suppose $\vec u$ is a soliton-like critical element. Then $\vec u\in L_t^\infty\dot\H^{s} $ for some $s > 1$.
\end{prop}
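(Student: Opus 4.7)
The plan is to rerun the double Duhamel / frequency envelope argument of Proposition~\ref{P:improve-reg}, but now starting from the improved base regularity $\vec u \in L^\infty_t \dot\H^1$ furnished by the preceding corollary (together with $\vec u \in L^\infty_t \dot\H^s$ for every $s<1$). The additional derivative provides extra slack in the nonlinear estimates through Sobolev embedding $\dot\H^1 \hookrightarrow L^6$, which will translate into a small positive envelope gain past regularity one.

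Concretely, I would introduce envelopes at a base regularity $s_\star$ slightly below $1$, namely
\[
\tilde\gamma_k(t_0) = \sum_j 2^{-\sigma|j-k|}\bigl[2^{js_\star}\|Q_j u(t_0)\|_{L^2} + 2^{j(s_\star-1)}\|Q_j \partial_t u(t_0)\|_{L^2}\bigr],
\]
together with the analogous Strichartz envelope $\tilde\alpha_k(J)$ built from a sharp admissible pair $(q,\tfrac{2q}{q-2})$ with $q>2$ close to $2$. The three-region decomposition \textbf{A}/\textbf{B}/\textbf{C} from the proof of Proposition~\ref{P:improve-reg} is reused verbatim, with $T = 2R/\delta_0$ dictated by the subluminality constant $\delta_0$ of Lemma~\ref{l:sl}. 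On Region \textbf{A} I apply the small-data iteration on intervals of length $\varepsilon$ provided by Lemma~\ref{l:DL1lem}, combined with the inhomogeneous Strichartz estimate and the splitting \eqref{sc-nonlinear-decomp} of the nonlinearity into the three paradigmatic terms. On Region \textbf{B} I use the auxiliary solution $\vec v$ with data $\chi_R \vec u(T)$ (which is small in $\dot\H^{s_\star}$ by compactness), estimate $v$ via the small-data theory at the higher base regularity, and transfer from $\gamma_k$ to $\tilde\gamma_k$ using the commutator bound of Lemma~\ref{L:commutator}. On Region \textbf{C} the sharp Huygens principle applied through the modified projectors $Q_{<k}$ of \eqref{qk_def} still forces $\langle C,C'\rangle_{\dot H^1} \equiv 0$ for all $k$ sufficiently large.

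Closing the bootstrap then yields $\tilde\gamma_k(0) \lesssim C_0 2^{-k\sigma}$, so that $\vec u(0) \in \dot\H^{s_\star+\sigma'}$ for every $\sigma'<\sigma$; time-translation symmetry promotes this bound to uniformity in $t \in \R$.

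The main obstacle is the delicate bookkeeping at the energy regularity. In the proof of Proposition~\ref{P:improve-reg} the envelope-closing constraint was $\sigma < \tfrac{2}{q} - s_{\text{base}}$, which degenerates at $s_{\text{base}}=1$ (forcing $\sigma < 0$ for any admissible $q>2$). To break through, I would redistribute derivatives in the fractional chain rule for the $u_{\leq k_0}u^{p-1}$ term, placing the derivative on the low-frequency factor $u_{\leq k_0}$ and controlling the remaining $(p-1)$-fold product via the embedding $\dot\H^1 \hookrightarrow L^6$ rather than $\dot\H^{s_p}\hookrightarrow L^{3(p-1)/2}$; since $6>\tfrac{3}{2}(p-1)$ for $p<5$, this creates a strictly positive margin $\kappa=\kappa(p)>0$ independent of $q$ in the summation constraints. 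The high-high terms $u_{>k_0}^{p-1}u_{>j}$ and $u_{>k_0}^{p-1}u_{\leq j}$ continue to be absorbed by the smallness of $\|u_{>k_0}\|_{L^{2(p-1)}_{t,x}}$. Choosing $s_\star = 1-\tfrac{\kappa}{4}$ and $\sigma = \tfrac{\kappa}{2}$ then produces $s_\star + \sigma' > 1$, yielding the conclusion $\vec u \in L^\infty_t \dot\H^s$ for some $s>1$ as required.
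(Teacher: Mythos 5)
You are not following the paper's route here, and as written your scheme has a genuine gap. You correctly identify that a naive rerun of the envelope bootstrap at base regularity $s_\star$ is capped by $\sigma<\tfrac{2}{q}-s_\star$, hence $s_\star+\sigma<\tfrac{2}{q}<1$, but your remedy addresses only the $u_{\le k_0}u^{p-1}$ term. That constraint does not come only from there. First, in the $u_{>k_0}^{p-1}u_{\le j}$ term the factor $\sum_{\ell\le j}2^{(\ell-j)(\frac2q-s_\star)}(\cdots)$ must be summed against $2^{-\sigma|j-k|}$, and summability alone forces $\sigma<\tfrac2q-s_\star$ no matter how the nonlinearity itself is estimated; your $\dot\H^1\hookrightarrow L^6$ redistribution does not touch this. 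Second, and fatal for your specific parameters, the transfer $\tilde\gamma_k(T)\lesssim\gamma_k(T)+C_02^{-k\sigma}$ via Lemma~\ref{L:commutator} produces an error decaying in $k$ only like $2^{-k(1-s_\star)}$ (the lemma is restricted to $0\le s\le1$, so invoking the $\dot H^1$ norm of $u$ instead of $\dot H^{s_\star}$ gives the same rate), which forces $\sigma\le 1-s_\star=\kappa/4$ and contradicts your choice $\sigma=\kappa/2$; in particular $s_\star+\sigma\le1$ through this step, so the bootstrap you describe cannot reach any $s>1$. The claimed margin $\kappa(p)$ ``independent of $q$'' for the $u_{\le k_0}u^{p-1}$ term is also asserted without an actual exponent computation, but even granting it, the cap re-enters through the other two places.

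For comparison, the paper never raises the base regularity of the envelope. It keeps the double Duhamel pairing measured in $\dot H^1$, exactly as in Proposition~\ref{P:reg-jump}, and spends the new information $\vec u\in L_t^\infty\dot\H^1$ on the solution factors: by persistence of regularity in the small-data theory, $\||\nabla|^{1-s_p}v\|_{L^{2(p-1)}_{t,x}}+\|v\|_{L_t^{2q/(q-2)}L_x^q}\lesssim_T1$ (and likewise for $\tilde v$, and for $u$ on $[-T,T]$ after subdividing into small intervals). Feeding these improved spacetime bounds into the same Region A/B/C estimates, \eqref{equ:strichartz_duhamel} and \eqref{regularity1}, yields the extra frequency decay $\|Q_ku(0)\|_{\H^1}^2\lesssim_T2^{-k\sigma}2^{-k(\frac2q-1+s_p)}$, and since $\sigma$ may be taken up to $\tfrac2q-s_p$ with $q$ arbitrarily close to $2$, summing gives $\vec u\in L_t^\infty\dot\H^{1+\alpha}$ for every $\alpha<\tfrac12$. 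Your instinct that the $\dot\H^1$ bound supplies extra room is right, but the room must be spent inside the Duhamel integrals at fixed output regularity $\dot\H^1$, not by pushing the envelope's base regularity toward $1$, where the $\tfrac2q$ barrier and the commutator transfer block the argument.
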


\begin{proof}
Let $v$ and $\widetilde{v}$ be the solutions to the small data Cauchy problems defined above. By small data arguments $v(T) \in \dot \cH^{1}(\bR^3)$ and $\| v(T) \|_{\dot \cH^{s_p}}$ small implies that
\begin{align}
\| v \|_{ L_t^{\frac{2q}{q-2}}  L_x^q(\bR \times \bR^{3})} + \| |\nabla|^{1- s_p} v \|_{L_{t,x}^{2(p-1)}(\bR \times \bR^{3})} &\lesssim_T 1\\
\| \tilde{v} \|_{ L_t^{\frac{2q}{q-2}}  L_x^q(\bR \times \bR^{3})} + \| |\nabla|^{1 - s_p} \tilde{v} \|_{L_{t,x}^{2(p-1)}(\bR \times \bR^{3})} &\lesssim_{T} 1.
\end{align}
Furthermore, arguing as above and partitioning $[-T, T]$ into sufficiently small intervals, we obtain
\[
\| u \|_{ L_t^{\frac{2q}{q-2}}  L_x^q ([-T, T] \times \bR^{3})} + \| |\nabla|^{1- s_p} u \|_{L_{t,x}^{2(p-1)}([-T,T] \times \bR^{3})} \lesssim_{T} 1.
\]
These inequalities, together with the argument used to prove Proposition \ref{P:improve-reg}, as well as \eqref{regularity1} and \eqref{equ:strichartz_duhamel} establishes that
\begin{align}
\| Q_{k} u(0) \|_{\mathcal H^{1}}^{2} &\lesssim_{T} 2^{-k\sigma} 2^{-k \left(\frac{2}{q} - 1+ s_p\right)}.
\end{align}
Since we may choose any
\[
\sigma < \frac{2}{q} - s_p,
\]
and $q$ arbitrarily close to $2$, we have then shown that
\[
\sum_{k} 2^{2 \alpha k} \| Q_{k} u(0) \|_{\mathcal H^{1}}^{2}  <  \infty
\]
for any $\alpha < \frac{1}{2}$, which concludes the proof.
\end{proof}

\subsection{Rigidity for the soliton-like critical element} 

Now we may prove that the soliton-like critical element is identically zero. We summarize this in the following proposition.

\begin{prop} \label{p:srig} 
Let $\vec u(t) \in \dot\H^1$ be a global-in-time solution to~\eqref{eq:nlw} such that for subluminal $x(t)$ the set 
\EQ{
K = \{ u( t,  \cdot - x(t)), \p_t u (t,  \cdot - x(t)) \mid t \in \R\} \subset \dot\H^1 \cap \dot \H^{s_p} 
} 
is a pre-compact subset of $\dot \H^1 \cap \dot \H^{s_p}$. Then $\vec u(t) \equiv 0$. 
\end{prop}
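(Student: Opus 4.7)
\textbf{Proof plan for Proposition \ref{p:srig}.} The strategy is to contradict a truncated virial (Morawetz-type) identity, using the defocusing sign crucially. Pre-compactness of $K$ in $\dot\H^1$ together with Sobolev embedding $\dot H^1 \hookrightarrow L^{p+1}$ (valid for $3<p<5$ in $\R^3$) makes the Hamiltonian
\[
E(\vec u)=\int \tfrac12(|u_t|^2+|\nabla u|^2)+\tfrac{1}{p+1}|u|^{p+1}\,dx
\]
finite and conserved, and produces uniform $\dot H^1\times L^2$ and $L^{p+1}$ tails of the modulated data $\vec u(t,\cdot-x(t))$ outside $B(0,R(\eta))$.

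First I would reduce to a frame in which $x(t)$ is bounded. The conserved momentum $\vec P=-\int u_t\nabla u\,dx$ is finite by compactness. Since $\dot H^{s_p}$ and $\dot H^1$ are both non-Lorentz-invariant in our range (as $s_p\neq\tfrac12$ for $p>3$ and $s_p\neq 1$ for $p<5$), the combined bounds on $K$ compactify the boost parameter; applying a suitable fixed Lorentz transformation, we may assume $\vec P=0$ while preserving every hypothesis of the proposition. The local conservation law $\partial_t e+\nabla\cdot \vec j=0$ with $e=\tfrac12(|u_t|^2+|\nabla u|^2)+\tfrac{|u|^{p+1}}{p+1}$ and $\vec j=-u_t\nabla u$ then gives $\tfrac{d}{dt}\int x\,e\,dx=\vec P=0$, so the energy center of mass $\int x\,e\,dx$ is constant in time. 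Since compactness confines the bulk of $e(t,\cdot)$ to $B(x(t),R(\eta))$ and $E(\vec u)>0$ (otherwise we are already done), this forces $\sup_{t\in\R}|x(t)|<\infty$; after a fixed translation assume $\sup_t|x(t)|\leq C_0$.

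With $x(t)$ bounded, I apply the truncated virial identity. For $R\gg C_0$ and a smooth cutoff $\phi$ with $\phi=1$ on $|x|\leq 1$ and $\supp\phi\subset\{|x|\leq 2\}$, set
\[
V_R(t):=\int \phi\!\left(\tfrac{x}{R}\right)\bigl[x\cdot\nabla u + u\bigr]u_t\,dx.
\]
A direct computation using $u_{tt}=\Delta u-u|u|^{p-1}$ and integration by parts gives
\[
\tfrac{d}{dt}V_R(t) = -\tfrac12\!\int\! \phi\!\left(\tfrac{x}{R}\right)\!\bigl(|u_t|^2+|\nabla u|^2\bigr)dx - \tfrac{p-2}{p+1}\!\int\! \phi\!\left(\tfrac{x}{R}\right)\!|u|^{p+1}dx + \mathcal{E}_R(t),
\]
where $\mathcal{E}_R(t)$ is a boundary error supported in the annulus $R\leq|x|\leq 2R$. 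Since $|x(t)|\leq C_0$ and the $\dot H^1\times L^2$ tails of $\vec u(t,\cdot-x(t))$ outside $B(0,R/2)$ are uniformly small for $R$ large, $\sup_t|\mathcal{E}_R(t)|=o_R(1)$; meanwhile $|V_R(t)|\lesssim R\cdot E(\vec u)$ uniformly in $t$. Integrating over $[0,T]$, the averaged lower bound of Corollary \ref{C:acax} combined with $\dot H^1$-precompactness yields $\int_0^T\|u(s)\|_{L^{p+1}}^{p+1}\,ds\gtrsim T$, hence
\[
T\lesssim R\cdot E(\vec u)+o_R(1)\,T.
\]
Choosing $R$ large enough to dominate the $o_R(1)$ factor and sending $T\to\infty$ forces $E(\vec u)=0$, hence $\vec u\equiv 0$.

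\textbf{Main obstacle.} The delicate step is the reduction to bounded $x(t)$: a static virial multiplier cannot absorb a drifting center, and the subluminality hypothesis alone bounds $|\dot x|$ only by $1-\delta_0$ in an averaged sense, which is not enough to beat the principal term in any naive moving-frame virial computation because of a $-\dot x\cdot \vec P_{\rm loc}$ loss. The Lorentz boost to zero total momentum (compactified by the subcritical nature of $\dot H^{s_p}$ and $\dot H^1$) followed by the energy center-of-mass identity is the key technical mechanism, and it is precisely what would fail at the Lorentz-invariant endpoints $p=5$ and $p=3$.
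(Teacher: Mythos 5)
Your route is genuinely different from the paper's: the paper boosts by the time-dependent averaged velocity $\nu(T)=x_1(T)/T$, which subluminality keeps in a compact subset of $(-1,1)$ (see \eqref{eq:alde}), and then averages the virial identity \eqref{eq:vir3d} over $[0,T']$ in the boosted frame (as in Claim~\ref{c:Tn}) to get $E(\vec v_{\nu(T)})=o(1)$ directly, with no momentum or nontriviality input; you instead boost once to the zero-momentum frame, try to control $x(t)$ by conservation of the energy center of mass, and then run a fixed-radius truncated virial against the averaged nontriviality of Corollary~\ref{C:acax}. That classical scheme can be made to work, but as written it has a genuine gap at its key step: the deduction ``$\vec P=0$ plus compactness forces $\sup_t|x(t)|<\infty$'' does not follow. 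The identity $\tfrac{d}{dt}\int x\,e\,dx=\vec P$ presupposes that the first moment $\int |x|\,e\,dx$ is finite, which your hypotheses do not provide, and the correct truncated center-of-energy argument — whose errors are controlled only by the uniform $\eta$-smallness of the tails outside $B(x(t),R(\eta))$ — gives almost-conservation up to an error of size $O(\eta\,t)$, hence only the sublinear bound $|x(t)|=o(|t|)$, not boundedness. With that weaker conclusion your fixed-$R$ argument does not close (you need $R\gtrsim\sup_{[0,T]}|x(t)|$ both for $|V_R(t)|\lesssim R\,E$ and for the smallness of $\mathcal{E}_R$); the standard repair is to take $R=\delta T$ and beat the resulting $\delta E\,T$ boundary contribution with the coercive lower bound, but that step is missing from your write-up.

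Two further points. The admissibility of the rest-frame boost is not a consequence of the non-Lorentz-invariance of $\dot H^{s_p}$ and $\dot H^1$: what is actually needed is that $(E,\vec P)$ is strictly timelike for a nontrivial defocusing solution, i.e. $|\vec P|<E$ (equality would force $u_t=\mp\partial_1 u$, $\nabla_{2,3}u\equiv 0$ and $\|u\|_{L^{p+1}}=0$), so the boost speed is $|\vec P|/E<1$; and you must still justify that the boosted solution remains global and inherits pre-compactness of its trajectory modulo a (subluminal) translation parameter — the paper invokes the analogous fact for its fixed boosts, but it cannot simply be folded into ``preserving every hypothesis.'' Finally, $\dot H^1(\R^3)$ does not embed into $L^{p+1}$ for $p<5$; finiteness of the energy follows instead by interpolation, $\dot H^{\frac32-\frac{3}{p+1}}\subset \dot H^{s_p}\cap\dot H^{1}$, exactly as the paper notes in the self-similar section.
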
 

As mentioned in the introduction, we include a proof of rigidity for the soliton-like critical element in the \textit{focusing setting} as well. The arguments that we use are similar to the ones given in~\cite[Section 3]{CKLS1} and~\cite{DL1, CR5d} but with a modification. The key new ingredient here is that the subluminality  of $x(t)$ compactifies the subset of the Lorentz group taking $(t, x(t))$ to $(t', 0)$; see also~\cite{KM06, NakS} for a somewhat different approach that uses the Lorentz transform to show that critical elements must have zero momentum.    The main ingredients in the proof are the following virial identities. 

In what follows we let $r=|x|$ and denote $\partial_r u = \nabla u \cdot \tfrac{x}{|x|}$.

\begin{lem}[Virial Identities]
Let $\chi \in C^\infty_0$ be a smooth radial function such that $\chi(r) = 1$ if $r \le 1$ and $\supp \chi \in \{ r \le 2 \}$.  For any $R>0$ we define $\chi_R(r) = \chi(r/ R)$ and let $\vec u(t)$ be a solution to \eqref{eq:nlw}. Denoting 
\EQ{\label{eq:Omdef} 
\Om_{u(t)}(R):= \int_{\abs{ x} \ge R}  | \na u |^2 + | \p_t u|^2 + \frac{\abs{u}^2}{\abs{x}^2} + \abs{ u}^{p+1} \, \ud x,
}
we have
\EQ{\label{eq:vir3d} 
 \frac{\ud }{ \ud t} \ang{ \p_t u  \mid \chi_R  ( r \p_r u + u) } & = -E( \vec u)  \pm  \big( \tfrac{p-3}{p+1} \big) \| u \|_{L^{p+1}}^{p+1} \\
 & \quad + O( \Om_{u(t)} (R)), 
}
where the $+$ sign above corresponds to the focusing equation and the $-$ sign corresponds to the defocusing equation. 

If $\vec  u(t)$ solves the focusing equation, we have 
\EQ{ \label{eq:vfoc} 
 \frac{\ud }{ \ud t} \ang{ \p_t u  \mid \chi_R  ( r \p_r u +  \frac{1}{2}u) } &= -\frac{1}{2} \int \abs{\p_t u}^2  - 3 \left( \frac{1}{p+1} - \frac{1}{6} \right) \int \abs{u}^{p+1}  \\
 & \quad + O (\Om_{u(t)} (R)).  
}
\end{lem}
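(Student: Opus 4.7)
The plan is to prove both identities by direct computation: differentiate in time, invoke the equation to eliminate $\partial_t^2 u$, and perform spatial integration by parts against the multiplier $\chi_R(r\partial_r u + \alpha u)$, with $\alpha = 1$ for the first identity and $\alpha = \tfrac12$ for the second. Expanding,
\[
\tfrac{d}{dt}\langle \partial_t u,\, \chi_R (r\partial_r u + \alpha u)\rangle = \langle \partial_t^2 u,\, \chi_R(r\partial_r u + \alpha u)\rangle + \langle \partial_t u,\, \chi_R(r\partial_r \partial_t u + \alpha \partial_t u)\rangle,
\]
and using $\partial_t^2 u = \Delta u \mp |u|^{p-1} u$ (with $-$ defocusing, $+$ focusing, consistent with \eqref{eq:en}), each piece reduces to a standard three-line computation.

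The three basic calculations (formally with $\chi_R\equiv 1$, writing $r\partial_r u = x_j\partial_j u$) are $\int \Delta u\,(x_j\partial_j u)\,dx = \tfrac12 \int|\nabla u|^2\,dx$ and $\int \Delta u\cdot u\,dx = -\int |\nabla u|^2\,dx$ for the Laplacian; the pointwise identity $|u|^{p-1}u\cdot x_j\partial_j u = \tfrac{1}{p+1} x_j\partial_j(|u|^{p+1})$ together with integration by parts gives $-\tfrac{3}{p+1}\int|u|^{p+1}\,dx$ for the nonlinearity; and similarly $\partial_t u \cdot x_j\partial_j\partial_t u = \tfrac12 x_j\partial_j(|\partial_t u|^2)$ leads to $-\tfrac32 \int|\partial_t u|^2\,dx$. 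Assembling these three contributions and substituting the conservation law to trade coefficients rearranges the output into the stated form for each of $\alpha = 1$ (both focusing and defocusing) and $\alpha = \tfrac12$ (focusing).

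The only real work is accounting for the cutoff. Each spatial integration by parts produces commutator terms involving $\nabla\chi_R$ or $x\cdot\nabla\chi_R$, which by construction are supported in the annulus $\{R\leq |x|\leq 2R\}$ with pointwise bounds $|\nabla\chi_R|\lesssim R^{-1}$ and $|x\cdot\nabla\chi_R|\lesssim 1$. Applying Cauchy--Schwarz to these residual integrals, each contribution is controlled by one of $\int_{|x|\geq R}|\nabla u|^2$, $\int_{|x|\geq R}|\partial_t u|^2$, $\int_{|x|\geq R}|u|^2/|x|^2$, or $\int_{|x|\geq R}|u|^{p+1}$; these are precisely the four terms grouped into $\Omega_{u(t)}(R)$ in \eqref{eq:Omdef}. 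The $|u|^2/|x|^2$ term appears specifically to absorb the commutators arising from the lower-order $\alpha u$ factor in the multiplier (where there is no gradient to pair with $\nabla \chi_R$, so Hardy-type weights are required). I do not expect any genuinely delicate step; the main obstacle is purely bookkeeping---verifying that every commutator produced along the way really does fall into one of the four listed contributions to $\Omega_{u(t)}(R)$ rather than producing an additional uncontrolled error.
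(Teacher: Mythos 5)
Your approach is the right one, and it is the standard proof of such identities (the paper itself states this lemma without proof): differentiate, substitute $\partial_t^2 u=\Delta u\mp|u|^{p-1}u$, integrate by parts, and dump every term containing $\nabla\chi_R$, $x\cdot\nabla\chi_R$, or $(1-\chi_R)$ into $\Omega_{u(t)}(R)$ using $|\nabla\chi_R|\lesssim R^{-1}$, $|x\cdot\nabla\chi_R|\lesssim 1$, support in $\{R\le|x|\le 2R\}$, and Cauchy--Schwarz with the Hardy-type weight for the zeroth-order part of the multiplier. Your three building-block identities are correct in $d=3$, and the assembly for $\alpha=1$ does reproduce \eqref{eq:vir3d} in both the focusing and defocusing cases, with all cutoff errors landing in the four terms of \eqref{eq:Omdef}.

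One warning about the $\alpha=\tfrac12$ case: your claim that the output "rearranges into the stated form" cannot literally succeed for \eqref{eq:vfoc} as printed. For a general multiplier $\chi_R(r\partial_r u+\alpha u)$ the formal computation gives, in the focusing case, $\bigl(\tfrac12-\alpha\bigr)\int|\nabla u|^2+\bigl(\alpha-\tfrac{3}{p+1}\bigr)\int|u|^{p+1}+\bigl(\alpha-\tfrac32\bigr)\int|\partial_t u|^2+O(\Omega_{u(t)}(R))$, so at $\alpha=\tfrac12$ the gradient term cancels, the $|u|^{p+1}$ coefficient is $\tfrac12-\tfrac{3}{p+1}=-3\bigl(\tfrac{1}{p+1}-\tfrac16\bigr)$ as stated, but the $|\partial_t u|^2$ coefficient is $-1$, not $-\tfrac12$; this is consistent with the classical $p=5$ identity $\tfrac{d}{dt}\langle\partial_t u\mid x\cdot\nabla u+\tfrac12 u\rangle=-\int|\partial_t u|^2+O(\Omega)$, so the $-\tfrac12$ in \eqref{eq:vfoc} is a typo rather than something you can reach by "trading coefficients" via the conservation law (there is no such trade available once the gradient term has cancelled). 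The discrepancy is harmless for the way the lemma is used: Claim~\ref{c:Tn} only needs both coefficients on $\int|\partial_t u|^2$ and $\int|u|^{p+1}$ to be negative for $3\le p<5$, which holds with either constant. So: state and prove the identity with $-\int|\partial_t u|^2$, and otherwise your bookkeeping plan is complete and correct.
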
 

\begin{proof}[Proof of Proposition~\ref{p:srig} for the focusing equation]

We may assume that $x(0) = 0$. Since $x(t)$ is subluminal we can find $\de >0$ so that 
\EQ{ \label{eq:sl1} 
\abs{ x (t) - x( \tau) } \le (1-\de) \abs{t - \tau}, \quad \abs{x(t)} \le (1-\de) \abs{t} 
}
for all $t, \tau \in \R$. 

For convenience, we consider only the special case where 
\EQ{
x(t) = (x_1(t), 0, 0) \quad  \forall t >0,
}
as this contains the essential difficulties and the general argument is an easy modification of the one presented below. Recall that for each $\nu \in (-1, 1)$ we have a Lorentz transform $L_\nu$  defined by 
\EQ{
L_{\nu}(t, x_1, x_1, x_3) = \Big( \frac{ t -\nu x_1}{\sqrt{1- \nu^2}}, \frac{ x_1 - \nu t}{\sqrt{1- \nu^2}}, x_2, x_3 \Big)=:(t', x').
}
For any $T >0$, set 
\EQ{
\nu(T) := \frac{x_1(T)}{T}.
}
Then
\EQ{ \label{eq:alde} 
-(1-\de) \le \nu(T) \le 1-\de
}
and the Lorentz transform $L_{\nu(T)}$ gives 
\EQ{
L_{\nu(T)} (T, x_1(T), 0, 0) = (T', 0, 0, 0),
}
where 
\EQ{ \label{eq:T'} 
T' =  \sqrt{T^2 - x_1(T)^2}.
}  
Since  $x(t)$ satisfies~\eqref{eq:sl1}, we have the bounds 
\EQ{
c_\de T   \le T'  \le T
}
for $c_\de:= \sqrt{1 - (1-\de)^2}>0$, which means that $T'$ is comparable to $T$. For each $T>0$ define 
\EQ{
v_{\nu(T)}(t', x') := u \circ L_{\nu(T)} (t, x) 
}
Then, since $K$ above is pre-compact for $x(t)$ subluminal  and since  $\vec v_{\nu(T)}(t')$ as above is a fixed Lorentz transform of $\vec u(t, x)$, we can find a subluminal translation parameter $x'(t')$ with 
\EQ{
x'(T') = 0
} 
such that the trajectory 
\EQ{\label{eq:K'}
K' := \{ \vec v_{\nu(T)}(t', x - x'(t')) \mid t' \in \R\}
}
is pre-compact in $\dot \H^1 \cap \dot \H^{s_p}$. We will now establish the following.

\begin{claim} \label{c:Tn} 
Consider a critical element for the focusing equation with $3 \le p <5$. For each $n$ there exists a time $T_n>0$ such that for $T_n'$ as in~\eqref{eq:T'} we have 
\EQ{
\frac{1}{T'_n} \int_0^{T_n'} \int_{\R^3} \abs{\p_t v_{\nu(T_n)}(t, x) }^2 + \abs{v_{\nu(T_n)}(t, x)}^{p+1} \, \ud x \, \ud t < \frac{1}{4n} 
}
\end{claim} 
\begin{proof}[Proof of Claim~\ref{c:Tn}] 
Let $T>0$. Since $\vec v_{\nu(T)}$ solves the focusing equation we average~\eqref{eq:vfoc} with $R = C_\de T$  over the time interval $[0, T']$ for some constant $C_\de$ to be specified below, yielding
\EQ{ \label{eq:vir1v} 
\frac{1}{T'} \int_0^{T'} \int_{\R^3} &\abs{\p_t v_{\nu(T)}(t, x) }^2 + \abs{v_{\nu(T)}(t, x)}^{p+1} \, \ud x \, \ud t \\
& \lesssim \frac{1}{T'} \abs{ \ang{ \p_t v_{\nu(T)}(t) \mid \chi_{2T}  r \p_r v_{\nu(T)} } \vert^{T'}_0}  \\
& \quad + \frac{1}{T'} \abs{ \ang{ \p_t v_{\nu(T)}(t) \mid \chi_{2T} v_{\nu(T)} } \vert^{T'}_0}\\
& \quad +\frac{1}{T'} \int_0^{T'}  \Om_{v_{\nu(T)}(t)}(C_\de T) \ud t 
}
where $\Om_{v_{\nu(T)}}(C_\de T)$ is defined as in~\eqref{eq:Omdef}. Given $n>0$, by~\eqref{eq:K'}, the subluminality of $x'(t')$, and the fact that 
\EQ{
x'(0) = 0, \quad x'(T') = 0
}
we can choose $C_\de$ and $T = T_n$ large enough so that 
\EQ{
\frac{1}{T_n'} \int_0^{T_n'} \Om_{v_{\nu(T_n)}(t)}(C_\de T) \ud t  \ll \frac{1}{n}.
}
Note that $C_\de$ can be chosen  independently of $n$. Next we estimate the first term on the right-hand side of~\eqref{eq:vir1v}.  We treat only the case where the inner product is evaluated at $t = T'$, as the case when it is evaluated at $t =0$ is similar. We have 
\EQ{
\frac{1}{T'} &\abs{ \ang{ \p_t v_{\nu(T)}(T') \mid \chi_{2T}\cdot   r \, \p_r v_{\nu(T)}(T') }} \\
&\lesssim  \frac{T^{\frac{1}{2}}}{T'} \|\p_t v_{\nu(T)}(T') \|_{L^2}  \| \na v_{\nu(T)}(T') \|_{L^2( \abs{x} \le T^{\frac{1}{2}})} \\
& \quad + \frac{C_\de}{c_\de} \|\p_t v_{\nu(T)}(T') \|_{L^2}  \| \na v_{\nu(T)}(T') \|_{L^2( T^{\frac{1}{2}} \le \abs{x} \le C_\de T)}.
}
Since $T' \simeq_{\de} T$, the first term on the right-hand side above can be made as small as we like by choosing $T_n$ large enough so that 
\EQ{
\frac{T_n^{\frac{1}{2}}}{T_n'} \ll \frac{1}{n}.
}
Similarly, for the second term on the right, we rely on the pre-compactness of $K'$  in $\dot \H^1 \cap \dot \H^{s_p}$ and the fact that $x'(T'_n) = 0$ which yields,  
\EQ{
\| \na v_{\nu(T_n)}(T'_n) \|_{L^2( \abs{x} \ge T_n^{\frac{1}{2}}) } \ll \frac{1}{n} 
}
for $T_n$ large enough. The second term on the right-hand-side of~\eqref{eq:vir1v} is estimated in a similar fashion. This completes the proof of Claim~\ref{c:Tn}. 
\end{proof} 
Now, given this sequence of times $T_n$ guaranteed by Claim~\ref{c:Tn} consider the sequence $\nu(T_n):= x_1(T_n)/ T_n$. By~\eqref{eq:alde} we can, passing to subsequence that we still denote by $\nu(T_n)$,  find a fixed $\nu \in [-1-\de, 1-\de]$ with 
\EQ{ \label{eq:allim} 
\nu(T_n) \to \nu_0 \mas n \to \infty.
}
Define 
\EQ{
v_{\nu_0}(t', x'): = u \circ L_{\nu_0}(t, x)
}
and note that this is a \emph{fixed} Lorentz transform of $u$. 
It follows from Claim~\ref{c:Tn}, ~\eqref{eq:allim}, and a continuity argument that in fact, 
\EQ{
\frac{1}{T'_n} \int_0^{T_n'} \int_{\R^3} \abs{\p_t v_{\nu_0}(t, x) }^2 + \abs{v_{\nu_0}(t, x)}^{p+1} \, \ud x \, \ud t < \frac{1}{2n} 
}
after passing to a further subsequence. Using yet another continuity argument we can assume without loss of generality that the $T_n' = M_n \in \N$, i.e., 
\EQ{\label{eq:Mn} 
\frac{1}{M_n} \int_0^{M_n} \int_{\R^3} \abs{\p_t v_{\nu_0}(t, x) }^2 + \abs{v_{\nu_0}(t, x)}^{p+1} \, \ud x \, \ud t < \frac{1}{n} 
}
for some sequence $\{M_n \} \subset \N$ with $M_n  \to \infty$. Now we claim that there exists a sequence of positive integers $m_n \to \infty$ such that 
\EQ{ \label{eq:mn}
\int_{m_n}^{m_n +1} \int_{\R^3} \abs{\p_t v_{\nu_0}(t, x) }^2 + \abs{v_{\nu_0}(t, x)}^{p+1} \, \ud x \, \ud t  \to 0 \mas n  \to \infty.
}
If not, we could find $\eps_0>0$ such that for all $n \in \Z$ we have 
\EQ{
\int_{m}^{m +1} \int_{\R^3} \abs{\p_t v_{\nu_0}(t, x) }^2 + \abs{v_{\nu_0}(t, x)}^{p+1} \, \ud x \, \ud t  \ge \eps_0.
}
However, summing up from $0$ to $M_n -1$ we would then have 
\EQ{
\int^{M_n}_{0} \int_{\R^3} \abs{\p_t v_{\nu_0}(t, x) }^2 + \abs{v_{\nu_0}(t, x)}^{p+1} \, \ud x \, \ud t  \ge \eps_0 M_n,
}
which contradicts~\eqref{eq:Mn}. Now, by~\eqref{eq:mn} we have 
\EQ{ \label{eq:0lim} 
\int_{0}^{1} \int_{\R^3} \abs{\p_t v_{\nu_0}(m_n+t, x) }^2 + \abs{v_{\nu_0}(m_n+t, x)}^{p+1} \, \ud x \, \ud t  \to 0
}
as $n\to\infty$. On the other hand, passing to a further subsequence, we can find $(V_0, V_1) \in \dot \H^1 \cap \dot \H^{s_p}$ such that 
\EQ{
\vec v_{\nu_0}( m_n,  \cdot - x'(m_n)) \to (V_0, V_1)\in \dot \H^1 \cap \dot \H^{s_p} \mas n \to \infty.
}
Let $\vec V(t)$ be the solution to~\eqref{eq:nlw} with data $(V_0, V_1)$. Then for some $t_0>0$ sufficiently small we have 
\begin{align}\label{val_conv}
\lim_{n \to \infty} \sup_{t \in [0, t_0]} \| \vec v_{\nu_0}(m_n + t, \cdot - x'(m_n)) - \vec V(t) \|_{\dot \H^1 \cap \dot \H^{s_p}} = 0 
\end{align}
However, from~\eqref{eq:0lim} we can then conclude that 
\EQ{
\vec V \equiv 0,
}
from which we conclude from \eqref{val_conv} and small data arguments that
\EQ{
\vec v_{\nu_0} \equiv 0.
}
This means $\vec u \equiv 0$ as well, which finishes the proof. 
\end{proof} 

\begin{proof}[Proof of Proposition~\ref{p:srig} for the defocusing equation]
The argument is much easier if either $p =3$ or if the equation is defocusing since~\eqref{eq:vir3d} gives us coercive control over the energy. Indeed, arguing as in the proof of Claim~\ref{c:Tn}, but using~\eqref{eq:vir3d} instead of~\eqref{eq:vfoc} we see that 
\EQ{
E(\vec v_{\nu(T)})  =  \frac{1}{T'} \int_0^{T'} E(\vec v_{\nu(T)} ) \, \ud t = o(1) \mas T \to \infty
}
since for each fixed $T$, the energy of $v_{\nu(T)}(t)$ is constant in time. However, since 
\EQ{
v_{\nu(T)}(t', x') = u \circ L_{\nu(T)} (t, x), 
}
we must have either $\limsup_{T \to \infty} \abs{\nu(T)}  = 1$,  or $E(\vec u) =0$. The former is impossible by~\eqref{eq:alde}. Hence $E(\vec u) = 0$. Therefore $\vec u \equiv 0$. 
\end{proof} 
\begin{rem} 
Note the argument given above for the defocusing equation also works for the cubic focusing equation since~\eqref{eq:vir3d} yields  control of the full energy for $p=3$. Arguing as above one can conclude that $E(u) =0$. Since the only nonzero solutions with zero energy must blow up in both time directions~\cite{KSV} we conclude that the global-in-time solution $\vec u \equiv 0$; see~\cite{DL1} where a version of this argument was carried out in detail. 
 \end{rem}

\section{The self-similar critical element}  \label{s:ss}

In this section, we assume towards a contradiction that $\vec u$ is a self-similar-like critical element as in Proposition~\ref{p:cases}, case (III).  We will prove that any such $\vec u$ has finite energy, and in fact that $E(\vec u)=0$.  Since we are treating the defocusing equation, this implies $\vec u \equiv 0$. The arguments in this section can be readily adapted to the focusing setting as well.

More precisely, we will prove the following result.
  \begin{prop}\label{ssim_0}
There are no self-similar-like critical elements, in the sense of Case (II) of Proposition~\ref{p:cases}.
 \end{prop}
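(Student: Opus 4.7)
The plan is to show that any self-similar critical element has vanishing energy, from which the defocusing sign of the nonlinearity forces $\vec u \equiv 0$ and contradicts the non-triviality of a critical element (e.g.\ via Corollary~\ref{C:acax}). The argument proceeds in three stages: (i) upgrade the regularity to $\vec u \in L^\infty_t \dot \H^{s}$ for some $s > 1$; (ii) use this additional regularity together with the self-similar concentration scale $N(t) = 1/t$ to show that the energy-level norms vanish as $t \to \infty$; and (iii) invoke energy conservation and the positivity of the defocusing energy.

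For the regularity upgrade in stage (i), I would mimic the double Duhamel strategy of Section~\ref{s:soliton-reg}. Fixing $t_0 \in (0,\infty)$ and exploiting Lemma~\ref{l:weak} with $T_1 = 0$ and $T_2 = \infty$, I would write
\[
\bigl\langle P_k \vec u(t_0),\, P_k \vec u(t_0)\bigr\rangle_{\dot H^s} = \Big\langle \int_0^{t_0} S(t_0-s) P_k(0,F(u))\,ds,\ \int_{t_0}^{\infty} S(t_0-\tau) P_k(0,F(u))\,d\tau \Big\rangle_{\dot H^s},
\]
where $F(u) = \pm |u|^{p-1} u$, and set up a frequency envelope in the spirit of Definition~\ref{d:fren}. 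The \emph{main technical obstacle} is that the sharp Huygens cancellation used in Section~\ref{s:soliton-reg} is unavailable: because $N(t) = 1/t$, Remark~\ref{r:Reta} forces the bulk of $\vec u(t)$ to live on spatial scale $\sim t$, so the forward and backward light cones emanating from $(t_0,0)$ can never absorb the solution at large $|t-t_0|$, and the ``region $C$ pair vanishes'' argument of Section~\ref{s:soliton-reg} collapses. Instead, I would substitute the long-time Strichartz estimates in the spirit of Dodson's work alluded to in the introduction, which control the Strichartz norms of $P_k u$ on time intervals $J$ satisfying $|J| \simeq 2^{\alpha k}$ for $\alpha \ge 1$. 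Iterating the resulting envelope estimate from $\dot \H^{s_p}$ upward would yield $\vec u \in L^\infty_t \dot \H^{s}$ for some $s > 1$.

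For the decay of energy-level norms in stage (ii), the tail estimate in Remark~\ref{r:Reta} with $N(t) = 1/t$ and $x(t) \equiv 0$ yields, for each $\eta > 0$, some $R = R(\eta)$ with $\int_{|\xi| \ge R/t} |\xi|^{2s_p} |\hat u(t,\xi)|^2 \,d\xi \le \eta$ uniformly in $t$. Splitting at frequency $R/t$, the low-frequency part of $\|u(t)\|_{\dot H^1}^2$ is dominated by $(R/t)^{2(1-s_p)} \|u(t)\|_{\dot H^{s_p}}^2$, which tends to zero as $t \to \infty$ since $s_p < 1$, while the high-frequency part is controlled by H\"older interpolation between the small $\dot H^{s_p}$ tail and the uniformly bounded $\dot H^s$ norm from stage (i). Analogous splittings combined with Sobolev embedding and Bernstein's inequality yield $\|u_t(t)\|_{L^2}, \|u(t)\|_{L^{p+1}} \to 0$ as $t \to \infty$. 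Hence $E(\vec u(t)) \to 0$; by conservation of energy (now meaningful because $\vec u$ is in the energy space) $E(\vec u) \equiv 0$; and since in the defocusing regime the energy is a sum of non-negative terms, this forces $u \equiv 0$ and $u_t \equiv 0$, yielding the desired contradiction.
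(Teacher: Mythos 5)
Your overall architecture (upgrade regularity via a double Duhamel pairing with $T_1=0$, $T_2=\infty$ and long-time Strichartz estimates, then show the energy vanishes and use the defocusing sign) is the same as the paper's, but the load-bearing step of your argument has a genuine gap. You assert, without any mechanism, that the iteration yields $\vec u\in L_t^\infty\dot\H^{s}$ for some $s>1$. First, as stated over the maximal interval $(0,\infty)$ this is not the right shape of estimate: the self-similar frequency scale $N(t)=t^{-1}$ forces any supercritical norm to degenerate as $t\to 0^+$, and indeed the bounds the paper proves are time-weighted, $\|\vec u(T)\|_{\dot\H^{s_0}}\lesssim T^{-(s_0-s_p)}$ and $\|\vec u(T)\|_{\dot\H^{1}}\lesssim T^{-p(s_0-s_p)}$, which blow up as $T\to0$ and decay as $T\to\infty$. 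Second, even restricting to $t\geq 1$, nothing in your sketch crosses the threshold $s=1$: the envelope/long-time-Strichartz machinery in the paper gains only increments of size $<\tfrac2q-s_p$ below $s=1$, and the paper never needs (or proves) $s>1$ in the self-similar case. Your stage (ii), however, genuinely requires $s>1$: with only the $\dot H^{s_p}$ tail bound from Remark~\ref{r:Reta} and boundedness in $\dot H^{1}$, interpolation controls $\dot H^{\sigma}$ only for $\sigma<1$, so the high-frequency part of $\|u(t)\|_{\dot H^1}$ and $\|u_t(t)\|_{L^2}$ cannot be made small. Thus your route cannot close as written.

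The paper avoids this by proving \emph{decay in $T$} rather than higher regularity: the double Duhamel pairing is run at $t_0=1$, the forward integral is split at time $2^{k\alpha}$ with the far piece estimated by the dispersive bound $|t-\tau|^{-1}$ together with an $L_t^2L_x^1$ bound on $P_kF(u)$ (giving $2^{k(1-\alpha/2)}$), the near piece by the long-time Strichartz estimate and a frequency envelope; then the self-similar rescaling invariance ($Tu(Tt,Tx)$ is again self-similar with the same compactness modulus) converts the fixed-time gain at $t=1$ into $\|\vec u(T)\|_{\dot\H^{s_0}}\lesssim T^{-(s_0-s_p)}$, and a single forward Duhamel over dyadic intervals upgrades this to $\|\vec u(T)\|_{\dot\H^{1}}\lesssim T^{-p(s_0-s_p)}$; sending $T\to\infty$ kills the conserved energy. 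Your proposal is missing exactly these quantitative ingredients (the splitting of the infinite forward time integral and the rescaling step), and without them the vanishing of the energy does not follow. To repair your argument you should either prove the time-decaying subcritical bounds as in the paper, or supply an actual proof of a uniform $\dot\H^{s}$ bound with $s>1$ on $[1,\infty)$, which is a strictly harder statement than anything established here.
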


 As in Section \ref{s:soliton}, we will prove this proposition via two additional regularity arguments. We fix the following notation: let 
\EQ{ \label{eq:s0} 
s_0=s_p+\frac{5-p}{2p(p-1)}=\frac32-\frac5{2p},
}

  \begin{prop} \label{ad_ref_ss}
Let $\vec u$ be a self-similar-like critical element as in Proposition~\ref{p:cases}. Then, 
\begin{equation}
\|\vec u(T)\|_{\H^{s_0}}\lesssim T^{-(s_0-s_p)}
\end{equation}
uniformly in $T>0$.
\end{prop}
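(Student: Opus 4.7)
The plan is to adapt the double Duhamel argument of Proposition~\ref{P:improve-reg} to the self-similar setting, replacing the sharp Huygens cancellation that was decisive in the interior cone region by a version of the long-time Strichartz estimates of Section~\ref{s:ltse}. Fix $T>0$ and a large dyadic $N=2^k$, and complexify by $w=u+i(-\Delta)^{-1/2}\partial_t u$ as in Section~\ref{s:soliton}. Because $\vec u$ has the compactness property on $I_{\max}=(0,\infty)$, Lemma~\ref{l:weak} yields (up to signs)
\begin{equation*}
 \|P_N w(T)\|_{\dot H^{s_0}}^2 = -\lim_{T_1\downarrow 0,\, T_2\uparrow \infty}\Bigl\langle \int_{T_1}^T \tfrac{e^{-i(s-T)\sqrt{-\Delta}}}{\sqrt{-\Delta}}P_N F(u(s))\,ds,\ \int_T^{T_2} \tfrac{e^{-i(\tau-T)\sqrt{-\Delta}}}{\sqrt{-\Delta}}P_N F(u(\tau))\,d\tau\Bigr\rangle_{\dot H^{s_0}},
\end{equation*}
so the proof reduces to estimating this pairing against a frequency envelope adapted to the $s_p$-to-$s_0$ regularity gap. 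The conclusion will be the bound $\|\vec u(T)\|_{\H^{s_0}}\lesssim T^{-(s_0-s_p)}$, with the $T$-dependence emerging by tracking the natural scale $N(t)=1/t$ through each step.

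Unlike the soliton-like case, the forward and backward light cones based at $(T,0)$ never exhaust the solution, since the bulk of $\vec u(t)$ occupies physical scale $t$; hence no portion of the pairing vanishes by Huygens, and every region must be estimated dispersively. Split each time integral into a near and a far piece: $[T/2,T]$ versus $(0,T/2]$ on the forward side, and $[T,2T]$ versus $[2T,\infty)$ on the backward side. On the near intervals $[T/2,2T]$, which have length $\simeq T$ and on which the characteristic frequency $1/t$ is comparable to $1/T$, standard inhomogeneous Strichartz estimates together with Lemma~\ref{l:DL1lem} (appropriately rescaled by $T$) give control of the Duhamel pieces with the correct $T^{-(s_0-s_p)}$ scaling. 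On the far intervals we invoke the long-time Strichartz estimates of Section~\ref{s:ltse}: for $s\in(0,T/2]$ the solution concentrates at frequency $1/s\gtrsim 2/T$ and the projection $P_N u(s)$ with $N\gg 1/T$ is controllable by these long-time bounds on intervals of length $\simeq N^\alpha$; for $s\in[2T,\infty)$ the frequency $1/s\lesssim 1/(2T)$ is below $N$, so $P_N u(s)$ is again rapidly decaying thanks to Remark~\ref{r:Reta}. Setting up an envelope $\beta_k$ adapted to $\dot\H^{s_0}$ in analogy with \eqref{sc-fe}, the smallness of the $\dot\H^{s_p}$ tail from Remark~\ref{r:Reta} upgrades to a self-improving inequality of the shape $\beta_k\lesssim T^{-(s_0-s_p)}+\eta\,\beta_k+C_02^{-k\sigma}$, which closes and, summed in $k$, delivers the claim.

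The principal obstacle is controlling the far contributions to the double Duhamel pairing, for which finite speed of propagation is of no use and the weak vanishing of the free evolution at $t\to 0^+$ and $t\to\infty$ does not by itself produce quantitative decay. The long-time Strichartz estimates are the indispensable tool here: they convert the self-similar compactness (namely, that $\vec u(t)$ is concentrated at frequency $1/t$) into uniform control of high-frequency Strichartz norms of $\vec u$ on time intervals whose length is commensurate with the reciprocal of the frequency. Threading these estimates through the pairing so that each of the four time regions contributes a bound scaling precisely like $T^{-(s_0-s_p)}$—and matching the frequency scales so that one avoids both a loss in $T$ and a loss in $N$—is the delicate step; once executed, the frequency-envelope iteration sketched above closes and proves Proposition~\ref{ad_ref_ss}.
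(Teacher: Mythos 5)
Your overall frame (double Duhamel at time $T$, no Huygens cancellation, frequency envelope, long-time Strichartz as the key input, scaling to extract the $T$-dependence) is the same as the paper's, but the quantitative mechanism that makes the argument close is missing, and the decomposition you propose would not produce it. The paper splits only the \emph{forward} integral, at the frequency-dependent time $2^{k\alpha}$ with $\alpha\geq 1$ large, writing it as $A_k$ (over $[1,2^{k\alpha}]$) plus $B_k$ (over $[2^{k\alpha},\infty)$), and never estimates the backward piece $Z_k=\int_0^1 e^{i(1-\tau)\sqrt{-\Delta}}P_kF(u)\,d\tau$ by itself: by Lemma~\ref{l:weak} that piece converges weakly to $\vec u(T)$ itself, so bounding it directly in $\dot\H^{s_0}$ is circular. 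The device that avoids this is the algebraic inequality $|\langle A_k+B_k,Z_k\rangle|\leq |A_k|^2+2|\langle B_k,Z_k\rangle|$; then $|A_k|^2$ is handled on the long interval $[1,2^{k\alpha}]$ by the self-similar long-time Strichartz estimate (Proposition~\ref{prop-ss-lts}) together with a multi-term paraproduct/envelope analysis, while $\langle B_k,Z_k\rangle$ is handled by the dispersive $|t-\tau|^{-1}$ bound together with \eqref{01071}, where the separation $|t-\tau|\gtrsim 2^{k\alpha}$ yields a factor $2^{-k\alpha/2}$ that beats the kernel loss, giving $|\langle B_k,Z_k\rangle|\lesssim 2^{k(1-\alpha/2)}$. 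In your scheme the far cutoffs are the frequency-\emph{independent} times $T/2$ and $2T$; then the dispersive cross term only sees separation $\simeq T$ and is of size $\simeq 2^{k(1-2s_p)}$ with no gain in $k$, and $\sum_k 2^{2ks_0}\,2^{k(1-2s_p)}=\sum_k 2^{k(1+2(s_0-s_p))}$ diverges, so the $\dot\H^{s_0}$ sum cannot close. The frequency-dependent splitting time is not a convenience; it is where all the decay in $N$ comes from on that side.

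The treatment you propose for the individual far regions also does not work as stated. For $s\in(0,T/2]$ the concentration frequency $N(s)=1/s$ eventually exceeds any fixed $N$, so $P_Nu(s)$ is a \emph{low}-frequency projection relative to the concentration scale; the long-time Strichartz estimates control frequencies \emph{above} the concentration scale going forward in time (that is exactly the paper's region $A_k=[1,2^{k\alpha}]$), and say nothing about this region — note also that the estimate you cite from Section~\ref{s:ltse} is the traveling-wave version with $N(t)\equiv1$ and additive windows $[t_0,t_0+N^{1-\eps}]$, not the self-similar Proposition~\ref{prop-ss-lts} with multiplicative windows. Moreover $\|\vec u(s)\|_{\H^{s_0}}\sim s^{-(s_0-s_p)}\to\infty$ as $s\to0$, which is another sign that this piece cannot be estimated alone. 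On $[2T,\infty)$, Remark~\ref{r:Reta} gives per-time smallness of high frequencies but no time decay, so it cannot control an integral over an infinite interval; that region is only controllable through the cross term as above. Finally, the gain of $s_0-s_p$ derivatives is not produced on $[T/2,2T]$ by local Strichartz plus Lemma~\ref{l:DL1lem}: in the paper the decay $2^{-k(\frac2q-s_p)+}$ in the envelope comes from the paraproduct analysis of $A_k$ over the long interval using Proposition~\ref{prop-ss-lts}, and the $T$-dependence is then obtained at the very end by rescaling ($Tu(Tt,Tx)$ is again self-similar with the same compactness modulus), which you could also invoke — but only after the estimate at $T=1$ has actually been closed.
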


 \begin{prop}\label{P:reg-jump-ss} Let $\vec u$ be a self-similar-like critical element as in Proposition~\ref{p:cases}. Let $s_0$ be as in~\eqref{eq:s0} and suppose that
\begin{equation}\label{s0pluseps}
\|\vec u(T)\|_{\dot \cH^{s_0}}\lesssim T^{-(s_0-s_p)}
\end{equation}
uniformly in $T>0$, then
\[
\|\vec u(T)\|_{\dot \cH^{1}}\lesssim T^{-p(s_0-s_p)}
\]
uniformly in $T>0$.
\end{prop}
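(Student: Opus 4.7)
The strategy is to run a double Duhamel pairing, parallel in structure to Proposition~\ref{P:reg-jump} but with the Huygens-principle cancellation replaced by direct estimates that exploit the hypothesized temporal decay \eqref{s0pluseps}. By the self-similar scaling of the problem it suffices to prove $\|\vec u(1)\|_{\dot \cH^1} \lesssim 1$: noting the identity $p(s_0-s_p) = 1 - s_p$, the estimate at general $T$ follows by rescaling $u_T(s, y) = T^{2/(p-1)} u(Ts, Ty)$, which is again a self-similar critical element satisfying \eqref{s0pluseps} uniformly in $T$. Complexifying by $w = u + \tfrac{i}{\sqrt{-\Delta}} u_t$ and applying Lemma~\ref{l:weak} with $T_- = 0$, $T_+ = +\infty$ yields the two weak representations
\[
\sqrt{-\Delta}\, Q_{<k} w(1) = -\int_0^1 e^{-i(t-1)\sqrt{-\Delta}} Q_{<k}F(u(t))\,\ud t = \int_1^\infty e^{-i(\tau-1)\sqrt{-\Delta}} Q_{<k}F(u(\tau))\,\ud \tau,
\]
so that $\|Q_{<k} w(1)\|_{\dot H^1}^2$ equals the pairing of these two integrals, to be bounded uniformly in $k \geq 0$.

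Following Proposition~\ref{P:reg-jump}, I would partition spacetime into three pieces analogous to \eqref{eq:regions}: a compact window $A$ over $[1/2, 1]$ paired with $A'$ over $[1, 2]$; an exterior region $B, B'$ defined by $|x| \geq R + |t - 1|$ for $t$ outside the compact window, with $R = R(\eta)$ chosen by Remark~\ref{r:Reta} so that $\chi_R \vec u(1/2)$ and $\chi_R \vec u(2)$ have small $\dot \cH^{s_p}$ norm; and an interior region $C, C'$ filling the complement. Contributions from $A, A'$ are handled by Strichartz estimates on the compact window: since $\dot H^{s_0}$ has the same scaling as $L^p_t L^{2p}_x$, the hypothesis gives $\|u\|_{L^p_t L^{2p}_x([1/2,2])} \lesssim 1$ and hence $\|F(u)\|_{L^1_t L^2_x([1/2,2])} \lesssim 1$ by H\"older. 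Contributions from $B, B'$ are absorbed by comparing $u$ to the small-data solution $v$ with initial data $\chi_R \vec u(1/2)$ (resp.\ $\chi_R \vec u(2)$) and applying finite speed of propagation, exactly as in Proposition~\ref{P:reg-jump}.

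The main obstacle, and the essential new ingredient for the self-similar case, is bounding the cross pairing $\langle C, C' \rangle$: since the bulk of $u(t)$ occupies a ball of radius $\sim t$ about the origin, the cones from $(1, 0)$ never separate the supports of $F(u(t))$ and $F(u(\tau))$ for $t$ near $0$ and $\tau$ large, so the Huygens-principle argument of Section~\ref{s:soliton-reg} gives no cancellation. I would instead dyadically decompose $C' = \bigcup_{n \geq 1}[2^n, 2^{n+1}]$ and $C = \bigcup_{j \geq 1}[2^{-j-1}, 2^{-j}]$ and rescale each block to unit length in time via the self-similar scaling; each rescaled block is then handled by the compact-window Strichartz estimate applied to the rescaled solution. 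On large-time blocks the hypothesis produces a factor $2^{-n(s_0-s_p)}$, yielding a convergent geometric sum; on small-time blocks Remark~\ref{r:Reta} suppresses the contribution whenever $j > k + O(1)$, leaving $\sim k$ effective blocks. The delicate step is summing these $\sim k$ blocks uniformly in $k$, which I expect to require a long-time Strichartz estimate adapted to the high-frequency portion of $u$ over time windows compatible with $N(t) = 1/t$, together with a frequency-envelope argument parallel to Proposition~\ref{P:improve-reg} propagating the small gains from the $p-1$ copies of $u$ in the nonlinearity.
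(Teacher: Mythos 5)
Your proposal contains a genuine gap, and it sits exactly where you flag it: the summation of the $\sim k$ near-origin blocks. Pairing $\int_0^1$ against $\int_1^\infty$ forces you to control the Duhamel contribution from times $t\to 0$, where the hypothesis \eqref{s0pluseps} degenerates (the bound $t^{-(s_0-s_p)}$ blows up as $t\to0$, and at time $t\sim 2^{-j}$ the solution lives at frequency $N(t)\sim 2^j$, so $Q_{<k}$ only helps once $j>k+O(1)$). Since, as you correctly observe, there is no Huygens separation of supports in the self-similar scenario, the cross pairing $\langle C,C'\rangle$ needs a genuinely bilinear source of decay (dispersive decay in $|t-\tau|$, angular/kernel estimates, or long-time Strichartz input), and you do not supply one: you defer it to ``a long-time Strichartz estimate \ldots together with a frequency-envelope argument parallel to Proposition~\ref{P:improve-reg}.'' That deferred step is not routine -- it is essentially the content of the paper's \emph{other} self-similar proposition (Proposition~\ref{ad_ref_ss}, the jump from $\dot\cH^{s_p}$ to $\dot\cH^{s_0}$, which uses Proposition~\ref{prop-ss-lts} and a delicate $\langle B_k,Z_k\rangle$ dispersive estimate), and importing it here leaves your argument circular in spirit and incomplete in substance. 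Also note that even the suppression of blocks with $j>k+O(1)$ is not immediate: $Q_{<k}F(u)$ can be nontrivial when $u$ is concentrated at frequencies $\gg 2^k$, since high-high interactions produce low-frequency output, so Remark~\ref{r:Reta} alone does not kill those blocks without further argument.

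The deeper point is that no double Duhamel is needed for this statement, and the paper's proof avoids the $t\to0$ side entirely. Once \eqref{s0pluseps} is assumed, the nonlinearity is \emph{absolutely} integrable toward $t=+\infty$ in the dual space $L_t^1L_x^2$: on each dyadic block $[2^k,2^{k+1}]$ one has $\|u\|_{L^{2(p-1)}_{t,x}}\lesssim 1$ (since $\int_{2^k}^{2^{k+1}}N(t)\,\ud t\sim 1$), hence by subdividing into $C(\eta)$ small intervals, Strichartz, and a continuity argument, $\|u\|_{L_t^pL_x^{2p}([2^k,2^{k+1}])}\lesssim 2^{-k(s_0-s_p)}$, and therefore $\|F(u)\|_{L_t^1L_x^2([2^k,2^{k+1}])}\lesssim 2^{-kp(s_0-s_p)}$, which sums over $2^k\geq T/2$ to $T^{-p(s_0-s_p)}$. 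Combining this with the weak vanishing of the free part as $T'\to+\infty$ (Lemma~\ref{l:weak}) gives directly $\|P_{\leq k}w(T)\|_{\dot H^1}\lesssim T^{-p(s_0-s_p)}$ uniformly in $k$, which is the claim. The double Duhamel trick is only needed when neither Duhamel integral converges absolutely at the target regularity; here the forward one does, so the backward (ill-behaved) side never has to be touched. Your rescaling reduction and the identity $p(s_0-s_p)=1-s_p$ are correct, and your treatment of the compact window and the exterior region would be fine, but the engine of your argument -- the pairing across $t=1$ and the summation of the near-origin blocks -- is both unnecessary and, as written, unproven.
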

 
Proposition \ref{P:reg-jump-ss} will immediately imply Proposition \ref{ssim_0}.
 
 \begin{proof}[Proof of Proposition \ref{ssim_0} assuming Proposition \ref{P:reg-jump-ss}]
Note that the nonlinear component of the energy is controlled by the $ \dot H^{\frac{3}{2} - \frac{3}{p+1}}(\bR^3)$ norm  by Sobolev embedding, and by interpolation we have
 \[
 \dot H^{\frac{3}{2} - \frac{3}{p+1}}(\bR^3) \subseteq \dot H^{s_p} \cap \dot H^1.
 \]
Thus the conserved energy $E(\vec u)$ must be zero by sending $T \to \infty$ in Proposition~\ref{P:reg-jump-ss}. Then $E[\vec u]\equiv 0$, which implies that $\vec u \equiv 0$, which is impossible. 
 \end{proof}
 
Proposition~\ref{P:reg-jump-ss} is the easier of the two additional regularity arguments, so we turn to this first.
 
 \subsection{The jump from \texorpdfstring{$\dot \cH^{s_0}(\bR^3)$}{H} to \texorpdfstring{$\dot \cH^{1}(\bR^3)$}{H} regularity.}
 We first prove that if $\vec{u}$ has some additional regularity, then we can achieve $\dot \H^1$ regularity, and hence reach the desired contradiction. 

\begin{proof}[Proof of \protect{Proposition~\ref{P:reg-jump-ss}}]
Using $N(t)=t^{-1}$, we have
\[
\|u\|_{L_{t,x}^{2(p-1)}([2^k,2^{k+1}]\times\R^3)} \lesssim 1
\]
uniformly in $k$. Thus for any $0<\eta\ll 1$, we can partition $[2^{k},2^{k+1}]$ into $C(\eta)$ intervals $I_j$ so that
\[
\|u\|_{L_{t,x}^{2(p-1)}(I_j\times\R^3)} < \eta. 
\]
On each such interval, we may argue using Strichartz estimates and a continuity argument together with \eqref{s0pluseps} to deduce that
\[
\| u\|_{L_t^p L_x^{2p}(I_j\times\R^3)} \lesssim 2^{-k(s_0-s_p)} 
\]
for each $j$. This implies
\[
\|u\|_{L_t^p L_x^{2p}([2^k,2^{k+1}]\times\R^3)}\lesssim 2^{-k(s_0-s_p)}
\]
uniformly in $k$. We once again complexify the solution. We let
 \[
 w = u+ \frac{i}{\sqrt{-\Delta}} u_t.
 \]
Once again, if $\vec u(t)$ solves \eqref{eq:nlw}, then $w(t)$ is a solution to
 \begin{align}
 w_t = -i \sqrt{-\Delta} w \pm \frac{i}{\sqrt{-\Delta}} |u|^{p-1} u.
 \end{align} 
 By compactness,
\begin{equation}\label{weakH1}
\lim_{T\to\infty} P_{\leq k} e^{iT \sqrt{-\Delta} } w(-T) = 0
\end{equation}
as weak limits in $\dot H^{s_0}$ for any $k \geq 0$.  By Strichartz estimates, we have
\begin{align*}
\|P_{\leq k} w(T)\|_{\dot H^{1}}  \lesssim \|u|u|^{p-1}\|_{L_t^1 L_x^2([T,\infty)\times\R^3)} & \lesssim \sum_{2^k \geq \frac{T}{2}} \|u|u|^{p-1}\|_{L_t^1 L_x^2([2^k, 2^{k+1}]\times\R^3)} \\
& \lesssim \sum_{2^k \geq \frac{T}{2}} 2^{-k p(s_0-s_p)} \lesssim T^{-p(s_0-s_p)},
\end{align*}
which completes the proof.
\end{proof}

\subsection*{The jump from \texorpdfstring{$\dot \cH^{s_p}$}{H} to \texorpdfstring{$\dot \cH^{s_0}$}{H} regularity.}

It remains to prove Proposition~\ref{ad_ref_ss}.  The main technical ingredient in the proof of Proposition~\ref{ad_ref_ss} is a long-time Strichartz estimate. 
\begin{prop}[Long-time Strichartz estimate]\label{prop-ss-lts} Let $\alpha \geq 1$ and
\[
2<q<\frac{2}{s_p}.
\]
Suppose $\vec u$ is a self-similar-like critical element as in Proposition \ref{p:cases} with compactness modulus function $R(\cdot)$.  For any $\eta_0>0$, there exists $k_0=k_0(R(\eta_0),\alpha)$ so that for every $k > k_0$,
\begin{align*}
 &\| |\nabla|^{\frac{3(p-3)}{2(p-1)}} u_{>k}\|_{L_t^{2(p-1)}L_x^{\frac{2(p-1)}{p-2}}([1,2^{\alpha(k-k_0)}]\times\R^3)} \\
 &\hspace{15mm}+ \| |\nabla|^{-(\frac{2}{q}-s_p)}u_{>k}\|_{L_t^q L_x^{\frac{2q}{q-2}}([1,2^{\alpha(k-k_0)}]\times\R^3)}<\eta_0. 
\end{align*}
\end{prop}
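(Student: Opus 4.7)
The plan is to run a frequency-localized bootstrap on the Strichartz norms, exploiting the compactness modulus of $\vec u$ together with the self-similar scaling $N(t)=1/t$. Let $Y_k(J)$ denote the sum of the two Strichartz norms appearing in the statement, restricted to the time window $J$. Both sit at $\dot\H^{s_p}$-scaling, and the pair $(q,\tfrac{2q}{q-2})$ is admissible by the choice $2<q<2/s_p$. The endpoint Strichartz inequality of Proposition~\ref{p:strich} yields
\[
Y_k(J)\;\lesssim\;\|\vec u_{>k}(t_J)\|_{\dot\H^{s_p}}+\bigl\||\nabla|^{s_p-\tfrac12}P_{>k}(|u|^{p-1}u)\bigr\|_{L^{4/3}_{t,x}(J\times\R^3)},
\]
where $t_J$ is the left endpoint of $J$.

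The first input is the uniform smallness of the high-frequency tails. Applying Remark~\ref{r:Reta} with $N(t)=1/t$ and $x(t)\equiv 0$ gives $\|P_{>k}\vec u(t)\|_{\dot\H^{s_p}}\le \eta$ whenever $2^k t\ge R(\eta)$. In particular, setting $k_1=\lceil\log_2 R(\eta_0)\rceil$, one obtains $\|\vec u_{>k}(t)\|_{\dot\H^{s_p}}<\eta_0$ for every $t\ge 1$ and $k\ge k_1$. More usefully, along the dyadic partition $J_l=[2^l,2^{l+1}]$ of $[1,2^{\alpha(k-k_0)}]$, the slice-wise smallness sharpens to $\|\vec u_{>k}(2^l)\|_{\dot\H^{s_p}}\lesssim R^{-1}(2^{k+l})$, which decays as $k+l$ grows and furnishes the key quantitative gain.

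Next I would estimate the nonlinear term by frequency splitting. Decompose $u=u_{\le k-L}+u_{>k-L}$ with $L=L(p)$ large enough that $p\cdot 2^{k-L}<2^k$, which is possible precisely because $p<5$. The pure low-frequency contribution $|u_{\le k-L}|^{p-1}u_{\le k-L}$ has Fourier support inside $|\xi|<2^k$, so $P_{>k}$ annihilates it exactly; the remaining terms all carry at least one factor of $u_{>k-L}$, and a H\"older-plus-fractional-Leibniz estimate then produces the schematic bound
\[
\bigl\||\nabla|^{s_p-\tfrac12}P_{>k}(|u|^{p-1}u)\bigr\|_{L^{4/3}_{t,x}(J)}\;\lesssim\;\|u\|_{L^{2(p-1)}_{t,x}(J)}^{p-1}\,Y_{k-L}(J).
\]
One now partitions each $J_l$ into a bounded number (uniform in $l$, by the compactness hull rescaled self-similarly) of subintervals $I_{l,\mu}$ on which Lemma~\ref{l:DL1lem} yields $\|u\|_{L^{2(p-1)}_{t,x}(I_{l,\mu})}<\varepsilon$ for any prescribed $\varepsilon=\varepsilon(p)>0$. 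On such an $I_{l,\mu}$ the Strichartz bootstrap closes and gives $Y_k(I_{l,\mu})\lesssim\|\vec u_{>k}(t_{l,\mu})\|_{\dot\H^{s_p}}+\varepsilon^{p-1}Y_{k-L}(I_{l,\mu})$.

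The principal obstacle, and the heart of the argument, is to propagate these localized bounds across all $\sim\alpha(k-k_0)$ subintervals without losing factors that outgrow the initial-data smallness. I would proceed by strong induction on the frequency parameter $k\ge k_0$: the base case $k=k_0$ is vacuous since the time interval degenerates, and the inductive step combines the per-interval Strichartz bound, the $\ell^q$ (respectively $\ell^{2(p-1)}$) summation over the dyadic blocks $J_l$, and the inductive hypothesis at frequency $k-L$. The quantitative trade-off is between the number $\alpha(k-k_0)$ of blocks, the compactness-driven slice-wise smallness $R^{-1}(2^{k+l})$, and the small constant $\varepsilon^{p-1}$ from the nonlinear estimate; choosing $k_0=k_0(R(\eta_0),\alpha)$ sufficiently large ensures that the accumulated tail sum is controlled by $\eta_0$ and closes the induction. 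This is precisely the mechanism by which the estimate survives on time intervals whose length grows exponentially in $k-k_0$.
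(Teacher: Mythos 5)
Your overall architecture (induction on the frequency, Lemma~\ref{l:DL1lem} on short subintervals, smallness of high frequencies from compactness) resembles the paper's in spirit, but two of your central steps fail, and they are exactly the points the paper's proof is designed to circumvent. First, the claim that $P_{>k}$ annihilates the pure low-frequency term $|u_{\le k-L}|^{p-1}u_{\le k-L}$ is false: for $3<p<5$ the nonlinearity is not a polynomial, so this expression does not have compactly supported Fourier transform, and no choice of $L$ makes its projection to frequencies $>2^k$ vanish (your remark that this is ``possible precisely because $p<5$'' has no force here). The paper never produces a pure low-frequency term: it Taylor-expands around the \emph{high}-frequency piece, $F(u)=F(u_{>k-3})+u_{\le k-3}F'(u_{>k-3})+u_{\le k-3}^2\iint_0^1 F''(\cdots)$, so every term retains a genuinely high-frequency factor, and in the last term one exploits that only the quadratic low-frequency \emph{prefactor} has compact Fourier support, via $P_{>k}(u_{\le k-3}^2F_2)=P_{>k}(u_{\le k-3}^2P_{>k-3}F_2)$. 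The low-frequency factors must then be estimated, and this is where logarithmic-in-time losses appear, compensated by explicit Bernstein/frequency gains such as $2^{-k(\frac2q-s_p)}$ and $2^{(j-k)s_p}$.

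Second, your mechanism for surviving the exponentially long time window does not close. You sum per-subinterval Strichartz bounds, each carrying a data term $\|\vec u_{>k}(t_{l,\mu})\|_{\dot\H^{s_p}}$, over roughly $\alpha(k-k_0)C(\varepsilon)$ subintervals, and you propose to control the accumulated sum by the ``slice-wise smallness $R^{-1}(2^{k+l})$.'' But the compactness modulus is purely qualitative: Remark~\ref{r:Reta} gives $\delta_k:=\sup_{t\ge1}\|P_{>k}\vec u(t)\|_{\dot\H^{s_p}}\to0$ with \emph{no rate}, so a quantity like $(k-k_0)\,\delta_k^{q}$ need not become small for $k>k_0$, however large $k_0$ is chosen (e.g. $\delta_k\sim(\log k)^{-1/q}$ is consistent with compactness). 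The paper avoids per-interval data terms altogether: Strichartz is applied once on the full interval $[1,2^{\alpha(k-k_0)}]$ with a single data term at $t=1$, and the interval length enters the nonlinear estimates only through factors $\log(2^{k-j})$ attached to low-frequency pieces (from covering $[2^{\alpha(j-k_0)},2^{\alpha(k-k_0)}]$ by boundedly many blocks per dyadic scale, using $N(t)=t^{-1}$ so that $\int N(t)\,\ud t$ over each block is $O(1)$); these logarithms are then beaten by the exponential frequency gains together with the $\eta_0$-smallness of the high-frequency factors supplied by the induction hypothesis. Relatedly, your base case at the single value $k=k_0$ cannot start an induction whose step lowers the frequency by $L$: for $k_0<k\le k_0+L$ there is no hypothesis to invoke, which is why the paper establishes the estimate on a whole initial range of $k$ directly from compactness before inducting.
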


\begin{proof} We proceed by induction on $k > k_0$. Let $\eta_0>0$.  Using compactness and the fact that $N(t)=t^{-1}$, we may find $k$ large enough that
\begin{align*}
\| & |\nabla|^{\frac{3(p-3)}{2(p-1)}} u_{>k_0}\|_{L_t^{2(p-1)}L_x^{\frac{2(p-1)}{p-2}}([1,2^{3\alpha}]\times\R^3)} \\
& \quad + \| |\nabla|^{-(\frac2q-s_p)}u_{>k_0}\|_{L_t^q L_x^{\frac{2q}{q-2}}([1,2^{3\alpha}]\times\R^3)} <\frac12\eta_0.
\end{align*}
This implies the result for $k_0\leq k\leq 8k_0$.  

To establish the induction step, by Taylor's theorem, we may decompose 
\begin{align}
F(u) &= F(u_{> k-3 }) +  u_{\leq k-3 } \int_0^1 F'(\theta u_{\leq  k-3 } +  u_{ > k-3 })\\
&= F(u_{> k-3 }) +  u_{\leq  k-3} F'(u_{> k-3 }) \\
& \hspace{24mm}+ u_{\leq k-3 }^2 \int_0^1 \int_0^1 F''(\theta_1 \theta_2 u_{< k-3} +  u_{> k-3}) d\theta_1 d\theta_2.
\end{align}
Hence, we can write the nonlinearity $F(u)$ as a sum of terms
\[
P_{>k} F(u) = |P_{> k - 3}u|^{p-1}P_{> k - 3}u + P_{>k}(u_{\leq k-3 }F'(u_{> k-3 }) ) + P_{>k}(u_{\leq k-3 }^2 P_{>k-3} F_2)
\]
where
\[
F_2 =  \iint_0^1 F''(\theta_1 \theta_2 u_{< k-3} +  u_{> k-3}) d\theta_1 d\theta_2,
\]
and we have used in the last term that 
\[
P_{>k}(u_{\leq k-3 }^2 F_2) = P_{>k}(u_{\leq k-3 }^2 P_{>k-3} F_2).
\]
Note that $|F'(u_{> k-3 })| \lesssim |u_{> k-3}|^{p-1}$ and $|F_2| \lesssim |u|^{p-2}$, and so we may replace these terms with $|u|^{p-1}$ and $|u|^{p-2}$ respectively once we have a chosen a dual space.

Fix exponents
\begin{equation}\label{ss-ab}
\gamma=\frac{q}{2},\quad \rho=\frac{6(-q+pq)}{12-12p-21q+13pq},
\end{equation}
and note that $\gamma\in(1,2)$ for $q\in(2,4)$, while for $q=2$, we have $\rho=\frac{6(p-1)}{7p-15}\in(\frac65,2)$.  In particular, by choosing $q$ close to $2$ we can guarantee that $\gamma,\rho\in(1,2)$.  Furthermore,
\[
\frac{1}{\alpha}+\frac{1}{\beta}-\frac32 = \frac{2(p-3)}{3(p-1)}\geq 0,
\]
which guarantees that the conjugate exponent pair $(\gamma',\rho')$ is wave-admissible. 

By Strichartz estimates, 
\begin{align}
\| &|\nabla|^{\frac{3(p-3)}{2(p-1)}} u_{>k} \|_{L_t^{2(p-1)}L_x^{\frac{2(p-1)}{p-2}}} + \| |\nabla|^{-(\frac2q-s_p)}u_{>k}\|_{L_t^q L_x^{\frac{2q}{q-2}}} \nonumber \\
& \lesssim \|u_{>k}(1)\|_{\H^{s_p}} + \| |\nabla|^{\frac{3(p-3)}{2(p-1)}}[u_{> k-3}]^p\|_{L_t^{\frac{2(p-1)}{p}}L_x^{\frac{2(p-1)}{2p-3}}} \label{J-ss-lts1}\\
& \quad + \||\nabla|^{-(\frac{2}{q}-s_p)}P_{>k}(u_{\leq k-3} u_{>k-3}^{p-1})\|_{L_t^{\frac{2q}{q+2}}L_x^{\frac{q}{q-1}}} \label{J-ss-lts2} \\
& \quad + \| |\nabla|^{-\frac4q+3s_p}P_{>k}(u_{\leq k-3}^2P_{>k}(u^{p-2}))\|_{L_t^\gamma L_x^\rho}\label{J-ss-lts3}\\
&:= I + II + II,
\end{align}
where all space-time norms are over $[1,2^{\alpha(k-k_0)} ]\times\R^3$. We estimate Term I as follows:
\begin{align}
\| |\nabla|^{\frac{3(p-3)}{2(p-1)}}&[u_{> k-3}]^p\|_{L_t^{\frac{2(p-1)}{p}}L_x^{\frac{2(p-1)}{2p-3}}} \nonumber \\
& \lesssim \| u_{> k-3}\|_{L_{t,x}^{2(p-1)}}^{p-1} \| |\nabla|^{\frac{3(p-3)}{2(p-1)}} u_{> k-3} \|_{L_t^{2(p-1)} L_x^{\frac{2(p-1)}{p-2)}}} \nonumber \\
& \lesssim \| |\nabla|^{\frac{3(p-3)}{2(p-1)}} u_{> k-3} \|_{L_t^{2(p-1)} L_x^{\frac{2(p-1)}{p-2)}}}^{p}. \label{equ:ss-term1}
\end{align}
By induction, we have
\[
 \| |\nabla|^{\frac{3(p-3)}{2(p-1)}} u_{> k-3} \|_{L_t^{2(p-1)} L_x^{\frac{2(p-1)}{p-2}}([1,2^{\alpha(k-k_0)}/8]\times\R^3)}\leq \eta_0. 
\] 
Thus, using $N(t)=t^{-1}$ and that
\[
\int_{2^{\alpha(k-k_0)}/8}^{2^{\alpha(k-k_0 )}} t^{-1}\,\ud t = \log 2^{3\alpha} \sim 1,
\]
and the fact that $N(t)\leq 1$ on $[2^{\alpha(k-k_0)}/8,\,2^{\alpha(k-k_0 )}]$ for $k >  k_0 \gg 1$, we can deduce
\[
 \| |\nabla|^{\frac{3(p-3)}{2(p-1)}} u_{> k-3} \|_{L_t^{2(p-1)} L_x^{\frac{2(p-1)}{p-2}}([2^{\alpha(k-k_0)}/8,\,2^{\alpha(k-k_0 )}]\times\R^3)}\leq \eta_0. 
\]
In particular, using \eqref{equ:ss-term1}, we obtain that
\[
\| |\nabla|^{\frac{3(p-3)}{2(p-1)}}[u_{> k-3}]^p\|_{L_t^{\frac{2(p-1)}{p}}L_x^{\frac{2(p-1)}{2p-3}}([1,2^{\alpha(k-k_0)}]\times\R^3)} \lesssim \eta_0^p.
\]

For Term II, we estimate
\begin{align*}
2^{-k(\frac{2}{q}-s_p)} \|u_{\leq k-3}\|_{L_t^q L_x^{\frac{2q}{q-2}}} \|u_{> k-3}\|_{L_{t,x}^{2(p-1)}}^{p-1} \lesssim \eta_0^{p-1} 2^{-k(\frac2q-s_p)}\|u_{\leq k-3 }\|_{L_t^q L_x^{\frac{2q}{q-2}}}.
\end{align*}
Fix $C_0\geq 1$ to be determined below.  We write
\begin{align} 
\|u_{\leq k-3}\|_{L_t^q L_x^{\frac{2q}{q-2}}([1,2^{\alpha(k-k_0)}]\times\R^3)} & \lesssim \|u_{\leq C_0} \|_{L_t^q L_x^{\frac{2q}{q-2}}([1,2^{\alpha(k-k_0)}] \times\R^3)} \label{ss-lts-1} \\
& \quad + \| u_{C_0<\cdot\leq k_0} \|_{L_t^q L_x^{\frac{2q}{q-2}}([1,2^{\alpha(k-k_0)} ] \times\R^3)} \label{ss-lts-2} \\
& \quad + \sum_{k_0\leq j \leq k-3 } \|u_j\|_{L_t^q L_x^{\frac{2q}{q-2}}([1,2^{\alpha(k-k_0)}] \times\R^3)} \label{ss-lts-3}.
\end{align}
For \eqref{ss-lts-2}, we have
\[
\| u_{C_0<\cdot\leq k_0} \|_{L_t^q L_x^{\frac{2q}{q-2}}([1,2^{\alpha(k-k_0)}] \times\R^3)}  \lesssim C_0^{\frac2q-s_p}\log(2^{k-k_0}).
\]
On the other hand, for $C_0=C_0(\eta_0)$ large enough, we can estimate \eqref{ss-lts-3} by
\[
 \sum_{k_0\leq j \leq k-3 } \|u_j\|_{L_t^q L_x^{\frac{2q}{q-2}}([1,2^{\alpha(k-k_0)}] \times\R^3)} \lesssim \eta_0 2^{k_0(\frac2q-s_p)}\log(2^{k-k_0}).
\]
Finally, for $k_0\leq j \leq k-3$ we first use the inductive hypothesis to write
\[
\|P_j u\|_{L_t^q L_x^{\frac{2q}{q-2}}([1,2^{\alpha(j-k_0)} ]\times\R^3)} \lesssim 2^{j(\frac2q-s_p)}\eta_0. 
\]
Arguing as we did for the high-frequency piece,
\[
\|P_M u\|_{L_t^q L_x^{\frac{2q}{q-2}}([2^{\alpha(j-k_0)},2^{\alpha(k-k_0)}]\times\R^3)} \lesssim 2^{j(\frac2q-s_p)}\eta_0\log(2^{k-j}). 
\]
Thus
\begin{align*}
 \sum_{k_0\leq j \leq k-3 }& \|u_j\|_{L_t^q L_x^{\frac{2q}{q-2}}([1,2^{\alpha(k-k_0)} ]\times\R^3)}  \\
 &\lesssim \sum_{k_0\leq j\leq k-3 }\eta_0 2^{j(\frac2q-s_p)}[1+\log(2^{k-j})] \lesssim \eta_0 2^{k(\frac2q-s_p)},
\end{align*}
where we have used
\[
\sum_{L>1} L^{-(\frac2q-s_p)}\log(L) \lesssim 1. 
\]
Collecting these estimates, we find
\begin{align}\label{J-ss-lts-low}
&\|u_{\leq k-3}\|_{L_t^q L_x^{\frac{2q}{q-2}}([1,2^{\alpha(k-k_0)} ] \times\R^3)} \\
& \lesssim [C_0+\eta_0 2^{k_0(\frac2q-s_p)}]\log(2^{k-k_0}) + \eta_0 2^{k(\frac2q-s_p)},
\end{align}
which yields
\begin{align}
\||\nabla|^{-(\frac{2}{q}-s_p)}&P_{>k}(u_{\leq k-3} u_{>k-3}^{p-1})\|_{L_t^{\frac{2q}{q+2}}L_x^{\frac{q}{q-1}}}  \\
& \lesssim  \eta_0^{p-1} 2^{-k(\frac2q-s_p)} [C_0^{\frac2q-s_p}+\eta_0 2^{k_0(\frac2q-s_p)}]\log(2^{k-k_0}) + \eta_0^p.
\end{align}
Choosing $k_0$ possibly even larger, we deduce
\[
\||\nabla|^{-(\frac{2}{q}-s_p)}P_{>k}(u_{\leq k-3} u_{>k-3}^{p-1})\|_{L_t^{\frac{2q}{q+2}}L_x^{\frac{q}{q-1}}} \lesssim \eta_0^p. 
\]

Finally, we estimate Term III. Since $-\tfrac{2}{q}+s_p<0$, we use the fractional chain rule and Bernstein estimates to obtain
\begin{align*}
\| &|\nabla|^{-\frac4q+3s_p}P_{>k}(u_{\leq k-3 }^2P_{>k}(u^{p-2}))\|_{L_t^\gamma L_x^\rho} \\
& \lesssim 2^{-k\frac4q+2s_p}\|u_{\leq k-3}\|_{L_t^q L_x^{\frac{2q}{q-2}}}^2\| |\nabla|^{s_p}(u^{p-2})\|_{L_t^\infty L_x^{\frac{6(p-1)}{7p-15}}} \\
& \lesssim 2^{-2k(\frac{2}{q}-s_p)} \|u_{\leq k-3}\|_{L_t^q L_x^{\frac{2q}{q-2}}}^2 \| u\|_{L_t^\infty L_x^{\frac{3(p-1)}{2}}}^{p-3} \| |\nabla|^{s_c} u\|_{L_t^\infty L_x^2}. 
\end{align*}
Using \eqref{J-ss-lts-low} (and the conditions on $k_0,C_0$ given above), we conclude
\[
 \| |\nabla|^{-\frac4q+3s_p}P_{>k}(u_{\leq k-3}^2P_{>k}(u^{p-2}))\|_{L_t^\gamma L_x^\rho} \lesssim \eta_0^2.
\]
Combining our estimates for Terms I, II and III and choosing $\eta_0$ small, we conclude that 
\[
\| |\nabla|^{\frac{3(p-3)}{2(p-1)}} u_{>k} \|_{L_t^{2(p-1)}L_x^{\frac{2(p-1)}{p-2}}} + \| |\nabla|^{-(\frac2q-s_p)}u_{>k}\|_{L_t^q L_x^{\frac{2q}{q-2}}} \leq \eta_0
\]
on $[1,2^{\alpha(k-k_0)}]\times\R^3$, thus closing the induction and completing the proof. \end{proof}

Finally, we arrive at the proof of the additional regularity Proposition \ref{ad_ref_ss}.
\begin{proof}[Proof of \protect{Proposition~\ref{ad_ref_ss}}]
We compute
\begin{align}
\|\vec u(1)\|^2_{\H^{s_0}} \simeq \| w (1) \|^2_{\dot H^{s_0}} \lesssim \sum_{k \gg 1} 2^{2ks_0} \langle P_k w(1), P_k  w(1)\rangle .
\end{align}
We use the double Duhamel argument based at $t=1$. For some $k\gg 1$, we write
\[
\langle P_k w(1), P_k  w(1)\rangle = \int_0^1 \int_1^\infty \langle e^{i(1-t)\sqrt{-\Delta}}P_k F(u(t)), e^{i(1-\tau)\sqrt{-\Delta}} P_k F(u(\tau))\rangle \,d\tau \,\ud t.
\]
We fix $\alpha \geq 1$, to be determined below, and split
\[
\int_1^\infty e^{i(1-t)\sqrt{-\Delta}}P_k F(u(t))\,\ud t  = A_k + B_k,
\]
where
\begin{align*}
A_k =\int_1^{2^{k\alpha }} e^{i(1-t)\sqrt{-\Delta}}P_k F(u(t))\,\ud t,\quad B_k&=\int_{2^{k\alpha}}^\infty e^{i(1-t)\sqrt{-\Delta}}P_k F(u(t))\,\ud t.
\end{align*}
We also write
\[
 \int_0^1 e^{i(1-\tau)\sqrt{-\Delta} }P_k F(u(s))\,ds  =  Z_k,
\]
We will use the estimate
\[
 |\langle A_k+B_k, Z_k\rangle| \leq |A_k|^2+2|\langle B_k,Z_k\rangle|,
\]
which follows from the fact that $A_k+B_k=Z_k$. 

\medskip
We first estimate the $\langle B_k,Z_k\rangle$ term. We expand
\[
|\langle B_k,Z_k\rangle| \leq \sum_{\ell \leq 0} \sum_{ j \geq k\alpha} \int_{2^{\ell}}^{2^{\ell+1}} \hspace{-2mm}  \int_{2^j}^{2^{j+1}} \bigl| \langle e^{-i(t-\tau)\sqrt{-\Delta} }P_k F(u(t)), P_k F(u(s))\rangle\bigr| \,d\tau\,\ud t.
\]
 We claim that
\begin{equation}\label{01071}
\|P_k(u|u|^{p-1})\|_{L_t^2 L_x^1([2^{\ell},2^{\ell+1}]\times\R^3)} \lesssim 2^{-ks_p}
\end{equation}
uniformly in $\ell \geq 0$. Indeed, arguing as above, we can decompose the nonlinearity into two types of terms
\[
u_{> k-1}u^{p-1} \qquad \textup{and} \qquad uP_{> k-1}(u^{p-1}) ,
\]
since if both $u$ and $|u|^{p-1}$ are projected to low frequencies, the product vanishes when projected to high frequencies. 

We thus have by Bernstein's inequality, H\"older's inequality, and the fractional chain rule that 
\begin{align*}
\|&P_k(u|u|^{p-1})\|_{L_t^2 L_x^1} \\ & \lesssim \|u\|_{L_{t,x}^{2(p-1)}}^{p-1}\|u_{>k-1}\|_{L_t^\infty L_x^2} + \|u\|_{L_{t,x}^{2(p-1)}}\|P_{>k-1}(u^{p-1})\|_{L_t^{\frac{2(p-1)}{p-2}} L_x^{\frac{2(p-1)}{2p-3}}} \\
& \lesssim 2^{-ks_p} \|u\|_{L_{t,x}^{2(p-1)}}^{p-1}\| |\nabla|^{s_p}u\|_{L_t^\infty L_x^2} \lesssim 2^{-ks_p},
\end{align*}
where all spacetime norms are over $[2^{\ell}, 2^{\ell+1}]\times\R^3$. 

Using dispersive estimates, we have for any $j \geq k \alpha$ and $\ell \leq 0$,
\begin{align*}
 \int_{2^{\ell}}^{2^{\ell+1}} & \int_{2^j}^{2^{j+1}} \bigl| \langle e^{-i(t-\tau)\sqrt{-\Delta} }P_k F(u(t)), P_k F(u(s))\rangle\bigr| \,d\tau\,\ud t \\
& \lesssim  \int_{2^{\ell}}^{2^{\ell+1}} \int_{2^j}^{2^{j+1}} t^{-1} 2^k \|P_k(u|u|^{p-1})(t)\|_{L_x^1} \|P_k(u|u|^{p-1)}(\tau)\|_{L_x^1}\,d\tau\,\ud t \\
& \lesssim 2^{\frac \ell 2} 2^{-\frac j 2} \|P_k(u|u|^{p-1})\|_{L_t^2 L_x^1([2^{\ell},2^{\ell+1}]\times\R^3)}\|P_k(u|u|^{p-1})\|_{L_t^2 L_x^1([2^k,2^{k+1}]\times\R^3)} \\
& \lesssim 2^{\frac \ell 2} 2^{-\frac j2} 2^{k(1-2s_p)}. 
\end{align*}
 Summing over $\ell \leq 0$ and $j \geq  k\alpha $, we deduce that
 \begin{equation}\label{ss-yz}
 |\langle B_k , Z_k \rangle_{\dot H^{s_p}_x}| \lesssim 2^{k(1-\frac{\alpha}{2})}.
 \end{equation}

We now turn to estimating the $|A_k|^2$ term.  We will use a frequency envelope argument to establish the required bounds. Once again, we fix an exponent $q$ satisfying
\[
2 < q < \frac{s_p}{2}.
\]
Let
\begin{align}\label{sigma}
\sigma < \min\left\{s_p,\frac2q-s_p,\frac4q-1-s_p \right\}. 
\end{align}
and define
\[
\gamma_k = \sum_{j} 2^{-\sigma|j-k|} \|w_j\|_{L_t^\infty \dot H^{s_p}([1,\infty)\times\R^3)}.
\]
We will establish the following: Let $\eta_0>0$ and let $R(\cdot)$ denote the compactness modulus function of $\vec u$. Then there exists $k_0\equiv k_0(\eta_0,R(\eta_0))$ sufficiently large that
\begin{align}\label{ss-ak}
\|A_k\|_{\dot H^{s_p}_x}\lesssim C(k_0)2^{-k (\frac2q-s_p)+} + \eta_0\sum_{j} 2^{-\sigma |j-k|} \|u_j\|_{L_t^\infty \dot H_x^{s_p}([1,\infty)\times\R^3)}
\end{align}
for all $k \geq k_0$. For $p > 3$, we write the nonlinearity as $F(u)=|u|^{p-3}u^3$ and then decompose $u^3$ by writing $u=u_{\leq k}+u_{>k}$.  By further decomposing $u_{\leq k}=u_{\leq k_0}+u_{k_0<\cdot\leq k}$, we are led to terms of the form
\begin{align}
F(u)&=|u|^{p-3}u_{>k}^3 \label{HHH}\\
& \quad + 3|u|^{p-3}u_{>k}^2 u_{\leq k_{0}} \label{HHL}  \\
& \quad + 3|u|^{p-3}u_{>k}^2 u_{k_0<\cdot\leq k} \label{HHM} \\
& \quad + |u|^{p-3} u_{\leq k_{0}} F_3 \label{MML}  \\
& \quad +   |u|^{p-3}u_{k_0<\cdot\leq k}^3 \label{MMM}\\
& \quad + 3|u|^{p-3}u_{>k} u_{\leq k}u_{\leq k_0} \label{HML} \\
& \quad + 3|u|^{p-3}u_{>k} u_{\leq k}u_{k_0<\cdot\leq k}, \label{HMM} 
\end{align}
where we have written
\[
F_3 = u_{\leq k_0}^2 + 2u_{\leq k_0}u_{k_0<\cdot\leq k}+ u_{k_0<\cdot\leq k}^2. 
\]
 
 By Proposition~\ref{prop-ss-lts}, for any $\beta \geq 1$ there exists $k_0\equiv k_0(R(\eta_0),\beta)$ so that  for every $k > k_0$ we have
\begin{align}
 \| |\nabla|^{\frac{3(p-3)}{2(p-1)}}& u_{>k}\|_{L_t^{2(p-1)}L_x^{\frac{2(p-1)}{p-2}}([1,2^{\beta(k-k_0)}]\times\R^3)}\\
 &+ \| |\nabla|^{-(\frac2q-s_p)}u_{>k}\|_{L_t^q L_x^{\frac{2q}{q-2}}([1,2^{\beta(k-k_0)}]\times\R^3)}<\eta_0.
\end{align}
Fix $\beta>\alpha$ and $k_1=k_1(R(\eta_0),\alpha,\beta)\geq  k_0$ which satisfies
\[
2^{k_1(\beta - \alpha)}\geq 2^{k_0 \beta}. 
\]
Then $2^{\beta(k- k_0)} \geq 2^{k \alpha}$ for $k \geq k_1$, and hence for every $k \geq k_1$, we have
\begin{equation}\label{cor-ss-lts}
\begin{split}
\| |\nabla|^{\frac{3(p-3)}{2(p-1)}} &u_{>k}\|_{L_t^{2(p-1)}L_x^{\frac{2(p-1)}{p-2}}([1,2^{\alpha k}]\times\R^3)}\\
&+\| |\nabla|^{-(\frac2q-s_p)}u_{>k}\|_{L_t^q L_x^{\frac{2q}{q-2}}([1,2^{\alpha k}]\times\R^3)} <\eta_0. 
\end{split}
\end{equation}
We will use this estimate repeatedly below. Furthermore, we may also establish identical long-time Strichartz estimates for
\[
\| |\nabla|^{s_p-\frac2r} u_{>k}\|_{L_t^r L_x^{\frac{2r}{r-2}}},
\]
where $\tfrac2{s_p}<r<4$.
%\end{rem}

%%%%%

To estimate \eqref{HHH}, we use the dual Strichartz pair
\[
\left(\frac{r}{2},\frac{6r(p-1)}{12-12p-21r+13pr}\right),
\]
with $\frac2{s_p}<r<4$.  We note that this pair is dual admissible: writing the pair as $(A,B)$, we have $\tfrac{1}{A}+\tfrac{1}{B}=\tfrac{13p-21}{6p-6}\geq \tfrac32$ for $p\geq 3$.  Note that $A\in(1,2)$ since $r\in(2,4)$ and $B>1$ for $r<\tfrac{12(p-1)}{7p-15}$.  This is compatible with $r>\tfrac{2}{s_p}$ when $p\in[3,5)$. We can thus bound
\begin{align*}
2^{k(3s_p-\frac4r)}&\|u\|_{L_t^\infty L_x^{\frac{3(p-1)}{2}}}^{p-3}\|u_{>k}\|_{L_t^r L_x^{\frac{2r}{r-2}}}^2 \sum_{k\leq j} \|u_{j}\|_{L_t^\infty L_x^2} \\
& \lesssim \| |\nabla|^{s_p-\frac2r}u_{>k}\|_{L_t^r L_x^{\frac{2r}{r-2}}}^2\sum_{k\leq j} 2^{(k-j)s_p}\|u_j\|_{L_t^\infty \dot H_x^{s_p}} \\
& \lesssim \eta_0 \sum_{j>k}2^{(k-j) s_p}\|u_j\|_{L_t^\infty \dot H_x^{s_p}}. 
\end{align*}

For \eqref{HHL}, we use the dual Strichartz pair
\begin{align}\label{eq:hhl_pair}
\left(\frac{2q(p-1)}{2p+q-2},\frac{6q(p-1)}{6-15q+2p(5q-3)}\right).
\end{align}
We bound the contribution of this term by 
\begin{align*}
2^{k(2s_p-\frac2q)}&\|u\|_{L_t^\infty L_x^{\frac{3(p-1)}{2}}}^{p-3}\|u_{>k}\|_{L_{t,x}^{2(p-1)}} \|u_{\leq k_0}\|_{L_t^q L_x^{\frac{2q}{q-2}}} \|u_{>k}\|_{L_t^\infty L_x^2} \\
& \lesssim \eta_0 2^{-k(\frac2q-s_p)}2^{k_0(\frac2q-s_p)}\log 2^k \|u\|_{L_t^\infty \dot H_x^{s_p}} \\
& \lesssim \eta_0 2^{-k(\frac2q-s_p)}2^{k_0(\frac2q-s_p)}\log 2^k.
\end{align*}
For \eqref{HHM}, we use the same dual pair as in \eqref{eq:hhl_pair}, and we obtain
\begin{align*}
2^{k(2s_p-\frac2q)}&\|u\|_{L_t^\infty L_x^{\frac{3(p-1)}{2}}}^{p-3}\|u_{>k}\|_{L_{t,x}^{2(p-1)}} \sum_{k_0\leq j_1\leq k\leq j_2} \|u_{j_1}\|_{L_t^q L_x^{\frac{2q}{q-2}}} \|u_{j_2}\|_{L_t^\infty L_{x}^{2}}\\
& \lesssim \eta_0 \sum_{k_0\leq j_1\leq k\leq j_2} 2^{j_1(\frac2q-s_p)}\log(2^{k-j_1}) 2^{-j_2s_p}\|u_{j_2}\|_{L_t^\infty \dot H_x^{s_p}} \\
& \lesssim \eta_0\sum_{k\leq j}2^{(k-j)s_p}\|u_j\|_{L_t^\infty \dot H_x^{s_p}}. 
\end{align*}

To estimate \eqref{MML}, we use the admissible dual pair $(\frac{q}{2},\frac{6q}{7q-8}+)$. We choose $\rho$ so that
\[
\frac{3}{\rho}=\frac{2}{q}+\frac{4}{p-1}-\frac{3}{2}.
\]
We bound the contribution of this term by
\begin{align*}
2^{-k(\frac2q-s_p)}&\|u\|_{L_t^\infty L_x^{\frac{3(p-1)}{2}}}^{p-3}\|u_{\leq k_0}\|_{L_t^q L_x^{\frac{2q}{q-2}}} \sum_{j_1\leq j_2\leq k} \|u_{j_1}\|_{L_t^q L_x^{\frac{2q}{q-2}}}\|u_{j_2}\|_{L_t^\infty L_x^{\rho+}} \\
& \lesssim 2^{-k(\frac2q-s_p)}2^{k_0(\frac2q-s_p)}\log 2^k \sum_{j_1\leq j_2\leq k} 2^{j_1(\frac2q-s_p)}2^{j_2(s_p-\frac2q+)}\log(2^{k-j_1}) \\
& \lesssim 2^{-k(\frac2q-s_p)+}.
\end{align*}

Now we estimate \eqref{MMM} as follows: we apply Strichartz estimates with the dual (sharp) admissible pair $(\tfrac{q}{2},\tfrac{2q}{3q-4})$. 	Then we obtain
\begin{align*}
2^{k(1-\frac4q+s_p)}\|u\|_{L_t^\infty L_x^{\frac{3(p-1)}{2}}}^{p-3} \sum \|u_{j_1}\|_{L_t^q L_x^{\frac{2q}{q-2}}} \|u_{j_2}\|_{L_t^q  L_x^{\frac{2q}{q-2}}}\|u_{j_3}\|_{L_t^\infty L_x^{\frac{6(p-1)}{9-p}}},
\end{align*}
where the sum is over $k_0\leq j_1\leq j_2\leq j_3\leq k$. 

Now, for $k_0\leq j\leq k$, we can estimate
\begin{align*}
\|u_{j}\|_{L_t^q L_x^{\frac{2q}{q-2}}([1,2^{\alpha k}]\times\R^3)} & \lesssim\|u_{j}\|_{L_t^q L_x^{\frac{2q}{q-2}}([1,2^{\alpha j}]\times\R^3)}+ \|u_j\|_{L_t^q L_x^{\frac{2q}{q-2}}([2^{\alpha k},2^{\alpha k}]\times\R^3)} \\
& \lesssim \eta_0 \log(2^{k-j}) 2^{j(\frac2q-s_p)},
\end{align*}
using the long-time Strichartz estimate of Proposition \ref{prop-ss-lts}  and we note the $\log$ comes from the second term. We also have
\[
\|u_j\|_{L_t^\infty L_x^{\frac{6(p-1)}{9-p}}} \lesssim 2^{-j(1-s_p)}\|u_j\|_{L_t^\infty \dot H_x^{s_p}}. 
\] 
This yields
\begin{align*}
\eta_0 2^{k(1-\frac4q+s_p)}& \sum_{k_0\leq j_1\leq j_2\leq j_3\leq k} 2^{j_1(\frac2q-s_p)}\log(2^{k-j_1}) 2^{j_2(\frac2q-s_p)}\log(2^{k-j_2})2^{-j_3(1-s_p)} \\
& \lesssim \eta_0 \sum_{k_0\leq j\leq k} \log(2^{j-k}) 2^{(j-k)(\frac4q-1-s_p)}\|u_j\|_{L_t^\infty \dot H_x^{s_p}}. 
\end{align*}
Note that for this estimate, we need 
\[
q<\frac{4}{1+s_p} = \frac{8(p-1)}{5p-9},
\]
which is compatible with $q>2$ for $p\in[3,5)$.

For \eqref{HML}, we use the dual Strichartz pair
\begin{align}\label{equ:hml_pair}
\left(\frac{q}{2},\frac{6(pq-q)}{12-12p-21q+13pq}\right).
\end{align}
We bound the contribution of this term by 
\begin{align*}
 2^{k(3s_p-\frac4q)}&\|u_{\leq k_0}\|_{L_t^q L_x^{\frac{2q}{q-2}}} \sum_{j_1\leq k \leq j_2} \|u_{j_1}\|_{L_t^q L_x^{\frac{2q}{q-2}}} \|u_{j_2}\|_{L_t^\infty L_x^2} \\
& \lesssim 2^{k(3s_p-\frac4q)} 2^{k_0(\frac2q-s_p)} \sum_{j_1\leq k\leq j_2} 2^{j_1(\frac2q-s_p)}\log(2^{k-j_1}) 2^{-j_2s_p} \|u_{j_2}\|_{L_t^\infty \dot H_x^{s_p}} \\
& \lesssim 2^{-k(\frac2q-s_p)}2^{k_0(\frac2q-s_p)}. 
\end{align*}

Finally, for \eqref{HMM}, using the same dual pair as in \eqref{equ:hml_pair}, and we estimate the contribution of this term by
\begin{align*}
2^{k(3s_p-\frac4q)}& \|u\|_{L_t^\infty L_x^{\frac{3(p-1)}{2}}}^{p-3} \sum_{j_1, k_0\leq j_2\leq k\leq j_3} \hspace{-4mm}\|u_{j_1}\|_{L_t^q L_x^{\frac{2q}{q-2}}} \|u_{j_2}\|_{L_t^q L_x^{\frac{2q}{q-2}}} \|u_{j_3}\|_{L_t^\infty L_x^2} \\
& \lesssim \eta_0 2^{k(3s_p-\frac4q)} \sum_{j_1\leq j_2\leq k \leq j_3} \hspace{-4mm} 2^{k_1(\frac2q-s_p)}\log(2^{2k-j_1 - j_2})2^{j_2(\frac2q-s_p)}2^{-j_3 s_p}\|u_{j_3}\|_{L_t^\infty \dot H_x^{s_p}} \\
& \lesssim \eta_0\sum_{k\leq j} 2^{(k-j)s_p}\|u_j\|_{L_t^\infty \dot H_x^{s_p}}. 
\end{align*}

Putting together all the estimates, we establish \eqref{ss-ak}, which, together with \eqref{ss-yz} and the conditions on $\sigma$ from \eqref{sigma} yields
\[
\|w_k(1)\|_{\dot H^{s_p}_x}  \lesssim 2^{k(\frac12-\frac{\alpha}{4})}+ 2^{-k(\frac2q-s_p)+} + \eta_0\sum_{j} 2^{-\sigma|j-k|} \|w_j\|_{L_t^\infty \dot H_x^{s_p}([1,\infty)\times\R^3)} 
\]
for all $k \gg 1$.  For $\alpha$ large enough, we can guarantee that the the second term dominates the first, and hence
\[
\|w_k(1)\|_{\dot H^{s_p}_x}  \lesssim 2^{-k(\frac2q-s_p)+} + \eta_0\sum_{j} 2^{-\sigma|j-k|} \|w_j\|_{L_t^\infty \dot H_x^{s_p}([1,\infty)\times\R^3)}
\]
for all $k \gg 1$. We now rescale the solution $u$ and use the fact that the rescaled solution $Tu(Tt,Tx)$ is also a self-similar solution for any $T>1$ (with the {same} compactness modulus function as $u$). This yields
\begin{equation}\label{01072}
\|w_k\|_{L_t^\infty \dot H^{s_p}([1,\infty)\times\R^3)} \lesssim 2^{-k(\frac2q-s_p)+} + \eta_0 \sum_{j} 2^{-\sigma|j-k|} \|w_j\|_{L_t^\infty \dot H_x^{s_p}([1,\infty)\times\R^3)}.
\end{equation}
Let $0<\eta< \sigma$. Then \eqref{01072} implies that for $k \gg k_0$,
\[
\gamma_k \lesssim 2^{-k\eta} +\eta_0\alpha_k,
\]
and hence, we may conclude that 
\[
\| w(1)\|_{\dot H^{s_p+\eta}} \lesssim 1 \qtq{for any}0<\eta<\sigma. 
\]
Using the same rescaling argument as above, and the relation between $w$ and $u$, we ultimately deduce that
\[
\| u(T)\|_{\H^{s_p+\delta}} \lesssim T^{-\eta},
\]
which yields \eqref{s0pluseps} provided we can choose $\eta=\frac{5-p}{2p(p-1)}$.  Combining with the constraint $\eta<\tfrac{2}{q}-s_p$, this requires that we choose
\[
2<q<\frac{4p}{3p-5},
\]
which is possible whenever $p\in[3,5)$. For the other term appearing in the definition of $\sigma$,  we find that we can choose $\eta=\frac{5-p}{2p(p-1)}$ provided we take 
\[
q<\frac{8p}{5(p-1)},
\]
which is similarly allowable by the requirement that $q>2$ for $p\in[3,5)$. This completes the proof of Proposition~\ref{ad_ref_ss} and hence completes our treatment of the self-similar scenario.
\end{proof}

\section{Doubly concentrating critical element: the sword and shield} \label{s:sword} 
We now consider the case of the doubly concentrating critical element, that is, $N(t) \geq 1$ on $\mathbb{R} = I_{\textup{max}}$ and
\EQ{ \label{eq:Ntoinf} 
\limsup_{t \to \pm \infty} N(t) = \infty.
}

By Proposition~\ref{p:cases} we may assume in this case that  $x(t)$ is subluminal in the sense of Definition~\ref{d:sl}. By Lemma~\ref{l:sl} there exists $\de_0>0$ so that 
\begin{align} \label{eq:sl}
|x(t) - x(\tau)| \leq (1- \delta_0)|t - \tau| , 
\end{align}
for all $t, \tau$ with 
\EQ{
|t - \tau| \geq \frac{1}{\delta_0 \inf_{s \in [t, \tau]} N(s)}. 
} 

The goal of this section is to prove the following proposition:
\begin{prop}
There are no doubly-concentrating critical elements, in the sense of Case (III) of Proposition~\ref{p:cases}.
\end{prop}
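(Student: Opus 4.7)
The plan is to execute the ``sword or shield'' dichotomy sketched in the introduction. For each $t_0\in\R$ and a small threshold $\eta_0>0$ (with compactness modulus $R(\eta_0)$ from Remark~\ref{r:Reta}), define
\[
C_+(t_0) := \inf\Big\{A\geq 1 \, : \, \{|x - x(t_0)|\leq R(\eta_0)/N(t) + |t-t_0|\}\supset \text{essential $\eta_0$-support of $\vec u(t)$} \text{ for } t\in[t_0,t_0+A/N(t_0)]\Big\},
\]
with $C_-(t_0)$ defined analogously in the backward direction. Heuristically $C_\pm(t_0)$ measures, in units of $N(t_0)^{-1}$, the time needed for the light cone from $(t_0,x(t_0))$ to swallow the bulk of $\vec u$. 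I then split into two cases.

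\emph{Shield case:} Suppose $\sup_{t_0}\max(C_+(t_0),C_-(t_0))<\infty$. The plan is to rerun the soliton-like additional regularity argument of Section~\ref{s:soliton-reg} at each time $t_0$, after rescaling by $N(t_0)$. After this rescaling the interval $[-C_-/N(t_0),C_+/N(t_0)]$ becomes a fixed compact interval, so the ``Region~\textbf{A}'' Strichartz estimate is uniform; ``Region~\textbf{B}'' is controlled by the small data theory applied to the truncated data $\chi_R\vec u$ as in~\eqref{equ:small_data_soliton}; and ``Region~\textbf{C}'' vanishes via the sharp Huygens principle because the subluminality~\eqref{eq:sl} combined with the uniform bound on $C_\pm$ forces the bulk to lie strictly inside the cone for $|t-t_0|\gtrsim 1/N(t_0)$. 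The frequency envelope arguments of Propositions~\ref{P:reg-jump} and~\ref{P:improve-reg} are scale-invariant, so I would obtain uniform control $\|P_{>k}\vec u(t_0)\|_{\dot\H^{s_p}}\lesssim N(t_0)^{s-s_p}2^{-k(s-s_p)}$ for some $s>s_p$, and iterating as in Proposition~\ref{P:improve-reg2} lifts this to $\dot\H^1$ control with $\|\vec u(t_0)\|_{\dot\H^1}\lesssim N(t_0)^{1-s_p}$. Now I use~\eqref{eq:Ntoinf}: pick $t_n$ with $N(t_n)\to\infty$ and rescale. The rescaled solutions have $\tilde N_n(0)=1$ and uniformly bounded $\dot\H^1$ norm, so extract a subsequential strong limit $\vec w$ in $\dot\H^{s_p}$. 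The limit $\vec w$ is a nontrivial solution (by Corollary~\ref{C:acax}) lying in $\dot\H^1\cap\dot\H^{s_p}$ and having the compactness property with a subluminal translation (subluminality passes to the limit by~\eqref{eq:sl}) and frequency parameter $\tilde N_\infty$ that is bounded above and below. A further normalization and passage to the limit reduces $\vec w$ to a soliton-like critical element, and Proposition~\ref{p:srig} yields $\vec w\equiv 0$, a contradiction.

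\emph{Sword case:} Suppose WLOG $C_+(t_n)\to\infty$ along some sequence $t_n$. Rescale
\[
\vec{\tilde u}_n(s,y):=\Big(N(t_n)^{-\frac{2}{p-1}}u(t_n+\tfrac{s}{N(t_n)},x(t_n)+\tfrac{y}{N(t_n)}),\, N(t_n)^{-\frac{2}{p-1}-1}\p_t u(\cdots)\Big),
\]
so $\tilde N_n(s) = N(t_n+s/N(t_n))/N(t_n)$ and $\tilde x_n(s)=N(t_n)[x(t_n+s/N(t_n))-x(t_n)]$. Extract a subsequential limit $\vec w$ with compactness parameters $\tilde N_\infty,\tilde x_\infty$. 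The failure of $C_+(t_n)$ to remain bounded means that for each fixed $s>0$ the light cone from $(0,0)$ has not yet captured the bulk at time $s$, so combining with Lemma~\ref{l:mon} and the finite speed of propagation bound~\eqref{eq:dxt} one deduces $\tilde N_\infty(s)\gtrsim 1/s$ as $s\to 0^+$. Exploiting the monotonicity constraints of Lemma~\ref{l:mon}, a further limiting procedure as in Section~\ref{subsec:analysis_of_props} yields a critical element whose scale is exactly $\tilde N_\infty(s)=1/s$ and whose translation parameter is $\equiv 0$; this is a self-similar-like critical element, which is excluded by Proposition~\ref{ssim_0}.

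\emph{Main obstacle.} The shield case is mostly bookkeeping: tracking how the $C_\pm$ bound replaces the role played by the identity $N(t)\equiv 1$ in Section~\ref{s:soliton}, and verifying that the rigidity step of Section~\ref{s:soliton} still applies to the rescaled limit. The genuine difficulty lies in the sword case, in showing that the limiting scale is exactly self-similar rather than some weaker concentration rate such as $s^{-\alpha}$ for $\alpha<1$. This forces one to quantify the relationship between the compactness radius $R(\eta_0)/\tilde N_n$, the translation parameter $\tilde x_n$, and the unbounded waiting time $C_+(t_n)$ with some precision, and is the step that most heavily leverages Lemma~\ref{l:mon} together with the local constancy of Lemma~\ref{l:const}.
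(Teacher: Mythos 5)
Your overall ``sword or shield'' dichotomy is the same as the paper's, but there are two concrete gaps. First, in the shield case your normalization is off in a way that matters: you measure the waiting time $C_\pm(t_0)$ in units of $N(t_0)^{-1}$, whereas the paper's constants are measured in the renormalized time $\tau(t)=\int_0^t N(s)\,ds$ (see the definition of $\ka_\pm$ and \eqref{eq:Cpmdef}). The Region~\textbf{A} step is not just ``a fixed compact interval after rescaling'': what controls the number of $\delta/N(t)$-subintervals on which Lemma~\ref{l:DL1lem} is applied is precisely the bound $\int_{t_-}^{t_+}N(t)\,\ud t\le 4C$ as in \eqref{equ:int_bds}, and a bound on $N(t_0)\,|t_+-t_-|$ does not give this when $N$ spikes inside the interval. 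Second, and more seriously, your concluding step in the shield case does not go through as stated. Having obtained $\|\vec u(t)\|_{\dot\H^1}\lesssim N(t)^{1-s_p}$ (which agrees with Proposition~\ref{p:casc-reg}), you rescale at arbitrary times with $N(t_n)\to\infty$ and assert that the limit has frequency parameter bounded above and below, so that it can be reduced to a soliton-like element and killed by the virial rigidity Proposition~\ref{p:srig}. Nothing forces $\tld N_\infty$ to be bounded on its whole lifespan (local constancy only controls it on compact rescaled time intervals), so the limit could again be doubly concentrating and the argument becomes circular; moreover Proposition~\ref{p:srig} needs pre-compactness of the trajectory in $\dot\H^1\cap\dot\H^{s_p}$, which a bound at the single time $s=0$ does not provide. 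The paper avoids both issues (Corollary~\ref{c:casc-reg}): it renormalizes at running maxima of $N$ on $[\tld t_m,0]$, so the limiting scale satisfies $\tld N(s)\le 1$ with $\liminf\tld N=0$, and then the inherited bound $\|\vec w(s)\|_{\dot\H^1}\lesssim\tld N(s)^{\frac{5-p}{2(p-1)}}$ forces $E(\vec w)=0$ directly — no soliton reduction and no virial argument are needed.

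In the sword case you correctly identify the hard point but do not supply it. The paper's mechanism is the future/past focusing dichotomy: unboundedness of $C(t_n)$ forces all sufficiently late times in $I_n'$ to be future focusing (otherwise subluminality is violated), which via Lemma~\ref{l:mon} yields the monotonicity \eqref{monotone}, allowing $N$ to be replaced by a monotone $\widetilde N$; subluminality gives $\widetilde N(t)(t-t_-^n)\lesssim 1$ and finite speed of propagation gives the matching lower bound, so the first limit satisfies $\widehat N(s)\simeq 1/s$; a \emph{second} rescaling along $s_n\to\infty$ then converts this into an exactly self-similar element to which Proposition~\ref{ssim_0} applies. Your sketch replaces this with an appeal to Lemma~\ref{l:mon} and \eqref{eq:dxt} yielding ``$\tld N_\infty(s)\gtrsim 1/s$ as $s\to0^+$,'' which is not the relevant regime (the renormalized solution has $\tld N_n(0)=1$ and the degeneration occurs for large $s$), and you explicitly leave open the step from $N\simeq 1/s$ to exact self-similarity — which is precisely the content you would need to prove.
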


To prove this proposition, we establish the following dichotomy: either additional regularity for the critical element can be established using essentially the same arguments used in Section~\ref{s:soliton}, or a self-similar-like critical element can be extracted by passing to a suitable limit. To this end we define function 
$\tau : \mathbb{R} \rightarrow \mathbb{R}$ by
\[
\tau(t) = \int_{0}^{t} N(s) \ud s.
\]
Since $N(t) > 0$ and $\lim_{t \to \pm \infty} \tau(t) = \infty$, the function $\tau : [0,\infty) \to [0,\infty)$ is bijective. Hence for any $t_0 > 0 $ and any  $C_+ > 0$, there exists a unique $\ka_+ =  \ka_+( t_0, C_+) > 0$ such that
\[
t_0 + \frac{\ka_{+}( t_0,  C_{+})}{N(t_0)}= \tau^{-1}(\tau(t_{0}) +  C_{+}).
\]
Similarly, for $t_0 < 0$ and any  $ C_- > 0$, we can define
\[
t_0 - \frac{\ka_{-}( t_0,  C_{-})}{N(t_0)} = \tau^{-1}(\tau(t_{0}) - C_{-}).
\]
Fix $\eta > 0$ as in the small data theory of Proposition~\ref{small data}, and let $R = R(\eta)$ be such that for all $t \in \mathbb{R}$,
\begin{align}\label{equ:compactness_small_data}
&\int_{|x -  x(t)| \geq \frac{R(\eta)}{N(t)}}||\nabla|^{s_p} u(t,x)|^{2} \ud x + \int_{|x-x(t)| \geq \frac{R(\eta)}{N(t)}}||\nabla|^{s_p-1} u_t(t,x)|^{2} \ud x   \leq  \eta,
\end{align} 
see Remark \ref{r:Reta}. Now let $\chi(t) = \chi_{R, N}(t)$ be a smooth cutoff to the set
\[
\{ |x - x(t) | \geq R(\eta) / N(t) \}.
\]
By our choice of $R(\eta)$ we have 
\[
\| \chi(t) \vec u \|_{\dot \cH^{s_p}}^2 \lesssim \eta.
\]

Since $N(t) \geq 1$ and by~\eqref{eq:sl}, for any $t_{0}$, there exists $C_{+}(t_{0}) \ge 1$ sufficiently large so that
\begin{align} \label{eq:Cpmdef} 
\biggl|x\Big(t_{0} +& \frac{\ka_{+}(t_{0}, C_{+}(t_{0}))}{ N(t_{0})}\Big) - x(t_{0})\biggr|\\
 &\leq \abs{\frac{\ka_{+}(t_{0}, C_{+}(t_{0}))}{N(t_{0})}} - \frac{R(\eta)}{N(t_0 + \ka_+(t_0, C_+(t_0)) N(t_0)^{-1})},
\end{align}
and similarly for $C_-(t_0)$. By continuity we may assume that $C_{\pm}(t_0)$ are minimal with this property. 
%However, $C_{\pm}(t_{0})$ need not be uniformly bounded for $t_{0} \in \R$.
%Since $N(t) \to \infty$ as $|t| \to \infty$, for any $t_0 \in \mathbb{R}$, $\delta > 0$, %and any $R > 0$, 
%we can find $C_+(t_0)$ sufficiently large so that
% Furthermore we can ensure that 
%\[
%\frac{R}{N(t)} > \delta \bigl(t - t_0\bigr),
%\]
%for $t_0 < t < T_+(t_0, C_+(t_0))$.
Furthermore, for every $t_{0}$ there exists $C(t_{0})$ such that for some $t_{1} \in \R$ satisfying
\begin{equation}
\tau(t_{1}) - \tau(t_{0}) \leq C(t_{0}),
\end{equation}
there exist $t_{-} < t_{1} < t_{+}$ with
\begin{equation}
\tau(t_{1}) - \tau(t_{-}) \leq 2 C(t_{0}), \quad \tau(t_{+}) - \tau(t_{1}) \leq 2 C(t_{0}),
\end{equation}
which satisfies
\begin{equation}
|x(t_{-}) - x(t_{1})| \leq |t_{-} - t_{1}| - \frac{R(\eta)}{N(t_{-})}, \quad \text{and} \quad |x(t_{+}) - x(t_{1})| \leq |t_{+} - t_{1}| - \frac{R(\eta)}{N(t_{+})}.
\end{equation}
It is clear from the definition that $C(t_{0}) \leq \sup(C_{+}(t_{0}), C_{-}(t_{0}))$, and thus is finite. However, $C_{\pm}(t_{0})$ need not be uniformly bounded for $t_{0} \in \R$, and hence neither does $C(t_0)$.

We will now analyze several cases based on whether $C(t_0)$ are uniformly bounded for $t_0 \in \mathbb{R}$. 

\subsection{Case 1: \texorpdfstring{$C(t_0)$}{C} are uniformly bounded}  Here we work under the assumption that there exists a constant $C > 0$ such that $C(t_0) \leq C$ for all $t_0 \in \mathbb{R}$. 

We show that essentially the same argument used in Section~\ref{s:soliton-reg} can be used to show that such a critical element necessarily has the compactness property in $\dot \HH^{s_p} \cap \dot \HH^1$.

\begin{prop}[Additional regularity] \label{p:casc-reg}  Let $\vec u(t) \in \dot \HH^{s_p}$ be a solution with the compactness property that is subluminal and doubly concentrating, as in Case (III) of Proposition~\ref{p:cases}. Assume in addition that $C(t)$ is uniformly bounded as a function of  $t \in \R$. Then $\vec u(t) \in \dot \HH^1$ and satisfies the bound,  
\EQ{ \label{eq:h1-regN} 
  \| \vec u(t) \|_{\dot \HH^1} \lesssim N(t)^{\frac{5-p}{2(p-1)}}
}
uniformly in $t \in \R$. 
\end{prop}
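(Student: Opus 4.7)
The idea is to reduce, via the scaling symmetry~\eqref{eq:scaling}, to the setting of Section~\ref{s:soliton-reg} at each fixed time $t_0$. For $t_0 \in \R$ define the rescaled solution
\[
\vec{\tilde u}(s,y) := \left( N(t_0)^{-\frac{2}{p-1}} u\bigl(t_0 + \tfrac{s}{N(t_0)},\, x(t_0) + \tfrac{y}{N(t_0)}\bigr),\; N(t_0)^{-\frac{2}{p-1}-1} u_t\bigl(t_0 + \tfrac{s}{N(t_0)},\, x(t_0) + \tfrac{y}{N(t_0)}\bigr)\right),
\]
so that $\vec{\tilde u}(0)\in K$. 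This is again a solution to~\eqref{eq:nlw} with the compactness property on $\R$, scale $\tilde N(s) = N(t_0+s/N(t_0))/N(t_0)$ normalized to $\tilde N(0)=1$, subluminal spatial center $\tilde x(s)$, and the same compactness modulus function $R(\cdot)$. Because the quantities $C_\pm(t_0)$ and $C(t_0)$ are scale invariant, in the rescaled coordinates there exist times $s_- \le 0 \le s_+$ with $|s_\pm| \lesssim C$ satisfying the inclusion~\eqref{eq:Cpmdef}; this plays the role of the reference time $T = 2R\delta_0^{-1}$ used in the proof of Proposition~\ref{P:reg-jump}.

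\textbf{Execution.} First, we upgrade $\vec{\tilde u}$ from $\dot{\mathcal{H}}^{s_p}$ to $\dot{\mathcal{H}}^{s_0}$ by repeating the frequency-envelope double-Duhamel argument of Proposition~\ref{P:improve-reg} at $s=0$. Spacetime in the rescaled picture is split into Region $A = [s_-, s_+]\times\R^3$, Regions $B$ and $B'$ exterior to the light cones issuing from $(s_\pm, \tilde x(s_\pm))$ with aperture $R(\eta)/\tilde N(s_\pm)$, and the interior Regions $C$ and $C'$. On $A$, Strichartz estimates on a time interval of length $|s_+-s_-| \lesssim C$ bound the $|A|^2+|A'|^2$ contributions; on $B$ and $B'$ we replace $\vec{\tilde u}$ by a small-data solution $\vec{\tilde v}$ matching $\vec{\tilde u}$ there by finite speed of propagation, small since $\|\chi_{R_\pm}\vec{\tilde u}(s_\pm)\|_{\dot{\mathcal{H}}^{s_p}} \lesssim \eta$ by~\eqref{equ:compactness_small_data}; the pairing over $C,C'$ vanishes identically by the sharp Huygens principle once the modified projectors $Q_{<k}$ are taken with $k$ sufficiently large depending on $C$ and $\delta_0$, exactly as in~\eqref{equ:c_vanish}. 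The commutator estimate of Lemma~\ref{L:commutator} relates the frequency envelopes of $\tilde u$ and $\tilde v$ and closes the bootstrap. Second, we apply Proposition~\ref{P:reg-jump} to pass from $\dot{\mathcal{H}}^{s_0}$ to $\dot{\mathcal{H}}^1$; this step involves only a finite-time Strichartz argument on the same region decomposition. All implicit constants depend only on $C$, $R(\eta)$, $\delta_0$, and $p$, quantities which are independent of $t_0$, so we conclude $\|\vec{\tilde u}(0)\|_{\dot{\mathcal{H}}^1}\lesssim 1$ uniformly in $t_0$.

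\textbf{Undoing the scaling.} A direct computation of the $\dot H^1$ norm under the above rescaling yields
\[
\|\vec u(t_0)\|_{\dot{\mathcal{H}}^1} = N(t_0)^{\frac{2}{p-1}-\frac12}\|\vec{\tilde u}(0)\|_{\dot{\mathcal{H}}^1} = N(t_0)^{\frac{5-p}{2(p-1)}}\|\vec{\tilde u}(0)\|_{\dot{\mathcal{H}}^1} \lesssim N(t_0)^{\frac{5-p}{2(p-1)}},
\]
which is precisely~\eqref{eq:h1-regN}. The main obstacle, compared with Section~\ref{s:soliton-reg} where $N \equiv 1$, is to verify the $t_0$-independence of every constant appearing in the frequency-envelope and Strichartz steps: the length $|s_+-s_-|\lesssim C$ of Region $A$, the small-data threshold on $B$, and the minimal Huygens separation between $C$ and $C'$ must all be controlled solely in terms of $C$, $R(\eta)$, $\delta_0$, and $p$. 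The uniform bound $C(t_0)\leq C$ is exactly what makes this possible, and this is the only substantive new input beyond the arguments already developed in Section~\ref{s:soliton-reg}.
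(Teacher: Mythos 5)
Your overall strategy is the paper's: a double-Duhamel argument at a reference time with the three-region decomposition (a bounded strip, a small-data exterior, and a Huygens-cancelled interior), with the uniform bound on $C(t)$ supplying the $t_0$-independence, followed by undoing the scaling to produce the factor $N(t_0)^{\frac{5-p}{2(p-1)}}$ (that computation is correct). However, there is a genuine gap at exactly the point the proposition turns on. Your claim that in the rescaled frame one has $|s_\pm|\lesssim C$ is false: with $s_\pm=\kappa_\pm$, one only knows $\int_{t_-}^{t_+}N(t)\,\ud t=\tau(t_+)-\tau(t_-)\lesssim C$, and since $N\geq 1$ this gives $t_+-t_-\lesssim C$ but $s_+-s_-=N(t_0)(t_+-t_-)$, which can be as large as $C\,N(t_0)$ (the scale $\tilde N(s)$ is only normalized at $s=0$ and may drop well below $1$ on $[s_-,s_+]$, since $|N'|\lesssim N^2$ allows $N$ to decay from $N(t_0)$ to $O(1)$ in unit time). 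Consequently "Strichartz estimates on a time interval of length $\lesssim C$" does not control Region $A$. What actually controls Region $A$ is not the length of the interval but the \emph{number} of subintervals of length $\delta/N(t)$ on which Lemma~\ref{l:DL1lem} gives smallness of the $L^{2(p-1)}_{t,x}$ norm; the paper's proof shows this number is at most $\sim C/\delta$, by combining the lower bound $\int_{t_1-\delta/N(t_1)}^{t_1+\delta/N(t_1)}N(t)\,\ud t\gtrsim\delta$ (a consequence of the local constancy $|N'|\lesssim N^2$, cf. \eqref{equ:lwr_bds}) with the total budget \eqref{equ:int_bds}. This covering/counting argument is the substantive new input of the proof, and your proposal replaces it with an incorrect length bound.

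A second, smaller issue: the hypothesis is a uniform bound on $C(t_0)$, not on $C_\pm(t_0)$. The definition of $C(t_0)$ only guarantees capturing cones (the inclusions of type \eqref{eq:Cpmdef}) centered at some nearby time $t_1$ with $\tau(t_1)-\tau(t_0)\leq C$, so you cannot assert $s_-\leq 0\leq s_+$ with the inclusion holding for the vertex $t_0$ itself; that would require $C_\pm(t_0)$ uniformly bounded, which is not assumed. The paper handles this by first proving the $\dot\H^1$ bound at $t_1$ and then transferring it to $t_0$ using persistence of regularity together with $N(t_0)\sim_C N(t_1)$ (again from $|N'|\lesssim N^2$ and $\tau(t_1)-\tau(t_0)\leq C$). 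Your argument should be amended to include both the covering argument for Region $A$ and this $t_1\to t_0$ transfer step.
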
 

%\begin{prop}[Additional regularity] \label{p:casc-reg}  Let $\vec u(t) \in \dot \HH^{s_p}$ be a solution with the compactness property in $\dot \HH^{s_p}$ on $\R$ with $N(t)$ as in~\eqref{eq:Ntoinf} and subluminal $x(t)$. Assume in addition that $C(t)$ is uniformly bounded as a function of  $t \in \R$. 
%
%Then $\vec u(t) \in \dot \HH^1$ and satisfies the bound,  
%\EQ{ \label{eq:h1-regN} 
%  \| \vec u(t) \|_{\dot \HH^1} \lesssim N(t)^{\frac{5-p}{2(p-1)}}
%}
%uniformly in $t \in \R$. 
%\end{prop} 

For the moment, we will assume Proposition \ref{p:casc-reg}, and we will use it to prove the following corollary.

\begin{cor} \label{c:casc-reg} 
Let $\vec u(t)$ satisfy the hypotheses of Proposition~\ref{p:casc-reg}. Then $\vec u(t) \equiv 0$. 
\end{cor}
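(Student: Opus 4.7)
The plan is to rescale $\vec u$ at a suitable sequence of times so as to extract a soliton-like critical element (in the sense of Case (I) of Proposition~\ref{p:cases}), apply the rigidity Proposition~\ref{p:srig}, and thereby force triviality of $\vec u$ itself via the small data theory.

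First, I would leverage the additional regularity. Proposition~\ref{p:casc-reg} gives
\[
\|\vec u(t)\|_{\dot\HH^1}\lesssim N(t)^{(5-p)/(2(p-1))}=N(t)^{1-s_p}
\]
uniformly in $t$. For each $t_0\in\R$, define the rescaled solution
\[
\vec v_{t_0}(s,y):=\bigl(N(t_0)^{-\frac{2}{p-1}}u,\;N(t_0)^{-\frac{2}{p-1}-1}\partial_t u\bigr)\bigl(t_0+s/N(t_0),\,x(t_0)+y/N(t_0)\bigr),
\]
which solves~\eqref{eq:nlw} and enjoys the compactness property with parameters $\tilde N_{t_0}(s)=N(t_0+s/N(t_0))/N(t_0)$ and $\tilde x_{t_0}(s)=N(t_0)[x(t_0+s/N(t_0))-x(t_0)]$, satisfying $\tilde N_{t_0}(0)=1$, $\tilde x_{t_0}(0)=0$. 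By the scale invariance of $\dot\HH^{s_p}$ combined with the bound above, $\|\vec v_{t_0}(0)\|_{\dot\HH^{s_p}\cap\dot\HH^1}\lesssim 1$ uniformly in $t_0$.

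Second, I would select a sequence $\{t_n\}$ along which $N(t_n)$ stays bounded (for instance tending to $\liminf_t N(t)$) and, combining pre-compactness of the critical element's trajectory (upgraded to $\dot\HH^{s_p}\cap\dot\HH^1$ via Proposition~\ref{p:casc-reg}) with the uniform $\dot\HH^1$ bound above, pass to a subsequence so that $\vec v_{t_n}(0)\to\vec w(0)$ strongly in $\dot\HH^{s_p}$ and weakly in $\dot\HH^1$. Writing $\vec w(s)$ for the solution of~\eqref{eq:nlw} with these initial data, the perturbation Lemma~\ref{l:pert} yields $\vec v_{t_n}\to\vec w$ locally uniformly in time, and a standard diagonal extraction produces limiting parameters $\tilde N_\infty(s)$, $\tilde x_\infty(s)$ under which $\vec w$ has the compactness property on all of $\R$. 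The uniform boundedness of $C(t_0)$ translates, after rescaling, to the fact that within rescaled $\tau$-distance $\lesssim 1$ of any point, $\tilde x_{t_n}$ fails to be luminal \emph{uniformly} in $n$; passing to the limit shows $\tilde x_\infty$ is subluminal in the sense of Definition~\ref{d:sl}, while Lemma~\ref{l:const} together with this subluminality forces $\tilde N_\infty$ to stay bounded. After a final fixed rescaling we may assume $\tilde N_\infty\equiv 1$, exhibiting $\vec w$ as a soliton-like critical element with trajectory pre-compact in $\dot\HH^{s_p}\cap\dot\HH^1$.

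Third, Proposition~\ref{p:srig} applied in the defocusing setting forces $\vec w\equiv 0$. Hence $\|\vec v_{t_n}(0)\|_{\dot\HH^{s_p}}\to 0$, and by the scale invariance of the $\dot\HH^{s_p}$-norm this is the same as $\|\vec u(t_n)\|_{\dot\HH^{s_p}}\to 0$. For $n$ sufficiently large the small data theory of Proposition~\ref{small data} then yields $\|u\|_{S([t_n,\infty))}<\infty$, directly contradicting the non-scattering property~\eqref{blow up} of the critical element; the only escape is $\vec u\equiv 0$.

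The step most likely to pose technical difficulty is the second paragraph's conversion of the uniform bound on $C(t_0)$ into control of the limiting parameters $\tilde N_\infty$ and $\tilde x_\infty$: one needs a diagonal extraction that preserves the subluminality modulus across scales, and one must verify that the extracted $\vec w$ does not degenerate into a self-similar-like or traveling-wave-like profile. The rigid structure imposed by $N(t)\ge 1$ together with the uniform bound on $C(t_0)$ is precisely what excludes these pathological limits, reducing the problem to the already-established rigidity of Proposition~\ref{p:srig}.
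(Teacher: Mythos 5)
Your overall architecture (rescale, extract a limiting compact solution, kill it with a rigidity statement, then contradict non-scattering via small data theory at times $t_n$) is reasonable, and your scaling computation showing $\|\vec v_{t_0}(0)\|_{\dot\HH^1}\lesssim 1$ is correct. However, there is a genuine gap at the crucial extraction step: you have not justified that the limit $\vec w$ is soliton-like. Rescaling at times where $N(t_n)$ stays bounded gives a limit whose frequency parameter $\tilde N_\infty(s)$ is bounded \emph{below}, but nothing bounds it \emph{above}; the limit can perfectly well be another doubly concentrating solution (for instance, the trivial choice $t_n\equiv 0$ reproduces $\vec u$ itself, which has $\limsup_{t\to\pm\infty}N(t)=\infty$). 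Your proposed justification --- that subluminality of $\tilde x_\infty$ together with Lemma~\ref{l:const} forces $\tilde N_\infty$ to stay bounded --- is false: the Case (II) solution under consideration is itself subluminal, satisfies the local constancy of Lemma~\ref{l:const}, has $N(t)\geq 1$ and uniformly bounded $C(t_0)$, and yet has unbounded $N(t)$. The bound on $C(t_0)$ controls how long (in units of $N(t)^{-1}$) one must wait for the light cone to capture the bulk of the solution; it does not constrain the growth of $N$, and if it did, Case~1 of Section~6 would collapse into Case (I) of Proposition~\ref{p:cases}, which it does not. A secondary issue: even granting a soliton-like limit, Proposition~\ref{p:srig} needs the trajectory pre-compact in $\dot\HH^1\cap\dot\HH^{s_p}$, and weak $\dot\HH^1$ convergence of the rescaled data does not supply this (one would have to re-run the regularity machinery of Section~4 on the limit).

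For contrast, the paper's proof goes in the opposite direction: it rescales at times $t_m\to-\infty$ chosen so that $N(t_m)=\max_{[\tilde t_m,0]}N(t)\to\infty$, so that the extracted solution $\vec w$ has $\tilde N(s)\leq 1$ on $[0,\infty)$ with $\liminf_{s\to\infty}\tilde N(s)=0$. The point of the bound \eqref{eq:h1-regN} is then that $\|\vec w(s)\|_{\dot\HH^1}\lesssim \tilde N(s)^{\frac{5-p}{2(p-1)}}\to 0$ along a sequence, so by Sobolev embedding and interpolation the conserved energy satisfies $E(\vec w)=0$, and in the defocusing case $\vec w\equiv 0$, contradicting the nontriviality of the strong $\dot\HH^{s_p}$ limit. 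No virial/Lorentz rigidity and no $\dot\HH^1$ compactness of the limiting trajectory are needed --- only the $t$-dependent $\dot\HH^1$ bound of Proposition~\ref{p:casc-reg} together with energy conservation. If you want to keep your plan, you would need a mechanism that genuinely prevents the limit from concentrating, and within Case (II) no such mechanism is available; the vanishing-energy argument is the one that exploits the unboundedness of $N(t)$ rather than fighting it.
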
 

\begin{proof}[Proof of Corollary~\ref{c:casc-reg} assuming Proposition~\ref{p:casc-reg}] 
We begin by extracting from $\vec u(t)$ another solution with the compactness property on a half-infinite time interval $[0, \infty)$ but with new scaling parameter $\ti N(s)   \to 0$ as $s \to \infty$. Let $\ti t_m$ be any sequence of times with 
\EQ{
\ti t_m \to - \infty, \quad N(\ti t_m) \to \infty \mas m \to \infty.
}
Next choose another sequence $t_m \to - \infty$ by choosing $t_m$ such that 
\EQ{
N(t_m) := \max_{t \in [\ti t_m, 0]} N(t).
}
Now define a sequence as follows: set 
\EQ{
&w_m(s, y) :=    \frac{1}{N( t_m)^{\frac{2}{p-1}}} u (  t_m + \frac{s}{N(t_m)}, x( t_m) + \frac{y}{N( t_m)}),   \\
&\p_t w_m(s, y):=  \frac{1}{N( t_m)^{\frac{2}{p-1} +1}} \p_t u ( t_m + \frac{s}{N(t_m)}, x( t_m) + \frac{y}{N(t_m)}), 
}
and set 
\EQ{
\vec w_m := ( w_m(0, y),  \p_t w_m(0, y)).
}
Then by the pre-compactness in $ \dot{\HH}^{s_p} $, there exists (after passing to a subsequence)  $\vec w_{\infty}(y)  \neq 0$ so that 
\EQ{ \label{eq:strongw} 
 \vec{w}_m \to \vec{w}_{\infty} \in \dot{\HH}^{s_p}.
} 
It is standard to show  that  $\vec w(s)$ (the evolution of $\vec w_{\infty} = \vec w(0)$)  has the compactness property on $I = [0, \infty)$  with frequency parameter  $ \ti N(s)$  defined by 
 \EQ{
 \ti N(s):= \lim_{m \to \infty} \frac{N( t_m + \frac{s}{N(t_m)})}{ N( t_m)},
}
and moreover that 
\EQ{ \label{bigN}
&\ti N(s)  \le 1   \quad \forall s \in [0, \infty), \\
&\liminf_{s \to \pm \infty} \ti N(s) = 0.
} 
By the uniform bounds of~\eqref{eq:h1-regN}, we see that 
\EQ{
\| \vec w(s) \|_{\dot\HH^1} \lesssim \ti N(s)^{\frac{5-p}{2(p-1)}} \quad \forall \, s \in [0, \infty),
}
and hence there exists a sequence of times $s_n \to \infty$ along which 
\EQ{
\| \vec w(s_n) \|_{\dot\HH^1} \lesssim \ti N(s_n)^{\frac{5-p}{2(p-1)}} \to 0 \mas n \to \infty.
}
Using the above, Sobolev embedding, and interpolation,  along the same sequence of times we have 
\EQ{
\| w(s_n) \|_{L^{p+1}} \lesssim \| w(s_n) \|_{\dot H^{\frac{3(p-1)}{2(p+1)}}}   \to 0 \mas n \to \infty.
}
But then, since the energy of $\vec w(s)$ is well-defined and conserved, we must have 
\EQ{
E( \vec w) = 0.
}
For the defocusing equation we may immediately conclude that $\vec w(s) \equiv 0$. \end{proof} 

\begin{rem}
As in Section \ref{s:soliton}, these arguments readily adapt to the focusing setting.
\end{rem}

\begin{proof}[Sketch of the proof of Proposition~\ref{p:casc-reg}] The argument is nearly identical to the proof of Proposition~\ref{p:sol-reg} in Section~\ref{s:soliton}, hence rather than repeat the entire proof, we instead summarize how the uniform boundedness of the numbers $C(t_0)$ allow us to proceed as in Section~\ref{s:soliton-reg}. The main idea is that the boundedness of these constants  means that for each $t_0 \in \R$ we only have to  wait a uniformly bounded amount of time,  where time is measured relative to the scale $N(t)$), for the forward and backwards light cones based at $(t_0, x(t_0))$) to capture the bulk of the solution.cConsequently, we can apply the same techniques that were developed in Section~\ref{s:soliton-reg} directly and implement a double Duhamel argument. In order to estimate the norm at a time $t=t_0$, we recall the definitions of $t_1,t_\pm$ above and decompose space-time into three regions:
\begin{enumerate}
\item[A)] \textbf{Region A}: $[t_-, t_+] \times \mathbb{R}^3$,
\item[B)] \textbf{Region B}: the forward (resp. backward) light-cones from 
\[
\{t_+\} \times \{x \,:\, |x - x(t_1)| \geq |t_+ - t_1|\},
\]
and
\[
  \{t_-\} \times \{x \,:\, |x - x(t_1)| \geq |t_- - t_1|\},
\]
\item[C)] \textbf{Region C}: $\mathbb{R} \times \mathbb{R}^4 \,\setminus$ (\textbf{Region A} $\cup$ \textbf{Region B}).
\end{enumerate}

\begin{figure}[h]
  \centering
  \includegraphics[width=14cm]{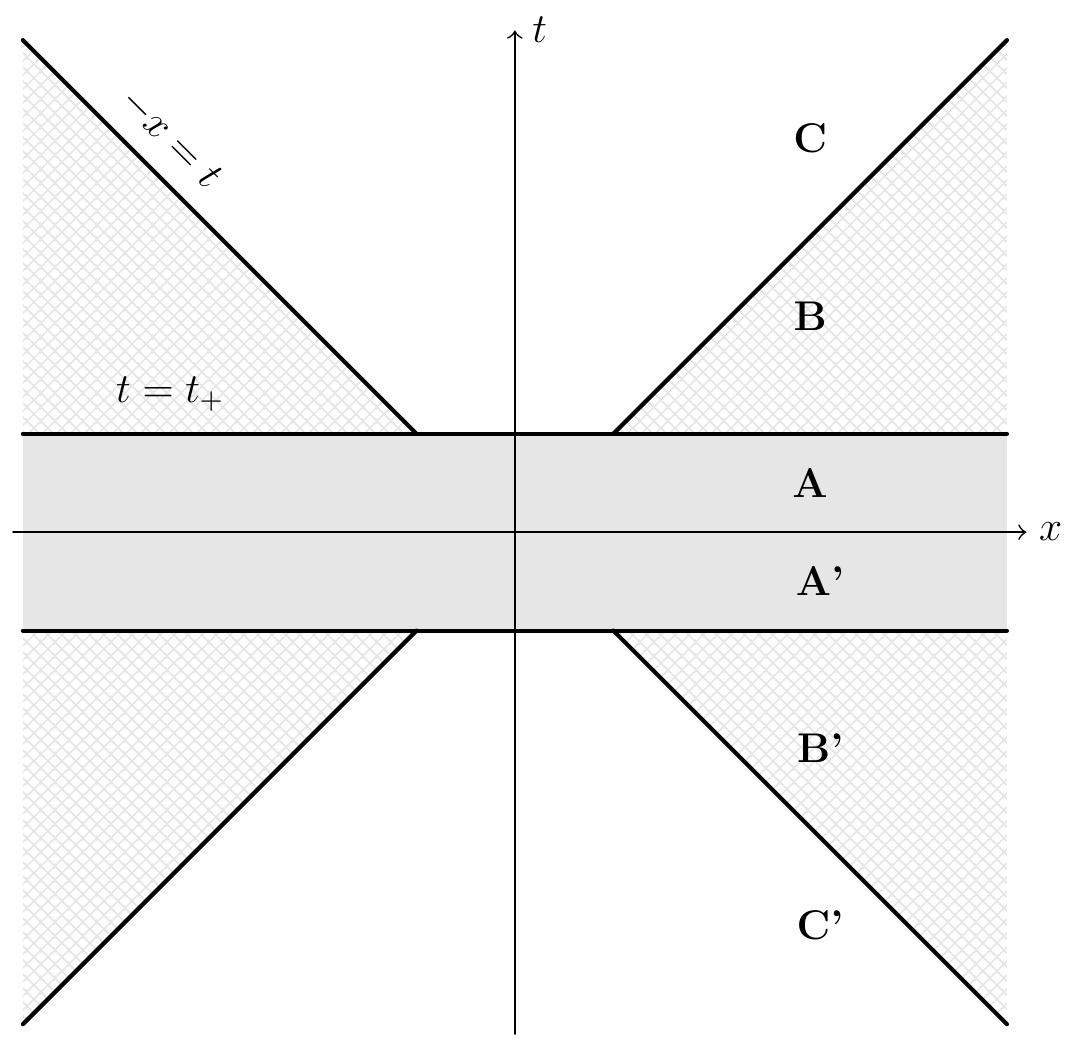}
  \caption{A depiction of the regions \textbf{A, B} and \textbf{C} in the case that $x(t) = 0$.} \label{fig:regions_swordandshield}
\end{figure}

On \textbf{Region A}, we control the solution by dividing the time interval $[t_-, t_+]$ into finitely many sufficiently small time strips on which we can use Lemma~\ref{l:DL1lem}.

The main difficulty is that we need to ensure that we can uniformly control the number of small strips we will need to accomplish this (this type of uniform control was guaranteed  in the Section~\ref{s:soliton}  because we had $N(t) \equiv 1$ there). Here, the boundedness of the constants $C$ is used to achieve this uniformity. 

  From Lemma~\ref{l:DL1lem} we know that for each $\eta>0$ there exists $\delta>0$ such that for all $t \in \R$
 \EQ{
 \|u\|_{L^{2(p-1)}_{t,x}([t - \frac{\delta}{N(t)}, t+ \frac{\delta}{N(t)}] \times \mathbb{R}^3)}  \leq \eta\quad \forall t \in \R.
 }
 Fix this $\delta > 0$. Examining the proof of the estimates used to control the solution on \textbf{Region A} in Section~\ref{s:soliton}, see \eqref{eq:regions}, we need to show that there exists a uniformly (in $t_0$) bounded number $M>0$ of times $t_m$, $-M \le m \le M$ with  $t_- \le t_{m}\le t_+$,  and such that the corresponding intervals $I_{-M}, \dots I_{M}$ with 
 \EQ{
 I_M := [t_m - \frac{\delta}{N(t_m)}, t_m+ \frac{
 \delta}{N(t_m)}]
 }
 satisfy \EQ{
 [t_-, t_+] \subset \bigcup_{m = -M}^M I_m.
 }
 In this case we obtain
\[
\|u\|^{2(p-1)}_{L^{2(p-1)}_{t,x}([t_-, t_+] \times \mathbb{R}^3)} \lesssim \sum_{i=1}^M \|u\|^{2(p-1)}_{L^{2(p-1)}_{t,x}(I_i \times \mathbb{R}^3)} \lesssim \int_{t_-}^{t_+} N(t)\,\ud t,
\]
and, since
\begin{align}\label{equ:int_bds}
 \int_{t_-}^{t_+} N(t) \, \ud t  = \tau(t_+) - \tau(t_-) \le 4C 
\end{align}
by construction, this would yield the desired upper bound. 

Hence, we now turn to the argument that intervals on which we can control the $L^{2(p-1)}_{t,x}$ will exhaust the time interval $[t_-, t_+]$ after finitely many steps.  Since $|N'(t)| \lesssim N(t)^2$ on an interval of length $\delta / N(t)$, for any $t_1, t_2 \in [t_-, t_+]$, which satisfy $|t_1 - t_2| \leq \delta / N(t_1)$, we have that
\[
N(t_1) - \delta N(t_1) \lesssim N(t_2) \lesssim N(t_1) + \delta N(t_1).
\]
Consequently, for any $t_1 \in [t_-, t_+]$ we must have
\[
\int_{t_1 - \delta / N(t_1)}^{t_1 + \delta / N(t_1)} N(t) \ud t \geq (2\delta - 2\delta^2) ,
\]
which for any $0 < \delta < \frac{1}{2}$ yields
\begin{align}\label{equ:lwr_bds}
\int_{t_1 - \delta / N(t_1)}^{t_1 + \delta / N(t_1)} N(t) \, \ud t \geq \delta.
\end{align}
By \eqref{equ:int_bds} and \eqref{equ:lwr_bds},
\begin{align}
4C \geq  \int_{t_-}^{t_+} N(t) \ud t  &=  \int_{t_-}^{t_- + \delta / N(t_-)} N(t) \ud t + \int_{t_- + \delta / N(t_-)}^{t_+} N(t ) \ud t \\
&=  \int_{t_- + \delta / N(t_-)}^{t_+ - \delta / N(t_+)} N(t) \ud t + \delta,
\end{align}
hence the positivity of $N(t)$ implies that by iterating this procedure, we be able to cover the whole interval $[t_-, t_+]$ in at most $4C / \delta$ many intervals of length $\delta / N(t)$ where we can control the $L^{2(p-1)}_{t,x}$ norm of the critical element. 

On \textbf{Region B}, we use \eqref{equ:compactness_small_data} to apply the small data theory at times $t_\pm$, which, together with finite speed of propagation, yields a uniform bound on the solution. Finally, on \textbf{Region C}, we may use the sharp Huygens principle exactly as in Section~\ref{s:soliton-reg}.

All together, using arguments from Section~\ref{s:soliton}, this will yield that
\begin{equation}
\| u(t_{1}) \|_{\dot \cH^1} \lesssim N(t_{1})^{\frac{5 - p}{2(p - 1)}}.
\end{equation}
For more details, we refer the reader to \cite{DL2}. By continuation of regularity and $(\ref{equ:lwr_bds})$, this implies
\begin{equation}
\| u(t_{0}) \|_{\dot \cH^1} \lesssim N(t_{1})^{\frac{5 - p}{2(p - 1)}},
\end{equation}
where the implicit constant again depends on $C$. Finally, since $|N'(t)| \lesssim N(t)^{2}$,
\begin{equation}
N(t_{0}) \sim_{C} N(t_{1}),
\end{equation}
which completes the proof.

\end{proof}

\subsection{Case 2: \texorpdfstring{$C(t)$}{C} is not uniformly bounded}  In this case we will show how to extract a self-similar-like critical element by taking an appropriate limit. The arguments from Section~\ref{s:ss}, specifically Proposition \ref{P:reg-jump-ss}, then allow us to conclude that any such solution must be $\equiv 0$, which is a contradiction. 

By assumption, there exists sequences $\{t_n\}$ such that
\[
C(t_{n}) \geq 2n.
\]

Now define 
\[
I_{n} = [t_{n} - \kappa_{-}(t_{n}, n) N(t_{n})^{-1}, t_{n} + \kappa_{+}(t_{n}, n) N(t_{n})^{-1}].
\]
Borrowing language from \cite{TVZ}, since $C(t_{n}) \geq 2n$, we show that all sufficient late times $t \in I_n$ are future focusing, that is,
\begin{equation}
\forall \tau \in I_{n} : \tau > t, \quad |x(\tau) - x(t)| \geq |\tau - t| - \frac{R(\eta)}{N(\tau)},
\end{equation}
or all sufficiently early times $t \in I_{n}$ are past focusing, that is
\begin{equation}
\forall \tau \in I_{n} : \tau < t, \quad |x(t) - x(\tau)| \geq |t - \tau| - \frac{R(\eta)}{N(\tau)}.
\end{equation}
Indeed, suppose that there exist $t_{-}^{n}, t_{+}^{n} \in I_{n}$ such that $\tau(t_{+}^{n}) - \tau(t_{-}^{n}) \geq C_{n}$ for some $C_{n} \nearrow \infty$ as $n \nearrow \infty$, $t_{-}^{n}$ is future focusing, $t_{+}^{n}$ is past focusing, and $t_{-}^{n} < t_{+}^{n}$. In that case,
\begin{equation}
N(t) \sim N(\tau), \quad \forall t, \tau \in [t_{-}^{n}, t_{+}^{n}],
\end{equation}
with constant independent of $n$. For $n$ sufficiently large this violates subluminality. 

Therefore, suppose without loss of generality that for $n$ sufficiently large, all sufficiently late times, say all
\begin{equation}
t \in [t_{n} + \kappa_{+}(t_{n}, n/2) N(t_{n})^{-1}, t_{n} + \kappa_{+}(t_{n}, n) N(t_{n})^{-1}] = I_{n}',
\end{equation}
are future focusing. 
First, we note that if $t \in I_{n}$ is future focusing, then for any $\tau \in I_{n}$, $\tau > t$,
\begin{equation}\label{monotone}
N(\tau) \leq \frac{R(\eta)}{c} \inf_{t < s < \tau} N(s).
\end{equation}
Indeed, for any $\tau \in I$, $\tau > t$, $|x(\tau) - x(t)| \geq |\tau - t| - \frac{R(\eta)}{N(\tau)}$. Then if $N(t) \leq \frac{c N(\tau)}{R(\eta)}$,
\begin{equation}
|x(t) - x(\tau)| \geq |t - \tau| - \frac{c}{N(t)},
\end{equation}
and therefore, we conclude that $N(\tau) \leq \frac{1}{c^{2}} N(t) \leq \frac{N(\tau)}{c R(\eta)}$, which is a contradiction for $R(\eta)$ sufficiently large. Note that in the case of past focusing times, a similar argument yields a lower bound in place of \eqref{monotone}.

Consequently, for any $t \in I_{n}'$,
\begin{equation}
N(t) \leq \inf_{\tau < t : \tau \in I_{n}'} N(\tau).
\end{equation}
In particular, modifying by a constant, $N(t)$ may be replaced by $\widetilde{N}(t)$ on $I_{n}'$, where
\begin{equation}
\widetilde{N}(t) = \inf_{t_{n} + \kappa_{+}(t_{n}, \frac{n}{2})N(t_{n})^{-1} < \tau < t} N(\tau).
\end{equation}
Clearly, $\widetilde{N}(t)$ is monotone decreasing. Furthermore, $\widetilde{N}(t)$ must converge to a self-similar solution taking $n \to \infty$.  
The main idea is that forward in time, on longer and longer time intervals,  the pre-compact solution expands to fill the light cone. This observation will enable us to extract a solution which ``looks self-similar'' on $[1, \infty)$ and we can then rescale that solution to extract a true self-similar solution on $[0,\infty)$. We proceed with this argument now. 

We begin by simplifying our notation, setting 
\begin{align}
t_{-}^{n} &= t_{n} + \kappa_{+}(t_{n}, \frac{n}{2}) N(t_{n})^{-1}\\
t_{+}^{n} &= t_{n} + \kappa_{+}(t_{n}, n) N(t_{n})^{-1}. 
\end{align}
By definition of subluminality (see Definition \ref{d:sl}), it holds that uniformly for all $t \in I_{n}'$,
\begin{equation}
\widetilde{N}(t) (t - t_{-}^{n}) \lesssim 1,
\end{equation}
 independent of $n$. We further have that
\begin{equation}
\widetilde{N}(t) (t - t_{-}^{n}) \gtrsim 1
\end{equation}
is also uniformly bounded for all $t \in I_{n}'$ such that $t - t_{-}^{n} \geq \frac{\delta}{N(t_{-}^{n})}$ by finite propagation speed. 

Now set 
\[
K_n := [\kappa_{+}(t_{n}, n) - \kappa_{+}(t_{n}, n/2)] N(t_{n})^{-1} \cdot N(t_{-}^{n}).
\]
Since
\begin{align}\label{equ:cn_unbdd}
\int_{t_{-}^{n}}^{t_{+}^{n}} \widetilde{N}(t) \, \ud t \sim \frac{n}{2} \to \infty,
\end{align}
and $\widetilde{N}(t) \leq \widetilde{N}(t_{-}^{n})$ for all $t \in I_{n}'$, we see that if $K_n \leq C$ for all $n \in \mathbb{N}$, then
\[
\int_{t_{-}^{n}}^{t_{+}^{n} } \widetilde{N}(t) \lesssim 1,
\]
which contradicts \eqref{equ:cn_unbdd}.  Hence we may conclude that $K_n$ is unbounded.  We can then define a rescaled sequence as follows: set
\[
u_n(0,x) = \frac{1}{\widetilde{N}(t_{-}^{n})^\frac{2}{p-1}} u \biggl(t_{-}^{n}, x(t_{-}^{n}) +  \frac{x}{\widetilde{N}(t_{-}^{n})}\biggr),
\]
\[
\partial_t u_n(0,x) =  \frac{1}{\widetilde{N}(t_-^n)^{\frac{2}{p-1} + 1}} u \biggl(t_-^n, x(t_-^n) +  \frac{x}{\widetilde{N}(t_-^n)}\biggr)
\]
and let
\[
\vec w_n(1) = \left( u_n(0, x), \partial_t u_n(0,x) \right).
\]
By pre-compactness of the trajectory of $\vec u$ in $\dot \cH^{s_p}$ (modulo symmetries), the rescaled initial data converges, that is $\vec w_n(1) \to w_\infty$ in $\dot \cH^{s_p}$. We let $\vec w(s)$ be the evolution of $\vec w_\infty =: \vec w(1)$, then $\vec w_\infty$ has the compactness property with a new scaling parameter $\widehat{N}(s)$, given by
\[
\widehat{N}(s) = \lim_{n \to \infty} \frac{\widetilde{N}(t_{-}^{n} + \frac{s}{\widetilde{N}(t_{-}^{n})})}{\widetilde{N}(t_{-}^{n})}.
\]
Hence we have
\[
cs \leq \frac{1}{\widehat{N}(s)} \leq s, \quad \textup{for all }s > 1.
\]

We may also assume without loss of generality that $\vec w_\infty$ has the compactness property with translation parameter $\widetilde{x}(s) = 0$: by finite speed of propagation, $\widetilde{x}(s)$ must remain bounded, and hence we may, up to passing to a subsequence, obtain a pre-compact solution with $\widetilde{x}(s) = 0$ by applying a fixed translation. Finally, we consider one last sequence of times $\{s_n\}$ with $s_n \to \infty$ and we define
\[
w_n(1,x) = \frac{1}{(s_n)^\frac{2}{p-1}} w \biggl(s_n, \frac{x}{s_n}\biggr), \quad \partial_t u_n(1,x) =  \frac{1}{(s_n)^{\frac{2}{p-1} + 1}} w \biggl(s_n, \frac{x}{s_n}\biggr).
\]
We set
\[
\vec v_n(1) = \left( w_n(1, x), \partial_t w_n(1,x) \right),
\]
which gives rise to a corresponding solution $\vec v_n(\tilde{s})$ with $\widehat{N}(\tilde{s}) = \tilde{s}^{-1}$ on $[\frac{1}{s_n}, \infty)$. We then can take the limit $n \to \infty$, which yields convergence $\vec v_n \to \vec v_\infty$ in $\dot \cH^{s_p}$, and a solution $\vec v$ with initial data $\vec v_\infty$ which is self-similar on $[0, \infty)$.

\section{The traveling wave critical element} \label{s:hans} 

In this section we preclude the possibility of the existence of a `traveling wave' critical element.  

Recall the definition of a traveling wave critical element. 

\begin{defn}[Traveling wave]
We say $\vec u(t) \neq 0$ is a \emph{traveling wave critical element} if $\vec u(t)$ is  a global-in-time solution to~\eqref{eq:nlw} such that the set 
\EQ{
K:=  \left\{ \left( u\left( t, \, x(t) +  \cdot \right), \, \p_t u\left( t,  \,  x(t) + \cdot \right) \right) \mid t \in \R \right\}
} 
is pre-compact in $\dot{H}^{s_p} \times \dot H^{s_p-1} (\R^3)$, where the function $x: \R \to \R^3$ satisfies, 
\begin{align} 
 x(0)&= 0,  \label{eq:xhans0}\\ 
\abs{t} - C_1 &\le \abs{x(t)} \le \abs{t} + C_1 \label{eq:xhans1} \\ 
\abs{ x(t) - (t, 0, 0)} &\le C_1  \abs{t}^{\frac{1}{2}} \label{eq:xhans2}
\end{align}
for some uniform constant $C_1>0$. 
\end{defn}

The main result of this section is the following theorem.

\begin{prop} \label{p:hans_zero} There are no traveling wave critical elements, in the sense of Case (IV) of Proposition~\ref{p:cases}.
\end{prop}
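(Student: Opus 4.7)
The plan is to rule out the traveling wave critical element in two conceptually separate steps: (1) establish additional regularity in the transverse directions $(x_2,x_3)$, specifically up to $1-\nu$ derivatives for arbitrary $\nu>0$, and (2) use this regularity to run a Morawetz-type argument adapted to the direction of propagation, which will force $\vec u\equiv 0$ and contradict nontriviality. By the setup, we may assume $x(t)$ tracks $(t,0,0)$ with error $O(\sqrt{|t|})$, so the bulk of $\vec u(t)$ travels rightward along the $x_1$-axis at essentially the speed of light.

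For the regularity step, fix a small parameter $\nu>0$ and an auxiliary $\eps = \eps(\nu)>0$ to be chosen. For dyadic $N\gg 1$ and $M$ satisfying $N^{s_p/(1-\nu)}\le M\le N$, introduce the anisotropic projection $\hat P_{N,M}$ localizing to $|\xi|\simeq N$ and $|\xi_{2,3}|\simeq M$. Using Lemma~\ref{l:weak}, I will estimate $\|\hat P_{N,M}\vec u(0)\|_{L^2}^2$ via the double Duhamel pairing and decompose spacetime into three regions: Region A $= [-N^{1-\eps},N^{1-\eps}]\times\R^3$; Region B $= \{|t|\geq N^{1-\eps},\ |x-x(\pm N^{1-\eps})|\geq R(\eta_0)+|t|-N^{1-\eps}\}$; and Region C, the complementary interior of the light cone emanating from $(\pm N^{1-\eps},x(\pm N^{1-\eps}))$. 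On Region A, one controls the Duhamel integral by a long-time Strichartz estimate in the spirit of Proposition~\ref{prop-ss-lts}, chosen so that the time length $N^{1-\eps}$ is compatible with the frequency $N$; this produces a gain of the form $(M/N)^{1-\nu}$ modulo logarithms provided $\eps$ is small relative to $\nu$. On Region B, Remark~\ref{r:Reta} together with finite speed of propagation reduces to a small-data solution with data below $\eta_0$, allowing absorption of the time integral via standard Strichartz estimates.

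The main obstacle is Region C, where both ordinary Huygens cancellation and raw dispersive decay fail: because $x(t)$ moves at the speed of light, the two light cones from $(\pm N^{1-\eps},x(\pm N^{1-\eps}))$ overlap in the wave zone $|x|\simeq|t|$, and in dimension three the $\langle t\rangle^{-1}$ decay of $S(t)$ is borderline non-integrable in time. The rescue is an angular separation argument: geometric considerations, combined with the constraint $M\ge N^{s_p/(1-\nu)}$ and the choice of $\eps\ll\nu$, force any $x$ in Region C $\cap\{|x|\simeq|t|\}$ to satisfy $|x_{2,3}|/|x|\ll M/N$, whereas the Fourier support of $\hat P_{N,M} f$ obeys $|\xi_{2,3}|/|\xi|\simeq M/N$. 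Stationary phase in the angular variable (Lemma~\ref{l:angle}) then converts this separation into arbitrary polynomial decay $\langle t\rangle^{-K}$ in the kernel of $\hat P_{N,M}S(t)$, yielding integrable time decay and closing the Region C estimate. Assembling all three regions with a frequency envelope argument gives the transverse regularity $\sum_{M\ge N^{s_p/(1-\nu)}}(N/M)^{2(1-\nu)}\|\hat P_{N,M}\vec u\|_{\dot\HH^{s_p}}^2\lesssim 1$, from which one extracts $|\nabla_{x_{2,3}}|^{1-\nu}\vec u\in L^\infty_t\dot\HH^{s_p}$.

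For the final rigidity step, the extra transverse regularity makes the following Morawetz-type identity applicable: integrate the identity for the multiplier $\mathbf{e}_1\cdot\nabla u + \tfrac12 u$, or more symmetrically a truncated version of the boosted momentum $\partial_1 u$, over long spacetime slabs $[0,T]\times\R^3$ with cutoffs adapted to the light cone traveling along the $x_1$-axis. Because of the \emph{defocusing} sign, the bulk term on the right-hand side controls $\iint |u|^{p+1}$ (or its appropriate weighted version) with a favorable sign, while Corollary~\ref{C:acax} and Lemma~\ref{L:strips} provide a matching lower bound: the critical element deposits a uniform amount of $L^{p+1}$ mass into the slab per unit time. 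The boundary terms are controlled using the newly established transverse regularity, which upgrades $\vec u$ into a space where the multiplier bracket $\langle \partial_t u\mid \chi_R\partial_1 u\rangle$ is finite and grows subcritically with $T$. Dividing by $T$ and sending $T\to\infty$ forces the nonlinear term to vanish, which combined with the defocusing sign yields $u\equiv 0$ and hence the desired contradiction; this is the only point in the argument where the defocusing hypothesis is essential, as noted in the introduction.
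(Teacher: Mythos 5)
Your regularity step follows essentially the paper's own route: a double Duhamel pairing at a fixed time with the anisotropic projections $\widehat P_{N,M}$, long-time Strichartz control on the slab $|t|\le N^{1-\eps}$, small-data theory exterior to the cones emanating from $(\pm N^{1-\eps},x(\pm N^{1-\eps}))$, and the angular-separation/stationary-phase kernel bound to handle the interior--interior interaction in the wave zone. Up to bookkeeping (the paper's envelope bound reads $\sum_{N}\sum_{M\gtrsim N^{s_p/(1-\nu)}}M^{2(1-\nu)}\|\widehat P_{N,\geq M}u(t)\|_{L^2_x}^2\lesssim 1$, which yields $|\nabla_{x_{2,3}}|^{1-\nu}u\in L_t^\infty L_x^2$, not $L_t^\infty\dot H^{s_p}$ as you state), this is the content of Propositions~\ref{P:tw-lts} and~\ref{padditional}.

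The genuine gap is in your rigidity step. The multiplier you propose, $\mathbf{e}_1\cdot\nabla u+\tfrac12 u$ (or a truncated $\partial_1 u$), does not produce a coercive bulk term: for a constant direction the translation part is exactly the momentum identity, $\tfrac{\ud}{\ud t}\int u_t\,\partial_1 u\,\ud x=0$, with no spacetime bulk term at all, irrespective of the sign of the nonlinearity, while the $\tfrac12 u$ part contributes $\tfrac12\int\bigl(u_t^2-|\nabla u|^2-|u|^{p+1}\bigr)\ud x$, which is not sign-definite; so the claim that the defocusing sign lets this multiplier control $\iint|u|^{p+1}$ is unfounded. Moreover, both the $|\nabla u|^2$ term and the bracket $\langle u_t\mid\chi_R\,\partial_1 u\rangle$ involve a full derivative in the $x_1$ direction, whereas the regularity gained in step (1) is purely transverse ($1-\nu<1$ derivatives in $x_{2,3}$, at the $L^2$ level) — a traveling wave cannot be upgraded in $x_1$ — so these terms are not finite in the required sense, and frequency truncation alone does not rescue the argument since there is no coercive term to compare against. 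This is precisely why the momentum-type arguments that rule out light-speed motion for finite-energy solutions are unavailable here, as noted in the introduction. The paper instead runs a Morawetz/virial identity adapted to the \emph{transverse} variables: the multiplier $\chi_R(|x_{2,3}|)\,x_{2,3}\cdot\nabla_{x_{2,3}}Iu+\tfrac12(\chi_R+\psi_R)Iu$ applied to $Iu=P_{\leq T}u$, with $R\simeq T^{\frac12+}$ on $[0,T^{1-\eps}]$. This produces the coercive term $(\tfrac12-\tfrac1{p+1})\int(\chi_R+\psi_R)|Iu|^{p+1}$ (Proposition~\ref{P:tw-morawetz}); its boundary and error terms involve only $\nabla_{x_{2,3}}Iu$ and $Iu_t$, which the transverse regularity and Bernstein control by $T^{\nu}$ and $T^{1-s_p}$; and the contradiction comes from Corollary~\ref{C:acax} together with $|x(t)-(t,0,0)|\lesssim\sqrt{t}$, which keeps the concentration inside $\{|x_{2,3}|\leq T^{\frac12+}\}$. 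You would need to replace your step (2) by an identity of this transverse type, i.e., one whose coercivity and boundary terms use only the derivatives you actually control.
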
 

To prove Propostion~\ref{p:hans_zero}, we will show that any traveling waves critical element would enjoy additional regularity in the $x_2$ and $x_3$ directions. This will allow us to utilize a direction-specific Morawetz-type estimate to reach a contradiction.  We will require an additional technical ingredient, namely, a long-time Strichartz estimate in the spirit of~\cite{D-JAMS, D-Duke}.  

%Let us first give a naive heuristic for why one might expect to prove extra regularity in the $x_{2}, x_{3}$ directions. As a caricature of a traveling wave critical element,  assume for the moment that the translation parameter $x(t)$ above is given by $x(t) = (t, 0, 0)$ and that $u(t)$ is a \emph{fixed profile} traveling in the 
%$x_1$-direction at the speed of light, i.e., there exists a  function $f: \R^3 \to \R$ such that 
%\EQ{
%u(t, x_1, x_2, x_3) = f( x_1 - t, x_2, x_3)
%}
%Let's now change  to coordinates that better capture this behavior. Set 
%\EQ{
% \tau := t+ x^1, \, \,  \rho  := t - x^1, \, \, x^2 = x^2, \, \, x^3 = x^3 \\
%  \p_{\tau} =  \frac{1}{2}( \p_t + \p_{x^1}), \quad \p_{ \rho} = \frac{1}{2} ( \p_{t} - \p_{x^1})
% }
%Then 
%\EQ{
% - \p_t^2 + \De =   -4 \p_\tau \p_\rho + \p_{x^2}^2 + \p_{x^3}^2
% }
% Note that for this hypothetical $\vec u$ we have 
% \EQ{
% \p_{\tau} u = 0,
% }
% which by the above means that 
% \EQ{
%  \Box u  = \pm \abs{u}^{p-1} u  \Longleftrightarrow( \p_{x^2}^2 + \p_{x^3}^2) f =  \pm \abs{f}^{p-1} f
% }
% Since $f$ satisfies an elliptic equation in $(x_2, x_3)$ we expect additional regularity in these directions. Of course, a general critical element does not take such a simple form, and the argument below does not directly use the above motivation. 
%

\subsubsection{Main ingredients in the proof} 
The long-time Strichartz estimates take the following form:

%\begin{prop}[Long-time Strichartz estimate]\label{P:tw-lts}  Suppose $\vec u(t)$ is a traveling wave critical element for \eqref{eq:nlw}.  Let $\eps>0$ and $0<\theta<\frac23\eps$. For any $\eta_0>0$, there exists $N_0 =N_0(\eta_0)$ large enough such that for all $N \geq N_0$ and for all $t_0 \in \mathbb{R}$, we have
%\[
%\|u_{>N}\|_{S_\theta([t_0,t_0+N^{1- \eps}])} <\eta_0. 
%\]
%\end{prop}
\begin{prop}[Long-time Strichartz estimate]\label{P:tw-lts}  Suppose $\vec u(t)$ is a traveling wave critical element for \eqref{eq:nlw}.  Let $\eps \in (0, 1)$ be arbitrary. Then, 
%Then,  and $0<\theta<\frac23\eps$. For any $\eta_0>0$, there exists $N_0 =N_0(\eta_0)$ large enough such that for all $N \geq N_0$ and for all $t_0 \in \mathbb{R}$, we have
\[
\|u_{>N}\|_{S([t_0,t_0+N^{1- \eps}])}  = o_N(1) \mas  N \to \infty. 
\]
where $S(I)$ denotes any admissible, non-endpoint Strichartz norm at Sobolev regularity $s = s_p$ on the time interval $I$. 
\end{prop}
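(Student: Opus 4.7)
My plan is to prove Proposition~\ref{P:tw-lts} by a Dodson-style long-time Strichartz induction on dyadic frequency scales, adapting the template of Proposition~\ref{prop-ss-lts} to the traveling wave setting. The two inputs that replace the use of the scale $N(t)=t^{-1}$ in the self-similar case are: first, the precompactness of $K$ in $\dot\HH^{s_p}$, which, because $N(t)\equiv 1$, gives
\[
c(N):=\sup_{t\in\R}\|P_{>N}\vec u(t)\|_{\dot\HH^{s_p}}\longrightarrow 0\quad\text{as}\quad N\to\infty;
\]
and second, Lemma~\ref{l:DL1lem}, which produces $\delta_0>0$ such that $\|u\|_{L^{2(p-1)}_{t,x}(J\times\R^3)}\le\eta_0$ on every time interval $J\subset\R$ of length $\delta_0$. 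Since $|I_N|=N^{1-\eps}=2^{k(1-\eps)}$ is exponential in $k=\log_2 N$, the induction runs over dyadic frequency scales with intervals of exponentially increasing length, mirroring the self-similar argument.

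Fix a non-endpoint admissible Strichartz pair $(q,r)$ at regularity $s_p$ and write $S(I)=L^q_tL^r_x(I\times\R^3)$. The inductive statement I would prove is: for every $\eta_0>0$ there exists $k_0=k_0(\eta_0)$ such that for all $k>k_0$ and all $t_0\in\R$,
\[
\|u_{>2^k}\|_{S([t_0,t_0+2^{k(1-\eps)}])}<\eta_0.
\]
The base case, for $k$ near $k_0$, uses $c(2^{k_0})\ll\eta_0$ together with the small-data theory of Proposition~\ref{small data} applied on a cover of $[t_0,t_0+2^{k_0(1-\eps)}]$ by length-$\delta_0$ subintervals. In the inductive step, a Strichartz estimate for $u_{>2^k}$ on $I_k:=[t_0,t_0+2^{k(1-\eps)}]$ yields
\[
\|u_{>2^k}\|_{S(I_k)}\lesssim c(2^k)+\|P_{>2^k}F(u)\|_{S'(I_k)},
\]
and I would decompose the nonlinearity as in the proof of Proposition~\ref{prop-ss-lts} into $F(u_{>2^{k-3}})$ plus a Taylor remainder of the form $u_{\le 2^{k-3}}\int_0^1F'(\theta u_{\le 2^{k-3}}+u_{>2^{k-3}})\,\ud\theta$. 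The pure high-frequency piece is bounded via H\"older using the smallness $\|u\|_{L^{2(p-1)}_{t,x}(J)}\le\eta_0$ on each unit subinterval together with the inductive control on $u_{>2^{k-3}}$, transported from the inductive interval of length $2^{(k-3)(1-\eps)}$ to $I_k$ by covering with $\lesssim 2^{3(1-\eps)}$ translates.

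The main obstacle is the mixed term $P_{>2^k}(u_{\le 2^{k-3}}\cdot P_{>2^{k-3}}|u|^{p-1})$: the low-frequency factor $u_{\le 2^{k-3}}$ carries no smallness, and its Strichartz norms on $I_k$ can grow polynomially in $|I_k|$. I would handle it by splitting $u_{\le 2^{k-3}}=u_{\le 1}+\sum_{0<j\le k-3}u_j$ and, for each $j$, using the inductive bound at scale $2^j$ on its natural subinterval of length $2^{j(1-\eps)}$, extending to $I_k$ by covering with $\lesssim 2^{(k-j)(1-\eps)}$ translates at a logarithmic loss. To make this summation close, I would introduce a frequency envelope of the form $\gamma_k=\sum_j 2^{-\sigma|j-k|}\|u_j\|_{L^\infty_t\dot H^{s_p}}$ with $\sigma>0$ small enough that the geometric decay dominates both the $2^{(k-j)(1-\eps)}$ factor and the logarithmic losses, in the spirit of the constraint~\eqref{sigma}. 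Choosing $\eta_0$ smaller than both the small-data threshold of Proposition~\ref{small data} and the absorption constant coming from Strichartz then closes the induction and produces $\|u_{>N}\|_{S([t_0,t_0+N^{1-\eps}])}=o_N(1)$.
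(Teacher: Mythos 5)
Your overall strategy---induction on dyadic frequencies over exponentially long time intervals, Strichartz estimates plus a Taylor decomposition of the nonlinearity, and transport of the inductive bound to longer intervals by covering---is indeed the skeleton of the paper's proof, which runs the scheme of Proposition~\ref{prop-ss-lts} in the $N(t)\equiv1$ setting. However, two of your steps do not close as formulated. The first is the base case: you must control $\|u_{>2^{k}}\|_{S([t_0,t_0+2^{k(1-\eps)}])}$ for $k$ near $k_0(\eta_0)$, i.e.\ on an interval whose length $2^{k_0(1-\eps)}$ tends to infinity as $\eta_0\to0$, using only the rateless compactness information $c(2^{k_0})\ll\eta_0$ and a covering by $\sim 2^{k_0(1-\eps)}/\delta_0$ subintervals of length $\delta_0$. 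Recombining the per-subinterval bounds costs a positive power of the number of subintervals, and since compactness provides no quantitative decay rate for $c(N)$, this cannot be made $<\eta_0$; as written, your base case is essentially as hard as the proposition itself. The paper sidesteps this by proving the inductive statement on intervals of length $(N/N_0)^{1-\eps}$, so that the base case $N_0\le N\le 9N_0$ lives on intervals of \emph{bounded} length where compactness alone suffices, and only at the end recovers $[t_0,t_0+N^{1-\eps}]$ by enlarging $\eps$ and $N_0$.

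The second gap is the quantitative accounting in the low-frequency summation. The loss from covering $I_k$ by $\sim 2^{(k-j)(1-\eps)}$ translates of the scale-$2^j$ interval is not logarithmic: recombining, say, $L_t^{p-1}$ norms costs a factor $2^{(k-j)(1-\eps)/(p-1)}$ (logarithmic losses are special to the self-similar case, where $\int N(t)\,\ud t\sim\log$). A frequency envelope $\sum_j2^{-\sigma|j-k|}\|u_j\|_{L_t^\infty\dot H^{s_p}}$ with \emph{small} $\sigma$ cannot absorb such an exponentially growing factor (smaller $\sigma$ means weaker decay), and indeed no envelope is used in the paper's proof of Proposition~\ref{P:tw-lts}. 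What actually beats the covering loss is a Bernstein/derivative gain across the frequency gap whose exponent strictly exceeds the loss; this is precisely why the paper introduces the auxiliary norms $S_\theta$ (e.g.\ $\||\nabla|^{-\frac{2-3\theta}{2(p-1)}}u\|_{L_t^{p-1}L_x^{2(p-1)/\theta}}$) and why the constraint $\theta<\tfrac{2}{3}\eps$, equivalently $\frac{2-3\theta}{2(p-1)}>\frac{1-\eps}{p-1}$, is the heart of the argument---exponent arithmetic your proposal never supplies. Relatedly, the dyadic pieces with $1\lesssim2^j\lesssim2^{k_0}$ carry no inductive bound and must be treated separately (Bernstein for $u_{\le C_0}$, the compactness modulus with $C_0=C_0(\eta_0)$ for $u_{C_0\le\cdot\le N_0}$), and the purely high-frequency factors should be estimated by global-in-$I_k$ norms supplied by the induction rather than by per-unit-interval bounds on the full solution, to avoid the same recombination losses.
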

%{\color{red} [$\theta$ doesn't need to appear in the statement of the theorem]}.

With the help of Proposition \ref{P:tw-lts}, we will also prove the following additional regularity result.

\begin{prop}[Additional regularity]\label{padditional}
Suppose $\vec u(t)$ is a traveling wave critical element for \eqref{eq:nlw}. For any $0<\nu<\frac12$,  
\begin{equation}\label{equ:hans_added_reg}
\| |\partial_{2}|^{1 - \nu} u \|_{L_{t}^{\infty} L_x^{2}(\R\times\R^3)} + \| |\partial_{3}|^{1 - \nu} u \|_{L_{t}^{\infty} L_x^{2}(\R\times\R^3)} < \infty.
\end{equation}
\end{prop}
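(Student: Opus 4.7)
The plan is to prove \eqref{equ:hans_added_reg} through a frequency-localized analysis that exploits an angular decomposition in the transverse directions $(\xi_{2}, \xi_{3})$. Since $|\partial_{2}|^{1-\nu}$ commutes with spatial translations and the compactness property provides a uniform bound on $\|\vec u(t)\|_{\dot\H^{s_{p}}}$, it suffices to show $\||\partial_{2}|^{1-\nu} u(0)\|_{L^{2}} < \infty$ (and the analogous bound for $\partial_{3}$). Introducing the angular projection $\hat P_{N,M}$ that localizes to frequencies $|\xi| \simeq N$ with transverse frequency $|(\xi_{2}, \xi_{3})| \simeq M$, a Littlewood--Paley decomposition yields
\[
\||\partial_{2}|^{1-\nu} u(0)\|_{L^{2}}^{2} \lesssim \sum_{N \geq 1} \sum_{1 \leq M \leq N} M^{2(1-\nu)} \|\hat P_{N,M} u(0)\|_{L^{2}}^{2},
\]
plus a low-frequency piece absorbed by the $\dot{H}^{s_p}$ bound. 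For $M \leq N^{s_{p}/(1-\nu)}$ the trivial estimate $\|\hat P_{N,M} u(0)\|_{L^{2}} \lesssim N^{-s_{p}}$ already yields a summable contribution, so the real task is to establish a gain for $M \geq N^{s_{p}/(1-\nu)}$.

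For such $N, M$, I would apply the double Duhamel trick via Lemma \ref{l:weak}, writing
\[
\|\hat P_{N,M} u(0)\|_{L^{2}}^{2} = \left\langle \int_{0}^{\infty} \hat P_{N,M} S(-t)(0, F(u(t)))\,\ud t,\, \int_{-\infty}^{0} \hat P_{N,M} S(-\tau)(0, F(u(\tau)))\,\ud\tau \right\rangle,
\]
where $F(u) = \pm |u|^{p-1} u$. I would then split each time integral into three pieces, paralleling the decomposition used in Section \ref{s:soliton-reg}. Setting $T := N^{1-\eps}$ for a small $\eps = \eps(\nu) > 0$ and fixing $R = R(\eta_{0})$ so that $\|\chi_{R}\vec u(\pm T)\|_{\dot\H^{s_{p}}} \leq \eta_{0}$, region A is the strip $[-T, T] \times \R^{3}$, region B is the exterior of the forward/backward cones of initial radius $R$ at $(\pm T, x(\pm T))$, and region C is their interior. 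On region A, the long-time Strichartz estimate of Proposition \ref{P:tw-lts} controls $u_{>N}$ and hence the contributions of $F(u)$ pertinent to $\hat P_{N,M}$; on region B, finite speed of propagation permits replacement of $u$ by the small-data solution $\vec v$ with initial data $\chi_{R} \vec u(\pm T)$ at $t = \pm T$, and the global small-data theory bounds this contribution by $O(\eta_{0})$. The algebraic $|A + B + C|^{2}$ inequality then reduces matters to controlling the mixed pairing $\langle C, C'\rangle$.

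The main obstacle is this pairing $\langle C, C'\rangle$. Unlike the situation in Section \ref{s:soliton}, the sharp Huygens principle alone does not kill it: the interiors of the forward and backward cones overlap in the wave zone $|x| \simeq |t|$, and the $\langle t \rangle^{-1}$ dispersive decay of $S(t)$ in three dimensions is not time-integrable. The angular projection $\hat P_{N,M}$ is introduced precisely to overcome this: for $\eps$ sufficiently small relative to $\nu$ and $M \geq N^{s_{p}/(1-\nu)}$, the intersection of the wave zone with region C forces any $x$ appearing in the convolution to satisfy $|x_{2,3}|/|x| \ll M/N$, whereas the Fourier support of $\hat P_{N,M}$ lives in the cone $|\xi_{2,3}|/|\xi| \simeq M/N$. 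This angular separation between the spatial support of the integration region and the frequency support of the projection yields, via a non-stationary phase analysis of the kernel of $\hat P_{N,M} S(t - \tau)$ (the content of Lemma \ref{l:angle}), arbitrarily fast polynomial decay in $|t - \tau|$, more than enough to absorb the dispersive $|t-\tau|^{-1}$ factor and integrate in time. The residual portion of region C where $|x| \not\simeq |t|$ is handled by a modified sharp Huygens argument that accounts for the mild blurring of supports caused by the spatial tails of the kernel of $\hat P_{N,M}$; the small logarithmic losses this introduces are absorbed by the $N^{-\eps s_{p}}$-type gain afforded by the choice $T = N^{1-\eps}$. Summing the resulting bounds against the weight $M^{2(1-\nu)}$ over $M$ and $N$ produces \eqref{equ:hans_added_reg}.
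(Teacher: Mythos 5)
Your outline follows the paper's strategy closely: the same reduction to bounding $\sum_{N}\sum_{M\gtrsim N^{s_p/(1-\nu)}} M^{2(1-\nu)}\|\hat P_{N,M}u(0)\|_{L^2}^2$, the same double Duhamel pairing with the transverse frequency localization $\hat P_{N,M}$, the same three-region splitting at $T=N^{1-\eps}$ (strip controlled by the long-time Strichartz estimate, exterior-of-cone controlled by small data theory, interior pairing $\langle C,C'\rangle$), and, crucially, the same key observation that in the wave zone inside region C one has $|x_{2,3}|/|x|\ll M/N$ while $\hat P_{N,M}$ forces $|\xi_{2,3}|/|\xi|\simeq M/N$, so that non-stationary phase in the angular variable yields arbitrary polynomial decay, with Huygens handling the rest. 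That part of your sketch is the heart of the paper's Lemmas~\ref{l:angle}, \ref{lem:ang}, and \ref{l:CC'}.

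However, there is a genuine gap in how you close the sum. Your final step, ``summing the resulting bounds against the weight $M^{2(1-\nu)}$,'' is not justified by the estimates you actually state for the strip and exterior regions. The bounds you invoke there (long-time Strichartz control of $u_{>N}$, and an $O(\eta_0)$ bound from small data theory) carry decay in $N$ at best of size $N^{-s_p}$ at the $L^2$ level and \emph{no decay in $M$}. Since $M$ ranges up to $N$, the worst term in your sum is $\simeq M^{2(1-\nu)}N^{-2s_p}\big|_{M\simeq N}= N^{2(1-\nu)-2s_p}$, which fails to be summable in $N$ whenever $\nu\le 1-s_p$ — a range that is nonempty for every $p\in(3,5)$ and covers most of the claimed interval $0<\nu<\tfrac12$ when $p$ is near $3$. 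The paper resolves exactly this issue with a frequency envelope indexed simultaneously in $N$ and $M$ (the quantities $\gamma_{N,M}$, $\alpha_{N,M}$, $\beta_{N,M}$ in Lemmas~\ref{l:AA'} and \ref{l:BB'}): the strip and exterior contributions are not shown to be small in $M$ directly, but are bounded by $\eta_0^{p-1}$ times the envelope itself, using in an essential way the transverse-derivative gain for the low-$M$ part of the nonlinearity (writing $P_{N,\geq M}F(u_{lo})=M^{-2}P_{N,\geq M}\Delta_{x_{2,3}}F(u_{lo})$ so that each low-transverse-frequency factor produces a ratio $M'/M$). The bootstrap $\gamma_{N,M}(0)\lesssim \eta_0^{p-1}\gamma_{N,M}(0)+M^{-L}$ then shows that the only source of $M$-decay, namely the $M^{-L}$ from $\langle C,C'\rangle$, propagates to the whole envelope, and only after that does the weighted sum converge. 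Without this (or some substitute mechanism producing decay in $M$ for the A/B contributions), your argument does not yield \eqref{equ:hans_added_reg} for small $\nu$, so this step needs to be supplied rather than asserted.
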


Using Proposition~\ref{P:tw-lts} and Proposition~\ref{padditional}, we can then prove the following Morawetz-type estimate. In the sequel, we use the notation 
\[
x=(x_1, x_{2,3}).
\] 
\begin{prop}[Morawetz-type estimate]\label{P:tw-morawetz} Suppose $\vec u(t)$ is a traveling wave critical element for \eqref{eq:nlw}.  Then there exists $\delta>0$ and $\eps>0$ such that
\[
\lim_{T\to\infty} \frac{1}{T^{1-\eps}}\int_0^{T^{1-\eps}} \int_{|x_{2,3}|\leq T^\delta} |u_{\leq T}(t,x)|^{p+1}\,\ud x\,\ud t = 0. 
\]
\end{prop}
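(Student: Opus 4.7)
The strategy is to apply a two-dimensional Morawetz-type multiplier identity in the transverse variables $x_{2,3}$ to the frequency-localized function $u_{\leq T}$, and then combine the resulting bound with the pointwise inequality
\[
\int_{|x_{2,3}|\le T^\delta}|u_{\leq T}|^{p+1}\,\ud x \;\le\; T^\delta\!\int \frac{|u_{\leq T}|^{p+1}}{|x_{2,3}|}\,\ud x.
\]
The additional transverse regularity of Proposition~\ref{padditional} plays the role of the ``partial energy'' needed to close the Morawetz identity, while Proposition~\ref{P:tw-lts} is used to control the commutator and cutoff errors introduced by the frequency projection over the window $[0,T^{1-\eps}]$.

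Concretely, fix small parameters $\eps,\delta,\nu>0$ to be chosen at the end, let $\chi=\chi(x_{2,3})$ be a smooth cutoff to $|x_{2,3}|\lesssim T^\delta$, and (with a mild regularization near $x_{2,3}=0$ if needed) consider the two-dimensional Morawetz multiplier in $x_{2,3}$,
\[
m(t,x)=\chi(x_{2,3})\Bigl(\tfrac{x_{2,3}}{|x_{2,3}|}\cdot \nabla_{2,3}u_{\leq T}+\tfrac{u_{\leq T}}{2|x_{2,3}|}\Bigr).
\]
Differentiating $\langle \partial_t u_{\leq T},m\rangle$ in time and using $\Box u_{\leq T}=-P_{\leq T}(|u|^{p-1}u)$ leads, via the standard 2D Morawetz computation in the $x_{2,3}$-plane, to an identity whose principal terms are the negatives of $\tfrac{p-1}{2(p+1)}\!\int \chi|u_{\leq T}|^{p+1}/|x_{2,3}|\,\ud x$ and $\int \chi|\nabla_{2,3}^\perp u_{\leq T}|^2/|x_{2,3}|\,\ud x$, plus an error $\mathcal{E}$ collecting the $\mathrm{supp}\,\nabla\chi$ contributions, the commutator $[P_{\leq T},|u|^{p-1}u]$, and the error from replacing $|u|^{p-1}u$ by $|u_{\leq T}|^{p-1}u_{\leq T}$. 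A key observation, central to the argument, is that the $x_1$-derivative contributions appearing in $\int \nabla u_{\leq T}\cdot\nabla m$ cancel exactly against $\int \partial_t u_{\leq T}\cdot\partial_t m$, since $m$ is independent of $x_1$; no $\partial_1$-derivative bound is ever required. Integrating on $[0,T^{1-\eps}]$ and exploiting the defocusing sign of the main term, one obtains
\[
\int_0^{T^{1-\eps}}\!\!\int \chi\tfrac{|u_{\leq T}|^{p+1}}{|x_{2,3}|}\,\ud x\,\ud t \;\lesssim\; \sup_{t\in\{0,T^{1-\eps}\}}|\langle \partial_t u_{\leq T},m\rangle| + \|\mathcal{E}\|_{L^1_{t,x}}.
\]

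The boundary term is estimated by pairing $\partial_t u_{\leq T}\in \dot H^{s_p-1}$ against the two components of $m$: $\nabla_{2,3}u_{\leq T}$ lies in $\dot H^{-\nu}_{x_{2,3}}$ uniformly in $t$ by Proposition~\ref{padditional} (after noting the subadditivity $(\xi_2^2+\xi_3^2)^{1-\nu}\le \xi_2^{2(1-\nu)}+\xi_3^{2(1-\nu)}$ for $\nu\in(0,1)$), while $\partial_t u_{\leq T}$ can be raised from $\dot H^{s_p-1}$ to $\dot H^\nu$ via Bernstein at cost $T^{\nu+1-s_p}$. The $u_{\leq T}/|x_{2,3}|$ piece is handled by a (possibly logarithmically corrected) fractional 2D Hardy inequality together with Bernstein for the band-limited $u_{\leq T}$, yielding at worst the same bound up to factors of $\log T$. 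Thus $|\langle \partial_t u_{\leq T},m\rangle|\lesssim T^{1-s_p+\nu+o(1)}$. The commutator and nonlinear errors in $\mathcal{E}$ are bounded using Proposition~\ref{P:tw-lts}, which makes $\|u_{>T}\|_{S([0,T^{1-\eps}])}=o_T(1)$ and therefore renders these errors of strictly lower order than the main term; the cutoff errors carry an additional factor $T^{-\delta}$ from $|\nabla\chi|$ and are controlled similarly.

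Combining these estimates with the pointwise inequality stated above yields
\[
\int_0^{T^{1-\eps}}\!\!\int_{|x_{2,3}|\le T^\delta}|u_{\leq T}|^{p+1}\,\ud x\,\ud t \;\lesssim\; T^{\,1-s_p+\nu+\delta+o(1)},
\]
so dividing by $T^{1-\eps}$ leaves $T^{\eps+\delta+\nu-s_p+o(1)}$, which tends to $0$ provided $\eps,\delta,\nu$ are chosen sufficiently small compared to $s_p-\tfrac{1}{2}>0$ (strictly positive since $p>3$). The main anticipated obstacles are (i) the delicate interplay between the singularity of $|x_{2,3}|^{-1}$ on the $x_1$-axis and the frequency cutoff $P_{\leq T}$, handled by regularizing the weight as $(|x_{2,3}|^2+T^{-2\delta'})^{-1/2}$ for a small auxiliary $\delta'>0$ and sending $\delta'\to 0$ at the end, and (ii) the careful tracking of the commutator errors in the frequency-projected nonlinearity, which is exactly what Proposition~\ref{P:tw-lts} is designed to accommodate. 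The defocusing sign of the nonlinearity is indispensable here: with the focusing sign the main term of the Morawetz identity changes sign and no useful spacetime bound can be extracted—consistent with the remark in the introduction that this section is the only place in the paper where the defocusing assumption is used essentially.
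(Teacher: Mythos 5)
Your multiplier choice creates a genuine gap that your error list never addresses. A Morawetz weight based on $a(x)=|x_{2,3}|$ is a \emph{two-dimensional} Morawetz identity (in the transverse plane, with $x_1$ as a parameter), and in two dimensions the bilaplacian term has the unfavorable sign: the identity produces, besides the terms you list, a contribution of the form $-\tfrac14\iint \Delta_{2,3}^2|x_{2,3}|\,|u_{\leq T}|^2 = -\tfrac14\iint |u_{\leq T}|^2/|x_{2,3}|^3$, which (unlike the 3D case, where $\Delta\Delta|x|$ is a negative delta and helps) appears with the wrong sign and is singular exactly on the $x_1$-axis, where the traveling-wave core actually lives ($|x_{2,3}(t)|\lesssim\sqrt t$). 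Regularizing the weight at scale $T^{-\delta'}$ does not rescue this: the regularized bilaplacian, integrated against $|u_{\leq T}|^2$ over the window $[0,T^{1-\eps}]$, produces a contribution that grows with the regularization (none of the available bounds --- $\dot\H^{s_p}$, the transverse $|\nabla_{2,3}|^{1-\nu}$ control of Proposition~\ref{padditional}, or fractional 2D Hardy, which only reaches weights $|x_{2,3}|^{-2(1-\nu)}$ --- controls an $|x_{2,3}|^{-3}$ weight near the axis), and it cannot be absorbed or shown to be $o(T^{1-\eps})$. In addition, your assertion that the commutator/nonlinear errors are ``of strictly lower order'' because $\|u_{>T}\|_{S}=o_T(1)$ is too quick: in the actual argument these terms are comparable in size to the boundary term and require not just Proposition~\ref{P:tw-lts} but also the low-frequency bounds of Corollary~\ref{C:tw-lts} and the transverse-derivative long-time Strichartz estimates of Corollary~\ref{cor:ltse_23}; they are controlled by careful power counting, not by smallness.

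The paper's proof is structured precisely to avoid your obstruction: instead of the singular weight $x_{2,3}/|x_{2,3}|$ cut off at a small radius $T^\delta$, it uses the smoothed virial/Morawetz hybrid $x^k\chi_R$ (with $\chi_R(r)=r^{-1}\int_0^r\psi_R$, $r=|x_{2,3}|$, $k\in\{2,3\}$), truncated at the \emph{large} radius $R=T^{\frac12+}$. Near the axis the weight is virial-like, so the problematic term becomes $\iint\Delta(\chi_R+\psi_R)|Iu|^2\lesssim T^{1-\eps}R^{-(2-2s_p)}$, which is small only because $R$ is so large; the price is that $R$ enters the boundary term ($\sup_t|M(t)|\lesssim T^{1-s_p}(RT^\nu+R^{1+s_p})$) and the commutator estimates, which is exactly what forces the condition $\nu+\eps<s_p-\tfrac12$ (and, not incidentally, yields the localization scale $|x_{2,3}|\leq T^{\frac12+}$ that is needed later to capture the traveling core in the contradiction step). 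Your exponent bookkeeping, which only requires $\eps+\delta+\nu<s_p$, is a symptom of having dropped the term that dictates the true constraint.
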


Combining Proposition~\ref{P:tw-morawetz} with the nontriviality of critical elements will yield a contradiction and complete the proof of Theorem~\ref{p:hans_zero}. 

We turn to the proofs of the three preceding propositions. In Section~\ref{s:mor} we also give the proof of Proposition~\ref{p:hans_zero}.

\subsection{Long-time Strichartz estimates}  \label{s:ltse} 

In this subsection we prove the long-time Strichartz estimate, Proposition~\ref{P:tw-lts} and then deduce a few technical corollaries.

\begin{proof}[Proof of Proposition~\ref{P:tw-lts}]  For technical reasons we fix a small parameter $0<\theta\ll1$ and introduce the following norm: given a time interval $I$, 
\begin{align}\label{equ:lts_space}
\|u\|_{S_{\theta}(I)}  & = \|u\|_{L_{t,x}^{2(p-1)}} +  \| |\nabla|^{-\frac{2 -3\theta}{2(p-1)}}u\|_{L_t^{p-1}L_x^{\frac{2(p-1)}{\theta}}} \\
& \quad + \| |\nabla|^{-\frac{1-\theta}{p-1}} u\|_{L_t^{\frac{2(p-1)}{2-\theta}}L_x^{\frac{2(p-1)}{\theta}}} + \| |\nabla|^{s_p-\theta}u \|_{L_t^{\frac{2}{\theta}} L_x^{\frac{2}{1-\theta}}} \\
&+ \| |\nabla|^{\frac{2 s_p}{3} - \frac{1}{3} }u \|_{L^\frac{6}{1+s_p}_t L^\frac{6}{2-s_p}_x } +  \| |\nabla| ^{\frac{3}{4} - \frac{3}{2(p-1)}} u \|_{L^{2(p-1)}_t L^4_x} ,
\end{align} 
where all space-time norms are over $I\times\R^3$.  Restrictions will be put on $\theta$ below.  One can check that each of these norms correspond to wave-admissible exponent pairs at $\dot H^{s_p}$ regularity; this already requires $0<\theta<p-3$. We will prove Propostion~\ref{P:tw-lts} for the space $S_{\te}$ and note here that the same estimates then easily follow for the whole family of admissible Strichartz norms. We also remark that a nearly identical (but simpler) argument works in the case $p=3$, with the caveat that we need to perturb away from the inadmissible $(2,\infty)$ endpoint.  

Let $\eta_0>0$ and $\eps>0$.   We will actually prove that there exists $N_0\gg 1$ such that for $N\geq N_0$, we have
\[
\|u_{>N}\|_{S_\theta([t_0,t_0+(\frac{N}{N_0})^{1-\eps}])} <\eta_0
\]
for any $t_0\in\R$ and $\theta < 2 \eps/3$. This implies the estimate appearing in the statement of Proposition~\ref{P:tw-lts} upon enlarging $\eps$ and $N_0$; indeed, $N^{1-\eps'}\leq (\tfrac{N}{N_0})^{1-\eps}$ provided $N\geq N_0^{\frac{1-\eps}{\eps'-\eps}}$. 

By compactness and $N(t)\equiv 1$, there exists $N_0$ sufficiently large such that
\[
\|u_{>N_0}\|_{S_\theta([t_0,t_0+9^{1-\eps}])} <\tfrac12\eta_0
\]
for any $t_0\in\R$.  This implies the desired estimate for $N_0\leq N\leq 9N_0$.  We will prove the result for larger $N$ by induction. 

Note that by choosing $N_0$ possibly even larger, we can guarantee
\begin{equation}\label{tw-lts-small1}
\|P_{>N}\vec u\|_{L_t^\infty \H^{s_p}(\R \times \R^{3})} <\tfrac12\eta_0
\end{equation}
for any $N\geq N_0$. 

Before completing the inductive step, we make a few simplifications.  First, by time-translation invariance, it suffices to consider $t_0=0$. Next, to keep formulas within the margins, we will assume all space-time norms are over $[0,(\tfrac{N}{N_0})^{1-\eps}]\times\R^3$ unless otherwise stated. 

By the Taylor's theorem, we can write
\begin{align}
F(u) &= F(u_{\leq N}) +  u_{> N} \int_0^1 F'(u_{< N} +  \theta u_{>N})\\
&= F(u_{\leq N}) +  u_{> N} F'(u_{< N}) + u_{>N}^2 \iint_0^1 F''(u_{< N} +  \theta_1 \theta_2 u_{>N})\\
&= F(u_{\leq N}) +  u_{> N} F'(u_{< N}) + u_{>N}^2 F''(u_{< N}) \\
& \hspace{14mm}+  u_{>N}^3 \iiint_0^1 F'''(u_{< N} +  \theta_1 \theta_2 \theta_3  u_{>N})
\end{align}
for any $N$.  Thus (ignoring absolute values and constants) we need to estimate four types of terms
\[
 u_{>\frac{N}{8}} u_{\leq \frac{N}{8}}^{p-1}   + u_{>\frac{N}{8}}^2 u_{<\frac{N}{8}}^{p-2} + u_{\leq \frac{N}{8}}^p +  u_{>\frac{N}{8}}^3 F_2 = : I + II + III + IV,
\]
where
\[
F_2 =  \iiint_0^1 F'''(u_{< \frac{N}{8}} +  \theta_1 \theta_2 \theta_3 u_{>\frac{N}{8}}).
\]

We will estimate the contribution of each term using Strichartz estimates. 

\subsection*{Term I} We let $0\leq \theta<p-3$ as in \eqref{equ:lts_space} and further impose $\theta<\tfrac23\eps$. We estimate
\begin{align*}
\| |\nabla|^{s_p-1}& P_{>N}( u_{\leq \frac{N}{8}}^{p-1}u_{\geq \frac{N}{8}})\|_{L_t^{1}L_x^{2} } \\&\lesssim N^{s_p-1} \|u_{\leq \frac{N}{8}}\|^{p-1}_{L_t^{p-1}L_x^\frac{2(p-1)}{\theta}}  \|u_{>\frac{N}{8}}\|_{L^{\infty}_t L^{\frac{2}{1-\theta}}_x}\\
& \lesssim N^{-1+\frac{3\theta}{2}} \|u_{\leq \frac{N}{8}}\|^{p-1}_{L_t^{p-1}L_x^\frac{2(p-1)}{\theta}}  \||\nabla|^{s_p - \frac{3\theta}{2} }u_{>\frac{N}{8}}\|_{L^{\infty}_t L^{^\frac{2}{1-\theta }}_x} \\
& \lesssim \bigl[N^{-\frac{2-3\theta}{2(p-1)}}\|u_{\leq \frac{N}{8}}\|_{L_t^{p-1} L_x^{\frac{2(p-1)}{\theta}}}\bigr]^{p-1} \| |\nabla|^{s_p}u_{>\frac{N}{8}}\|_{L_t^\infty L_x^2}. 
\end{align*}
Recalling \eqref{tw-lts-small1}, it remains to prove
\begin{equation}\label{tw-lts-term2}
N^{-\frac{2-3\theta}{2(p-1)}}\|u_{\leq \frac{N}{8}}\|_{L_t^{p-1}L_x^\frac{2(p-1)}{\theta}} \lesssim \eta_0.
\end{equation}
We let $C_0\gg1$ to be determined shortly and begin by splitting
\begin{align*}
N^{-\frac{2 -3\theta}{2(p-1)}} \|u_{\leq \frac{N}{8}}\|_{L_t^{p-1}L_x^\frac{2(p-1)}{\theta}}  &\lesssim N^{-\frac{2 -3\theta}{2(p-1)}}\|u_{\leq C_0}\|_{L_t^{p-1}L_x^\frac{2(p-1)}{\theta}}  \\
& \quad +N^{-\frac{2 -3\theta}{2(p-1)}} \|u_{C_0\leq\cdot\leq N_0}\|_{L_t^{p-1}L_x^\frac{2(p-1)}{\theta}} \\
& \quad + N^{-\frac{2 -3\theta}{2(p-1)}}  \sum_{N_0\leq M\leq\frac{N}{8}} \|u_{M}\|_{L_t^{p-1}L_x^\frac{2(p-1)}{\theta}}.
\end{align*}
By Bernstein's inequality and $N(t)\equiv 1$, we can estimate
\begin{align*}
N^{-\frac{2 -3\theta}{2(p-1)}} \|u_{\leq C_0}\|_{L_t^{p-1}L_x^\frac{2(p-1)}{\theta}} & \lesssim N^{-\frac{2 -3\theta}{2(p-1)}}C_0^{\frac{2-3\theta}{2(p-1)}} \bigl(\tfrac{N}{N_0}\bigr)^{\frac{(1-\eps)}{p-1}}
\end{align*}
on $[0,(\tfrac{N}{N_0})^{1-\eps}]\times\R^3$.  To guarantee that the overall power of $N$ is negative, we need
\[
\frac{3\theta}{2} < \varepsilon.
\]
 Thus, for $N_0$ sufficiently large depending on $C_0$, we may guarantee that
\[
N^{-\frac{2 -3\theta}{2(p-1)}} \|u_{\leq C_0}\|_{L_t^{p-1}L_x^\frac{2(p-1)}{\theta}} \lesssim \eta_0. 
\]
Next, choosing $C_0=C_0(\eta_0)$ large enough and using $N(t)\equiv 1$, we estimate 
\begin{align*}
&N^{-\frac{2 -3\theta}{2(p-1)}}\|u_{C_0\leq \cdot \leq N_0} \|_{L_t^{p-1}L_x^\frac{2(p-1)}{\theta}} \\
& \lesssim N^{-\frac{2 -3\theta}{2(p-1)}}N_0^{\frac{2-3\theta}{2(p-1)}}\||\nabla|^{- \frac{2-3\theta}{2(p-1)}}u_{>C_0}\|_{L_t^{p-1}L_x^\frac{2(p-1)}{\theta}} \\
& \lesssim \eta_0 \bigl(\tfrac{N}{N_0}\bigr)^{-\frac{2  -3\theta}{2(p-1)}+\frac{(1-\eps)}{p-1}} \lesssim \eta_0. 
\end{align*}
For the final term, we begin by estimating 
\begin{align*}
N^{-\frac{2 -3\theta}{2(p-1)}}&\sum_{N_0\leq M\leq \frac{N}{8}}  \|u_M\|_{L_t^{p-1}L_x^\frac{2(p-1)}{\theta}} \\ 
\quad & \lesssim \sum_{N_0\leq M\leq \frac{N}{8}} \bigl(\tfrac{M}{N}\bigr)^{\frac{2 -3\theta}{2(p-1)}}\| |\nabla|^{-\frac{2 -3\theta}{2(p-1)}} u_M\|_{L_t^{p-1}L_x^\frac{2(p-1)}{\theta}}. 
\end{align*}
We now apply the inductive hypothesis to the last term. To do so, we divide the interval $[0,(\tfrac{N}{N_0})^{1-\eps}]$ into $\approx (\tfrac{N}{M})^{1-\eps}$ intervals of length $(\tfrac{M}{N_0})^{1-\eps}$. Continuing from above, this leads to
\begin{align*}
N^{-\frac{2 -3\theta}{2(p-1)}}&\sum_{N_0\leq M\leq \frac{N}{8}}  \|u_M\|_{L_t^{p-1}L_x^\frac{2(p-1)}{\theta}}  \lesssim \sum_{N_0\leq M\leq \frac{N}{8}} \bigl(\tfrac{M}{N}\bigr)^{\frac{2  -3\theta}{2(p-1)}-\frac{(1-\eps)}{p-1}} \eta_0 \lesssim \eta_0,
\end{align*}
where we have used that the exponent appearing is, in this case, positive. This completes the estimation of Term I. 

\subsection*{Term II}
We estimate 
\begin{align}
\| |\nabla|^{s_p - 1}& P_{>N}( u_{\leq \frac{N}{8}}^{p-2}u_{\geq \frac{N}{8}}^2)\|_{L_t^{1}L_x^{2} }\\
& \lesssim N^{ s_p-1 } \|u_{> \frac{N}{8}} \|^2_{L^{2(p-1)}_t L^4_x} \|u_{\leq \frac{N}{8}} \|_{L^{p-1}_t L^\infty_x}^{p-2}\\
& \lesssim N^{ s_p-1+ 1 - \frac{1}{p-1}} \|u_{> \frac{N}{8}} \|^2_{L^{2(p-1)}_t L^4_x}  N^{- \frac{p-2}{(p-1)}} \|u_{\leq \frac{N}{8}} \|_{L^{p-1}_t L^\infty_x}^{p-2}\\
& \lesssim \bigr[ N^{ \frac{3}{4}- \frac{3}{2(p-1)}} \|u_{> \frac{N}{8}} \|_{L^{2(p-1)}_t L^4_x} \bigr]^2  N^{- \frac{p-2}{(p-1)}} \|u_{\leq \frac{N}{8}} \|_{L^{p-1}_t L^\infty_x}^{p-2}.
\end{align}
We can argue as above (now with $\theta = 0$) for the low frequency term, and we note that $(2(p-1), 4)$ is a wave admissible pair at regularity
\[
\frac{3}{2} - \frac{1}{2(p-1)} - \frac{3}{4} = s_p - \left(\frac{3}{4} -  \frac{3}{2(p-1)} \right),
\]
and we conclude using the inductive hypothesis on 
\[
\| |\nabla| ^{\frac{3}{4} - \frac{3}{2(p-1)}} u_{> \frac{N}{8}} \|_{L^{2(p-1)}_t L^4_x} .
\]

\subsection*{Term III}
Next using the fractional chain rule we estimate 
\begin{align*}
\| |\nabla|^{s_p-1} &P_{>N}(u_{\leq \frac{N}{8}}^{p})\|_{L_t^{1}L_x^{2} }\\
& \lesssim N^{s_p - 2} \|u_{\leq \frac{N}{8}}\|^{p-1}_{L_t^{\frac{2(p-1)}{2-\theta}}L_x^\frac{2(p-1)}{\theta}}  \||\nabla| u_{\leq \frac{N}{8}}\|_{L^{\frac{2}{\theta}}_t L^{\frac{2}{1-\theta}}_x}\\
& \lesssim N^{-1 + \theta } \|u_{\leq \frac{N}{8}}\|^{p-1}_{L_t^{{\frac{2(p-1)}{2-\theta}}}L_x^\frac{2(p-1)}{\theta}} N^{- \theta + s_p - 1} \||\nabla| u_{\leq\frac{N}{8}}\|_{L^{\frac{2}{\theta}}_t L^{\frac{2}{1-\theta}}_x}.
\end{align*}

To complete the estimation of term III, we need to prove
\begin{align*}
N^{-\frac{1-\theta}{p-1}}\|u_{\leq \frac{N}{8}}\|_{L_t^{\frac{2(p-1)}{2-\theta}} L_x^{\frac{2(p-1)}{\theta}}}+N^{-\theta+s_p-1}\| |\nabla| u_{\leq N}\|_{L_t^{\frac{2}{\theta}} L_x^{\frac{2}{1-\theta}}}&  \lesssim \eta_0. 
\end{align*}

For this, we argue as in term I,  that is, we split
\[
u_{\leq \frac{N}{8}} = u_{\leq C_0}+u_{C_0\leq\cdot\leq N_0}+\sum_{N_0\leq M \leq \frac{N}{8}} u_M. 
\]
and estimate each term separately, relying on the inductive hypothesis (and a splitting of the time interval) for the final sum.  Comparing with those estimates, we see that this requires
\[
\tfrac{1-\theta}{p-1}-\tfrac{1-\eps}{p-1}>0
\]
to deal with the first term and
\[
\theta+1-s_p-\tfrac{\theta(1-\eps)}{2}>0
\]
to deal with the second term.  These conditions are satisfied provided $0<\theta<\eps$.

\subsection*{Term IV}
We estimate
\[
\|u_{> \frac{N}{8}}^3 F_2 \|_{L^\frac{2}{1+s_p}_t L^\frac{2}{2-s_p}_x } \lesssim \|u_{> \frac{N}{8}}^p \|_{L^\frac{2}{1+s_p}_t L^\frac{2}{2-s_p}_x }  + \|u_{> \frac{N}{8}}^3 u_{\leq \frac{N}{8}}^{p-3} \|_{L^\frac{2}{1+s_p}_t L^\frac{2}{2-s_p}_x } .
\]
For the first expression we estimate
\[
\|u_{> \frac{N}{8}}^p \|_{L^\frac{2}{1+s_p}_t L^\frac{2}{2-s_p}_x } = \|u_{> \frac{N}{8}} \|^p_{L^\frac{2p}{1+s_p}_t L^\frac{2p}{2-s_p}_x } \lesssim \eta_0^p,
\]
while for the second expression we have
\begin{align}
\|u_{> \frac{N}{8}}^3 u_{\leq \frac{N}{8}}^{p-3} \|_{L^\frac{2}{1+s_p}_t L^\frac{2}{2-s_p}_x }  \lesssim \|u_{> \frac{N}{8}} \|_{L^\frac{6}{1+s_p}_t L^\frac{6}{2-s_p}_x }^3 \|u_{\leq \frac{N}{8}} \|^{p-3}_{L^\infty_{t,x}}.
\end{align}
Now, 
\begin{align}
\|u_{\leq \frac{N}{8}} \|^{p-3}_{L^\infty_{t,x}} \lesssim N^{\frac{2(p-3)}{p-1}}\| u_{< \frac{N}{8}}\|^{p-3}_{L^{\infty}_{t} L^{\frac{3(p-1)}{2}}}\lesssim N^{\frac{2(p-3)}{p-1}}.
\end{align}
For the first term, we see that $(6/(1+s_p), 6/(2- s_p))$ is an admissible Strichartz pair at regularity
\[
s_p + \frac{1}{3} - \frac{2 s_p}{3} < s_p,
\]
and hence
\begin{align}
&\|u_{> \frac{N}{8}}^3 u_{\leq \frac{N}{8}}^{p-3} \|_{L^\frac{2}{1+s_p}_t L^\frac{2}{2-s_p}_x }  \\
& \lesssim  N^{ 1 - 2s_p }\| |\nabla|^{\frac{2 s_p}{3} - \frac{1}{3} }u_{> \frac{N}{8}} \|^3_{L^\frac{6}{1+s_p}_t L^\frac{6}{2-s_p}_x } N^{\frac{2(p-3)}{p-1}}\| u_{< \frac{N}{8}}\|^{p-3}_{L^{\infty}_{t} L^{\frac{3(p-1)}{2}}}.
\end{align}
Finally, note that \[
-2s_p + 1 + \frac{2(p-3)}{p-1} = - 3 +1 + \frac{4}{p-1} + \frac{2p - 6}{p-1} = 0.
\]

Hence, by the inductive hypothesis, putting all the pieces of the argument together, we obtain
\[
\|u_{>N}\|_{S_{\theta}([0,(\frac{N}{N_0})^{1-\eps}])} \leq \tfrac12\eta_0 + C\eta_0^{3},
\]
which suffices to complete the induction for $\eta_0$ sufficiently small. 
\end{proof}

We will need the following corollary of Proposition~\ref{P:tw-lts}, which provides some control over the low frequencies as well.

\begin{cor}[Control of low frequencies]\label{C:tw-lts} Suppose $\vec u$ is a traveling wave critical element for \eqref{eq:nlw}. Let $\eps>0$ and $0<\theta<\tfrac23\eps$.  For any $\eta_0$ there exists $N$ sufficiently large such that
\begin{align}
 \|u_{\leq N}\|_{L_t^{p-1}L_x^\frac{2(p-1)}{\theta}([t_0,t_0+N^{1-\eps}])\times\R^3)} &\lesssim \eta_0 N^{\frac{2-3\theta}{2(p-1)}},\label{tw-clf1}\\
  \|u_{\leq N}\|_{L_t^{\frac{2(p-1)}{2-\theta}}L_x^\frac{2(p-1)}{\theta}([t_0,t_0+N^{1-\eps}]\times\R^3)}  &\lesssim \eta_0 N^{\frac{1-\theta}{p-1}},\nonumber \\
 \| |\nabla| u_{\leq N}\|_{L^{\frac{2}{\theta}}_t L^{\frac{2}{1-\theta}}_x([t_0,t_0+N^{1-\eps}]\times\R^3)} &\lesssim \eta_0 N^{\theta - s_p + 1}.\nonumber
\end{align}
uniformly over $t_0\in\R$. 
\end{cor}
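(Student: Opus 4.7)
The plan is to establish each of the three bounds via the same three-part frequency decomposition that already appears inside the proof of Proposition~\ref{P:tw-lts}. Given $N \gg N_0$, split
\[
u_{\leq N} = u_{\leq C_0} + u_{C_0 \leq \cdot \leq N_0} + \sum_{N_0 \leq M \leq N} u_M,
\]
where $C_0 = C_0(\eta_0)$ will be fixed momentarily. Each piece is estimated separately against the right-hand side of the claim, using only Bernstein's inequality, the embedding $\dot H^{s_p} \hookrightarrow L^{3(p-1)/2}$, the uniform bound $\|\vec u\|_{L^\infty_t \dot\H^{s_p}} \lesssim 1$, and Proposition~\ref{P:tw-lts} itself.

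For the very-low-frequency piece $u_{\leq C_0}$, Bernstein and the uniform $\dot H^{s_p}$ bound give pointwise-in-time control in each of the three spatial norms $L_x^{2(p-1)/\theta}$, $L_x^{2(p-1)/\theta}$, $L_x^{2/(1-\theta)}$ (for the third case after distributing the derivative) by a fixed power $C_0^{\gamma}$. Since the time interval has length $N^{1-\eps}$, the temporal integral contributes $N^{(1-\eps)/q_t}$, where $q_t \in \{p-1,\ 2(p-1)/(2-\theta),\ 2/\theta\}$. A direct comparison with the target exponents $\tfrac{2-3\theta}{2(p-1)}$, $\tfrac{1-\theta}{p-1}$, $\theta - s_p + 1$ shows, in each of the three cases, that the net $N$-power is negative precisely under the hypothesis $\theta < 2\eps/3$; choosing $N_0$ large in terms of $C_0$ then absorbs this contribution into $\eta_0$ times the target.

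For the middle piece $u_{C_0 \leq \cdot \leq N_0}$, pre-compactness of $K$ in $\dot\H^{s_p}$ combined with Remark~\ref{r:Reta} (applied to frequency rather than spatial tails) lets us pick $C_0 = C_0(\eta_0)$ so that $\|(1-P_{\leq C_0})\vec u\|_{L^\infty_t \dot\H^{s_p}} < \eta_0$. Bernstein at the fixed cut-off $N_0$ then yields the desired $\eta_0$-smallness, with only an $N_0$-dependent loss that is again overwhelmed by $N \gg N_0$. For the high piece $\sum_{N_0 \leq M \leq N} u_M$, we apply Proposition~\ref{P:tw-lts} to $u_M$ on intervals of length $M^{1-\eps}$, partition $[t_0, t_0+N^{1-\eps}]$ into $\sim (N/M)^{1-\eps}$ such subintervals, and collect the $L^{q_t}_t$ norms, obtaining
\[
\|u_M\|_{L^{q_t}_t L^{q_x}_x([t_0, t_0+N^{1-\eps}])} \lesssim \eta_0 \, (N/M)^{(1-\eps)/q_t}\, M^{\beta},
\]
where $\beta$ is the scaling exponent of $L^{q_x}_x$ at $\dot H^{s_p}$ level. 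Summing dyadically over $N_0 \leq M \leq N$, the geometric series is dominated by its endpoint $M = N$, giving $\lesssim \eta_0 \, N^{\beta}$, which matches the claimed right-hand side.

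The only place any genuine care is needed is in the exponent bookkeeping: one must verify, separately for each of the three norms, that (i) the $u_{\leq C_0}$ contribution carries a strictly negative power of $N$ and (ii) the dyadic $M$-sum is geometric, with largest term at $M=N$. Both reductions hinge on the same inequality $\theta < 2\eps/3$ already used in Proposition~\ref{P:tw-lts}, so the arithmetic is an exact parallel of the computations performed in the estimates of Terms~I and~III there. No additional structural ingredient beyond Proposition~\ref{P:tw-lts} is required.
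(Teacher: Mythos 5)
Your proposal is correct and takes essentially the same route as the paper's own proof: the low frequencies are handled by Bernstein plus the length $N^{1-\eps}$ of the time interval (your additional split at $C_0$ is harmless but not needed, since the crude Bernstein bound on all of $u_{\leq N_0}$ already carries a net negative power of $N$ relative to the target when $\theta<\tfrac23\eps$), and each dyadic piece $u_M$ is controlled by partitioning $[t_0,t_0+N^{1-\eps}]$ into $\sim(N/M)^{1-\eps}$ intervals of length $M^{1-\eps}$ on which Proposition~\ref{P:tw-lts} applies, the resulting geometric sum in $M$ being dominated by its endpoint $M=N$ exactly because $\theta<\tfrac23\eps$ (for the second and third norms the needed exponent conditions are in fact weaker, so your ``precisely'' should be read as ``in particular''). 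This matches the paper's argument up to cosmetic bookkeeping.
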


\begin{proof} We let $\eta_0$ and choose $N_0=N_0(\eta_0)\geq 1$ as in Proposition~\ref{P:tw-lts}.  By time-translation invariance, it suffices to consider $t_0=0$. We focus our attention on \eqref{tw-clf1}, as the other estimates follow similarly.  For $N\geq N_0$, we estimate 
\begin{align*}
\|u_{\leq N}&\|_{L_t^{p-1}L_x^\frac{2(p-1)}{\theta}([0,N^{1-\eps}]\times\R^3)} \\
& \lesssim \|u_{\leq N_0}\|_{L_t^{p-1}L_x^\frac{2(p-1)}{\theta}([0,N^{1-\eps}]\times\R^3)} \\
& \quad + \sum_{N_0\leq M\leq N} \|u_{M}\|_{L_t^{p-1}L_x^\frac{2(p-1)}{\theta}([0,M^{1-\eps}]\times\R^3)} \\
& \quad + \sum_{N_0\leq M\leq N}\|u_{\leq N}\|_{L_t^{p-1}L_x^\frac{2(p-1)}{\theta}([M^{1-\eps},N^{1-\eps}]\times\R^3)}.
\end{align*}

For the first term, we use Bernstein's inequality and $N(t)\equiv 1$ to get
\[
\|u_{\leq N_0}\|_{L_t^{p-1}L_x^\frac{2(p-1)}{\theta}([0,N^{1-\eps}]\times\R^3)} \lesssim N_0^{\frac{2-3\theta}{2(p-1)}}N^{\frac{1-\eps}{p-1}}.
\]
Recalling that $\frac{1-\eps}{p-1}<\frac{2-3\theta}{2(p-1)}$, we see that this term is acceptable provided we choose $N$ sufficiently large. 

Next, we use Proposition~\ref{P:tw-lts} to estimate
\[
\sum_{N_0\leq M\leq N} \|u_M\|_{L_t^{p-1}L_x^{\frac{2(p-1)}{\theta}}([0,M^{1-\eps}]\times\R^3)} \\ \lesssim \eta_0 \sum_{N_0\leq M \leq N} M^{\frac{2-3\theta}{2(p-1)}} \lesssim \eta_0N^{\frac{2-3\theta}{2(p-1)}},
\]
which is also acceptable.

For the remaining term, we split $[M^{1-\eps},N^{1-\eps}]$ into $\approx (\tfrac{N}{M})^{1-\eps}$ intervals of length $M^{1-\eps}$.  Applying Proposition~\ref{P:tw-lts} once more, we have
\begin{align*}
\sum_{N_0\leq M\leq N}&\|u_{M}\|_{L_t^{p-1}L_x^\frac{2(p-1)}{\theta}([M^{1-\eps},N^{1-\eps}]\times\R^3)} \\
& \lesssim \eta_0\sum_{N_0\leq M\leq N}M^{\frac{2-3\theta}{2(p-1)}}\bigl(\tfrac{N}{M}\bigr)^{\frac{1-\eps}{p-1}} \lesssim \eta_0N^{\frac{2-3\theta}{2(p-1)}},
\end{align*}
where we recall $0 <\theta<\tfrac23\eps$ in order to sum. This term is also acceptable, and so we complete the proof of \eqref{tw-clf1} and Corollary~\ref{C:tw-lts}. \end{proof}

Finally, we will need certain long-time Strichartz estimates with regularity in the $x_2, x_3$ directions.
\begin{cor}[Long-time Strichartz estimates for $\nabla_{x_2, x_3} u$]\label{cor:ltse_23}
Suppose that Proposition~\ref{padditional} holds with $\nu>0$.  Then for any $\nu_0 > \nu$,
\[
\| |\nabla_{x_{2,3}}|^{1- \nu_0} u_{N}\|_{L^{\frac{2}{1-s_p}}_{t} L^{\frac{2}{s_p}}_x ([t_0, t_0 +  N^{1-\eps}])} \lesssim N^{1-s_p}.
\]
\end{cor}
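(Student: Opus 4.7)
The key arithmetic observation is that the pair $(q,r) = (2/(1-s_p), 2/s_p)$ is non-endpoint wave admissible (since $1/q + 1/r = 1/2$) and, by the gap condition $1/q + 3/r = 3/2 - s$, its natural scaling Strichartz regularity is $s = 1-s_p$. The plan is therefore to apply a Strichartz estimate at regularity $\dot H^{1-s_p}$ to $f := |\nabla_{x_{2,3}}|^{1-\nu_0} u_N$, and to show that the initial data $\vec f(t_0)$ is of size $O(N^{1-s_p})$ in $\dot H^{1-s_p}\times \dot H^{-s_p}$ by invoking Proposition~\ref{padditional}.

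The first step is to upgrade Proposition~\ref{padditional} to a uniform bound on $\||\nabla_{x_{2,3}}|^{1-\nu_0} u_N\|_{L^\infty_t L^2_x}$. Decomposing $u_N$ into pieces $\hat P_{N,M} u$ with $|\xi_{2,3}| \simeq M \leq N$, Proposition~\ref{padditional} together with almost orthogonality yields $\|\hat P_{N,M} u\|_{L^\infty_t L^2_x} \lesssim M^{\nu-1}$. Hence, since $\nu_0 > \nu$,
\[
\||\nabla_{x_{2,3}}|^{1-\nu_0} u_N\|_{L^\infty_t L^2_x}^2 \simeq \sum_{M\leq N} M^{2(1-\nu_0)} \|\hat P_{N,M} u\|_{L^\infty_t L^2_x}^2 \lesssim \sum_{M\leq N} M^{-2(\nu_0-\nu)} \lesssim_{\nu_0,\nu} 1.
\]
Because $u_N$ is localized in full space to $|\xi| \simeq N$, Bernstein then upgrades this to
\[
\|f(t_0)\|_{\dot H^{1-s_p}} \lesssim N^{1-s_p} \|f(t_0)\|_{L^2} \lesssim N^{1-s_p},
\]
which is the target bound. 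For the time-derivative component $\partial_t f(t_0)$ in the pair $\dot H^{1-s_p} \times \dot H^{-s_p}$, the plan is to exploit the natural wave-equation pairing and verify that the proof of Proposition~\ref{padditional} also produces $\||\nabla_{x_{2,3}}|^{-\nu_0} \partial_t u\|_{L^\infty_t L^2_x} < \infty$ (the analogue of the $\dot H^{1-\nu_0}\times\dot H^{-\nu_0}$ pairing); the same dyadic decomposition in $M$ then yields $\|\partial_t f(t_0)\|_{\dot H^{-s_p}} \lesssim N^{1-s_p}$.

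The main obstacle will be the nonlinear Duhamel term $\||\nabla_{x_{2,3}}|^{1-\nu_0} P_N F(u)\|_{\text{dual Strichartz}([t_0, t_0+N^{1-\eps}])}$. The plan there is to expand $P_N F(u)$ via the same Taylor/Leibniz decomposition used in the proof of Proposition~\ref{P:tw-lts}, isolating at least one high-frequency factor at scale $\gtrsim N$. Distributing the $|\nabla_{x_{2,3}}|^{1-\nu_0}$ via the fractional Leibniz rule incurs a loss of at most $N^{1-\nu_0}$ on a high-frequency factor, which is comfortably absorbed by the $o_N(1)$ smallness of the admissible long-time Strichartz norms of $u_{>N}$ from Proposition~\ref{P:tw-lts} on the time interval of length $N^{1-\eps}$. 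Low-frequency factors are controlled using the quantitative bounds of Corollary~\ref{C:tw-lts}, while the $\ell^2$ summation over the angular scale $M$ closes because $\nu_0 > \nu$. The net budget is precisely $N^{1-s_p}$, as required.
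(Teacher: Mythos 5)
There is a genuine gap in your treatment of the Duhamel term, and it is exactly the point the paper's proof is built to circumvent. Work at the $L^2$ data level (as you must, since the pair $(\tfrac{2}{1-s_p},\tfrac{2}{s_p})$ sits at regularity $1-s_p$ and the forcing must then be measured as $\||\nabla|^{-1}|\nabla_{x_{2,3}}|^{1-\nu_0}P_NF(u)\|_{L^1_tL^2_x}$ with budget $O(1)$, the factor $N^{1-s_p}$ being recovered by Bernstein at the end). If you pay for $|\nabla_{x_{2,3}}|^{1-\nu_0}$ by Bernstein on a factor at frequency $\simeq N$, you lose $N^{1-\nu_0}$, while the long-time Strichartz machinery of Proposition~\ref{P:tw-lts} only supplies $\||\nabla|^{s_p-1}P_NF(u)\|_{L^1_tL^2_x}\lesssim \eta_0$ on the interval of length $N^{1-\eps}$, i.e.\ $\|P_NF(u)\|_{L^1_tL^2_x}\lesssim \eta_0 N^{1-s_p}$. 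The total is then $\eta_0 N^{(1-s_p)-\nu_0}$ against a budget of $O(1)$: the deficit is a positive power $N^{(1-s_p)-\nu_0}$ whenever $\nu_0<1-s_p$, which is precisely the relevant regime (the corollary is needed for $\nu_0$ just above the small $\nu$, while $1-s_p$ can be close to $\tfrac12$). Smallness of the form $\eta_0$ or $o_N(1)$ is not a power of $N$ and cannot absorb this; so "comfortably absorbed by the $o_N(1)$ smallness" is an arithmetic error, not a technicality. Moreover, for the purely low-frequency contribution $P_{>N}F(u_{\leq N/8})$ (which does not vanish since $F$ is not a polynomial) there is no high-frequency factor on which to place the derivative at all, and closing that term requires space-time bounds on $|\nabla_{x_{2,3}}|^{1-\nu_0}u_{\leq N/8}$ in mixed Lebesgue norms — not just the $L_t^\infty L_x^2$ bound you extract from Proposition~\ref{padditional}. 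This is why the paper does not run a single Strichartz application: it proves the bound by induction on $N$, in the style of Proposition~\ref{P:tw-lts}, treating $|\nabla_{x_{2,3}}|^{1-\nu_0}u$ as a solution of a wave equation with forcing $|\nabla_{x_{2,3}}|^{1-\nu_0}F(u)$. The base case comes from compactness of the trajectory of $|\nabla_{x_{2,3}}|^{1-\nu_0}u$ in $L^2_x$, obtained by Gagliardo--Nirenberg interpolation between the bound of Proposition~\ref{padditional} and the anisotropic Sobolev embedding $\dot H^{s_p}\hookrightarrow L^2_{x_1}L^{2/(1-s_p)}_{x_{2,3}}$; the inductive step places the anisotropic derivative on a factor that is already controlled (inductively at frequencies $\leq N/8$, or via Proposition~\ref{padditional}), so that it never costs a power of $N$.

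Two smaller points. First, your initial-velocity bound is not set up correctly: what you need is $\||\nabla_{x_{2,3}}|^{1-\nu_0}\partial_tu_N\|_{\dot H^{-s_p}}\lesssim N^{1-s_p}$, i.e.\ control of $|\nabla_{x_{2,3}}|^{1-\nu_0}|\nabla|^{-1}\partial_tu$ in $L^2$ (the natural analogue, since the derivative loss for $u_t$ is isotropic), not $\||\nabla_{x_{2,3}}|^{-\nu_0}\partial_tu\|_{L^\infty_tL^2_x}<\infty$, which involves a negative anisotropic power and has no reason to be finite; the correct statement does follow from the paper's double Duhamel argument because it is run for the complexified variable $w=u+i|\nabla|^{-1}u_t$, but it must be invoked in that form. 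Second, in your dyadic sum $\sum_{M\leq N}M^{-2(\nu_0-\nu)}$ the low-$M$ part diverges; you need to treat $M\lesssim N^{s_p/(1-\nu)}$ (or $M\lesssim 1$) separately using only $\|u_N\|_{\dot H^{s_p}}$, as in \eqref{equ:lowm}. These are repairable, but the Duhamel step as proposed would fail.
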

\begin{proof}
We only sketch this argument as it follows in the same manner as the standard LTSE with some additional technical details. First we note that $(2/(1-s_p), 2/s_p)$ is an admissible Strichartz pair at regularity $1- s_p$. By compactness, it suffices to argue with $t_0 = 0$. Let $S(I)$ denote any collection of Strichartz pairs at regularity $s= 0$. We will show that 
\begin{align}\label{equ:tlong23}
\| |\nabla_{x_{2,3}}|^{1- \nu_0} u_{N}\|_{S ([t_0, t_0 +  N^{1-\eps}])}  \lesssim 1,
\end{align}
for $\nu_0 > \nu$, from which the result follows.

Let $\vec u$ be a solution with the compactness property on $\R$ with $N(t) = 1$.  By the Gagliardo-Nirenberg inequality
\begin{align}\label{gn_ineq}
\||\nabla_{2,3}|^{1- \nu_0} u \|_{L^2_{x_2, x_3}} \leq C \||\nabla_{2,3}|^{1- \nu} u \|_{L^2_{x_2, x_3}}^{\alpha} \| u\|_{L_{x_2,x_3}^{\frac{2}{1-s_p}}}^{1-\alpha}
\end{align}
for
\[
\alpha = \frac{1 - s_p - \nu_0}{1- s_p - \nu}.
\]

Next, we observe the Sobolev embedding
\begin{align}\label{equ:sobolev_embed}
\dot H_{x}^{s_p} \hookrightarrow L_{x_1}^2 L_{x_{2,3}}^{\frac{2}{1-s_p}},
\end{align}
which follows from Sobolev embedding in $\R^2$ and Plancherel:
\begin{align*}
\int\biggl(\int & |u(x_1,x_{2,3})|^{\frac{2}{1-s_p}}\,dx_{2,3} \biggr)^{1-s_p}\,dx_1 \\
& \lesssim \int \bigl| |\nabla_{x_{2,3}}|^{s_p} u\bigr|^2 \,\ud x \sim \int \bigl| | \xi_{2,3} |^{s_p} \hat u(\xi)\bigr|^2 \,d\xi  \lesssim \int \bigl| |\xi|^{s_p}\hat u(\xi)\bigr|^2 \,d\xi. 
\end{align*}
Thus we may take the $L^2_{x_1}$ norm of both sides of \eqref{gn_ineq} and us H\"older's inequality on the right to conclude that the trajectory $\abs{\na_{2, 3}}^{1-\nu_0} u$ has the compactness property in $L^2_x$, and hence there exists $N_0 = N_0 (\eta_0)$ such that for all $N>N_0$,
\begin{align}\label{equ:base_case}
\|P_{>N} \abs{\na_{2, 3}}^{1- \nu_0}  u\|_{S([0, \, 9^{1- \varepsilon}])} \leq \eta_0, \qquad \textup{for all  } N \geq N_0(\eta_0),
\end{align}
which proves the base case, that is \eqref{equ:tlong23} holds for $N_0 \leq N \leq 9 N_0$. 

We now proceed to the inductive step. Suppose that \eqref{equ:tlong23} holds up to frequency $N_1$ for $N_1 \geq 9 N_0$. We will show that \eqref{equ:tlong23} holds for $N = 2N_1$. The argument we employ is similar to a persistence of regularity argument. Note that $\abs{\na_{2, 3}}^{1- \nu_0} u$ solves the equation
\[
\partial_t \abs{\na_{2, 3}}^{1- \nu_0} u - \Delta \abs{\na_{2, 3}}^{1- \nu_0} u = \abs{\na_{2, 3}}^{1- \nu_0} F(u).
\]
By the Strichartz estimates we have:
\begin{align}
&\| P_{> N} \abs{\na_{2, 3}}^{1- \nu_0} u \|_{S([0, \left( N /N_0\right)^{1-\varepsilon} ])} \\
&\lesssim \| P_{> N} \abs{\na_{2, 3}}^{1- \nu_0} \vec u \|_{L^\infty_t \dot \cH_x^{0} ([0, \left( N /N_0\right)^{1-\varepsilon} ])} + \|P_{>N} \abs{\na_{2, 3}}^{1- \nu_0}  F(u) \|_{\cN([0, \left( N /N_0\right)^{1-\varepsilon} ])}.
\end{align}
where $\cN$ is the dual space to $S$. Let $\widehat{P}_M$ denote a Fourier projection in the $\xi_2, \xi_3$ variables. The first term can be bounded using compactness, so we focus on the second term. We again write
\begin{align}
F(u) &= F(u_{\leq N}) +  u_{> N} \int_0^1 F'(u_{< N} +  \theta u_{>N})\\
&= F(u_{\leq N}) +  u_{> N} F'(u_{< N}) + u_{>N}^2 \iint_0^1 F''(u_{< N} +  \theta_1 \theta_2 u_{>N})\\
&= F(u_{\leq N}) +  u_{> N} F'(u_{< N}) + u_{>N}^2 F''(u_{< N}) \\
& \hspace{14mm}+  u_{>N}^3 \iiint_0^1 F'''(u_{< N} +  \theta_1 \theta_2 \theta_3  u_{>N}).
\end{align}
We will estimate the first term as an example, since the other terms will be similar generalizations of the proof of Proposition~\ref{P:tw-lts}. We have
\begin{align}
& \||\nabla|^{-1} |\nabla_{2,3}|^{1-\nu_0} P_{>N} F(u_{\leq \frac{N}{8}})\|_{L^1_t L^2_x} \\
& \lesssim N^{- 1} \| |\nabla_{2,3}|^{1-\nu_0} P_{>N} F(u_{\leq \frac{N}{8}})\|_{L^1_t L^2_x} \\
& \lesssim N^{- 2} \| |\nabla_{2,3}|^{1-\nu_0}  u_{\leq \frac{N}{8}}\|^{p-1}_{L_t^{\frac{2(p-1)}{2-\theta}}L_x^\frac{2(p-1)}{\theta}}  \||\nabla| u_{\leq \frac{N}{8}}\|_{L^{\frac{2}{\theta}}_t L^{\frac{2}{1-\theta}}_x} \\
& \hspace{14mm} +  N^{- 2} \|  u_{\leq \frac{N}{8}}\|^{p-1}_{L_t^{\frac{2(p-1)}{2-\theta}}L_x^\frac{2(p-1)}{\theta}}  \||\nabla||\nabla_{2,3}|^{1-\nu_0}  u_{\leq \frac{N}{8}}\|_{L^{\frac{2}{\theta}}_t L^{\frac{2}{1-\theta}}_x}\\
& \lesssim N^{-1 + \theta - s_p} \| |\nabla_{2,3}|^{1-\nu_0}  u_{\leq \frac{N}{8}}\|^{p-1}_{L_t^{\frac{2(p-1)}{2-\theta}}L_x^\frac{2(p-1)}{\theta}} N^{-1 - \theta + s_p}  \||\nabla| u_{\leq \frac{N}{8}}\|_{L^{\frac{2}{\theta}}_t L^{\frac{2}{1-\theta}}_x} \\
& \hspace{14mm} +  N^{- 1 + \theta} \|  u_{\leq \frac{N}{8}}\|^{p-1}_{L_t^{\frac{2(p-1)}{2-\theta}}L_x^\frac{2(p-1)}{\theta}} N^{-1 - \theta} \||\nabla||\nabla_{2,3}|^{1-\nu_0}  u_{\leq \frac{N}{8}}\|_{L^{\frac{2}{\theta}}_t L^{\frac{2}{1-\theta}}_x},
\end{align}
and all four terms can be treated analogously to the low frequency component in Term I in Proposition~\ref{P:tw-lts}. \end{proof}

\subsection{Proof of Proposition~\ref{P:tw-morawetz} and Proposition~\ref{p:hans_zero}, assuming Proposition~\ref{padditional}}  \label{s:mor} 
As mentioned above, the long-time Strichartz estimate (Proposition~\ref{P:tw-lts}) will be a key ingredient to proving additional regularity (Proposition~\ref{padditional}).  Before turning to the rather technical proof, let us use Proposition~\ref{padditional} (together with Proposition~\ref{P:tw-lts} and Corollary~\ref{C:tw-lts}) to prove the Morawetz estimate, Proposition~\ref{P:tw-morawetz}.  With the Morawetz estimate in hand, we can then quickly rule out the possibility of traveling waves and hence complete the proof of the main result, Proposition~\ref{p:hans_zero}.

We recall the notation $x=(x_1,x_{2,3})$ and similarly write $\xi=(\xi_1,\xi_{2,3})$. 

\begin{proof}[Proof of Proposition~\ref{P:tw-morawetz}] Let $\psi:[0,\infty)\to\R$ be a smooth cutoff satisfying
\[
\begin{cases}
\psi(\rho) = 1 & \rho \leq 1, \\
\psi(\rho) = 0 & r >2.
\end{cases}
\]
We fix $R>0$ to be determined below and let $\psi_R(\rho) = \psi(\tfrac{\rho}{R})$. Next, let 
\[
\chi_R(r) = \frac{1}{r}\int_0^r \psi_R(s)\,ds. 
\]
We collect a few useful identities: 
\begin{equation}\label{tw-chi-id}
\partial_k[x^k\chi_R] = \chi_R+\psi_R, \quad r\partial_r \chi_R = -\chi_R + \psi_R,
\end{equation}
and we recall the Sobolev embedding \eqref{equ:sobolev_embed}.

In the following, we consider $\chi_R$ as a function of $|x_{2,3}|$. For $T>0$ and 
\[
I:=P_{\leq T},
\]
we define the Morawetz quantity
\[
M(t) = \int_{\R^3} \chi_R Iu_t\, x^k \partial_k Iu \,\ud x + \frac12\int_{\R^3} (\chi_R + \psi_R) Iu_t Iu\,\ud x,
\]
where repeated indices are summed over $k\in\{2,3\}$.  

We first compute the derivative of $M(t)$:
\begin{align*}
M'(t) & = \int \chi_R Iu_t (x^k\partial_k Iu_t) + \frac12 \int (\chi_R+\psi_R)(Iu_t)^2  \\
& \quad + \int \chi_R [x^k\partial_k Iu] Iu_{tt} + \frac12 \int(\chi_R+\psi_R)Iu Iu_{tt}.
\end{align*}

By \eqref{tw-chi-id} and integration by parts, we have
\[
\int [x^k \chi_R]\partial_k\tfrac 12 (Iu_t)^2 = -\frac12\int (\chi_R+\psi_R) (Iu_t)^2,
\]
so we are left to estimate 
\begin{align}\label{tw-mor2}
\int \chi_R [x^k\partial_k Iu] Iu_{tt} + \frac12 \int(\chi_R+\psi_R)Iu Iu_{tt}.
\end{align}
Using the equation for $u$ yields
\[
Iu_{tt} = \Delta Iu -F(Iu)+ [F(Iu)-IF(u)],\qtq{where} F(z) = |z|^{p-1}z. 
\]

We first consider the contribution of $\Delta Iu$ to \eqref{tw-mor2}.  We claim
\begin{equation}\label{tw-mor3}
\int x^k\chi_R [\partial_k Iu] \Delta Iu + \frac12(\chi_R+\psi_R)Iu\Delta Iu\,\ud x \leq \frac 12  \int \Delta(\chi_R+\psi_R)(Iu)^2\,\ud x. 
\end{equation}
In the proof of \eqref{tw-mor3} we will simplify notation by suppressing the operator $I$, suppressing the dependence on $R$, and writing $u_k=\partial_k u$.  We turn to the proof.

We begin by considering the first term on the left-hand side of \eqref{tw-mor3}.  Integrating by parts yields
\[
\int x^k\chi u_k u_{jj} = -\int\partial_j[x^k \chi]u_k u_j + \frac12 x^k\chi \partial_k(u_j^2), 
\]
where $k\in\{2,3\}$ and $j\in\{1,2,3\}$. Writing $r=|x_{2,3}|$ and using \eqref{tw-chi-id}, we have 
\begin{align*}
\int \partial_j[x^k\chi] u_k u_j & = \int \delta_{jk}\chi u_j u_k + \frac{x^k x^j}{r^2}r \chi' u_j u_k \\
& = \int \delta_{jk}\chi u_j u_k + \frac{x^k x^j}{r^2}\psi u_j u_k - \frac{x^k x^j}{r^2}\chi u_j u_k,
\end{align*}
where we may now restrict to $j\in\{2,3\}$.  Using the other identity in \eqref{tw-chi-id}, we also have 
\[
\frac12 \int x^k \chi\partial_k(u_j)^2 = -\frac12 \int (\chi+\psi) u_j^2. 
\]
As for the second term on the left-hand side of \eqref{tw-mor3}, we have
\[
\frac12 \int (\chi+\psi)u u_{jj} = \frac 12 \int  \partial_{jj}(\chi+\psi)u^2- \frac12(\chi+\psi) u_j^2.
\]
Collecting the computations above, we find
\begin{align*}
\int &\bigl( x^k\chi_R [\partial_k Iu] \Delta Iu + \frac12(\chi_R+\psi_R)Iu\Delta Iu\,\bigr) \ud x \\
 & = \frac12 \int \Delta(\chi+\psi)u^2  -\int\bigl[\delta_{jk}-\frac{x^j x^k}{r^2}\bigr]\chi u_j u_k - \int \psi\bigl[\frac{x_{2,3}}{r}\cdot \nabla_{x_{2,3}}u\bigr]^2\,\ud x,
\end{align*}
which yields \eqref{tw-mor3}. 

We next consider the contribution of $-F(Iu)$ to \eqref{tw-mor2}.  Using \eqref{tw-chi-id} and integration by parts, 
\begin{equation}\label{tw-mor4}
\begin{aligned}
-\int&\bigl[x^k\chi_R \partial_k Iu + \frac12(\chi_R+\psi_R) Iu]F(Iu) \,\ud x\\
& = -\int(x^k\chi_R)\frac{1}{p+1}\partial_k|Iu|^{p+1} + \frac 12 (\chi_R+\psi_R)|Iu|^{p+1}\,\ud x \\
& = \int \left(\frac{1}{p+1}-\frac{1}{2}\right)(\chi_R+\psi_R)|Iu|^{p+1}\,\ud x. 
\end{aligned}
\end{equation}
Hence, by \eqref{tw-mor2}, \eqref{tw-mor3}, and \eqref{tw-mor4} and the fundamental theorem of calculus, we deduce 
\begin{equation}\label{TWM1}
\begin{aligned}
\iint_{|x_{2,3}| \leq R}& | Iu|^{p+1}\,\ud x\,\ud t \\
& \lesssim \sup_{t\in J} | M(t)| +  \iint \Delta(\chi_R + \psi_R) |Iu|^2 \,\ud x\,\ud t  \\
& \quad + \biggl| \iint [x^k\chi_R \partial_k Iu+(\chi_R+\psi_R)Iu][F(Iu)-IF(u)]\,\ud x \,\ud t \biggr|
\end{aligned}
\end{equation}
for any interval $J$.  In the following, we choose $J=[0,T^{1-\eps}]$, where $\eps>0$ will be chosen below and $T$ is large enough that Proposition~\ref{P:tw-lts} and Corollary~\ref{C:tw-lts} hold. 

We need to estimate the terms on the right-hand side of this inequality.  We first bound $|M(t)|$. By Bernstein's inequality, the Sobolev embedding \eqref{equ:sobolev_embed}, and Proposition~\ref{padditional}, we have
\begin{equation}
\label{tw-mor-bd}
\begin{split}
\sup_t |&M(t)|\\
 &\lesssim \| I u_t \|_{L_t^\infty L_x^2}\bigl(R \|\nabla_{x_{2,3}} Iu\|_{L_t^\infty L_x^2} +\|Iu\|_{L_t^\infty L_{x_1}^2 L_{x_{2,3}}^{\frac{2}{1-s_p}}}\|\chi_R+\psi_R\|_{L_{x_{2,3}}^{\frac{2}{1+s_p}}} \bigr)   \\
& \lesssim T^{1-s_p}\bigl( RT^\nu + R^{1+s_p}\|u\|_{L_t^\infty \dot H_x^{s_p}}\bigr) \\
& \lesssim T^{1-s_p}\bigl( RT^\nu + R^{1+s_p}\bigr),
\end{split}
\end{equation}
for $\nu>0$ to be chosen sufficiently small below. 

For the next term, we have
\begin{equation}\label{TWM2}
\begin{aligned}
\iint \Delta(\chi_R + \psi_R)|Iu|^2\,\ud x \,\ud t & \lesssim 
T^{1-\eps}\|Iu\|_{L_t^\infty L_{x_1}^2 L_{x_{2,3}}^{\frac{2}{1-s_p}}}^2 \|\Delta(\chi_R+\psi_R)\|_{L_{x_{2,3}}^{\frac{2}{1+s_p}}} \\
& \lesssim T^{1-\eps} \| u\|_{L_t^\infty \dot H_x^{s_p}}^2 R^{-(2-2s_p)}\\
& \lesssim T^{1-\eps}R^{-(2-2s_p)}. 
\end{aligned}
\end{equation}

Now we turn to the final term. Arguing as in the long-time Strichartz estimates, we need to estimate terms of the form
\[
u_{\leq T}^{p} + u_{> T} u_{\leq T}^{p-1} +  u_{> T}^2 u_{\leq T}^{p-2} + u_{>T}^3 F_2,
\]
where $F_2$ involves both high and low frequencies.
% We write 
%\begin{align*}
%P_{\leq T}F(u) - F(u_{\leq T}) = -P_{>T}F(u_{\leq T}) + P_{\leq T}[F(u) - F(u_{\leq T})]
%\end{align*}
%and we further decompose the nonlinearity schematically as
%\[
%|P_{\leq T}[F(u) - F(u_{\leq T})]|\lesssim |u_{>T}|^p + |u_{>T}||u_{\leq T}|^{p-1}.
%\]
%We get
%\begin{align}
% \iint [x^k\chi_R \partial_k Iu &+(\chi_R+\psi_R)Iu][F(Iu)-IF(u)]\,\ud x \,\ud t\\
%& \lesssim \iint x^k \chi_R \partial_k u_{\leq T}[|u_{>T}| |u_{\leq T}|^{p-1}]\,\ud x \ud t \label{tw-emor1} \\
%&  \quad+\iint x^k \chi_R \partial_k u_{\leq T}[|u_{>T}|^p]\,\ud x \,\ud t \label{tw-emor2}\\
%& \quad+\iint x^k \chi_R \partial_k u_{\leq T}P_{>T}[u_{\leq T}^p]\,\ud x\,\ud t \label{tw-emor3}\\
%& \quad+\iint (\chi_R + \psi_R)u_{\leq T} |u_{>T} |^p\,\ud x\,\ud t \label{tw-emor4}\\
%& \quad+\iint (\chi_R + \psi_R)u_{\leq T} |u_{>T}| |u_{\leq T} |^{p-1}\,\ud x\,\ud t \label{tw-emor41}\\
%& \quad+\iint (\chi_R+\psi_R) u_{\leq T}P_{>T}[u_{\leq T}^p]\,\ud x\,\ud t. \label{tw-emor5}
%\end{align}
Thus we estimate
\begin{align}
 \iint [x^k\chi_R \partial_k &Iu +(\chi_R+\psi_R)Iu][F(Iu)-IF(u)]\,\ud x \,\ud t\\
& \lesssim\iint x^k \chi_R \partial_k u_{\leq T}P_{>T}[u_{\leq T}^p]\,\ud x\,\ud t \label{tw-emor1}\\
&\quad + \iint x^k \chi_R \partial_k u_{\leq T} P_{>T}[|u_{>T}| |u_{\leq T}|^{p-1}]\,\ud x \ud t \label{tw-emor2} \\
 & \quad +\iint x^k \chi_R \partial_k u_{\leq T}P_{>T}[|u_{>T}|^2 |u_{\leq T}|^{p-2}]\,\ud x \ud t \label{tw-emor3} \\
&\quad + \iint x^k \chi_R \partial_k u_{\leq T}P_{>T} [|u_{>T}|^3 F_2 ]\,\ud x \ud t \label{tw-emor4} \\
& \quad+\iint (\chi_R + \psi_R)u_{\leq T} P_{>T} [ u_{\leq T}^p]\,\ud x\,\ud t \label{tw-emor5}\\
& \quad+\iint (\chi_R + \psi_R)u_{\leq T } P_{>T} [ |u_{> T}| |u_{\leq T} |^{p-1]}\,\ud x\,\ud t \label{tw-emor6}\\
& \quad+\iint (\chi_R+\psi_R) u_{\leq T}P_{>T}[ |u_{> T}|^2 |u_{\leq T} |^{p-2}]\,\ud x\,\ud t \label{tw-emor7}\\
& \quad+\iint (\chi_R+\psi_R) u_{\leq T}P_{>T}[|u_{> T}|^3 F_2]\,\ud x\,\ud t \label{tw-emor8}
\end{align}
where all the integrals are taken over $[0,T^{1-\eps}]\times\R^3$. We treat each of these terms separately.

We first consider \eqref{tw-emor1}. Estimating as in the long-time Strichartz estimates and using Corollary \ref{C:tw-lts}, we obtain
\begin{align*}
&\left|\iint x^k \chi_R \partial_k u_{\leq T}P_{>T}[u_{\leq T}^p]\,\ud x\,\ud t\right|\\
& \lesssim RT^{1- s_p} \| \nabla_{x_{2,3}}u_{\leq T}\|_{L_t^\infty L_x^2}T^{s_p - 2} \||\nabla| P_{>T}(u_{\leq T})^p\|_{L_t^1 L_x^2} \\
&\lesssim RT^{1 - s_p + \nu}\cdot T^{s_p - 2} \| u_{\leq T}\|_{L_t^{\frac{2(p-1)}{2-\theta}}L_x^{\frac{2(p-1)}{\theta}}}^{p-1} \|\nabla u_{\leq T}\|_{L_t^{\frac{2}{\theta}}L_x^{\frac{2}{1-\theta}}} \\
&\lesssim R T^{1-s_p+\nu}.
\end{align*}

We next consider \eqref{tw-emor2}. We let $0 < \theta <\tfrac23\eps$.  By Bernstein's inequality,  Proposition~\ref{padditional}, and Corollary~\ref{C:tw-lts}, we obtain
\begin{align*}
&\left| \iint x^k \chi_R \partial_k u_{\leq T}[|u_{>T}| |u_{\leq T}|^{p-1}]\,\ud x\, \ud t \right| \\
& \lesssim R \|\nabla_{x_{2,3}} u_{\leq T}\|_{L^\infty_t L^2_x}  \|u_{\leq T}\|^{p-1}_{L_t^{p-1}L_x^\frac{2(p-1)}{\theta}}  \|u_{>T}\|_{L^{\infty}_t L^{^\frac{2}{1-\theta}}_x} \\
& \lesssim R T^{1- s_p} \|\nabla_{x_{2,3}} u_{\leq T}\|_{L^\infty_t L^2_x} T^{s_p - 1} \|u_{\leq T}\|^{p-1}_{L_t^{p-1}L_x^\frac{2(p-1)}{\theta}}  \|u_{>T}\|_{L^{\infty}_t L^{^\frac{2}{1-\theta}}_x} \\
%& \lesssim R T^{ \nu-(s_p-\frac{3\theta}{2})} \||\nabla_{x_{2,3}}|^{1-\nu} u_{\leq T}\|_{L^\infty_t L^2_x} \|u_{\leq T}\|^{p-1}_{L_t^{p-1}L_x^\frac{2(p-1)}{\theta}}  \||\nabla|^{s_p - \frac{3\theta}{2}}u_{>T}\|_{L^{\infty}_t L^{^\frac{2}{1-\theta}}_x} \\
%& \lesssim R T^{\nu-(s_p-\frac{3\theta}{2})} \|u_{\leq T}\|^{p-1}_{L_t^{p-1}L_x^\frac{2(p-1)}{\theta}}  \||\nabla|^{s_p}u\|_{L^{\infty}_t L_x^2}\\
& \lesssim R T^{1 - s_p + \nu}.
\end{align*}

For \eqref{tw-emor3} we again argue as in the proof of the long-time Strichartz estimates, and using Corollary~\ref{C:tw-lts}, we obtain
\begin{align*}
&\left| \iint x^k \chi_R \partial_k u_{\leq T}[|u_{>T}| ^2|u_{\leq T}|^{p-2}]\,\ud x\, \ud t \right| \\
& \lesssim R \|\nabla_{x_{2,3}} u_{\leq T}\|_{L^\infty_t L^2_x} \|u_{\leq T}\|^{p-2}_{L_t^{p-1}L_x^{\infty}}  \|u_{>T}\|_{L^{2(p-1)}_t L^{4}_x} \\
& \lesssim R T^{ \nu + s_p - 1} \||\nabla_{x_{2,3}}|^{1-\nu} u_{\leq T}\|_{L^\infty_t L^2_x} N^{1 - s_p} \|u_{\leq T}\|^{p-2}_{L_t^{p-1}L_x^{\infty}}  \|u_{>T}\|_{L^{2(p-1)}_t L^{4}_x} \\
& \lesssim R T^{1 - s_p + \nu}.
\end{align*}

For \eqref{tw-emor4}, we once again use the bounds from the proof of the long-time Strichartz estimates as well as Corollary \ref{cor:ltse_23}, and we obtain
\begin{align*}
&\left| \iint x^k \chi_R \partial_k u_{\leq T} P_{>T} [|u_{>T}| ^3F_2]\,\ud x\, \ud t \right| \\
& \lesssim R \|\nabla_{x_{2,3}} u_{\leq T}\|_{L^{\frac{2}{1-s_p}}_{t} L^{\frac{2}{s_p}}_x} \|P_{>T} [|u_{>T}| ^3F_2]\|_{L^{\frac{2}{1+s_p}}_t L^{\frac{2}{2-s_p}}_x} \\
& \lesssim R \sum_{N \leq T} N^{\nu_0} \left(\frac{T}{N}\right)^{\frac{(1-\eps)(1-s_p)}{2}} \| |\nabla_{x_{2,3}}|^{1- \nu_0} u_{N}\|_{L^{\frac{2}{1-s_p}}_{t} L^{\frac{2}{s_p}}_x} \\
& \lesssim R T^{\nu_0} \sum_{N \leq T}   \left(\frac{T}{N}\right)^{\frac{(1-\eps)(1-s_p)}{2}} N^{1-s_p} \\
& \lesssim R T^{1- s_p + \nu_0} \sum_{N \leq T}   \left(\frac{N}{T}\right)^{1- s_p -\frac{(1-\eps)(1-s_p)}{2}} \lesssim R T^{ 1- s_p + \nu_0}
\end{align*}
for any $\nu_0 > \nu$, where $\nu > 0$ is as in Proposition \ref{padditional}.

 Arguing analogously for the remaining terms, the estimates are almost identical, up to noting that by H\"older's inequality in the $x_{2,3}$ variables we have
 \begin{align}
\|(\chi_R + \psi_R) u_{\leq T}\|_{L^\infty_t L^2_{x}} \lesssim R^{s_p}  \| u_{\leq T}\|_{L^\infty_t L^2_{x_1}L^{\frac{2}{1-s_p}}_{x_2, x_3}},
 \end{align}
which is controlled by the $\dot H^{s_p}$ norm by the Sobolev embedding \eqref{equ:sobolev_embed}. Thus we obtain for \eqref{tw-emor5} - \eqref{tw-emor7} the estimates:
\begin{align}
\eqref{tw-emor5}  &\lesssim R^{s_p} T^{1- s_p} \| u_{\leq T}\|_{L^\infty_t L^2_{x_1} L^{\frac{2}{1-s_p}}_{x_2, x_3}}  T^{s_p - 2} \| |\nabla| P_{>T}[u_{\leq T}^p] \|_{L^1_t L^2_x} 
 \lesssim R^{s_p} ,\\\\
\eqref{tw-emor6} &\lesssim R^{s_p}  T^{1 - s_p} \| u_{\leq T}\|_{L^\infty_t L^2_{x_1}L^{\frac{2}{1-s_p}}_{x_2, x_3}}  T^{s_p - 1} \| P_{>T}[ u_{>T} u_{\leq T}^{p-1}] \|_{L^1_t L^2_x} \lesssim R^{s_p} ,\\\\
 \eqref{tw-emor7}& \lesssim R^{s_p} T^{1 - s_p} \| u_{\leq T}\|_{L^\infty_tL^2_{x_1} L^{\frac{2}{1-s_p}}_{x_2, x_3}}  T^{s_p - 1} \| P_{>T}[ u_{>T}^2 u_{\leq T}^{p-2}] \|_{L^1_t L^2_x}\lesssim R^{s_p}.
 \end{align}
For the last term \eqref{tw-emor8}, we note that 
 \[
 \frac{2}{s_p} \leq \frac{2}{1-s_p},
 \]
and we use H\"older's inequality in the $x_{2,3}$ variables to estimate
 \begin{align}
\eqref{tw-emor8} &\lesssim  \|(\chi_R + \psi_R) u_{\leq T}\|_{L^{\frac{2}{1-s_p}}_{t} L^{\frac{2}{s_p}}_x}  \| P_{>T}[ u_{>T}^3 F_2] \|_{L^{\frac{2}{1+s_p}}_{t} L^{\frac{2}{2-s_p}}_x}\\
& \lesssim  T^{\frac{(1-\eps)(1-s_p)}{2}} T^{\frac{1 -s_p}{2}} R^{2s_p - 1} \|(\chi_R + \psi_R) u_{\leq T}\|_{L^\infty_{t} L^2_{x_1} L^{\frac{2}{1-s_p}}_{x_{2,3}}} \\
& \lesssim  T^{1- s_p} R^{2s_p - 1} \|(\chi_R + \psi_R) u_{\leq T}\|_{L^\infty_{t} L^2_{x_1} L^{\frac{2}{1-s_p}}_{x_{2,3}}} .
\end{align}

Now, using \eqref{tw-mor-bd}, \eqref{TWM2}, and our estimates for \eqref{tw-emor1}--\eqref{tw-emor8}, we have established that
\begin{align*}
&\iint_{|x_{2,3}|\leq R}|u_{\leq T}|^{p+1}\,\ud x\,\ud t  \lesssim RT^{1+\nu-s_p}+R^{s_p} + R^{2s_p - 1}  T^{1 - s_p }. 
\end{align*}
We now choose $R=T^{\frac{1}{2}+}$ to obtain that the right-hand side is $o(T^{1-\eps})$.  This can be achieved provided $\nu+\eps<s_p-\frac12,$ and hence we complete the proof. 
\end{proof}

As mentioned above, with the Morawetz estimate Proposition~\ref{P:tw-morawetz} in hand, we can quickly rule out traveling waves.  The final ingredient we will need is the non-triviality for compact solutions appearing in Corollary~\ref{C:acax}.  Combining this corollary with Proposition~\ref{P:tw-morawetz}, we can now prove Proposition~\ref{p:hans_zero}.

\begin{proof}[Proof of Proposition~\ref{p:hans_zero}] Suppose toward a contradiction that $\vec u$ is a traveling wave critical element for \eqref{eq:nlw}.  It suffices to prove that 
\begin{equation}\label{tw-lb}
\int_0^{T^{1-\eps}}\int_{|x_{2,3}|\leq T^{\frac{1}{2}+}} |u_{\leq T}(t,x)|^{p+1}\,\ud x\,\ud t \gtrsim T^{1-\eps}
\end{equation}
for $T$ sufficiently large, as this contradicts Proposition~\ref{P:tw-morawetz}. By Corollary~\ref{C:acax}, the definition of the critical element, and the fact that $N(t)\equiv 1$, there exists $C\gg1$ and $T\gg 1$ large enough that
\begin{equation}\label{tw-lb2}
\int_{t_0}^{t_0+1} \int_{|x-x(t)|\leq C} |u_{\leq T}(t,x)|^{\frac{3(p-1)}{2}} \,\ud x\,\ud t \gtrsim_u 1
\end{equation}
for all $t_0\in\R$.  Recalling $|x(t) -(t,0,0)| \lesssim \sqrt{t}$ we see that for $T> C^{2}$, we have
\[
\{|x-x(t)| \leq C\} \subset \{|x_{2,3}|\leq T^{\frac{1}{2}+}\}
\]
for all $t\in[0,T^{1-\eps}]$.  Thus \eqref{tw-lb2} implies \eqref{tw-lb}, as desired. 
\end{proof}

%%%%%

\subsection{Additional  Regularity: Proof of Proposition~\ref{padditional}}
Our final task is to prove Proposition~\ref{padditional}, namely, additional regularity for traveling waves.  More precisely, we can establish additional regularity in the directions orthogonal to the direction of travel. 

Recall the notation $x=(x_1,x_{2,3})$.  We similarly use $\xi=(\xi_1,\xi_{2,3})$ for the frequency variable.  We also introduce the following modified Littlewood--Paley operators:

For $N,M\in 2^{\mathbb{Z}}$,  we let $\widehat P_{N,>M}$ be the Fourier multiplier operator that is equal to one where
\[
|\xi|\simeq N \qtq{and} |\xi_{2,3}|\gtrsim M.
\]
We let $\widehat P_{N, M} = \widehat P_{N,>2M} - \widehat P_{N,>M}$, and we let $P_N = \widehat P_{N,\leq M}+\widehat P_{N,>M}$.

\medskip

We will occasionally abuse notation slightly and apply these multipliers to a vector, where this should be taken to mean applying these multipliers component-wise. We note that this notation differs from that of the previous sections, however we would like to make explicit that $N$ corresponds to $\xi$ frequencies while $M$ to those of $\xi_{2,3}$.
%
%{\color{blue} In the proof below, because we have so many terms now, I think we should return to the $2^{\delta |j-k|}$ frequency envelope notation. This way, we will only have one expression which may may certain equations a bit nicer.}

We fix $\nu>0$. We begin with the observation that
\begin{align}\label{equ:order1}
\| P_{\leq N_0} u\|_{L_t^\infty \dot H_x^1} \lesssim_{N_0} 1.
\end{align}
We will choose the precise value of $N_0 \gg 1$ in the course of the proof.  On the other hand,
% defining
% \begin{align}\label{equ:delta_exp}
% \delta=\tfrac{s_p}{1-\nu},
% \end{align}
 we have 
\begin{align}\label{equ:lowm}
\sum_{N>N_0}& \| |\nabla_{x_{2,3}}|^{1-\nu} \widehat P_{N,\leq N^{\frac{s_p}{1-\nu}}}u(t)\|_{L_x^2}^2\\
 & \lesssim \sum_{N>N_0} N^{2[\frac{s_p}{1-\nu}(1-\nu)-s_p]}\| |\nabla|^{s_p}u_N(t)\|_{L_x^2}^2  \lesssim \| |\nabla|^{s_p} u(t)\|_{L_x^2}^2, 
\end{align}
Therefore, we are left to show that
\begin{equation}\label{tw-addreg-nts0}
\sum_{N \geq N_0}\sum_{C_0 N^{\frac{s_p}{1-\nu}}\leq M \leq N} M^{2(1-\nu)}\|\widehat P_{N, \geq M}u(t)\|^2_{L_x^2} \lesssim 1,
\end{equation}
for some fixed $C_0 > 0$ (uniformly in $t$). 

We will use a double Duhamel argument together with a frequency envelope to estimate this expression. We will estimate
\begin{align}
\| \widehat{P}_{N, \geq M} u(t_0) \|_{L^{2}(\bR^{3})}^2 &\simeq N^{-2s_p} \| \widehat{P}_{N, \geq M} u(t_0) \|_{\dot H^{s_p} (\bR^{3})}^2\\
& =  N^{-2s_p}  \langle  \widehat{P}_{N, \geq M} u(t_0),\, \widehat{P}_{N,  \geq M} u(t_0) \rangle_{\dot H^{s_p}(\bR^{3})}.
\end{align}
We will show that there exists a frequency envelope $\gamma_{M,N}$ such that
\[
 \| \widehat{P}_{N,  \geq M} u(t_0) \|_{\dot H^{s_p}(\bR^{3})} \lesssim \gamma_{N,M}(t_0)
\]
and such that
\begin{align}\label{equ:frequency_env}
\sum_{N \geq N_0} \biggl(\sum_{C_0 N^{\frac{s_p}{1-\nu}}\leq M \leq N}  M^{2(1 - \nu)} N^{-2s_p} \gamma_{N,M}(t_0)^2 \biggr) \lesssim 1.
\end{align}
Consequently, this will show that
\begin{align}
\sum_{N \geq N_0}&\sum_{C_0 N^{\frac{s_p}{1-\nu}}\leq M \leq N}  M^{2(1-\nu)}\|\widehat P_{N, \geq M}u(t_0)\|^2_{L_x^2} \\
&\lesssim \sum_{N \geq N_0}\sum_{C_0 N^{\frac{s_p}{1-\nu}}\leq M \leq N}  M^{2(1-\nu)} N^{-2s_p } \|\widehat P_{N, \geq M}u(t_0)\|^2_{\dot H^{s_p}_x} \lesssim 1.
\end{align}
Together with \eqref{equ:order1} and \eqref{equ:lowm} (and time translation invariance), this will imply that
\[
\| |\partial_{2}|^{1 - \nu} u \|_{L_{t}^{\infty} L_x^{2}} + \| |\partial_{3}|^{1 - \nu} u \|_{L_{t}^{\infty} L_x^{2}} < \infty,
\]
and hence prove Proposition \ref{padditional}. 
Thus, we let
\[
\Gamma_{N,M}(t_0) =  N^{s_p} \| \widehat{P}_{N, \geq M} u(t_0) \|_{L^{2}(\bR^{3})},
\]
and we fix some $\sigma > 0$ to be specified later. 
We define the frequency envelope
\begin{align}\label{equ:alpha_env}
\gamma_{M,N}(t_0) =\sum_{N'} \sum_{M' \leq M} \min\biggl\{\frac{N}{N'},\frac{N'}{N}\biggr\}^{\sigma}\cdot \left(\frac{M'}{M}\right)^{\sigma}\Gamma_{N',M'}(t_0).
\end{align}

By time-translation symmetry, it suffices to consider the case $t_0 = 0$.   Once again, we complexify the solution, letting 
 \[
 w = u+ \frac{i}{\sqrt{-\Delta}} u_t.
 \]
 Then
 \[
 \|w(t) \|_{\dot H^1} \simeq \|\vec u(t) \|_{\dot H^1 \times L^2},
 \]
 and if $\vec u(t)$ solves \eqref{eq:nlw}, then $w(t)$ is a solution to
 \begin{align}
 w_t = -i \sqrt{-\Delta} w \pm \frac{i}{\sqrt{-\Delta}} |u|^{p-1} u.
 \end{align}
 By Duhamel's principle, for any $T$, we have
 \[
 w(0) = e^{iT \sqrt{-\Delta}} w(T) \pm \frac{i}{\sqrt{-\Delta}} \int_T^0 e^{i \tau \sqrt{-\Delta}} F(u)(\tau) d\tau
 \]
 where $F(u) = |u|^{p-1} u$.  To estimate $\gamma_{N,M}$, we write
\begin{align}
\widehat{P}_{N,  \geq M} w(0) &= \widehat{P}_{N,  \geq M} e^{-iT\sqrt{-\De}} w(T) - \frac{1}{\sqrt{-\De}} \int_0^T e^{-it\sqrt{-\De}} \widehat{P}_{N,  \geq M} F(u) \ud t \\
&=\widehat{P}_{N,   \geq M} e^{-iT\sqrt{-\De}} w(-T) - \frac{1}{\sqrt{-\De}} \int_{-T}^0 e^{-i\tau\sqrt{-\De}} \widehat{P}_{N,  \geq M} F(u) \ud\tau 
\end{align}
When we pair these expressions and take $T \to \infty$, we use the facts that
\[
e^{-iT\sqrt{-\De}} \widehat{P}_{N,  \geq M}  w(T) \rightharpoonup 0 \quad\text{and}\quad e^{iT\sqrt{-\De}} \widehat{P}_{N,  \geq M}  w(-T) \rightharpoonup 0,
\]
and ultimately we are left to estimate
\begin{align}\label{equ:double_duhamel_term_hans}
\left \langle  \int_0^\infty S(-t) \widehat{P}_{N,  \geq M} F(u) \ud t ,\, \int_{-\infty}^0 S(-\tau) \widehat{P}_{N,  \geq M} F(u) d\tau \right \rangle_{\dot H^{s_p}_x}.
\end{align}
where we have introduced the notation 
\EQ{
S(t):= \frac{1}{\sqrt{-\De}} e^{i t\sqrt{-\De}}
}
above.

As we have done in previous sections, we will estimate this expression by dividing space-time into three regions: a compact time interval, an outer region, and a region inside the light-cone. We note, however, that the arguments on the compact time interval and the region inside the light cone will be considerably different than in previous sections. 

Thus, we let $\eta_0>0$ and $\eps>0$ be sufficiently small parameters and define the smooth cut-off
\begin{align}\label{equ:chi_0}
\chi_{0}(t,x) = 1_{\{ |x - x(N^{1-\eps})| \geq R(\eta_0) + (t - N^{1 - \eps}),\, t \geq N^{1 - \eps}\}},
\end{align}
where $R(\eta_0)$ is such that
\begin{align}\label{equ:small1}
\| \chi_{0}(N^{1-\eps},x)u(N^{1 - \eps}, x) \|_{\dot{H}^{s_p}} + \| \chi_0(N^{1-\eps},x) \partial_t u(N^{1 - \eps}, x) \|_{\dot{H}^{s_p - 1}} \leq \eta_0.
\end{align}

By the small-data theory, we may solve the Cauchy problem
\begin{equation}
 \left\{ \begin{aligned}
&v_{tt} - \Delta v +  F(v) = 0 \text{ on } \bR \times \bR^3, \\
 & (v, \partial_t v)|_{t={0}} = \bigl( \chi_{0}(N^{1-\eps},x) u(N^{1- \eps}, x),  \chi_{0}(N^{1-\eps},x) u_{t}(N^{1- \eps}, x)\bigr) \in \dot \H^{s_p}(\bR^3).
 \end{aligned} \right.
\end{equation}

Note that by finite propagation speed, $v = u$ on the set
\begin{align}\label{equ:supp_set}
\{ (t,x) : |x - x(N^{1-\eps})| \geq R(\eta_0) + (t - N^{1 - \eps}), \,\, t \geq N^{1 - \eps} \}.
\end{align}

We now write
\begin{align*}
&\int_0^\infty S(-t) \widehat P_{N,  \geq M} F(u)\,\ud t = A+B+C,
\end{align*}
where 
\begin{equation}\label{tw-ABC}
\begin{aligned}
& A = \int_{N^{1-\eps}}^\infty  S(-t) \widehat P_{N,  \geq M} F(v)\,\ud t, \\
& B = \int_0^{N^{1-\eps}} S(-t)\widehat P_{N, \geq M} F(u)\,\ud t,\\
& C = \int_{N^{1-\eps}}^\infty S(-t)\widehat P_{N,  \geq M}[ F(u)- F(v)]\,\ud t 
\end{aligned}
\end{equation}
and perform a similar decomposition in the negative time direction, yielding quantities $A',B',C'$. We will use the estimate
\begin{equation}\label{eq:ABC}
\begin{split}
|\langle A+B+&C,A'+B'+C'\rangle| \\
& \lesssim \|A\|_{\dot H^{s_p}_x}^2 + \|A'\|_{\dot H^{s_p}_x}^2+\|B\|_{\dot H^{s_p}_x}^2+\|B'\|_{\dot H^{s_p}_x}^2+|\langle C,C'\rangle_{\dot H^{s_p}_x}|
\end{split}
\end{equation}
whenever $A+B+C=A'+B'+C'$.

\subsection*{Term A} We first estimate $\langle A,A\rangle_{\dot H^{s_p}_x}$ and $\langle A',A'\rangle_{\dot H^{s_p}_x}$, where 
\[
A = \int_{N^{1 - \eps}}^\infty  S(-t) \widehat{P}_{N, \geq M} F(v) \ud t\qtq{and} A' = \int_{-\infty}^{-N^{1 - \eps}} S(-\tau) \widehat{P}_{N, \geq M} F(v) d\tau.
\]  
We introduce two parameters $q$ and $r$ satisfying
\begin{equation}\label{tw-parameters}
2<q<\min\{p-1,\tfrac{2}{s_p},\tfrac{5p-9}{3p-7}\}\qtq{and}\tfrac{2}{s_p}\leq r\leq\min\{\tfrac{2p(p-1)}{2p-3},4+\}. 
\end{equation}
and let $I = [N^{1 - \eps}, \infty)$. We fix $\sigma > 0$ to be specified later, and we define
\[
a_{N', M} = \biggl[ (N')^{-(\frac{2}{q} -s_p ) } \| \widehat{P}_{N', \geq M} v \|_{L_{t}^{q} L_{x}^{\frac{2q}{q-2}}} + (N')^{\frac{2}{r} - 1 + s_p} \| \widehat{P}_{N', \geq M} v \|_{L_{t}^{\frac{2r}{r-2}} L_{x}^{r}}\biggr],
\]
and let
\begin{align}
\alpha_{N,M}=\sum_{N'} \sum_{M' \leq M} \left( \frac{M'}{M}\right)^\sigma&\min\biggl\{\frac{N}{N'},\frac{N'}{N}\biggr\}^{\sigma}a_{N', M'}.
\end{align}
All space-time norms are taken over $I\times\R^3$. 
Our goal is to prove the following result.
\begin{lem}\label{l:AA'} 
Let $A, A'$ and $\gamma_{N,M}$ be as above, then
\begin{align}\label{equ:a_ests}
\sum_{N'}\sum_{M' \leq M} \left( \frac{M'}{M} \right)^\sigma \min\biggl\{\frac{N}{N'},\frac{N'}{N}\biggr\}^{\sigma} (\|A\|_{\dot H^{s_p}_x} + \|A'\|_{\dot H^{s_p}_x})
&\lesssim \eta_0^{p-1}\, \alpha_{N, M},
\end{align}
and we also have
\begin{align}\label{alpha_bds}
\alpha_{N, M} \lesssim \gamma_{N,M}(N^{1-\eps}) + \eta_0^{p-1} \alpha_{N, M}.
\end{align}
\end{lem}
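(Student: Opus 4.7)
The plan is to treat both inequalities as essentially Strichartz-style estimates, the first for the Duhamel integral defining $A,A'$ and the second for $v$ itself, with the main work being a careful paraproduct analysis of $\widehat{P}_{N,\geq M}F(v)$. By time-reversal symmetry it suffices to handle $A$. To prove \eqref{equ:a_ests}, apply dual Strichartz on the Duhamel representation of $A$ restricted to frequency annulus $\widehat{P}_{N,\geq M}$; this reduces matters to estimating $\widehat{P}_{N,\geq M}F(v)$ in a dual admissible space adapted to the two norms defining $a_{N,M}$. Because $v$ is a small-data solution built from the tail cutoff \eqref{equ:small1}, the standard small-data Strichartz estimates applied to $v$ at regularity $s_p$ give
\[
\|v\|_{L_{t,x}^{2(p-1)}(I\times\R^3)} + \sup_{N'}\bigl[(N')^{-(\frac{2}{q}-s_p)}\|P_{N'}v\|_{L_t^qL_x^{\frac{2q}{q-2}}} + (N')^{\frac{2}{r}-1+s_p}\|P_{N'}v\|_{L_t^{\frac{2r}{r-2}}L_x^r}\bigr] \lesssim \eta_0,
\]
where $q,r$ are as in \eqref{tw-parameters}.

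For $\widehat{P}_{N,\geq M}F(v)$, write $F(v)=v|v|^{p-1}$ and perform a Littlewood-Paley trichotomy: in any nontrivial product output at frequency $\sim N$ with angular frequency $\gtrsim M$, at least one factor must carry spatial frequency $\gtrsim N/C$, and if all factors have angular frequencies $\ll M$ the output vanishes, so at least one factor must carry angular frequency $\gtrsim M/C$. Fix one such distinguished factor, assign it the frequency projection $\widehat{P}_{N'',\geq M''}v$ with $N''\gtrsim N/C$ and $M''\gtrsim M/C$, and place the remaining $p-1$ factors in $L_{t,x}^{2(p-1)}$ (where they are controlled by $\eta_0$). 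H\"older in time-space then yields, for each admissible output norm,
\[
\|\widehat{P}_{N,\geq M}F(v)\|_{\mathrm{dual\ Str.}} \lesssim \eta_0^{p-1}\sum_{N''\gtrsim N/C}\sum_{M''\gtrsim M/C,\,M''\le N''} (\text{bilinear weight})\cdot a_{N'',M''},
\]
where the bilinear weight absorbs the regularity mismatch between output and input frequencies and is summable against $(N/N')^\sigma$ and $(M'/M)^\sigma$ provided $\sigma$ is chosen strictly smaller than the gaps $\tfrac{2}{q}-s_p$, $1-s_p$, etc. Multiplying by the envelope weights and summing first in $N,M$ then in $N'',M''$ produces $\eta_0^{p-1}\alpha_{N,M}$, which is \eqref{equ:a_ests}.

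The bound \eqref{alpha_bds} is essentially the same argument applied to $v$ itself on $I$ rather than to the Duhamel integral projected onto $\widehat{P}_{N,\geq M}$. Indeed, by the homogeneous and inhomogeneous Strichartz estimates,
\[
(N')^{-(\frac{2}{q}-s_p)}\|\widehat{P}_{N',\geq M'}v\|_{L_t^qL_x^{\frac{2q}{q-2}}(I\times\R^3)} + (N')^{\frac{2}{r}-1+s_p}\|\widehat{P}_{N',\geq M'}v\|_{L_t^{\frac{2r}{r-2}}L_x^r(I\times\R^3)}
\]
is controlled by the datum term $\|\widehat{P}_{N',\geq M'}\vec v(N^{1-\eps})\|_{\dot\H^{s_p}}$, plus the dual Strichartz norm of $\widehat{P}_{N',\geq M'}F(v)$ on $I$. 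Using finite propagation and the definition $\vec v(N^{1-\eps})=\chi_0\vec u(N^{1-\eps})$, the first piece is bounded by $\Gamma_{N',M'}(N^{1-\eps})$ up to a commutator error controlled via Lemma \ref{L:commutator} (which, because $\sigma<1-s_p$, sums into $\gamma_{N,M}(N^{1-\eps})$). The nonlinear piece is handled by the same paraproduct decomposition as above, contributing $\eta_0^{p-1}\alpha_{N,M}$ after envelope-summation.

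The main technical obstacle is the combinatorics of the angular Littlewood-Paley calculus: unlike the usual isotropic $P_N$, the operators $\widehat{P}_{N,\geq M}$ do not satisfy a clean product rule, so one must verify that in every interaction type (high--high, high--low in $|\xi|$, and the corresponding cases in $|\xi_{2,3}|$) at least one factor really does inherit both $N''\gtrsim N$ and $M''\gtrsim M$ up to constants, and that the resulting sum over $(N'',M'')$ has a summable kernel against the two-parameter envelope $\gamma_{N,M}$. Once the kernel bounds are established, choosing $\eta_0$ small enough to absorb $C\eta_0^{p-1}\alpha_{N,M}$ into the left-hand side completes the estimate and, in combination with the arguments for Regions B and C, yields \eqref{equ:frequency_env} after summation over $N,M$.
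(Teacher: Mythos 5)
Your overall architecture (Strichartz on the Duhamel integrals for $A,A'$, frequency-localized Strichartz for $v$ itself to get \eqref{alpha_bds}, smallness $\eta_0^{p-1}$ from $\|v\|_{L^{2(p-1)}_{t,x}}$, and a two-parameter envelope summation with $\sigma$ small) matches the paper, and your remark that the data term should be compared to $\Gamma_{N,M}(N^{1-\eps})$ modulo a commutator in the spirit of Lemma~\ref{L:commutator} is a reasonable (indeed slightly more careful) way to handle that step. The gap is in the core nonlinear estimate for $\widehat P_{N,\geq M}F(v)$. Your trichotomy rests on the claim that ``if all factors have angular frequencies $\ll M$ the output vanishes, so at least one factor must carry angular frequency $\gtrsim M/C$.'' That support argument is only valid when $F(v)$ is literally a $p$-fold product, i.e.\ for integer $p$; here $3<p<5$ is not an integer, $F(v)=|v|^{p-1}v$ is not a polynomial, and $F$ applied to an angularly band-limited function is not angularly band-limited. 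Concretely, with $v_{lo}=\sum_{N'}\widehat P_{N',\leq M}v$, the term $\widehat P_{N,\geq M}F(v_{lo})$ does \emph{not} vanish, and it is exactly the term your scheme discards: it has no distinguished high-angular-frequency factor to place into a norm contributing to $\alpha_{N,M}$, so your ``bilinear weight'' summation never produces the needed factor of $(M'/M)$ for it, and the estimate \eqref{equ:a_ests} cannot be closed as written.

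The paper supplies the missing mechanism. It first decomposes by the fundamental theorem of calculus, $F(v)=F(v_{lo})+v_{hi}\int_0^1F'(v_{lo}+\theta v_{hi})\,d\theta$, which is the correct substitute for a paraproduct when $p$ is fractional; the $v_{hi}$ term is then handled essentially as you propose (distinguished factor in a weighted Strichartz norm, the remaining $p-1$ factors in $L^{2(p-1)}_{t,x}$, envelope summation under $\sigma<\tfrac2q-s_p$). For the genuinely low-angular-frequency piece it exploits the \emph{output} localization: on the support of $\widehat P_{N,\geq M}$ one has $|\xi_{2,3}|\gtrsim M$, so
\[
\|\widehat P_{N,\geq M}F(v_{lo})\|_{N(\R)}\lesssim M^{-2}\|\widehat P_{N,\geq M}\Delta_{x_{2,3}}F(v_{lo})\|_{N(\R)},
\]
and the chain rule places each angular derivative on a single dyadic piece $\widehat P_{N',M'}v_{lo}$, where $\|\nabla_{x_{2,3}}\widehat P_{N',M'}v\|\lesssim M'\|\widehat P_{N',M'}v\|$. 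This converts the apparent loss into the gain $(M'/M)$ per term, which is summable over $M'\leq M$ precisely because $\sigma<1$, and produces the bound by $\eta_0^{p-1}\alpha_{N,M}$. Without this step (or an equivalent device for non-integer $p$), your proof of both \eqref{equ:a_ests} and the nonlinear contribution to \eqref{alpha_bds} does not go through.
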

\begin{proof}
On this region, we will use the small data theory, which implies, in particular, that
\begin{align} \label{equ:small_v}
\| v \|_{L_{t,x}^{2(p-1)}(\R\times\R^3)}  \lesssim \eta_0.
\end{align}

By Strichartz estimates, we may write
\begin{align}\label{equ:first_a_bds}
&N^{-\left(\frac{2}{q} -s_p \right) } \| \widehat{P}_{N,  \geq M} v \|_{L_{t}^{q} L_{x}^{\frac{2q}{q-2}}} + N^{\frac{2}{r} - 1 + s_p} \| \widehat{P}_{N,  \geq M} v \|_{L_{t}^{\frac{2r}{r-2}} L_{x}^{r}} \\
& \lesssim \| \widehat{P}_{N, \geq  M} (v, v_{t})(N^{1 - \eps}) \|_{\dot{\cH}^{s_p}}  + \|P_{N, \geq M} F(v)\|_{N(\R)},
\end{align}
and recall that, by definition, we have
\begin{align}\label{equ:data_bds}
\| \widehat{P}_{N, \geq M} (v, v_{t})(N^{1 - \eps}) \|_{\dot{H}^{s_p}} \lesssim \Gamma_{N,M}(N^{1-\eps}).
\end{align}
Here, we let the norm $\|F\|_{N(\R)}$ denote any finite combination $\sum_j \|F_j\|_{N_j(\R)}$, with $F=\sum_j F_j$ and each $N_j(\R)$ being a dual admissible Strichartz space with the appropriate scaling and number of derivatives.  

It will be useful to introduce the quantities
\[
v_{lo} = \sum_{N'} \widehat{P}_{N',\leq M}v\qtq{and} v_{hi} = \sum_{N'}\widehat{P}_{N',>M}v,
\]
where low and high is meant to refer to the $\xi_{2,3}$ frequency component. We decompose the nonlinearity via
\[
F(v)=F(v_{lo})+v_{hi}\int_0^1 F'(v_{lo}+\theta v_{hi})\,d\theta,
\]
which we write schematically as
\[
F(v)=F(v_{lo})+v_{hi} v^{p-1}.
\]

For the high frequency (in $M$) component, we write
\[
v_{hi} v^{p-1} =  (P_{\leq N} v_{hi} ) v^{p-1}  + (P_{\geq N} v_{hi} ) v^{p-1},
\]
and to estimate these terms we may use the dual Strichartz spaces
\[
L_t^{\frac{2q}{q+2}} \dot H_x^{-(\frac2q-s_p),\frac{q}{q-1}} \quad \textup{and} \quad  L_t^{\frac{r}{r-1}}\dot H_x^{\frac2r-1 +s_p,\frac{2r}{r+2}  }
\]
respectively. This yields
\begin{align}
\|&P_{N', \geq M}  v_{hi} v^{p-1}  \|_{N(\R)} \\
& \lesssim (N')^{-\left(\frac{2}{q} -s_p \right) } \sum_{N'' \leq N'} \| \widehat{P}_{N'', \geq M} v \|_{L_{t}^{q} L_{x}^{\frac{2q}{q-2}}(I \times \bR^{3})} \| v \|_{L_{t,x}^{2(p-1)}}^{p-1}\\
& + (N')^{\frac{2}{r} - 1 + s_p} \sum_{N'' \geq N'} \| \widehat{P}_{N'', \geq M } v \|_{L_{t}^{\frac{2r}{r-2}}L_{x}^{r}  (I \times \bR^{3})} \| v \|_{L_{t,x}^{2(p-1)}}^{p-1}\phantom{\int}\\
& := \NN_1 + \NN_2
\end{align}
hence we conclude that
\begin{align}
 \sum_{N'}\min\biggl\{\frac{N}{N'},\frac{N'}{N}\biggr\}^{\sigma}& \|P_{N', \geq M}  v_{hi} v^{p-1}  \|_{N(\R)} \\
  & \lesssim \sum_{N'}\min\biggl\{\frac{N}{N'},\frac{N'}{N}\biggr\}^{\sigma} (\NN_1 + \NN_2).
\end{align}
Thus, we argue in order to bound the $\NN_1$ and $\NN_2$ terms. 
We only treat the first term as an example since the other term follows analogously. We obtain
\begin{align}
&\sum_{N' \leq N}\sum_{N'' \leq N'} \left( \frac{N'}{N}\right)^{\sigma} (N')^{-\left(\frac{2}{q} -s_p \right) }  \| \widehat{P}_{N'', \geq M} v \|_{L_{t}^{q} L_{x}^{\frac{2q}{q-2}}}\\
& \lesssim \sum_{N' \leq N} \sum_{N'' \leq N'} \left( \frac{N'}{N}\right)^{\sigma}\left(\frac{N''}{N'}\right)^{\frac2q-s_p} (N'')^{-\left(\frac{2}{q} -s_p \right) }  \| \widehat{P}_{N'', \geq M} v \|_{L_{t}^{q} L_{x}^{\frac{2q}{q-2}}}\\
&\lesssim\sum_{N'' \leq N} \left( \frac{N''}{N}\right)^{\sigma}  a_{N'', M}  \sum_{N'\geq N''}\left(\frac{N'}{N''}\right)^{-(\frac{2}{q}-s_p)+\sigma}.
\end{align}
Hence this term can be bounded by $ \alpha_{N, M}$ provided $\sigma<\frac{2}{q}-s_p.$

We also have
\begin{align}
&\sum_{N' \geq N}  \left( \frac{N}{N'}\right)^{\sigma} (N')^{-\left(\frac{2}{q} -s_p \right) } \sum_{N'' \leq N'} \| \widehat{P}_{N'', \geq M} v \|_{L_{t}^{q} L_{x}^{\frac{2q}{q-2}}(I \times \bR^{3})}\\
&\lesssim \sum_{N'' \leq N}  \left( \frac{N''}{N}\right)^{\sigma}  a_{N'', M}  \sum_{N'' \leq N', N \leq N'} \left( \frac{N^2}{N' N''}\right)^{\sigma}   \left(\frac{N '}{N''} \right)^{-\left(\frac{2}{q} -s_p \right) } \\
& \hspace{14mm} + \sum_{N'' \geq N} \left( \frac{N}{N''}\right)^{\sigma}  a_{N'', M}  \sum_{N'' \leq N'} \left( \frac{N''}{N'}\right)^{\sigma} \left(\frac{N '}{N''} \right)^{-\left(\frac{2}{q} -s_p \right) } .
\end{align}
Using that in the first term,
\[
 \left( \frac{N^2}{N' N''}\right)^{\sigma}  \leq  \left( \frac{(N')^2}{N' N''}\right)^{\sigma}  =  \left( \frac{N'}{N''}\right)^{\sigma},
 \]
 we can bound this expression by $\gamma_{N,M}$ provided
 \[
   \sum_{N'' \leq N'} \left( \frac{N'}{N''}\right)^{\sigma} \left(\frac{N '}{N''} \right)^{-\left(\frac{2}{q} -s_p \right) } \lesssim   1 \quad \Longleftrightarrow \quad \sigma < \frac{2}{q} - s_p,
\]
and so we obtain
\begin{align}\label{gamma_bd1}
 \sum_{N'} \sum_{M' \leq M} \left( \frac{M'}{M} \right)^\sigma& \min\biggl\{\frac{N}{N'},\frac{N'}{N}\biggr\}^{\sigma} \|P_{N', \geq M'} v_{hi} v^{p-1}  \|_{N(\R)}  \\
 & \lesssim \eta_0^{p-1} \alpha_{N, M}
\end{align}
Next we estimate the contribution of the low-frequency piece.  We can write
\begin{align}
\|P_{N, \geq M} F(v_{lo})\|_{N(\R)} \leq M^{ - 2 } \|P_{N, \geq M} \Delta_{x_2,x_3} F(v_{lo})\|_{N(\R)}.
\end{align}
applying the chain rule and decomposing $v = P_{\geq N} v + P_{\leq N} v$, we obtain for $j=2,3$ that
\begin{align}
\partial_{x_j} F(v_{lo}) &= \partial_{x_j}v_{lo} F'(v_{lo})\\
& = \partial_{x_j} P_{\leq N} v_{lo} F'(v_{lo})  +  \partial_{x_j} P_{\geq N} v_{lo} F'(v_{lo}) ,
\end{align}
and hence
\begin{align}
\|P_{N, \geq M} \Delta_{x_2,x_3} F(v_{lo})\|_{N(\R)} &\leq \sum_{j=1}^2 \|\partial_{x_j} P_{N, \geq M}   (\partial_{x_j}v_{lo} )F'(v_{lo})\|_{N(\R)}\\
& \leq \sum_{j=1}^2 M \|P_{N, \geq M}   (\partial_{x_j}v_{lo} )F'(v_{lo})\|_{N(\R)}.
\end{align}
Estimating as above, using the dual Strichartz spaces
\[
L_t^{\frac{2q}{q+2}} \dot H_x^{-(\frac2q-s_p),\frac{q}{q-1}} \quad \textup{and} \quad  L_t^{\frac{r}{r-1}}\dot H_x^{\frac2r-1 +s_p,\frac{2r}{r+2}  },
\]
we conclude that
\begin{align}
&\|P_{N,\geq M}F(v_{lo})  \|_{N(\R)} \\
& \lesssim N^{-\left(\frac{2}{q} -s_p \right) } \sum_{N' \leq N} \sum_{M' \leq M} M^{-1} \| \nabla_{x_2,x_3} \widehat{P}_{N', M'} v_{lo} \|_{L_{t}^{q} L_{x}^{\frac{2q}{q-2}}(I \times \bR^{3})} \| v \|_{L_{t,x}^{2(p-1)}}^{p-1}\\
& + N^{\frac{2}{r} - 1 + s_p} \sum_{N' \geq N} \sum_{M' \leq M} M^{-1}  \| \nabla_{x_2,x_3} \widehat{P}_{N', M' } v_{lo} \|_{L_{t}^{\frac{2r}{r-2}}L_{x}^{r}  (I \times \bR^{3})} \| v \|_{L_{t,x}^{2(p-1)}}^{p-1}\phantom{\int}\\
& \lesssim N^{-\left(\frac{2}{q} -s_p \right) } \sum_{N' \leq N} \sum_{M' \leq M} \left( \frac{M'}{M} \right)  \| \widehat{P}_{N', M'} v_{lo} \|_{L_{t}^{q} L_{x}^{\frac{2q}{q-2}}(I \times \bR^{3})} \| v \|_{L_{t,x}^{2(p-1)}}^{p-1}\\
& + N^{\frac{2}{r} - 1 + s_p} \sum_{N' \geq N} \sum_{M' \leq M} \left( \frac{M'}{M} \right)  \| \widehat{P}_{N', M' } v_{lo} \|_{L_{t}^{\frac{2r}{r-2}}L_{x}^{r}  (I \times \bR^{3})} \| v \|_{L_{t,x}^{2(p-1)}}^{p-1}\phantom{\int}.
%& \lesssim N^{-\left(\frac{2}{q} -s_p \right) } \sum_{N' \leq N} \| \widehat{P}_{N', \geq M} v \|_{L_{t}^{q} L_{x}^{\frac{2q}{q-2}}(I \times \bR^{3})} \| v \|_{L_{t,x}^{2(p-1)}}^{p-1}\\
%& + N^{\frac{2}{r} - 1 + s_p} \sum_{N' \geq N} \| \widehat{P}_{N', \geq M } v \|_{L_{t}^{\frac{2r}{r-2}}L_{x}^{r}  (I \times \bR^{3})} \| v \|_{L_{t,x}^{2(p-1)}}^{p-1}\phantom{\int}.
\end{align}
To establish a bound for this expression, it is useful to introduce the notation
\[
\widetilde{a}_{N, M'} = \sum_{N'} \min\biggl\{\frac{N}{N'},\frac{N'}{N}\biggr\}^{\sigma}a_{N', M'}.
\]
Thus summing over $N$ and $M' \leq M$, we can again argue exactly as above to bound this expression by 
\begin{align}
&\eta_0^{p-1} \sum_{M' \leq M} \left( \frac{M'}{M} \right)^\sigma \sum_{M'' \leq M'} \left( \frac{M''}{M'} \right) \widetilde{a}_{N, M''} \\
&\leq \eta_0^{p-1} \sum_{M'' \leq M} \left( \frac{M''}{M} \right)^\sigma \sum_{M'' \leq M'} \left( \frac{M''}{M'} \right) \left( \frac{M'}{M''}\right)^\sigma \widetilde{a}_{N, M''}  \\
&\leq \eta_0^{p-1} \sum_{M'' \leq M} \left( \frac{M''}{M} \right)^\sigma \widetilde{a}_{N, M''}  \sum_{M'' \leq M'} \left( \frac{M''}{M'} \right) \left( \frac{M'}{M''}\right)^\sigma\\
& \lesssim \eta_0^{p-1} \alpha_{N, M}
\end{align}
provided $\sigma < 1$. Thus, we obtain
\begin{align}
\sum_{N'} &\sum_{M' \leq M} \left( \frac{M'}{M} \right)^\sigma \min\biggl\{\frac{N}{N'},\frac{N'}{N}\biggr\}^{\sigma}  \|P_{N, \geq M} F(v_{lo})  \|_{N(\R)}  \\ & \lesssim \eta_0^{p-1} \alpha_{N, M}.\label{gamma_bd2}
\end{align}

By Strichartz estimates, we have
\[
\|A'\|_{\dot H^{s_p}_x} + \|A\|_{\dot H^{s_p}_x} \lesssim \|P_{N, \geq M} F(v)\|_{N(I)}.
\]
Hence, putting these bounds together with \eqref{gamma_bd1} and \eqref{gamma_bd2} we obtain
\begin{align}
\sum_{N'}\sum_{M' \leq M} \left( \frac{M'}{M} \right)^\sigma \min\biggl\{\frac{N}{N'},\frac{N'}{N}\biggr\}^{\sigma} (\|A\|_{\dot H^{s_p}_x} + \|A'\|_{\dot H^{s_p}_x})
&\lesssim \eta_0^{p-1}\, \alpha_{N, M}.
\end{align}
Together with \eqref{equ:first_a_bds} and \eqref{equ:data_bds}, we also have
\begin{align}
\alpha_{N, M} \lesssim \gamma_{N,M}(N^{1-\eps}) + \eta_0^{p-1} \alpha_{N, M},
\end{align}
as required.
\end{proof}

\subsection*{Term B}  We next estimate the terms $\langle B,B\rangle$ and $\langle B',B'\rangle$ from \eqref{tw-ABC}.  On this region, we use the long-time Strichartz estimates (Proposition~\ref{P:tw-lts}) and another frequency envelope argument for this contribution. In the following we suppose, unless otherwise specified, that norms be taken over 
\[
I:= [0,  N^{1 - \eps}] \times \bR^{3}.
\]
We define
\begin{align}
b_{N', M} &=  \biggl[  (N')^{-\left(\frac{2}{q} -s_p \right) } \| \widehat{P}_{N', \geq M} u \|_{L_{t}^{q} L_{x}^{\frac{2q}{q-2}}} +  (N')^{\frac{3(p-3)}{2(p-1)}} \| P_{N',  \geq M} u\|_{L_t^{2(p-1)} L_x^{\frac{2(p-1)}{p-2}}}  \\
& + (N')^{-\frac{1}{p-1} + \frac{3\theta}{2(p-1)}}  \|P_{N',  \geq M} u  \|_{L_t^{p-1}L_x^\frac{2(p-1)}{\theta}}   \\
&+(N')^{s_p - \frac{3\theta}{2}}  \| P_{N',  \geq M} u\|_{L^{\infty}_t L^{^\frac{2}{1-\theta}}_x} +  (N')^{- \theta + s_p } \| P_{N' ,  \geq M} u \|_{L^{\frac{2}{\theta}}_t L^{\frac{2}{1-\theta}}_x} \\
& + (N')^{\frac{\ell}{2}} \|P_{N', \geq M} u \|_{L^\frac{2p}{1+s_p - p\ell}_t L^\frac{2p}{2-s_p}_x } + (N')^{\frac{2 s_p}{3} - \frac{1}{3} } \|  P_{N',\geq M}\|_{L^\frac{6}{1+s_p}_t L^\frac{6}{2-s_p}_x } \biggr],
\end{align}
where $\ell>0$ will be determined more precisely below and $\theta$ is as in Proposition~\ref{P:tw-lts}.  These are just a collection of admissible Strichartz pairs at regularity $s_p$. We then define the frequency envelope
\begin{align} \label{equ:beta_env}
\beta_{N,M} = \sum_{N'} \sum_{M' \leq M} \left( \frac{M'}{M}\right)^\sigma \min\biggl\{\frac{N}{N'},\frac{N'}{N}\biggr\}^{\sigma} b_{N', M'} .
\end{align}
Our goal in this section is to prove the following result.
\begin{lem}\label{l:BB'} 
Let $B, B'$ and $\beta_{N,M}$ be as above. Then
\begin{align}\label{equ:b_ests}
\sum_{N'}\hspace{-.5mm} \sum_{M' \leq M} \hspace{-1mm} \left( \frac{M'}{M} \right)^\sigma\hspace{-1mm} \min\biggl\{\frac{N}{N'},\frac{N'}{N}\biggr\}^{\sigma}\hspace{-1.5mm} (\|B\|_{\dot H^{s_p}_x} + \|B'\|_{\dot H^{s_p}_x})
&\lesssim   \eta_0^{p-1}\, \beta_{N, M},
\end{align}
and we also have
\begin{align}\label{equ:beta_bds}
\gamma_{N,M}(N^{1-\eps}) + \beta_{N, M} \lesssim \gamma_{N, M}(0) + \eta_0^{p-1} \beta_{N, M}.
\end{align}
\end{lem}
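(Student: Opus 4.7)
The strategy is to mirror the proof of Lemma~\ref{l:AA'}, replacing the small-data bound $\|v\|_{L^{2(p-1)}_{t,x}} \lesssim \eta_0$ used there by the long-time Strichartz estimates of Proposition~\ref{P:tw-lts} and Corollary~\ref{C:tw-lts} on the interval $I = [0, N^{1-\eps}]$. First, by applying Strichartz estimates to $B,B'$ (and separately to the Strichartz norms of $u$ on $I$ appearing in $\beta_{N,M}$), both \eqref{equ:b_ests} and \eqref{equ:beta_bds} reduce to estimating $\|\widehat P_{N,\geq M} F(u)\|_{\mathcal{N}(I)}$ in a variety of dual Strichartz spaces, multiplied by appropriate weights, and summing against the envelope $\sum_{N',M'\le M}(M'/M)^\sigma \min(N/N',N'/N)^\sigma$.

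Following the template of Lemma~\ref{l:AA'}, I would split $u = u_{lo} + u_{hi}$ in the $\xi_{2,3}$-frequency variable at scale $M$ and decompose
\[
F(u) = F(u_{lo}) + u_{hi}\int_0^1 F'(u_{lo}+\theta u_{hi})\,d\theta =: F(u_{lo}) + u_{hi} G(u).
\]
For the $u_{hi}G(u)$ term I would further split $u_{hi}$ by the total frequency scale $N' \lessgtr N$ and estimate using the dual Strichartz pairs $L_t^{\frac{2q}{q+2}}\dot H^{-(\frac{2}{q}-s_p),\frac{q}{q-1}}$ and $L_t^{\frac{r}{r-1}}\dot H^{\frac{2}{r}-1+s_p,\frac{2r}{r+2}}$, exactly as in the Term $A$ estimate; the key difference is that the $p-1$ factors of $u$ in $G(u)$ are distributed between $L_{t,x}^{2(p-1)}$ (estimated by $\|u_{>N_0}\|_{L_{t,x}^{2(p-1)}(I)} \lesssim \eta_0$ from Proposition~\ref{P:tw-lts}) and the mixed norms $L_t^{p-1}L_x^{2(p-1)/\theta}$, $L_t^{\frac{2(p-1)}{2-\theta}}L_x^{2(p-1)/\theta}$, $L_t^{2/\theta}L_x^{2/(1-\theta)}$ etc. (estimated on the low-frequency piece by Corollary~\ref{C:tw-lts}). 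The high-frequency Strichartz norms are precisely the ones encoded in $b_{N',M'}$. For the $F(u_{lo})$ term, the projection $\widehat P_{N,\geq M}$ forces at least one factor of $u_{lo}$ to carry $\xi_{2,3}$-frequency $\gtrsim M$; I would write $F(u_{lo}) = M^{-2}\Delta_{x_{2,3}}F(u_{lo})$ modulo constants, expand via the chain rule, and absorb each $\partial_{x_{2,3}}u_{lo}$ piece into the envelope structure with a compensating gain $M'/M$, exactly as in the low-frequency analysis of Lemma~\ref{l:AA'}.

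Summing against the weights $\left(\tfrac{M'}{M}\right)^\sigma \min(N/N',N'/N)^\sigma$ and using the geometric-series constraints $\sigma < \min(\tfrac{2}{q}-s_p, 1)$ yields the factor $\eta_0^{p-1}\beta_{N,M}$ in \eqref{equ:b_ests}. The same manipulations applied to Strichartz norms of $u$ itself on $I$, starting from the identity $\vec u(t) = S(t)\vec u(0) + \int_0^t S(t-s)(0,-F(u))\,ds$, give \eqref{equ:beta_bds}: the linear term contributes $\gamma_{N,M}(0)$ and the Duhamel term contributes $\eta_0^{p-1}\beta_{N,M}$, with the same bound holding for $\gamma_{N,M}(N^{1-\eps})$ by Strichartz evaluation at the endpoint.

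The main technical obstacle will be verifying that each of the many Strichartz norms comprising $b_{N,M}$ can be simultaneously controlled by the combination of Proposition~\ref{P:tw-lts} and Corollary~\ref{C:tw-lts}: specifically, one must check that in every nonlinear term, the distribution of $p-1$ factors always places at least one factor at high $\xi$-frequency (where Proposition~\ref{P:tw-lts} supplies smallness $\eta_0$), while the remaining low-frequency factors, estimated by Corollary~\ref{C:tw-lts} on $[0,N^{1-\eps}]$, carry only the admissible powers of $N$ that are compensated by the Bernstein losses and the gain $M^{-2}$ from the integration-by-parts step. Threading these exponent relations uniformly over the envelope sums, particularly the interplay between the parameters $q, r, \theta, \ell$, is the delicate bookkeeping at the heart of the estimate.
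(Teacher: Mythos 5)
Your overall architecture does coincide with the paper's: reduce \eqref{equ:b_ests} and \eqref{equ:beta_bds} to dual-Strichartz bounds for $\widehat P_{N,\geq M}F(u)$ on $[0,N^{1-\eps}]$ via the inhomogeneous Strichartz estimate (this is exactly how the paper gets \eqref{equ:first_b_bds} and hence \eqref{equ:beta_bds}), split $u=u_{lo}+u_{hi}$ at $\xi_{2,3}$-scale $M$, gain the factor of $M$ on $F(u_{lo})$ by the $\Delta_{x_{2,3}}$/chain-rule trick, and close an envelope summation under $\sigma$ small. However, there is a genuine gap in the step where you transplant the Term-$A$ estimate of the $u_{hi}$-piece: the input you invoke, $\|u_{>N_0}\|_{L_{t,x}^{2(p-1)}([0,N^{1-\eps}])}\lesssim\eta_0$, is not what Proposition~\ref{P:tw-lts} provides and is in fact false. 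That proposition gives smallness for $u_{>K}$ only on intervals of length $\sim K^{1-\eps}$; at the fixed threshold $N_0$ the interval $[0,N^{1-\eps}]$ must be split into $\sim(N/N_0)^{1-\eps}$ pieces, so the $L^{2(p-1)}_{t,x}$ norm of $u_{>N_0}$ (indeed of $u$ itself, by Lemma~\ref{l:mass}) grows like a positive power of $N$. Consequently the Lemma~\ref{l:AA'} strategy of placing all $p-1$ companion factors in $L^{2(p-1)}_{t,x}$ smallness cannot be carried over to Term $B$; this is precisely why Term $B$ is harder than Term $A$.

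The paper's proof closes this by a finer decomposition: inside both the $u_{hi}u^{p-1}$ and the $F(u_{lo})$ contributions it Taylor-expands again around the \emph{total}-frequency cutoff $N$ (the terms $1.\mathrm{I}$--$1.\mathrm{III}$ and $2.\mathrm{I}$--$2.\mathrm{III}$), so that in every resulting product either a factor sits at frequency $\gtrsim N$, where Proposition~\ref{P:tw-lts} gives $\eta_0$-smallness on the matching time scale and the factor can be recorded in the envelope quantity $b_{N',M'}$, or the factor is low-frequency and is estimated through Corollary~\ref{C:tw-lts} with an \emph{explicit} power of $N$ that is exactly cancelled by Bernstein factors, the $M^{-1}$ gain, and ratios $(N'/N)^{\pm}$, $(M'/M)$. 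This is what forces the additional exponent pairs built into $b_{N',M}$ (the $\theta$-pairs, the $\ell$-pair $(\tfrac{2p}{1+s_p-p\ell},\tfrac{2p}{2-s_p})$, and in Term $2.\mathrm{III}$ the pairs $(\tfrac{p-1}{p-2},1)$ and $(4,\tfrac{4(p-1)}{p+1})$), and it is also where the binding constraint $\sigma<\ell/2$ arises, rather than only $\sigma<\min\{\tfrac2q-s_p,1\}$ as in your sketch. You do flag this bookkeeping as the main obstacle, but as written your distribution of factors would not close the estimate; the further frequency-$N$ expansion and the associated choice of Strichartz pairs are the missing substantive content, not merely routine verification.
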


\begin{proof}
Fix $t_0 = 0$. Throughout, we will assume that $N \gg N_0$ as in the statement of the long-time Strichartz estimates. By Strichartz estimates
\begin{align}\label{equ:first_b_bds}
\|P_{N', \geq M} (u, u_t) \|_{L^\infty_t \dot \cH^{s_p}([0, N^{1-\eps}])}  &+ b_{N',M} \\
& \lesssim \|P_{N', \geq M} (u, u_t)(0) \|_{\cH^{s_p}} + \|P_{N', \geq M} F \|_{N(I)}.
\end{align}
Once again, it will be useful to introduce the quantities
\[
u_{lo} = \sum_{N'} \widehat{P}_{N',\leq M}u\qtq{and} u_{hi} = \sum_{N'}\widehat{P}_{N',>M}u,
\]
where low and high is meant to refer to the $\xi_{2,3}$ frequency component. We decompose the nonlinearity via
\[
F(u)=F(u_{lo})+u_{hi}\int_0^1 F'(u_{lo}+\theta u_{hi})\,d\theta,
\]
which we write schematically as
\[
F(u)=F(u_{lo})+u_{hi} u^{p-1}.
\]
These two expressions will be estimated almost identically to, up to requiring additional exponential gains for the low frequency (in $M$) term, $F(u_{lo})$. We will only estimate this term since the other is easier. Arguing as above via the chain rule with the Laplacian in the $x_{2,3}$ directions, we have
\begin{align}
\|P_{N, \geq M} F(u_{lo})\|_{N(I)} \leq M^{-1} \| P_{N, \geq M} (\nabla_{2,3} u_{lo})\: F'(u_{lo})\|_{N(I)}.
\end{align}
We write
\begin{equation}\label{EQX}
(\nabla_{2,3} u_{lo})\: F'(u_{lo}) = (\nabla_{2,3} P_{\geq N} u_{lo})\: F'(u_{lo}) + (\nabla_{2,3} P_{\leq N} u_{lo})\: F'(u_{lo}) := 1 + 2
\end{equation}
and we begin with Term 1. We set
\[
P_{\geq N} u_{lo} := u_{lo, \geq N}, \quad P_{\leq N} u_{lo} := u_{lo, \leq N},
\]
and decompose
\begin{align}
 (\nabla_{2,3} u_{lo, \geq N})\: F'(v_{lo}) &=   (\nabla_{2,3} u_{lo, \geq N})\: F'(u_{lo, \leq N})  \\
 & \hspace{4mm}+ (\nabla_{2,3} u_{lo, \geq N}) u_{lo, \geq N} \int_0^1 F''(u_{lo, \leq N} + \theta u_{lo, \geq N}) \\
& = (\nabla_{2,3} u_{lo, \geq N})\: F'(u_{lo, \leq N}) + \nabla_{2,3} u_{lo, \geq N} u_{lo, \geq N} F''(u_{lo, \leq N})\\
 & \hspace{4mm}+   (\nabla_{2,3} u_{lo, \geq N})u_{lo, \geq N}^2 \iint_0^1 F'''(u_{lo, \leq N} + \theta_1 \theta_2 u_{lo, \geq N}) 
 \\
 & := 1.I + 1.II + 1.III.
\end{align}

\subsection*{Term 1.I}
We estimate using Corollary~\ref{C:tw-lts} to get
\begin{align}
&M^{-1} \||\nabla|^{s_p - 1}P_{N} \nabla_{2,3} u_{lo, \geq N}\: F'(u_{lo, \leq N}) \|_{L^1_t L^2_x} \\
& \leq M^{-1} N^{s_p - 1} \|\nabla_{2,3} u_{lo, \geq N}\: F'(u_{lo, \leq N}) \|_{L^1_t L^2_x}\\
&  \leq M^{-1} N^{s_p - 1} \|u_{lo, \leq N} \|^{p-1}_{L_t^{p-1}L_x^\frac{2(p-1)}{\theta}}  \|\nabla_{2,3} u_{lo, \geq N}\|_{L^{\infty}_t L^{\frac{2}{1-\theta}}_x}\\
&  \leq N^{s_p - 1} \|u_{lo, \leq N} \|^{p-1}_{L_t^{p-1}L_x^\frac{2(p-1)}{\theta}}  \sum_{M' \leq M} \sum_{N' \geq N} \left(\frac{M'}{M} \right)  \|P_{N', M'} u\|_{L^{\infty}_t L^{\frac{2}{1-\theta}}_x}\\
&  \leq N^{s_p - 1} N^{-s_p + \frac{3\theta}{2}} \|u_{lo, \leq N} \|^{p-1}_{L_t^{p-1}L_x^\frac{2(p-1)}{\theta}} \\
& \hspace{14mm} \times \sum_{M' \leq M} \sum_{N' \geq N} \left(\frac{M'}{M} \right) \left(\frac{N'}{N} \right)^{-s_p + \frac{3\theta}{2}}  (N')^{s_p - \frac{3\theta}{2}}  \|P_{N', M'} u\|_{L^{\infty}_t L^{\frac{2}{1-\theta}}_x}\\
&  \leq \eta_0^{p-1} \sum_{M' \leq M} \sum_{N' \geq N} \left(\frac{M'}{M} \right) \left(\frac{N'}{N} \right)^{-s_p + \frac{3\theta}{2}}  b_{N', M'}.
\end{align}
\subsection*{Term 1.II}
We estimate
\begin{align}
&M^{-1} \||\nabla|^{s_p - 1}P_{N} \nabla_{2,3} u_{lo, \geq N}\: u_{lo, \geq N} F''(u_{lo, \leq N}) \|_{L^1_t L^2_x}\\
& \lesssim M^{-1} N^{ s_p-1 } \|\nabla_{2,3} u_{lo, \geq N}\|_{L^{2(p-1)}_t L^4_x} \|u_{lo, \geq N}\|_{L^{2(p-1)}_t L^4_x} \|u_{lo, \leq N} \|_{L^{p-1}_t L^\infty_x}^{p-2}\\
& \lesssim M^{-1} N^{ s_p-1 } \|\nabla_{2,3} u_{lo, \geq N}\|_{L^{2(p-1)}_t L^4_x} \|u_{lo, \geq N}\|_{L^{2(p-1)}_t L^4_x} \|u_{lo, \leq N} \|_{L^{p-1}_t L^\infty_x}^{p-2}\\
& \lesssim \eta_0^{p-1} \sum_{M' \leq M} \sum_{N' \geq N} \left(\frac{M'}{M}\right) \left(\frac{N}{N'} \right)^{ \frac{3}{4} - \frac{3}{2(p-1)} }  (N')^{ \frac{3}{4} - \frac{3}{2(p-1)} }\|P_{N', M'} u\|_{L^{2(p-1)}_t L^4_x} \\
& \lesssim \eta_0^{p-1} \sum_{M' \leq M} \sum_{N' \geq N} \left(\frac{M'}{M}\right) \left(\frac{N}{N'} \right)^{ \frac{3}{4} - \frac{3}{2(p-1)} }  b_{N', M'} .
\end{align}

\subsection*{Term 1.III}
As in the proof of Term IV in the long-time Strichartz estimates, there are two terms. For the first we estimate
\begin{align}
&M^{-1} \|\nabla_{2,3} u_{lo, \geq N}\: u_{lo, \geq N}^2u_{lo, \geq N}^{p-3}  \|_{L^\frac{2}{1+s_p}_t L^\frac{2}{2-s_p}_x }\\
& \lesssim M^{-1} N^{\frac{\ell}{2}} \|\nabla_{2,3} u_{lo, \geq N}\: u_{lo, \geq N}^2 u_{lo, \geq N}^{p-3}   \|_{L^{\frac{2}{1+s_p - \ell}}_t L^\frac{2}{2-s_p}_x }\\
& \lesssim  M^{-1} N^{\frac{\ell}{2}} \|u_{> \frac{N}{8}} \|^{p-1}_{L^\frac{2p}{1+s_p}_t L^\frac{2p}{2-s_p}_x } \|u_{lo, \geq N} \|_{L^\frac{2p}{1+s_p - p\ell}_t L^\frac{2p}{2-s_p}_x }\\
& \lesssim  \eta_0^{p-1}  \sum_{M' \leq M} \sum_{N' \geq N} \left(\frac{M'}{M}\right) \left(\frac{N}{N'} \right)^{\frac{\ell}{2}}  (N')^{\frac{\ell}{2}} \|P_{N', M'} u \|_{L^\frac{2p}{1+s_p - p\ell}_t L^\frac{2p}{2-s_p}_x }\\
& \lesssim  \eta_0^{p-1}  \sum_{M' \leq M} \sum_{N' \geq N} \left(\frac{M'}{M}\right) \left(\frac{N}{N'} \right)^{\frac{\ell}{2}}  b_{N',M'},
\end{align}
where we have used that for $p > 3$ and $\ell > 0$, the pair
\[
\left( \frac{2p}{1+s_p - p\ell}, \frac{2p}{2-s_p} \right)
\]
is wave admissible at regularity
\[
\frac{3}{2} - \frac{1+s_p - p\ell}{2p} - \frac{6-3s_p}{2p} = s_p -  \frac{\ell}{2}.
\]

For the second term, we have
\begin{align}
&M^{-1} \|\nabla_{2,3} u_{lo, \geq N}\: u_{lo, \geq N}^2 u_{lo, \leq N}^{p-3}  \|_{L^\frac{2}{1+s_p}_t L^\frac{2}{2-s_p}_x }\\
& \lesssim  \eta_0^{p-1}  \sum_{M' \leq M} \sum_{N' \geq N} \left(\frac{M'}{M}\right) \left(\frac{N}{N'} \right)^{\frac{2 s_p}{3} - \frac{1}{3} }(N')^{\frac{2 s_p}{3} - \frac{1}{3} } \|  P_{N',M'} u\|_{L^\frac{6}{1+s_p}_t L^\frac{6}{2-s_p}_x } \\
& \lesssim  \eta_0^{p-1}  \sum_{M' \leq M} \sum_{N' \geq N} \left(\frac{M'}{M}\right) \left(\frac{N}{N'} \right)^{\frac{2 s_p}{3} - \frac{1}{3} } b_{N', M'}.
\end{align}

This completes the estimation of Term 1 in \eqref{EQX}.

Now we turn to Term 2 in \eqref{EQX}, namely
\[
\nabla_{2,3} P_{\leq N} u_{lo}\: F'(u_{lo}).
\]
We decompose
\begin{align}
 \nabla_{2,3} u_{lo, \leq N}\: F'(v_{lo}) &=   \nabla_{2,3} u_{lo, \leq N}\: F'(u_{lo, \leq N})  \\
 & \hspace{4mm}+ \nabla_{2,3} u_{lo, \leq N}  u_{lo, \geq N} \int_0^1 F''(u_{lo, \leq N} + \theta u_{lo, \geq N}) \\
& = \nabla_{2,3} u_{lo, \leq N}\: F'(u_{lo, \leq N}) + \nabla_{2,3} u_{lo, \geq N} u_{lo, \geq N} F''(u_{lo, \leq N})\\
 & \hspace{4mm}+   \nabla_{2,3} u_{lo, \leq N}u_{lo, \geq N}^2 \iint F'''(u_{lo, \leq N} + \theta_1 \theta_2 u_{lo, \geq N})
 \\
 & := 2.I + 2.II + 2.III.
\end{align}
We omit the estimates for the first two terms since they follow as above, and we focus on 
\[
 \nabla_{2,3} u_{lo, \leq N}u_{lo, \geq N}^2 \iint_0^1 F'''(u_{lo, \leq N} + \theta_1 \theta_2 u_{lo, \geq N}) =:  \nabla_{2,3} u_{lo, \leq N}u_{lo, \geq N}^2 F_3.
 \]
Here, we will need to introduce some new exponent pairs compared to the proof of the long-time Strichartz estimates. We divide this expression into two parts: 
\begin{align}
\| &|\nabla|^{- \frac{p-3}{2(p-1)} +s_p} P_{N}(  \nabla_{2,3} u_{lo, \leq N}u_{lo, \geq N}^2 F) \|_{L_t^{\frac{p-1}{p-2}}L_x^{1} }\\
& \lesssim N^{ - \frac{p-3}{2(p-1)} + s_p}\| \nabla_{2,3} u_{lo, \leq N}u_{lo, \geq N}^{p-1}  \|_{L_t^{\frac{p-1}{p-2}}L_x^{1} } \\
& \qquad +  N^{ - \frac{p-3}{2(p-1)} + s_p}\| \nabla_{2,3} u_{lo, \leq N} u_{lo, \geq N}^{2} u_{lo, \leq N}^{p-3}  \|_{L_t^{\frac{p-1}{p-2}}L_x^{1} }.
\end{align}
Note that $(\tfrac{p-1}{p-2},1)$ is dual wave admissible for $p\geq 3$.  

For the first term, we have a bound of 
\begin{align}
&N^{ - \frac{p-3}{2(p-1)} - s_p} \||\nabla|^{s_p} u_{> N } \|_{L^\infty_t L^2_x}^2 \|u_{\leq N} \|_{L^{p-1}_t L^\infty_x}^{p-3 } \sum_{M' \leq M}  M'  \| P_{M'} u _{\leq N } \|_{L^{p-1}_t L^\infty_x} \\
&\lesssim \eta_0^{p-1} \sum_{M' \leq M} \sum_{N' \leq N}  M' \left( \frac{N'}{N}\right)^{\frac{1}{p-1}} (N')^{-\frac{1}{p-1}} \| P_{N',M'} u \|_{L^{p-1}_t L^\infty_x}.
\end{align}

For the second term we estimate
\begin{align}
&N^{ - \frac{p-3}{2(p-1)} + s_p}\| \nabla_{2,3} u_{lo, \leq N}u_{lo, \geq N}^{p-1}  \|_{L_t^{\frac{p-1}{p-2}}L_x^{1} } \\
& \lesssim N^{ - \frac{p-3}{2(p-1)} + s_p} \|u_{\geq N} \|_{L^{2(p-1)}_{t,x}}^{p-3} \|u_{\geq N} \|^2_{ L^{4}_t L^{\frac{4(p-1)}{p+1}}_x }\\
& \hspace{12mm} \times \sum_{M' \leq M} \sum_{N' \leq N} M'  \left( \frac{N'}{N} \right)^{\frac{2}{p-1}} (N')^{-\frac{2}{p-1}} \| P_{N', M'}u \|_{L^{\infty}_t L^{\infty}_x} .
\end{align}
Now we note that the pair
\[
\left(4, \frac{4(p-1)}{p+1} \right)
\]
is wave admissible at regularity
\[
\frac{3}{2} - \frac{1}{4} - \frac{3p + 3}{4(p-1)} = s_p + \frac{2}{p-1} - \frac{1}{4} - \frac{3p + 3}{4(p-1)}.
\]
Noting that 
\[
 \frac{3p + 3}{4(p-1)} =  \frac{3(p + 1)}{4(p-1)} >  \frac{3}{(p-1)},
\]
we see that this is number strictly less that $s_p$. Thus we obtain a bound of
\[
\eta_0^{p-1} \sum_{M' \leq M} \sum_{N' \leq N} M'  \left( \frac{N'}{N} \right)^{\frac{2}{p-1}} (N')^{-\frac{2}{p-1}} \| P_{N', M'}u \|_{L^{\infty}_t L^{\infty}_x}.
\]

Arguing as in the estimates for Term $A$, we may determine the restrictions on $\sigma$. First, we need to assume that $\sigma < 1$ so that we can perform the summation in $M$, and we further require that $\sigma$ be bounded above by the power appearing on the $N' / N$ factor when $N' \leq N$ and the $N / N'$ factor when $N \leq N'$. Examining the exponents in the definition of $\mathcal{S}_{N, M}$ this amounts to requiring $\sigma$ smaller than the smallest (in absolute values) exponent in that expression, and hence we may assume the most restrictive of these will be taking $\sigma < \ell  /2$ in Term 1.III. 

Provided this is the case, we obtain
\begin{align}
\sum_{N'}\sum_{M' \leq M} \left( \frac{M'}{M} \right)^\sigma \min\biggl\{\frac{N}{N'},\frac{N'}{N}\biggr\}^{\sigma} \|P_{N', \geq M} F(u) \|_{N(I)}
&\lesssim \eta_0^{p-1}\, \beta_{N, M},
\end{align}
and since, by Strichartz estimates
\[
\|B\|_{\dot H^{s_p}_x} + \|B'\|_{\dot H^{s_p}_x} \lesssim \|P_{N', \geq M} F \|_{N(I)}.
\]
we have
\begin{align}
\sum_{N'}\hspace{-.5mm} \sum_{M' \leq M} \hspace{-1mm} \left( \frac{M'}{M} \right)^\sigma\hspace{-1mm} \min\biggl\{\frac{N}{N'},\frac{N'}{N}\biggr\}^{\sigma}\hspace{-1.5mm} (\|B\|_{\dot H^{s_p}_x} + \|B'\|_{\dot H^{s_p}_x})
&\lesssim  \eta_0^{p-1}\, \beta_{N, M},
\end{align}
as well as the estimate
\begin{align}
\gamma_{N, M}(N^{1-\eps}) + \beta_{N, M} \lesssim \gamma_{N, M}(0) + \eta_0^{p-1} \beta_{N, M},
\end{align}
as required.
\end{proof}

\subsection*{Term C}  We turn to the $\langle C,C'\rangle$ term (cf. \eqref{tw-ABC} and \eqref{eq:ABC}). 
In this section we prove the following lemma. 
\begin{lem}  \label{l:CC'} 
Let $C, C'$ be defined as in~\eqref{tw-ABC}, and let $M \geq C_0 N^{\frac{s_p}{1-\nu}}$. Then, for any $L \in \N$ we have 
\EQ{
\abs{\ang{C, C'}_{\dot H^{s_p}_x}} \lesssim_L \frac{1}{M^L} 
}
where the implicit constant above depends only on $L $. 
\end{lem}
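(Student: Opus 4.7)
The plan is to estimate $\langle C, C'\rangle_{\dot H^{s_p}}$ by exploiting an angular-separation property of the propagator kernel after the frequency projection $\widehat P_{N,\ge M}$, combined with the fact that the nonlinear differences $F(u)-F(v)$ and $F(u)-F(\tilde v)$ are sharply localized near $x(t)\approx(t,0,0)$ and $x(\tau)\approx(\tau,0,0)$ respectively. Unfolding the pairing via Plancherel yields
\[
\langle C, C'\rangle_{\dot H^{s_p}} = \int_{N^{1-\eps}}^{\infty}\int_{-\infty}^{-N^{1-\eps}}\iint_{\R^3\times\R^3} K(t-\tau, x-y)\,h_1(t, x)\,\overline{h_2(\tau, y)}\,dx\,dy\,d\tau\,dt,
\]
where $h_1 := F(u)-F(v)$ is supported in region $C$, $h_2 := F(u)-F(\tilde v)$ in region $C'$, and $K(s, z)$ is the convolution kernel of an operator of the form $|\nabla|^{2s_p-2}\widehat P_{N,\ge M}\,e^{is\sqrt{-\Delta}}$. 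A dyadic decomposition $\widehat P_{N, \ge M}=\sum_{M'\ge M}\widehat P_{N, M'}$ reduces matters to bounding each pair-contribution, indexed by $(M_1, M_2)$ with $M_i\ge M$, by a quantity with arbitrarily fast decay in $\max(M_1, M_2)$.

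The core geometric input is the following. By the compactness property (Remark \ref{r:Reta}), up to a tail bounded by the compactness modulus, the nonlinearity $F(u)(t,\cdot)$ is supported in $B(x(t), R(\eta_0))$; combined with $|x(t)-(t,0,0)|\le C_1\sqrt{|t|}$, this forces $|x_{2,3}|\lesssim\sqrt t$ on the main support of $h_1(t)$, with the analogous estimate for $h_2(\tau)$. Since $x_1\simeq t>0$ and $y_1\simeq \tau<0$ there, one has $|x_1-y_1|\simeq t-\tau$ and thus $|x-y|\simeq t-\tau$, while $|(x-y)_{2,3}|\lesssim\sqrt t+\sqrt{|\tau|}$. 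Therefore, on the main supports,
\[
\frac{|(x-y)_{2,3}|}{|x-y|}\lesssim\frac{1}{\sqrt{\min(t, |\tau|)}}\le N^{-(1-\eps)/2}.
\]
Using the hypothesis $M\ge C_0N^{s_p/(1-\nu)}$ together with $s_p>1/2$ (valid since $p>3$), one may choose $\eps$ and $\nu$ sufficiently small to guarantee $|(x-y)_{2,3}|/|x-y|\ll M/N$ throughout the main support.

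With this angular separation in hand, I would invoke the angular-separation kernel lemma (Lemma \ref{l:angle}, referenced in the introduction): the symbol of $\widehat P_{N, M'}e^{is\sqrt{-\Delta}}$ is supported where $|\xi_{2,3}|\simeq M'$, so the phase $z\cdot\xi-s|\xi|$ is non-stationary in $\xi_{2,3}$ when $z_{2,3}/|z|$ is bounded away from $\pm M'/N$. Repeated integration by parts in $\xi_{2,3}$ then yields arbitrarily fast polynomial decay of $K_{M_1,M_2}(s,z)$ off the angular concentration set. Combined with the sharp Huygens principle in three dimensions---which sharply localizes the unprojected kernel to $|z|=|s|$, with only rapidly decaying Schwartz-tail smearing after applying the frequency projection---this yields pairwise bounds of the form $(M_1 M_2)^{-L}$ for any $L\ge 1$, and summing in $M_1, M_2\ge M$ completes the proof.

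The main obstacle lies in controlling two types of tail errors. First, the portion of $h_1, h_2$ outside the $R(\eta_0)$-neighborhood of $x(t), x(\tau)$ must be absorbed using the compactness modulus, together with the small-data bound on $v, \tilde v$ that follows from the choice of $R(\eta_0)$ in \eqref{equ:small1}. Second, and more delicate, the sharp Huygens localization of $K$ is destroyed by the application of $\widehat P_{N, M'}$, so one must quantify the Schwartz-tail smearing induced by the frequency projection to ensure the angular-separation argument still applies outside the light-cone-like support of the unprojected kernel. This blurring of the Huygens support, absent in the soliton-like case of Section \ref{s:soliton}, represents the principal technical difficulty.
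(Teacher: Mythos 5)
Your proposal correctly identifies the central new mechanism — the mismatch between the spatial direction of points in the wave zone (angle $\lesssim N^{-(1-\eps)/2}$ off the $x_1$-axis, coming from $|x(t)-(t,0,0)|\lesssim\sqrt{|t|}$) and the frequency support of $\widehat P_{N,M'}$ (angle $\simeq M'/N$), exploited by non-stationary phase in $\xi_{2,3}$ — and this is exactly the content of Lemmas~\ref{l:angle} and~\ref{lem:ang} and of the paper's treatment of the exterior--exterior interaction. However, there is a genuine gap in how you dispose of everything else. You assert that, up to a tail "bounded by the compactness modulus," $G=F(u)-F(v)$ is supported in $B(x(t),R(\eta_0))$, so that $|x_{2,3}|\lesssim\sqrt t$ and $|x-y|\simeq t-\tau$ hold on the "main support," and that the remainder can be "absorbed" using compactness and the small-data bound on $v$. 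This does not work: the compactness modulus only gives smallness of size $\eta_0$ of the $\dot H^{s_p}$ norm of $u$ outside $B(x(t),R(\eta_0))$ at each fixed time — it gives no decay in $t$, $N$ or $M$, so it can neither be integrated over the half-infinite time intervals $[N^{1-\eps},\infty)$ nor produce the required $M^{-L}$ bound; and $F(v)$ is small but spread over the whole interior of the cone $\calG_{\pm}$, where the angular separation simply fails (a point $x$ deep inside the forward cone can have $|x_{2,3}|/|x|\simeq 1$, and can even be spatially close to a point $y$ deep inside the backward cone, so $|x-y|\simeq t-\tau$ is false there). In other words, the actual support of $G$ is the full cone region $\calG_{\pm}(t)$ of \eqref{eq:calG}, not a neighborhood of $x(t)$, and the interior contributions are not "tails" that smallness can kill.

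The paper's proof handles precisely this: it splits $G$ by the indicator functions of $\calC_{\ext}(t)$ and $\calC_{\inte}(t)$, treats the ext--ext term by the angular-separation kernel estimate (your mechanism), and treats the int--int and mixed terms by a further dyadic decomposition of $\calG_{\pm}$ into the regions $\calC_{\pm,j}$ and $\calG_{\pm,j}$ (with an additional in/out split of $\calG_{\pm,j}$ at distance $2^{\al j}N^{\al(1-\eps)}$ from the cone boundary). There, the mechanism is not angular separation but quantified sharp-Huygens support separation of order $(2^j+2^k)N^{1-\eps}$, converted into decay through the kernel bound \eqref{eq:pnm} for $p_{N,M}^2$ (this is exactly how the "blurring of Huygens supports" you flag but do not resolve is controlled), combined with exterior-cone small-data Strichartz/dispersive bounds \eqref{eq:cjck} to make the $j,k$-sums and the time integrals converge; angular separation reappears only for the near-boundary pieces $\calG_{\pm,j,\mathrm{out}}$ via Lemma~\ref{l:Gout}. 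Without some version of this interior analysis, your argument does not reach the conclusion $|\langle C,C'\rangle_{\dot H^{s_p}_x}|\lesssim_L M^{-L}$.
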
 

\begin{proof}[Proof of Lemma~\ref{l:CC'}]

We introduce the notation
\[
G(u,v)(t)=F(u(t))-F(v(t)),
\]
which we may abbreviate as $G(t)$ or even $G$.  We are faced with estimating
\[
\ang{C, C'}_{\dot H^{s_p}_x} \simeq N^{2s_p} \ang{C, C'}_{L^2_x},
\]
where
\begin{align} \label{equ:cc_prime}
&\ang{C, C'}_{L^2_x}  
\\&= \int_{-\infty}^{-N^{1 - \eps}} \hspace{-3mm}\int_{N^{1 - \eps}}^{\infty} \langle   S(-t) \widehat{P}_{N,  M} G(u,v)(t), \,  S(-\tau)\widehat{P}_{N,M}G(u,v)(\tau) \rangle_{L_x^2}\, \ud t\, d\tau \\
 &= \int_{-\infty}^{-N^{1 - \eps}} \hspace{-3mm}\int_{N^{1 - \eps}}^{\infty} \langle     G(u,v)(t), \, S(t-\tau)\widehat{P}_{N,M}^2\frac{1}{\abs{\na}}G(u,v)(\tau) \rangle_{L_x^2}\, \ud t\, d\tau.
\end{align}
Since $M \geq C_0 N^{\frac{s_p}{1-\nu}}$, it suffices to show that 
\[
|\ang{C, C'}_{L^2_x} |\lesssim_L \frac{1}{M^L},
\]
and all inner products in this proof will be $L^2_x$ inner products.

For each fixed $t, \tau$ as above we estimate the pairing, 
\EQ{ \label{eq:ddpair} 
\langle  G(u,v)(t) ,\,  S(t- \tau)\widehat{P}_{N,  M}^2 G(u,v)(\tau) \rangle_{L_x^2},
}
Recall that by the definition of $\vec v(\tau)$, $G(u, v)(\tau)$ is supported in the region 
\EQ{ \label{eq:calG} 
\hspace{.2in} \calG_{\pm}(\tau):= \{ x \mid \abs{x - x(\pm N^{1-\eps})} \le R(\eta_0) +  \abs{\tau}- N^{1-\eps} ,}
for all  $\pm \tau \ge N^{1-\eps}\}$.  This points to an immediate problem in any naive implementation of the double Duhamel trick by way of Huygens principle as performed in previous sections. Namely, the support of the $S(t-\tau)$ evolution of $G(u, v)(\tau)$ intersects with the support of $G(u, v)(t)$ in the ``wave zone,'' i.e., near the boundary of the light cone where the kernel of $S(t -\tau)$ only yields $\ang{t-\tau}^{-1}$ decay, which is not sufficient for integration in time. However, we are saved here by a gain in \emph{angular separation} in the wave zone guaranteed by our directional frequency localization $\hat P_{N, M}$. Indeed, application of $\hat P_{N, M}$ restricts to frequencies $\xi = (\xi_1, \xi_{2, 3})$ with 
\EQ{
\frac{\abs{\xi_{2, 3}}}{ \abs{\xi}} \simeq \frac{M}{N}
}
whereas for any $x = (x_1, x_{2, 3}) \in \calG(t) \cap \{(t, x) \mid \abs{x} \ge t - R(\eta_0)\}$ we claim that 
\EQ{
\frac{\abs{x_{2, 3}}}{\abs{x}} \ll \frac{M}{N}
}
for all $M \ge N^{\frac{s_p}{1-\nu}}$. We establish this fact in Lemma~\ref{l:angle} below.  

We introduce some additional notation. Let $R(\cdot)$ be the compactness modulus function. For given $t \in \R$ let
\begin{equation}
\label{eq:calC}
\begin{split}
\calC_{\ext}(t) := \{ x \mid \abs{x} \ge \abs{t} - R(\eta_0), \\
 \calC_{\inte}(t):= \{ x \mid \abs{x} \le \abs{t} - R(\eta_0)\}
\end{split}
\end{equation}

 We decompose $\ang{C, C'}$ as follows. First, we write
\begin{align}
 G(u,v)(t) &=   G(u,v)(t)1_{\calC_{\ext}(t)} +G(u,v)(t)1_{\calC_{\inte(t)} }.
 %&=:  G(u,v) 1_{\Lambda_{\textup{in}}(t)} + G(u,v) 1_{\Lambda_{\textup{out}}(t)}
\end{align}
Using this decomposition in \eqref{equ:cc_prime} leads to four terms:
\begin{align}\label{c_outout}
\iint\bigl\langle S(t - \tau) \frac{1}{\abs{\na}}\widehat{P}_{N,  M}   1_{\calC_{\ext}}(\tau) G(\tau)  ,  \, 1_{\calC_{\ext}}(t) G(t) \bigr \rangle \ud t \ud\tau,
\end{align}
\begin{align}\label{c_outin}
\iint \bigl\langle S(t - \tau)  \frac{1}{\abs{\na}}\widehat{P}_{N,  M}   1_{\calC_{\ext}}(\tau) G(\tau) ,  \, 1_{\calC_{\inte}}(t) G(t) \bigr \rangle \ud t \ud\tau,
\end{align}
\begin{align}\label{c_inout}
\iint\bigl\langle S(t - \tau) \frac{1}{\abs{\na}} \widehat{P}_{N,  M}   1_{\calC_{\inte}}(\tau) G(\tau) , \,  1_{\calC_{\ext}}(t) G(t) \bigr \rangle \ud t \ud\tau,
\end{align}
\begin{align}\label{c_inin}
\iint \bigl\langle S(t - \tau)  \frac{1}{\abs{\na}}\widehat{P}_{N,  M}   1_{\calC_{\inte}}(\tau) G(\tau)  ,  \, 1_{\calC_{\inte}}(t) G(t) \bigr \rangle \ud t \ud\tau,
\end{align}
where the integrals are over $[-\infty,-N^{1-\eps}]\times[N^{1-\eps},\infty]$.   We will refer to these terms are $C_{\ext - \ext}$, $C_{\ext - \inte}$, $C_{\inte - \ext}$ and $C_{\inte -  \inte}$ respectively, and we will handle these terms separately below.
Were it not for the frequency localization $\hat P_{N, M}$ all but the first term above would vanish using the support properties of $G(u, v)$, together with the particular pairing of the cutoffs $1_{C_{\inte}}$ and $1_{C_{\ext}}$, and the sharp Huygens principle.  On the other hand, whereas in previous scenarios (e.g. the subluminal soliton) the first term would vanish, in the present setting there truly is an interaction between these two terms.  This is the origin of the essential technical difficulty faced in the present scenario, and indeed we will find that the first term~\eqref{c_outout} requires the most careful analysis. The crucial observation is that in this setting we can rely on angular separation to exhibit decay.

\subsection*{The term \texorpdfstring{$C_{\ext - \ext}$}{C}:} We will rely crucially on the following two lemmas, which together make precise the gain in decay from angular separation. 

\begin{lem}[Angular separation in the wave zone] \label{l:angle} For any $c>0$ there exists $N_0 = N_0(c)>0$ with the following property. Fix $\nu \in (0, 1)$ and let $\eps>0$ be any number with $\eps< \frac{2s_p}{1-\nu} -1$.  Let $(t, x)$ satisfy 
\EQ{
\abs{t} \ge N^{1-\eps}, \quad  x = ( x_1, x_{2, 3}) \in \calG(t) \cap \calC_{\ext}(t)
}
where $\calG(t)$ are defined in~\eqref{eq:calG},~\eqref{eq:calC}. Then, 
\EQ{ \label{eq:angle} 
\frac{\abs{x_{2, 3}}}{\abs{x}} \lesssim \frac{1}{N^{\frac{1}{2}- \frac{\eps}{2}}} \le  c \frac{M}{N}
}
for all $ N \ge N_0$ and  $M \ge N^{\frac{s_p}{1-\nu}}$. 

\end{lem} 

See Figure~\ref{f:angle} for a depiction of Lemma~\ref{l:angle}. 

\begin{figure}[h] 
  \centering
  \includegraphics[width=14cm]{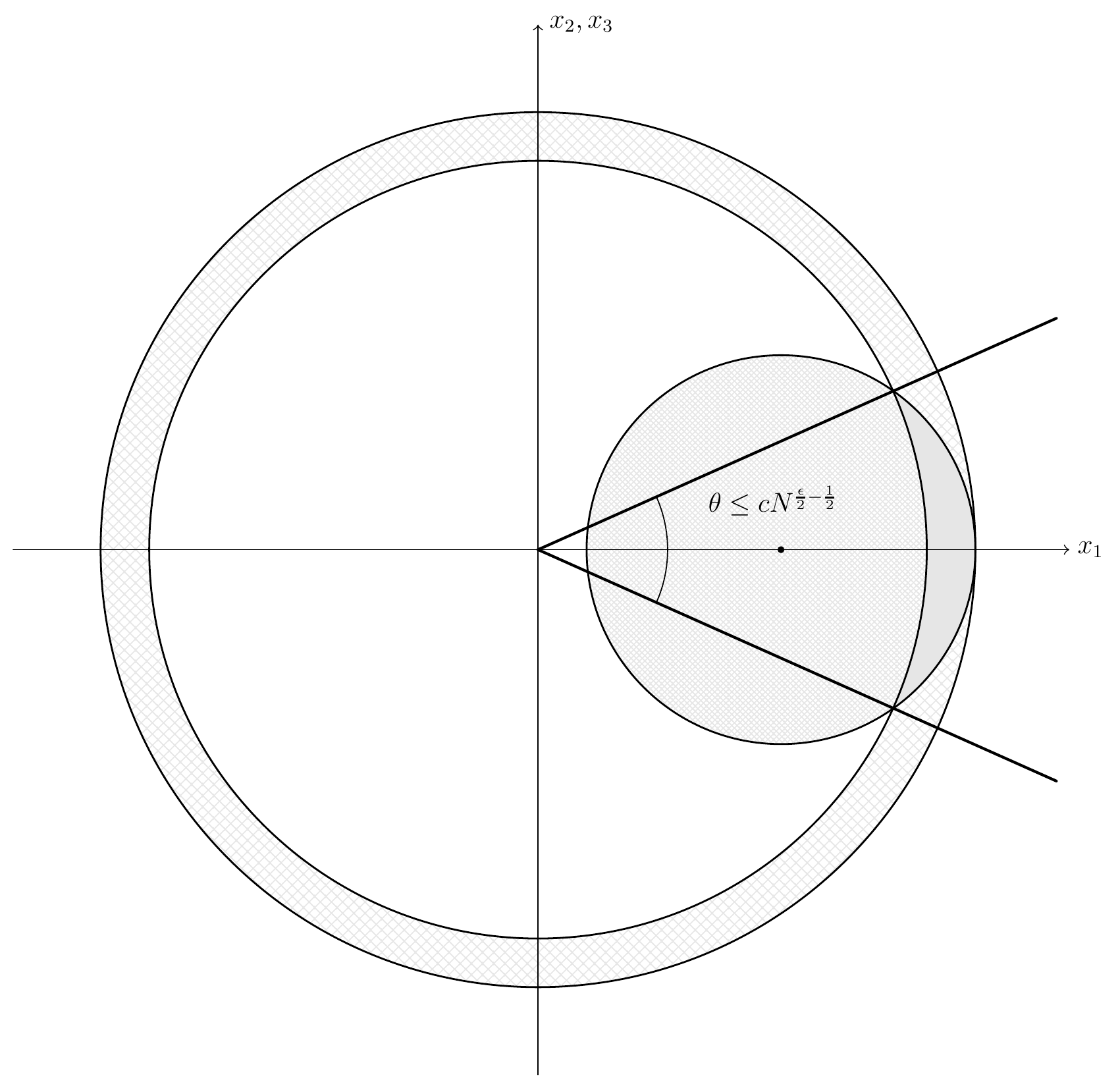}
  \caption{The dark gray region above represents the region $\calG_+(t) \cap \calC_{\ext}$ in space at fixed time $t > N^{1-\eps}$.}  \label{f:angle} 
\end{figure}

Next, we show that if we restrict to those $x \in \R^3$ satisfying~\eqref{eq:angle} then we get strong pointwise decay for the kernel of the operator $S(t)\frac{1}{ \abs{\na}}\widehat{P}_{N, M}^2$. 

To state the result, we define 
\EQ{ \label{eq:calS}
\calS_{N} := \Big\{ x \in \R^3 \mid \frac{\abs{x_{2, 3}}}{\abs{x}} \lesssim \frac{1}{N^{\frac{1}{2}- \frac{\eps}{2}}}  \Big\}.
}

\begin{lem}[Kernel estimates via angular separation] \label{lem:ang}
Let $K_{N, M}(t,x)$ denote the kernel of the operator $S(t)\frac{1}{ \abs{\na}}\widehat{P}_{N, M}^2$. Let $N \ge N_0$ where $N_0$ is as in the hypothesis of Lemma~\ref{l:angle}. Then for any $L$,
\begin{align}\label{equ:ang}
 | 1_{\calS_N}(x)K_{N, M}(t, x)| \lesssim_{L} N \frac{N^{L }}{M^L} \frac{1}{ \ang{ M |x|}^{L}}, \quad   \forall t \ge N^{1-\eps}, \quad  % x \in \calG(t) \cap \calC_{\ext}(t)}
 \end{align}
where $\calS_N$ is the set defined in~\eqref{eq:calS} and where we have used the notation $\ang{z}:= (1 + \abs{z}^2)^{\frac{1}{2}}$ above. 
\end{lem}

\begin{proof}[Proof of Lemma~\ref{l:angle}]
We assume that $t \ge 0$. Since we are assuming $0< \eps< \frac{2s_p}{1-\nu} -1$ and that $M \ge N^{\frac{s_p}{1-\nu}}$ it suffices to show the first inequality in~\eqref{eq:angle}, i.e., that 
\EQ{ \label{eq:angleest} 
\frac{\abs{x_{2, 3}}}{\abs{x}} \lesssim  \frac{1}{ N^{\frac{1}{2} -\frac{\eps}{2}}} 
}
for all $x \in \calG(t) \cap \calC_{\ext}(t)$ for some uniform constant. 

First, we claim that ~\eqref{eq:angleest} holds at time $t = N^{1-\eps}$. Suppose 
\EQ{
x \in \calG_+(N^{1-\eps}) \cap \calC_{\ext}(N^{1-\eps}).
}
By the definition of traveling wave (i.e. \eqref{eq:xhans1} and \eqref{eq:xhans2}) we have 
\EQ{
\abs{x} \simeq N^{1-\eps}, \quad \abs{x_{2, 3}} \lesssim N^{\frac{1}{2} - \frac{\eps}{2}}
}
and thus, 
\EQ{
\frac{\abs{x_{2, 3}}}{\abs{x}}  \lesssim N^{\frac{\eps}{2} - \frac{1}{2}}
}
as desired. 

Now suppose $t > N^{1- \eps}$. We introduce some notation. Let $\te_{x(N^{1-\eps})}$ denote the angle between the unit vector $\vec e_1$ (the unit vector in the positive $x_1$-direction) and the vector $x(N^{1-\eps})$, where we recall that $x(t)$ denotes the spatial center of $\vec u$. Above, we have just shown that 
\EQ{
\abs{\sin(\te_{x(N^{1-\eps})})} \simeq \abs{\te_{x(N^{1-\eps})}} \le A_1 N^{\frac{\eps}{2} - \frac{1}{2}}
}
for some uniform constant $A_1>0$. To finish the proof it will suffice to show that for any $x \in \calG(t) \cap \calC_{\ext}(t)$, the angle $\te_{(x, x(N^{1-\eps}))}$ formed between the vectors $x$ and $x(N^{1-\eps})$ satisfies 
\EQ{
\abs{\te_{(x, x(N^{1-\eps}))}} \le A_2 N^{\frac{\eps}{2} - \frac{1}{2}}
}
for some other uniform constant $A_2 >0$, as then the sine of the total angle between $x$ and the $x_1$-axis, i.e., $\frac{\abs{x_{2, 3}}}{\abs{x}}$ would satisfy~\eqref{eq:angleest}. To get a hold of $\te_{(x, x(N^{1-\eps}))}$ we square both sides of the inequality defining the set $\calG_+(t)$. For $x \in \calG(t)$ we have 
\EQ{
\abs{x}^2 - 2  x \cdot x(N^{1-\eps}) + \abs{x(N)^{1-\eps}}^2 \le \left( R(\eta_0) + t - N^{1-\eps}\right)^2.
}
Using that $x \cdot x(N^{1-\eps}) = \abs{x} \abs{x(N^{1-\eps})} \cos \te_{(x, x(N^{1-\eps}))}$ the above yields the inequality, 
\EQ{
-2 \abs{x} \abs{x(N^{1-\eps})} \cos\te_{(x, x(N^{1-\eps}))}\le \left( R(\eta_0) + t - N^{1-\eps}\right)^2  - \abs{x}^2 - \abs{x(N^{1-\eps})}^2
}
Bootstrapping, we may assume that $\te_{(x, x(N^{1-\eps}))}$ is small enough to use the estimate, 
\EQ{
 \cos \te_{(x, x(N^{1-\eps}))} \le 1 - \frac{\te_{(x, x(N^{1-\eps}))}^2}{4}. 
}
Pugging the above in we arrive at the inequality 
\EQ{
\frac{\te_{(x, x(N^{1-\eps}))}^2}{2} &\le 2  + \frac{1}{\abs{x}\abs{x(N^{1-\eps})}}\left( \left( R(\eta_0) + t - N^{1-\eps}\right)^2  - \abs{x}^2 - \abs{x(N^{1-\eps})}^2  \right) \\
& \le  \frac{ 2 \abs{x}\abs{x(N^{1-\eps})} + \left( R(\eta_0) + t - N^{1-\eps}\right)^2  - \abs{x}^2 - \abs{x(N)^{1-\eps}}^2}{\abs{x}\abs{x(N^{1-\eps})}}.
}
The requirement that $x \in C_{\ext}(t)$, finite speed of propagation, and~\eqref{eq:xhans1} imply that we have 
\EQ{
t - R(\eta_0) \le \abs{x} \le t + R(\eta_0) \mand  N^{1-\eps} - R(\eta_0) \le \abs{x(N^{1-\eps})} \le N^{1-\eps} + R(\eta_0).
}
Plugging the above into the previous line gives, 
\begin{align}
\frac{\te_{(x, x(N^{1-\eps}))}^2}{2} &\le \frac{ 2 (t + R(\eta_0))(N^{1-\eps} + R(\eta_0))+ \left( R(\eta_0) + t - N^{1-\eps}\right)^2}{(t - R(\eta_0)) ( N^{1-\eps} - R(\eta_0) )} \\
& \hspace{34mm} - \frac{ (t - R(\eta_0))^2 - (N^{1-\eps} - R(\eta_0))^2}{(t - R(\eta_0)) ( N^{1-\eps} - R(\eta_0) )}\\
&  =  \frac{ 6 t R(\eta_0) + 2 N^{1-\eps} R(\eta_0) + R(\eta_0)^2}{(t - R(\eta_0)) ( N^{1-\eps} - R(\eta_0) )}  \\
& \lesssim  \frac{1}{N^{1-\eps}} + \frac{1}{t}. 
\end{align}
Taking the square root and noting that $t \ge N^{1-\eps}$ we arrive at 
\EQ{
\abs{\te_{(x, x(N^{1-\eps}))}} \lesssim  \frac{1}{N^{\frac{1}{2} - \frac{\eps}{2}}}
}
as desired. This completes the proof. 
\end{proof}

Next, we prove Lemma~\ref{lem:ang}. 

\begin{proof}[Proof of Lemma~\ref{lem:ang}]

The kernel $K_{N, M}$ of the operator $S(t)\frac{1}{ \abs{\na}}\widehat{P}_{N, M}^2$ is given by 
\EQ{
K_{N, M}(t, x) :=  \int e^{i x \cdot \xi}  \abs{\xi}^{-2} e^{ it |\xi|}  \phi^2\bigl(\tfrac{|\xi|}{N}\bigr)  \phi^2 \bigl(\tfrac{|(\xi_2, \xi_3)|}{M}\bigr)  d\xi  \\
}
where $\phi \in C^{\infty}_{0}(\R)$ is satisfies $ \phi(r) = 1$ if $1 \le r  \le  2$ and  $ \supp \phi \in (\frac{1}{4}, 4)$. 
Now, 
recall that we are restricting to only those $x \in \calS_N$, as defined in~\eqref{eq:calS}. We express any 
%$x \in \calG(t) \cap \calC_{\ext}(t)$ 
such $x$ in spherical coordinates 
\EQ{
x = \abs{x}(  \cos \te_x, \sin \te_x  \cos \om, \sin \te_x \sin \om)
}
where $\te_x$ denotes the angle formed by $x$ and the unit vector in the $e_1$-direction. And recall that any $x \in \calS_N$ satisfies  
\EQ{ \label{eq:xa}
 \frac{\abs{x_{2, 3}}}{\abs{x}} =  \sin \te_x  \simeq \abs{\te_x} \lesssim \frac{1}{N^{1-\eps}}. 
}
Similarly, we change to the spherical variables 
\EQ{
\xi  = \abs{\xi}( \cos \te_{\xi}, \sin \te_\xi \cos \al, \sin \te_\xi \sin \al)
}
in the integral defining $K_{N, M}$ and note that because of the frequency localization $\hat P_{N, M}$ we have 
\EQ{\label{eq:xia}
\frac{\abs{\xi_{2, 3}}}{\abs{\xi}}  = \sin \te_\xi \simeq \frac{M}{N}. 
}
This yields, 
\begin{align*}
K_{N, M}(t, x) =  \int_0^{2\pi}\int_0^\pi\int_{N/4}^{4N} & e^{i \abs{x}\abs{\xi}f( \te_x, \te_\xi, \om, \al)} \abs{\xi}^{-2} e^{ it |\xi|}  \phi^2\bigl(\tfrac{|\xi|}{N}\bigr)   \\
& \quad\times \phi^2 \bigl(\tfrac{ \abs{\xi}\sin \te_\xi }{M}\bigr)   \, \abs{\xi}^2  \sin \te_\xi \ud \abs{\xi} \ud \te_\xi \ud \al,
\end{align*}
where the angular phase function $f( \te_x, \te_\xi, \om, \al)$ is given by 
\EQ{
f( \te_x, \te_\xi, \om, \al) = \cos \te_x \cos \te_\xi + \sin \te_x \sin \te_\xi( \cos \om \cos \al + \sin \om \sin \al).
}
The idea is that the angular separation between $x$ and $\xi$ given by~\eqref{eq:xa} and~\eqref{eq:xia} allows us to integrate by parts in $\te_\xi$. Indeed,  using~\eqref{eq:xa} and~\eqref{eq:xia} we have the lower bound
\EQ{
\biggl| \frac{\ud}{\ud \te_\xi} &[\abs{x} \abs{\xi}   f( \te_x, \te_\xi, \om, \al) ] \biggr|\\
 &= \abs{x}{\abs \xi}  \Bigl| -\cos \te_x  \sin \te_\xi + \sin \te_x \cos \te_\xi( \cos \om \cos \al + \sin \om \sin \al ) \Bigr| \\
& \ge \abs{x}{\abs \xi} \Big( \frac{M}{N}  -    O( \frac{1}{N^{\frac{1}{2}-\frac{\eps}{2}}}) \Big) \\
& \gtrsim \abs{x} M.
}
Moreover, note that for any $L \in \N$ and $M\lesssim N$, 
\EQ{
\abs{\frac{\ud^L}{\ud \te_\xi^L} \big(\phi^2 \bigl(\tfrac{ \abs{\xi}\sin \te_\xi }{M}\bigr)  \sin \te_\xi \Big)} \lesssim \frac{N^{L}}{M^{L}}
}
Thus, integration by parts $L$-times in $\te_\xi$ yields the estimate 
\EQ{
\abs{K_{N, M}(t, x)} \lesssim_L N^2 \frac{N^{L}}{M^{L}} \frac{1}{ \ang{M\abs{x}}^L}, \quad \forall t \ge N^{1-\eps}, \quad   x \in \calG(t) \cap \calC_{\ext}(t)
}
as desired. \end{proof} 

We can now estimate~\eqref{c_outout}. Here will rely crucially on Lemma~\ref{l:angle} and 
Lemma~\ref{lem:ang}. First we write, 
\begin{multline} 
 \bigl\langle S(t - \tau) \frac{1}{\abs{\na}} \widehat{P}_{N,  M}   1_{\calC_{\ext}}(\tau) G(u,v)(\tau)  ,  \, 1_{\calC_{\ext}}(t) G(u,v)(t) \bigr \rangle   \\
 =  \ang{ K_{N, M}(t- \tau) \ast 1_{\calC_{\ext}}(\tau) G(u,v)(\tau)  , \, 1_{\calC_{\ext}}(t) G(u,v)(t)}.
\end{multline} 
We claim that in fact, the above can be expressed as 
\begin{multline} 
  \ang{ K_{N, M}(t- \tau) \ast 1_{\calC_{\ext}}(\tau) G(\tau)  , \, 1_{\calC_{\ext}}(t) G(t)}  \label{eq:SNttau}  \\
   = \ang{ \big(1_{\calS_N}( \cdot) 1_{ \{\abs{ \cdot} \ge\frac{1}{2} \abs{t -\tau}   \}}(\cdot) K_{N, M}(t- \tau) \big) \ast 1_{\calC_{\ext}}(\tau) G(\tau)  , \, 1_{\calC_{\ext}}(t) G(t)},
\end{multline} 
where the set $\calS_N$ is defined in~\eqref{eq:calS}. 
Indeed, note that above we have 
\EQ{ \label{eq:xyout} 
x \in \calG_+(t) \cap \calC_{\ext}(t) \mand  y \in \calG_-(\tau) \cap \calC_{\ext}(\tau)
}
where $\calG_{\pm}$ are as in~\eqref{eq:calG} and $C_{\ext}$ is as in~\eqref{eq:calC}. 
Thus, 
\EQ{
\abs{x- y} \ge  \abs{t - \tau} - 2 R(\eta_0)  \ge \frac{1}{2}  \abs{t - \tau}
}
as long as $N$ is chosen large enough. Similarly by~\eqref{eq:xyout} we have $\abs{x-y} \ge \abs{x}$ and $\abs{x-y} \ge \abs{y}$ and thus, 
\EQ{
\frac{\abs{x_{2, 3} - y_{2, 3}}}{ \abs{x - y}} \le \frac{\abs{x_{2, 3}}}{\abs{x}}  +  \frac{\abs{y_{2, 3}}}{\abs{y}}  \lesssim \frac{1}{N^{\frac{1}{2} - \frac{\eps}{2}}},
}
where in the last inequality above we used Lemma~\ref{l:angle}. This proves the equality in~\eqref{eq:SNttau}. 

Now, let $q_{p}$ denote the Sobolev embedding exponent for $\dot H^{s_p}$, i.e. $q_p=\frac{3(p-1)}{2}$. Note that $q_p\geq p$ for $p\geq 3$ and $(q_p/p)'\geq 2$ for $p>0$ (where $x'$ denotes the H\"older dual of $x$).  By H\"older's and Young's inequalities we then have 
\begin{align*} 
&\abs{\ang{ \big(1_{S_N}( \cdot)  1_{ \{\abs{ \cdot} \ge\frac{1}{2} \abs{t -\tau}   \}}(\cdot)K_{N, M}(t- \tau) \big) \ast 1_{\calC_{\ext}}(\tau) G(\tau)  , \, 1_{\calC_{\ext}}(t) G(t)}}  \\
&\quad \le \| 1_{S_N}( \cdot)  1_{ \{\abs{ \cdot} \ge\frac{1}{2} \abs{t -\tau}   \}}(\cdot) K_{N, M}(t- \tau) \|_{L_x^{\left(\frac{q_{p}}{p} \right)'/2}} \\
& \quad\times \| G(u, v)(t) \|_{L_x^{\frac{q_{p}}{p}}}\| G(u, v)(\tau) \|_{L_x^{\frac{q_{p}}{p}}}.
\end{align*} 
Using~\eqref{equ:ang} we see that 
\begin{align}  \label{eq:MNL} 
&\| 1_{S_N}( \cdot)  1_{ \{\abs{ \cdot} \ge\frac{1}{2} \abs{t -\tau}   \}}(\cdot) K_{N, M}(t- \tau) \|_{L_x^{\left(\frac{q_{p}}{p} \right)'/2}} \\
 &\lesssim \frac{N^{L+1}}{M^{2L}} \left( \int_{\abs{x} \ge \frac{1}{2}\abs{t-\tau}}  \frac{1}{ \abs{x}^{2L\left(\frac{q_{p}}{p} \right)'} } \, \ud x \right)^{\frac{2}{\left(\frac{q_{p}}{p} \right)'}} \\
& \lesssim \frac{N^{L+1}}{M^{2L}} \frac{1}{  \abs{t-\tau}^{L-1}}
\end{align}
Since $G(u, v) = F(u) - F(v)$ we have 
\EQ{
 \| G(u, v)(t) \|_{L_x^{\frac{q_p}{p}}} \lesssim \| u(t) \|_{L_x^{q_p}}^p + \| v(t) \|_{L_x^{q_p}}^p \lesssim \|u(t) \|_{\dot H^{s_p}_x}^p + \|v(t) \|_{ \dot H^{s_p}}^p.
}
Putting this all together we arrive at the estimate 
\begin{align} 
&\abs{ \int_{-\infty}^{-N^{1 - \eps}} \hspace{-3mm} \int_{N^{1 - \eps}}^{\infty} \bigl\langle S(t - \tau) \widehat{P}_{N,  M}   1_{\calC_{\ext}}(\tau) G(u,v)(\tau)  ,  \, 1_{\calC_{\ext}}(t) G(u,v)(t) \bigr \rangle  \ud t \, \ud \tau } \\
 &\lesssim_L \int_{-\infty}^{-N^{1 - \eps}} \hspace{-3mm} \int_{N^{1 - \eps}}^{\infty}  \frac{N^{L+1}}{M^{2L}} \frac{1}{  \abs{t-\tau}^{L-1}}  \left(\|u(t) \|_{\dot H^{s_p}_x}^p + \|v(t) \|_{ \dot H^{s_p}}^p \right)\\
 & \hspace{54mm} \times \left(\|u(\tau) \|_{\dot H^{s_p}_x}^p + \|v(\tau) \|_{ \dot H^{s_p}}^p \right)  \ud t \, \ud \tau  \\
& \lesssim_L N^{L + 1 + (1-\eps)(2- L)} M^{-2L}  \left(\|u \|_{L^\infty_t \dot H^{s_p}}^{2p} + \|v \|_{L^\infty_t \dot H^{s_p}}^{2p} \right)  \\
& \lesssim_L M^{-L},
\end{align} 
where to obtain the last line we ensure that $\eps>0$ is small enough so that when $M \ge N^{\frac{s_p}{1-\nu}}$ we also have $M^L \ge N^{4+ \eps L}$. We have proved that 
\EQ{
\eqref{c_outout} \lesssim_L M^{-L},
}
as desired.  This completes the treatment of the $C_{\ext - \ext}$ term.

\subsection*{The term \texorpdfstring{$C_{\inte - \inte}$}{C}:}  

Here we will use a combination of arguments based on sharp Huygens principle and the techniques developed to deal with the previous term $C_{\ext-\ext}$. 

First we record an estimate for the kernel of the modified frequency projection. 

\begin{lem} 
Let $p_{N, M}^2$ denote the kernel of the operator $\widehat{P}_{N, M}^2$. Then, 
\EQ{ \label{eq:pnm} 
\abs{p_{N, M}^2 (x)} \lesssim_L \frac{N^3}{\ang{N \abs{x}}^L} + \frac{N^3}{\ang{M \abs{x}}^L} 
}
\end{lem} 

Next, consider the following decomposition of the forward cone centered at $(t, x) = (N^{1-\eps}, x(N^{1-\eps}))$ of width $R(\eta_0)$, i.e., the set 
\EQ{
\calG_+ := \bigcup_{t \ge N^{1-\eps}}\calG_+(t)
} 
where $\calG(t)$ is defined as in~\eqref{eq:calG}.  This decomposition is depicted in Figure~\ref{f:CG}.

We write 
\EQ{
\calG_+ = \bigcup_{j \ge 1} \calC_{+, j} \cup \bigcup_{j \ge 0} \calG_{+, j}
}
We define the $\calC_{+, j}, G_{+, j}$ as follows. First, set 
\EQ{
\ti\calC_{+, 1}&:=   \{(t, x)  \mid  \abs{x - x(2N^{1-\eps})} \ge R(\eta_0) + t - 2N^{1-\eps} , \\
& \hspace{74mm} t \ge 2N^{1-\eps} \} \cap \calG_+ 
}
and for $j \ge 1$, 
\EQ{
\ti \calC_{+, j}&:=   \bigg\{\{ (t, x)  \mid  \abs{x - x(2^jN^{1-\eps})} \ge R(\eta_0) + t - 2^jN^{1-\eps}, \\
& \hspace{64mm} t \ge 2^jN^{1-\eps} \} \cap \calG_+ \bigg\} \setminus \calC_{+, j-1}.
}
For $j \ge 0$, define sets $\ti\calG_{+, j}$ to be the regions, 
\EQ{
\ti \calG_{+, j} &=   \{(t, x) \mid  \abs{x - x(2^jN^{1-\eps})} \le R(\eta_0) + t - 2^jN^{1-\eps}, \\
& \hspace{64mm} 2^j N^{1-\eps} \le t \le 2^{j+1}N^{1-\eps} \} \cap \calG_+.
}
Then we define 
\EQ{
&\calC_{+, j} := \ti \calC_{+, j}  \cap \{ (t, x) \mid \abs{x}  \le t - 2^{j}N^{1-\eps}, \quad  t \ge 2^{j}N^{1-\eps}\} \\
&\calG_{+, j}:= \ti \calG_{+, j} \cup [\ti\calC_{+, j+1} \setminus \calC_{+, j+1}].
}
The regions $\calC_{+,j}$ and $\calG_{+, j}$ are depicted in Figure~\ref{f:CG}. 
\begin{figure}[h]
  \centering
  \includegraphics[width=14cm]{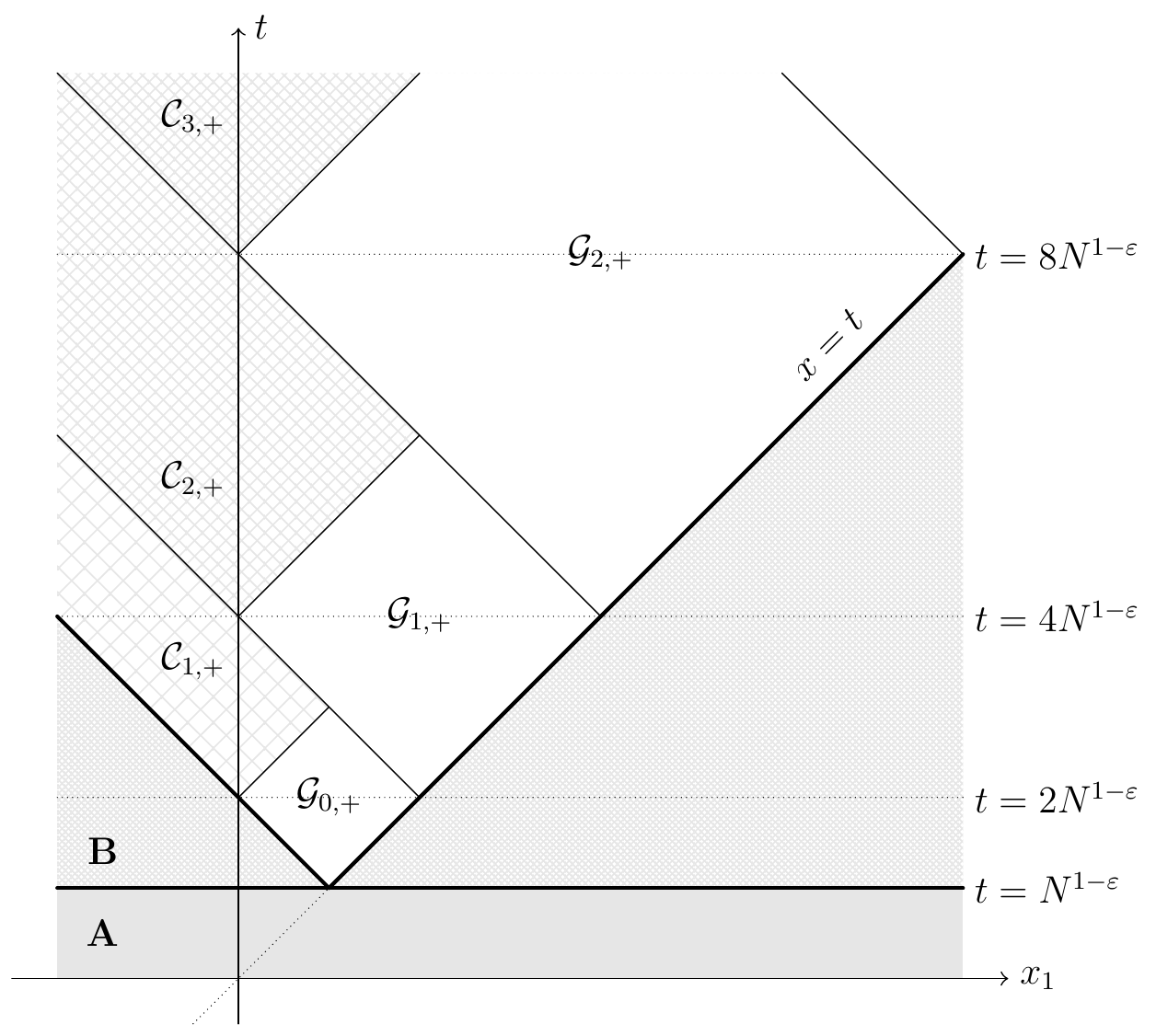}
  \caption{A depiction of the first few regions $\calC_{j, +}$ and $\calG_{+, j}$ within the region $C$.} \label{f:CG} 
\end{figure}

Now, split the integrand of~\eqref{c_inin} in the four pieces, 
\begin{align}
\eqref{c_inin} = \int_{-\infty}^{-N^{1 - \eps}} \hspace{-3mm} \int_{N^{1 - \eps}}^{\infty} (I + II + III + IV) \ud t\, \ud \tau,
\end{align} 
where 
\begin{align}  
\hspace{10mm} I &= \sum_{j,k}  \bigl\langle \widehat{P}_{N,  M}^2 \frac{1}{\abs{\na}}S(t - \tau)  [1_{\calC_{-, j}}   1_{\calC_{\inte}} G](\tau)  ,  [1_{\calC_{+, k}}1_{\calC_{\inte}} G](t) \bigr \rangle, 
\label{eq:CC} \\
 II&=   \sum_{j,k}  \bigl\langle \widehat{P}_{N,  M}^2 \frac{1}{\abs{\na}}S(t - \tau)  [1_{\calC_{-, j}}   1_{\calC_{\inte}} G](\tau)  ,   [1_{\calG_{+, k}}1_{\calC_{\inte}} G](t) \bigr \rangle, \label{eq:GC}  \\
 III &=\sum_{j,k}  \bigl\langle \widehat{P}_{N,  M}^2 \frac{1}{\abs{\na}}S(t - \tau)  [1_{\calG_{-, j}}   1_{\calC_{\inte}} G](\tau)  ,   [1_{\calC_{+, k}}1_{\calC_{\inte}}G](t) \bigr \rangle,  \label{eq:CG} \\
 IV &= \sum_{j,k}  \bigl\langle \widehat{P}_{N,  M}^2 \frac{1}{\abs{\na}}S(t - \tau)  [1_{\calG_{-, j}}   1_{\calC_{\inte}} G](\tau)  ,   [1_{\calG_{+, k}}1_{\calC_{\inte}} G](t) \bigr \rangle.       \label{eq:GG}
\end{align}

First we estimate the term~\eqref{eq:CC} above. The key points are the following. First, %Suppose that $x, y$ satisfy 
%\EQ{
%x \in \supp (1_{\calG_{+, k}}1_{\calC_{\inte}} G(u,v))(t), \quad y \in \supp S(t- \tau) 1_{\calG_{-, j}}   1_{\calC_{\inte}} G(u,v)](\tau) 
%}
%
by the support properties of $1_{\calC_{+, k}}1_{\calC_{\inte}}(\tau, y)$,  $1_{\calC_{-, j}}   1_{\calC_{\inte}}(t, y) $ and the sharp Huygens principle, we must  have 
\begin{align} \label{eq:gjgk}
&\abs{x - y} \gtrsim (2^j +2^k) N^{1-\eps} \\
 & \forall\,  x \in \supp (1_{\calC_{+, k}}1_{\calC_{\inte}} G(u,v))(t), \, \,  y \in \supp [S(t- \tau) 1_{\calC_{-, j}}   1_{\calC_{\inte}} G(u,v)](\tau) .
\end{align} 
Second, by the definitions of the spacetime cutoffs $1_{\calC_{-, j}}$, and $1_{\calC_{+, k}}$ the functions $1_{\calC_{-, j}} u(\tau)$ and $1_{\calC_{+, k}} u(t)$ are restricted  to the exterior \emph{small data} regime and we thus have 
\EQ{ \label{eq:cjck}
\|1_{\calC_{-, j}}   1_{\calC_{\inte}} G(u,v) \|_{\NN( (-\infty, -2^j N^{1-\eps}])}  \lesssim \| \vec u \|_{L^\infty_t \dot \HH^{s_p}}  \lesssim 1\\
\|1_{\calC_{+, k}}   1_{\calC_{\inte}} G(u,v) \|_{\NN( [ 2^j N^{1-\eps}, \infty))}  \lesssim \| \vec u \|_{L^\infty_t \dot \HH^{s_p}}  \lesssim 1,
}
where $\NN$ denote suitable dual spaces. 

We argue as follows.  For any $q\ge2$, and up to fattening the projection $\hat P_{N, M}$, we have 
\begin{align*}
&\abs{\bigl\langle \widehat{P}_{N,  M}^2\frac{1}{\abs{\na}} S(t - \tau)  [1_{\calC_{-, j}}   1_{\calC_{\inte}} G(u,v)](\tau)  ,  \, [1_{\calC_{+, k}}1_{\calC_{\inte}} G(u,v)](t) \bigr \rangle}  \\ 
&\lesssim  \| 1_{ \{\abs{ \cdot} \gtrsim (2^j +2^k) N^{1-\eps} \} }p_{N, M} \|_{L^1} \\
& \quad\times \| P_{N} \abs{\na}^{-1-s_p + \frac{2}{q}}S(t - \tau)  [1_{\calC_{-, j}}   1_{\calC_{\inte}} G(u,v)](\tau) \|_{L^{q}_x}  \\
  &\quad \times \| \abs{\na}^{s_p - \frac{2}{q}} [1_{\calC_{+, k}}1_{\calC_{\inte}} G(u,v)](t) \|_{L^{q'}} 
\end{align*} 
We estimate last line above as follows. Note that by~\eqref{eq:pnm}  and~\eqref{eq:gjgk} (and the lower bound on $M$), we have 
\EQ{
\| 1_{ \{\abs{ \cdot} \gtrsim (2^j +2^k) N^{1-\eps} \} }p_{N, M} \|_{L^1} \lesssim_L  \frac{N^3}{ [(2^j + 2^k) N^{1-\eps}]^{L-1}}.
}
By the dispersive estimate for the wave equation (and noting that $\abs{t-\tau} \ge 2 N^{1-\eps}$ we have 
\begin{align}
& \| P_{N} \abs{\na}^{-1-s_p + \frac{2}{q}}S(t - \tau)  [1_{\calC_{-, j}}   1_{\calC_{\inte}} G(u,v)](\tau) \|_{L^{q}_x}  \\ 
&\lesssim \frac{1}{\abs{t-\tau}^{1- \frac{2}{q}}} N^{1-\frac{4}{q}} \| P_N \abs{\na}^{-1-s_p + \frac{2}{q}}[1_{\calC_{-, j}}   1_{\calC_{\inte}} G(u,v)](\tau) \|_{L^{q'}_x} \\
& \lesssim \frac{1}{\abs{t-\tau}^{1- \frac{2}{q}}} N^{-2s_p} \| P_N \abs{\na}^{s_p - \frac{2}{q}}[1_{\calC_{-, j}}   1_{\calC_{\inte}} G(u,v)](\tau) \|_{L^{q'}_x}.
\end{align} 
Thus,  using the above, Bernstein's inequality, the Hardy-Littlewood Sobolev inequality, and~\eqref{eq:cjck} in the last line below we have  
\begin{align} 
&\int_{N^{1-\eps}}^\I \int_{-\I}^{-N^{1-\eps}}\eqref{eq:CC} \, \ud t \, \ud \tau \\
& \lesssim_L  \sum_{j, k \ge 1} \frac{N^3N^{-2s_p}}{ [(2^j + 2^k) N^{1-\eps}]^{L-1}}  \int_{N^{1-\eps}}^\I \int_{-\I}^{-N^{1-\eps}} \hspace{-4mm}\Big(  \frac{1}{\abs{t-\tau}^{1- \frac{2}{q}}} 
 \\
 & \quad \times \| \abs{\na}^{s_p- \frac{2}{q}} [1_{\calC_{-, j}}   1_{\calC_{\inte}} G(u,v)](\tau) \|_{L^{q'}_x} \\
 &  \, \quad \qquad \times  \,\quad \| \abs{\na}^{s_p- \frac{2}{q}} [1_{\calC_{-, k}}   1_{\calC_{\inte}} G(u,v)](t) \|_{L^{q'}_x} \Big) 
 \, \ud t \, \ud \tau \\
& \lesssim_L  \sum_{j, k \ge 1} \frac{N^3N^{-2s_p}}{ [(2^j + 2^k) N^{1-\eps}]^{L-1}}   \| \abs{\na}^{s_p- \frac{2}{q}} [1_{\calC_{+, k}}   1_{\calC_{\inte}} G(u,v)] \|_{L_t^{\frac{2q}{q+2}}L^{q'}_x} \\
& \quad \qquad \times \quad \| \abs{\na}^{s_p- \frac{2}{q}} [1_{\calC_{-, j}}   1_{\calC_{\inte}} G(u,v)] \|_{L_t^{\frac{2q}{q+2}}L^{q'}_x} \\
 & \lesssim_L \sum_{j, k \ge 1} \frac{N^3N^{-2s_p}}{ [(2^j + 2^k) N^{1-\eps}]^{L-1}}  \lesssim_L N^{-L/2}, 
\end{align} 
where in the second to last line we have fixed $q>2$ above and note that the norms above are dual sharp admissible Strichartz pairs (e.g., one can take $q=4$). 

Next, consider the term~\eqref{eq:GG}. Here we cannot rely exclusively on separation of supports because the $S(t-\tau)$ evolution of the term localized to $\calG_{-, j}$ has some of its support within $2 R(\eta_0)$  of the term localized to $\calG_{+, k}$ for all $j, k$. The saving grace is that the pieces of the supports of $S(t - \tau)  [1_{\calG_{-, j}}   1_{\calC_{\inte}} G(u,v)](\tau)$ and $[1_{\calG_{+, k}}1_{\calC_{\inte}} G(u,v)](t) $ that are close to each other (say within $2^{\al j} + 2^{\al k}$ for some small parameter $\alpha>0$) come along with \emph{angular separation} in the sense of Lemma~\ref{lem:ang}. To make this precise we must further subdivide $G_{\pm, k}$ as follows. 

Let $\al>0$ be a small parameter to be fixed below. Let 
\EQ{
&\calG_{+, k, in}:=  \calG_{+, k} \cap  \{ (t, x) \mid \abs{x} \le t - 2^{\al k}N^{\al(1-\eps)} \} \\
& \calG_{+, k, out} :=  \calG_{+, k} \cap  \{ (t, x) \mid \abs{x} \ge t - 2^{\al k}N^{\al(1-\eps)} \}
}
We decompose~\eqref{eq:GG} as follows, noting symmetry in $j, k$ means it suffices to consider only the sum for $j \ge k$.  We write \eqref{eq:GG} in the form
\begin{align} 
&\sum  \bigl\langle \widehat{P}_{N,  M}^2 \frac{1}{\abs{\na}}S(t - \tau)  [1_{\calG_{-, j, in}}   1_{\calC_{\inte}} G]  ,  \, [1_{\calG_{+, k, in}}1_{\calC_{\inte}} G] \bigr \rangle   \label{eq:Ginin}  
\\
& +\sum  \bigl\langle \widehat{P}_{N,  M}^2 \frac{1}{\abs{\na}}S(t - \tau)  [1_{\calG_{-, j, in}}   1_{\calC_{\inte}} G]  ,  \, [1_{\calG_{+, k, out}}1_{\calC_{\inte}} G] \bigr \rangle   \label{eq:Ginout}  \\ 
& + \sum \bigl\langle \widehat{P}_{N,  M}^2 \frac{1}{\abs{\na}}S(t - \tau)  [1_{\calG_{-, j, out}}   1_{\calC_{\inte}} G]  ,  \, [1_{\calG_{+, k, in}}1_{\calC_{\inte}} G]\bigr \rangle  \label{eq:Goutin} 
\\
& + \sum \bigl\langle \widehat{P}_{N,  M}^2 \frac{1}{\abs{\na}}S(t - \tau)  [1_{\calG_{-, j, out}}   1_{\calC_{\inte}} G]  ,  \, [1_{\calG_{+, k, out}}1_{\calC_{\inte}} G] \bigr \rangle \label{eq:Goutout} ,
\end{align} 
where the sums are over $j,k\geq 0$ with $j\geq k$, $G=G(u,v)$, and the pairings are evaluated at $\tau,t$.

The key point will be that on the outer regions $\calG_{-, j, out}$, $\calG_{+, k, out}$ we can recover the same angular separation used to treat the term $C_{\ext -\ext}$ and on the inner regions $\calG_{-, j, in}$ and $\calG_{+, k, in}$ we obtain sufficient separation in support between the two factors after evolution by $S(t-\tau)$  to get enough decay in $j, k$  after the application of $\hat P_{N, M}^2 \frac{1}{\abs{\na}}$.

\begin{lem}[Angular separation in $\calG_{\pm, j, out}$] \label{l:Gout} Let $\al>0$ and let $S_{N, \al}$ be the set 
\EQ{ \label{eq:SNal} 
\calS_{N, \al} := \biggl \{ x \in \R^3 \mid \frac{\abs{x_{2, 3}}}{\abs{x}} \lesssim \frac{1}{N^{\frac{(1-\al)(1-\eps)}{2} }} \biggr\}
}
Then, there exists $\al>0$ small enough and $N_0>0$ large enough so that for all $x \in \calG_{\pm, j, out}$ we have 
\EQ{ \label{eq:angGout}
x \in S_{N, \al} \mand  \frac{1}{N^{\frac{(1-\al)(1-\eps)}{2} }} \ll \frac{M}{N}
}
for all $ N \ge N_0$ and  $M \ge N^{\frac{s_p}{1-\nu}}$ and for all $j \ge 0$.

\end{lem} 

\begin{proof} 
It suffices to consider $x \in\calG_{+, j, out}$. The proof is nearly identical to the proof of Lemma~\ref{l:angle}, but here we have allowed the region $\calG_{+, j, out}$ to deviate farther from the boundary of the cone as $j$ (and hence $t$) gets larger. As in Lemma~\ref{lem:ang} we have 
\EQ{
\abs{\sin(\te_{x(2^j N^{1-\eps})})} \simeq \abs{\te_{x(2^jN^{1-\eps})}} \le A_1 N^{\frac{\eps}{2} - \frac{1}{2}}
}
independently of $j \ge 0$. To finish the proof it suffices to show that for any $x \in \calG_{+, j, out}$, the angle $\te_{(x, x(2^jN^{1-\eps}))}$ formed between the vectors $x$ and $x(2^jN^{1-\eps})$ satisfies 
\EQ{
\abs{\te_{(x, x(2^jN^{1-\eps}))}} \le A_2 \frac{1}{N^{\frac{(1-\al)(1-\eps)}{2} }}
}
for some other uniform constant $A_2 >0$, as then the sine of the total angle between $x$ and the $x_1$-axis, i.e., $\frac{\abs{x_{2, 3}}}{\abs{x}}$ would satisfy~\eqref{eq:SNal}. Note that for any $(t, x) \in \calG_{+, j, out}$ 
\EQ{
2^{j}N^{1-\eps} - 2^{\al j} N^{\al(1-\eps)} \le \abs{x} \le 2^{j+1} N^{1-\eps} + 2^{\al j} N^{\al(1-\eps)}  
}
Arguing as in the proof of Lemma~\ref{l:angle} we see that for any $(t, x) \in \calG_{+, j, out} $
\EQ{
\te_{(x, x(2^jN^{1-\eps}))}^2 \lesssim \frac{ 2 t 2^{\al j} N^{\al(1-\eps)}}{ (t - 2^{\al j} N^{\al(1-\eps)})( 2^{j}N^{1-\eps})} \lesssim \frac{1}{ 2^{(1-\al)j} N^{(1-\al){(1-\eps)}}},
}
as desired. \end{proof} 
  
  With Lemma~\ref{l:Gout} in hand, we can estimate the term~\eqref{eq:Goutout} in an identical fashion as the term~\eqref{c_outout}, noting that applications of Lemma~\ref{lem:ang} are still valid in this new setting because for $x \in G_{+, k, out}$ and $y \in G_{-, j, out}$ we have 
  \EQ{
 \frac{ \abs{x_{2, 3} - y_{2, 3}}}{ \abs{x-y}} \lesssim \frac{\abs{x_{2, 3}}}{\abs{x}} + \frac{\abs{y_{2, 3}}}{\abs{y}} \lesssim \frac{1}{N^{(1-\al)(1-\eps)}} \ll \frac{M}{N},
  }
  i.e., sufficient angular separation since the Fourier variable $\xi$ satisfies 
  \[
  \abs{\xi_{2, 3}}/ \abs{\xi} \simeq M/N.
  \]
   Moreover we have 
  \EQ{
  \abs{x-y} \simeq (2^{j} + 2^k) N^{1-\eps} \mif x \in G_{+, k, out}, \, y \in G_{-, j, out}
  }
  This means that we are free to write, 
  \EQ{
  & \bigl\langle \widehat{P}_{N,  M}^2 \frac{1}{\abs{\na}}S(t - \tau)  [1_{\calG_{-, j, out}}   1_{\calC_{\inte}} G(u,v)](\tau)  ,  \, [1_{\calG_{+, k, out}}1_{\calC_{\inte}} G(u,v)](t) \bigr \rangle \\
  & =  \bigl\langle [1_{S_{N, \al}} 1_{\{\abs{\cdot} \simeq (2^{j} + 2^k) N^{1-\eps} \}}  K_{N, M} ] \ast   [1_{\calG_{-, j, out}}   1_{\calC_{\inte}} G(u,v)](\tau)  , \\ & \quad\quad\quad \, [1_{\calG_{+, k, out}}1_{\calC_{\inte}} G(u,v)](t) \bigr \rangle.
  }
  Mimicking the estimates of~\eqref{eq:Goutout} we see that as in~\eqref{eq:MNL} we have 
  \EQ{
  \| 1_{S_{N, \al}}&( \cdot)  1_{\{\abs{\cdot} \simeq (2^{j} + 2^k) N^{1-\eps} \}}  (\cdot) K_{N, M}(t- \tau) \|_{L_x^{\left(\frac{qp}{p} \right)'/2}}  \\
  & \lesssim_L \frac{N^{L+1}}{M^{2L}} \frac{1}{[(2^j + 2^k)N^{1-\eps}]^L}
  }
  This allows us to sum in $j, k$, and we obtain, 
  \EQ{
  \int_{- \infty}^{- N^{1-\eps}}  \int_{ N^{1-\eps}}^{\infty} \eqref{eq:Goutout} \, \ud t \, \ud \tau \lesssim_L \frac{1}{M^L}.
  }
  To handle the term~\eqref{eq:Ginin} we rely on the following observation: by the support properties of $1_{\calC_{+, k}}1_{\calC_{\inte}}(\tau, y)$,  $1_{\calC_{-, j}}   1_{\calC_{\inte}}(t, y) $ and the sharp Huygens principle, we must  have 
\begin{equation} \label{eq:gjgkin}
\abs{x - y} \gtrsim (2^j +2^k) N^{1-\eps} 
\end{equation}
for all  
\[
 x \in \supp (1_{\calG_{+, k, in}}1_{\calC_{\inte}} G(u,v))(t)
 \]
  and 
  \[
  y \in \supp S(t- \tau) [1_{\calG_{-, j, in}}  1_{\calC_{\inte}} G(u,v)](\tau).
  \]
   Hence, 
\begin{align}
&\abs{\bigl\langle \widehat{P}_{N,  M}^2\frac{1}{\abs{\na}} S(t - \tau)  [1_{\calG_{-, j, in}}   1_{\calC_{\inte}} G(u,v)](\tau)  ,  \, [1_{\calG_{+, k, in}}1_{\calC_{\inte}} G(u,v)](t) \bigr \rangle}  \\
& \lesssim  \| 1_{ \{\abs{ \cdot} \gtrsim (2^j +2^k) N^{1-\eps} \} }p_{N, M} \|_{L_x^{\left(\frac{q_p}{p}\right)'/2}} N^{-1}  \\
&  \quad  \times \| P_NS(t - \tau)  [1_{\calC_{-, j}}   1_{\calC_{\inte}} G(u,v)](\tau) \|_{L^{\frac{q_p}{p}}_x}\|  [1_{\calC_{+, k}}1_{\calC_{\inte}} G(u,v)](t) \|_{L_x^{\frac{q_p}{p}}} \\
&  \lesssim_L \frac{1}{[(2^j+ 2^k)N^{1-\eps}]^L} 
   \left(\|u \|_{L^\infty_t \dot H^{s_p}}^{2p} + \|v \|_{L^\infty_t \dot H^{s_p}}^{2p} \right)
\end{align} 
Hence, 
\EQ{
  \int_{- \infty}^{- N^{1-\eps}}  \int_{ N^{1-\eps}}^{\infty} \eqref{eq:Ginin} \, \ud t \, \ud \tau \lesssim_{L} \frac{1}{N^{L}} 
}
Next, for the term~\eqref{eq:Ginout} we note that the same argument used to treat~\eqref{eq:Ginin} applies.  However, we note that here we only obtain  spatial separation of $2^j N^{1-\eps}$.  Nonetheless,  since $j \ge k$ we have that 
\EQ{
2^jN^{1-\eps}  \simeq (2^j + 2^k) N^{1-\eps}
}
and hence we are able to sum in $j, k$, obtaining
\EQ{
  \int_{- \infty}^{- N^{1-\eps}}  \int_{ N^{1-\eps}}^{\infty} \eqref{eq:Ginout} \, \ud t \, \ud \tau \lesssim_L \frac{1}{M^L}
}

Lastly, consider the term~\eqref{eq:Goutin}. Here we use a mix of the arguments used to control~\eqref{eq:Ginin} and~\eqref{eq:Goutout}. In particular we split the sum into two pieces noting that if $j \simeq k$ then the same argument used to estimate~\eqref{eq:Ginin} applies since the spatial supports are separated by $\simeq 2^{k} N^{1-\eps} \simeq (2^j + 2^k) N^{1-\eps}$. If $j \gg k$ we obtain enough angular separation argument to use the same argument used to bound~\eqref{eq:Goutout}, since in this case we have 
\EQ{
 \frac{ \abs{x_{2, 3} - y_{2, 3}}}{ \abs{x-y}} \simeq  \frac{\abs{y_{2, 3}}}{\abs{y}} \lesssim \frac{1}{N^{(1-\al)(1-\eps)}} \ll \frac{M}{N}
}
for all $x \in \calG_{+, k, in}$ and $y \in \calG_{-, j, out}$ as long as $j \gg k$. 
 We obtain
\EQ{
  \int_{- \infty}^{- N^{1-\eps}}  \int_{ N^{1-\eps}}^{\infty} \eqref{eq:Ginin} \, \ud t \, \ud \tau \lesssim_{L} \frac{1}{N^{L}} + \frac{1}{M^L} 
}

This completes the estimation of \eqref{eq:GG}. 

At this point, the mixed terms \eqref{eq:GC} and \eqref{eq:CG} (i.e. the remaining contributions to the $C_{\inte-\inte}$ term), as well as the $C_{\inte-\ext}$ and $C_{\ext-\inte}$ terms (\eqref{c_outin} and \eqref{c_inout}) can be handled with a combination of the techniques developed above. For example, after further subdividing $\calG_{-}$ in the regions $\calC_{-, j}$ and $\calG_{-, j}$ consider the term of the form, 
\EQ{
&\sum_{j \ge 0}  \int_{-2^{j+2} N^{1-\eps}}^{-2^j N^{1-\eps}} \int_{N^{1-\eps}}^\infty \\
& \quad \bigl\langle \widehat{P}_{N,  M}^2 \frac{1}{\abs{\na}}S(t - \tau)  [1_{\calG_{-, j}}   1_{\calC_{\inte}} G(u,v)](\tau)  ,  \, 1_{\calC_{\ext}}G(u,v)](t) \bigr \rangle\, \ud t \, \ud \tau.
}
Fixing a large constant $K_1>0$,  we can divide the above into two further pieces, namely
\EQ{
 & \sum_{j \ge 0}  \int_{-2^{j+2} N^{1-\eps}}^{-2^j N^{1-\eps}} \int_{N^{1-\eps}}^{K_1 2^{j} N^{1-\eps}}  \\
 &\quad \bigl\langle \widehat{P}_{N,  M}^2 \frac{1}{\abs{\na}}S(t - \tau)  [1_{\calG_{-, j}}   1_{\calC_{\inte}} G(u,v)](\tau)  ,  \, 1_{\calC_{\ext}}G(u,v)](t) \bigr \rangle\, \ud t \, \ud \tau  \\
& + \sum_{j \ge 0}  \int_{-2^{j+2} N^{1-\eps}}^{-2^j N^{1-\eps}} \int_{K_1 2^j N^{1-\eps}}^\infty \\
& \quad + \bigl\langle \widehat{P}_{N,  M}^2 \frac{1}{\abs{\na}}S(t - \tau)  [1_{\calG_{-, j}}   1_{\calC_{\inte}} G(u,v)](\tau)  ,  \, 1_{\calC_{\ext}}G(u,v)](t) \bigr \rangle\, \ud t \, \ud \tau.
}
For the first term on the right-hand-side above we can copy the argument used to estimate~\eqref{eq:Ginin}. Indeed by the sharp Huygens principle the spatial supports (before application of $P_{N, M}$)  are separated for each fixed $t, \tau$ by a distance of at least $\simeq 2^{j}N^{1-\eps} \simeq_{K_1} \abs{t -\tau}$. For the second term above we can choose $K_1 \gg 1$ large enough to guarantee enough angular separation between the spatial and Fourier variables to mimic a combination of the  arguments used to estimate~\eqref{c_outout} (where one integrates in $t$) and~\eqref{eq:Goutout} (where one sums in $j$). The remaining interactions are handled similarly. We omit the details.  

We have thus proved that 
\EQ{
\abs{\ang{C, C'} } \lesssim_L \frac{1}{N^L} + \frac{1}{M^L}  \lesssim_L \frac{1}{M^L},
}
which finally completes the proof of Lemma~\ref{l:CC'}. 
\end{proof}

\medskip
We are now prepared to conclude the frequency envelope argument and the proof of Proposition~\ref{padditional}.

\begin{proof}[Proof of Proposition~\ref{padditional}] 
Recall that we are trying to prove that
\begin{equation}
\sum_{N \geq N_0}\sum_{C_0 N^{\frac{s_p}{1-\nu}} \leq M \leq N} M^{2(1-\nu)}\|\widehat P_{N, \geq M}u(t)\|^2_{L_x^2} \lesssim 1,
\end{equation}
for some fixed $C_0 > 0$, for which it suffices to prove that
\begin{equation}
\sum_{N \geq N_0}\sum_{C_0 N^{\frac{s_p}{1-\nu}} \leq M \leq N} M^{2(1-\nu)}N^{-2 s_p} \|\widehat P_{N, \geq M}u(t)\|^2_{\dot H^{s_p}_x} \lesssim 1.
\end{equation}
Once again, by time-translation invariance, we argue for $t = 0$. Recall that 
\begin{align}
| \langle  \widehat{P}_{N, \geq M} u(0),\, \widehat{P}_{N,  \geq M} u(0) \rangle_{\dot H^{s_p}_x}|& \lesssim \|A\|_{\dot H^{s_p}}^2 + \|A'\|_{\dot H^{s_p}}^2 \\
& \quad +\|B\|_{\dot H^{s_p}}^2+\|B'\|_{\dot H^{s_p}}^2+|\langle C,C'\rangle_{\dot H^{s_p}_x}|,
\end{align}
and hence by Lemmas \ref{l:AA'}, \ref{l:BB'} and \ref{l:CC'}, we obtain
\begin{align}\label{equ:first_env_bd}
\gamma_{N,M}(0) &= \sum_{N',M' \geq M }\min\biggl\{\frac{N}{N'},\frac{N'}{N}\biggr\}^{\sigma}\left( \frac{M'}{M} \right)^{\sigma} \| \widehat{P}_{N, \geq M} u(0)\|_{\dot H^{s_p}_x}^2 \\
& \lesssim \eta_0^{p-1} \alpha_{N,M} + \eta_0^{p-1} \beta_{N,M} + M^{-L}.
\end{align}

Furthermore by \eqref{alpha_bds} and \eqref{equ:beta_bds}, 
\[
\alpha_{N,M} \lesssim \gamma_{N, M}(N^{1-\eps}) + \eta_0^{p-1} \alpha_{N,M},
\]
and
\[
\gamma_{N, M}(N^{1-\eps}) + \beta_{N,M} \lesssim \gamma_{N, M}(0) + \eta_0^{p-1} \beta_{N,M}.
\]
Hence
\[
\beta_{N,M} \lesssim \gamma_{N, M}(0) , \qquad \alpha_{N,M} \lesssim \gamma_{N, M}(N^{1-\eps}) \lesssim  \gamma_{N, M}(0),
\]
and we conclude from \eqref{equ:first_env_bd} that
\begin{align}\label{equ:second_env_bd}
\gamma_{N,M}(0) \lesssim \eta_0^{p-1} \gamma_{N, M}(0) + M^{-L},
\end{align}
which implies
\begin{align}\label{equ:final_env_bds}
\gamma_{N,M}(0) \lesssim M^{-L}
\end{align}
for any $L \gg 1$. Consequently, we have established that
\begin{align}
\sum_{N \geq N_0}\sum_{M \geq C_0 N^{\frac{s_p}{1-\nu}}} M^{2(1-\nu)}N^{-2 s_p} \gamma_{N,M}(0)^2 \lesssim 1,
\end{align}
which concludes the proof.

\end{proof}

\bibliographystyle{myamsplain}
\bibliography{researchbib}

\medskip

\centerline{\scshape Benjamin Dodson}
\smallskip
{\footnotesize
 \centerline{Department of Mathematics, Johns Hopkins University}
\centerline{404 Krieger Hall, Baltimore, MD 21218}
\centerline{\email{dodson@math.jhu.edu}}
} 

\medskip

\centerline{\scshape Andrew Lawrie}
\smallskip
{\footnotesize
 \centerline{Department of Mathematics, Massachusetts Institute of Technology}
\centerline{77 Massachusetts Ave, 2-267, Cambridge, MA 02139, U.S.A.}
\centerline{\email{ alawrie@mit.edu}}
} 
\medskip

\centerline{\scshape Dana Mendelson}
\smallskip
{\footnotesize
 \centerline{Department of Mathematics, University of Chicago}
\centerline{5734 S. University Avenue, Chicago, IL  60637}
\centerline{\email{dana@math.uchicago.edu}}
} 

\medskip

\centerline{\scshape Jason Murphy}
\smallskip
{\footnotesize
 \centerline{Department of Mathematics and Statistics, Missouri University of Science and Technology}
\centerline{400 West 12th St., Rolla, MO 65409}
\centerline{\email{jason.murphy@mst.edu}}
} 

\end{document}